\begin{document}

\title{Patching and multiplicities of $p$-adic eigenforms}
\author{Eugen Hellmann\footnote{Mathematisches Institut, Universit\"at
    M\"unster, Einsteinstrasse 62, D-48149 M\"unster, Germany
    \emph{e-mail :} e.hellmann@uni-muenster.de} \and Valentin
  Hernandez\footnote{Laboratoire de Mathématiques d'Orsay,
    Universit\'e Paris-Saclay \emph{e-mail :}
    valentin.hernandez@math.cnrs.fr} \and Benjamin
  Schraen\footnote{Universit\'e Claude Bernard Lyon 1, CNRS, \'Ecole
    Centrale de Lyon, INSA Lyon, Université Jean Monnet, I CJ UMR5208,
    69622 Villeurbanne, France. \emph{e-mail :}
    schraen@math.univ-lyon1.fr}} \date{}

\maketitle

\begin{abstract}
  We prove the existence of non-classical $p$-adic automorphic
  eigenforms associated to a classical system of eigenvalues on
  definite unitary groups in $3$ variables. These eigenforms are
  associated to Galois representations which are crystalline but very
  critical at $p$. We use patching techniques related to the
  trianguline variety of local Galois representations and its local
  model. The new input is a comparison of the coherent sheaves
  appearing in the patching process with coherent sheaves on the
  Grothendieck--Springer version of the Steinberg variety given by a
  functor constructed by Bezrukavnikov.
\end{abstract}

\tableofcontents
\section{Introduction}

The aim of this paper is to unravel (and explain) a new phenomenon in the theory of $p$-adic automorphic forms. 
Given a reductive group $\underline{G}$ over a number field (overconvergent) $p$-adic automorphic forms are $p$-adic avatars of automorphic forms on $\underline{G}$. We usually refer to the latter as classical automorphic forms in order to distinguish them from their $p$-adic limits. 
Additional structures on spaces of automorphic forms, such as the Hecke-action, naturally extend to the $L$-vector spaces of overconvergent $p$-adic automorphic forms $S^\dagger(K^p), S^\dagger_\kappa(K^p)$, where the field of coefficients $L$ is a finite extension of $\mathbb{Q}_p$ and $K^p\subset G(\mathbb{A}^p)$ is a compact open subgroup (referred to as the \emph{tame level}) and $\kappa$ is a \emph{weight}. 
A central question about $p$-adic automorphic forms is to clarify whether a given overconvergent $p$-adic automorphic form (of algebraic weight) that is an eigenform for the Hecke action is a classical automorphic form. 
Often this question can be answered in terms of the Hecke eigenvalues. Coleman's \emph{small slope implies classical} result \cite{Coleman} and generalizations thereof (see e.g.~\cite{Kassaeigluing}, \cite{Chefougere}, \cite{BPS}) asserts that this question can be purely decided using the Hecke action at $p$ if the $p$-adic valuation of the Hecke eigenvalues at $p$ is small compared to the weight. 
Beyond the \emph{numerically non critical slope} it is known that this fails. However, one can ask the same question taking into account the full Hecke action (as opposed to the Hecke action at $p$). 

Assume that we are in a situation where we can construct the Galois representation $\rho_f=\rho_\chi$ attached to a $p$-adic eigenform $f$, respectively to the Hecke character $\chi$ giving the system of Hecke eigenvalues of $f$. 
Then the Hecke action away from $p$ encodes all the information about the $p$-adic Galois representation $\rho_f$, including the $p$-adic Hodge theoretic information at places dividing $p$ (though this is encoded in a rather indirect and mysterious way).
The naive generalization of the classicality question about overconvergent $p$-adic automorphic forms can hence be phrased as follows (though we phrase the question in a rather informal way):

\noindent \textbf{Question A:}
Let $f$ be an overconvergent $p$-adic eigenform of dominant algebraic weight such that the corresponding Galois representation $\rho_f$ is de Rham at places dividing $p$. Is it true that $f$ is a classical automorphic form?

We note that a softer version of this question is the following expectation that is implied by the Fontaine--Mazur conjecture. Again we state the expectation in a rather informal way -- it might fail without more precise assumptions on the group the level, etc.~(see e.g.~\cite[Conj. 5.1.1]{BHS3} for a precise formulation).

\noindent \textbf{Rough Expectation B:}
Let $S^\dagger_\kappa(K^p)[\chi]\subset S^\dagger_\kappa(K^p)$ be an eigensystem (for the action of the full Hecke algebra $\mathbb{T}$ generated by Hecke operators at $p$ and away from $p$) in the space $S^\dagger_\kappa(K^p)$ of overconvergent $p$-adic automorphic forms of weight $\kappa$ on $\underline{G}$. Assume that $\kappa$ is dominant algebraic and that the Galois representation $\rho_\chi$ associated to the Hecke character $\chi:\mathbb{T}\rightarrow L$ is de Rham at places dividing $p$. Then $S^\dagger_\kappa(K^p)[\chi]$ contains a classical automorphic form, i.e.~its subspace $S^{\rm cl}_\kappa(K^p)[\chi]$ of classical forms is non-zero. 

Question $A$ then can be rephrased as the question whether $S^{\rm cl}_\kappa[\chi](K^p)=S^\dagger_\kappa(K^p)[\chi]$ in Expectation B. 
It is known that Question $A$ does not have an affirmative answer in general. Ludwig \cite{MR3846054} and Johansson--Ludwig \cite{johansson2023endoscopy} have shown that there are counterexamples for ${\rm SL}_2$. The reason for these counterexamples however, is of global (endoscopic) nature and it remains a reasonable question to ask Question A for groups where these phenomena do not apply, e.g.~for definite unitary groups. 

Expectation B has been verified for $\rm{GL}_2$ (this is basically \cite{Kisinoverconvergent}), and generalizations of Kisins' result were proven by  Bellaïche and his coauthors
(\cite{BCLisseEisenstein},\cite{BellaicheCritInvent} and \cite{BellaicheDimitrov}). For definite unitary groups, and under Taylor--Wiles assumptions, these results were vastly generalized in \cite{BHS2}, \cite{BHS3}. 
We point out that in the cases treated in \cite{BHS2} the results imply that $S^{\rm cl}_\kappa(K^p)[\chi]=S^\dagger_\kappa(K^p)[\chi]$, while the more general case in \cite{BHS3} only allows to construct some classical form in the eigensystem (though no counterexample to Question A is constructed in loc.~cit.).
The reason for this difference is due to a phenomenon in the geometry of eigenvarieties (i.e.~rigid analytic spaces parametrizing the systems of Hecke eigenvalues in the space of overconvergent $p$-adic automorphic forms of finite slope), respectively in the geometry of their local Galois-theoretic counterparts (the so-called  trianguline variety of \cite{BHS1}). In the case treated in \cite{BHS2} the trianguline variety is smooth at the Galois representations in question (and hence the eigenvariety is local complete intersection). In general the trianguline variety is not smooth, and as a consequence one can construct non-smooth points on the corresponding eigenvarieties, see \cite[Thm. 5.4.2]{BHS3}. It is this failure of smoothness that prevents \cite{BHS3} from identifying $S^{\rm cl}_\kappa(K^p)[\chi]$ and $S^\dagger_\kappa(K^p)[\chi]$.

In this paper we prove that the answer to Question A is \emph{no} for definite unitary groups in three variables (see Theorem \ref{thm:main} below for a more precise formulation). 
\begin{theor}
There exists a unitary group in three variables $U$, a tame level $K^p$, a dominant algebraic weight $\kappa$ and a Hecke character $\chi:\mathbb{T}\rightarrow L$ that occurs in the space $S^\dagger_{\kappa}(K^p)_{\rm fs}$ of overconvergent automorphic forms of finite slope and weight $\kappa$ such that the eigenspace $S^\dagger_\kappa(K^p)[\chi]$ contains classical as well as non-classical eigenforms. 
\end{theor}
The construction of this example also clarifies the role of the singularities of the trianguline variety $X_{\rm tri}$. The precise results we prove suggest that the answer to Question A is no, whenever the dualizing sheaf $\omega_{X_{\rm tri}}$ is not locally free at the point defined by $\rho$ (and the refinement associated to $\chi$), i.e.~whenever $X_{\rm tri}$ is non-Gorenstein at this point (we refer to Theorem \ref{theo:Verma} below for the precise link with $\omega_{X_{\rm tri}}$). In the three dimensional case, this results in a precise comparison of the dimensions of the eigenspaces $S^{\rm cl}_\kappa(K^p)[\chi]\subset S^\dagger_\kappa(K^p)[\chi]$.

We point out that, in contrast to \cite{MR3846054} and \cite{johansson2023endoscopy} this is a purely local \emph{$p$-adic} phenomenon. Moreover, the theorem implies that the usual invariants (i.e.~the Hecke action, respectively the $p$-adic Hodge theoretic information of the associated Galois representation) can not  distinguished between classical and non-classical forms.
We like to refer to the non-classical forms in such eigensystems as \emph{undercover} automorphic forms.

The main result, and in particular the occurrence of the dualizing sheaf $\omega_{X_{\rm tri}}$ therein, is inspired by the categorical point of view in the $p$-adic Langlands program, see \cite{EGH}. The space of overconvergent $p$-adic automorphic forms of finite slope $S^\dagger(K^p)_{\rm fs}$ can be viewed as the topological dual of the global sections of a coherent sheaf (that we simply refer to as the \emph{sheaf of $p$-adic automorphic forms}) on the rigid analytic generic fiber of the universal deformation space of Galois representations (more precisely, on the product of this space with the space of continuous characters of a maximal torus $\underline{T}(\QQ_p)\subset \underline{G}(\mathbb{Q}_p)$ at $p$). The support of this sheaf is, by definition, the corresponding eigenvariety. The local-global-compatibility conjectures \cite[Conj. 9.6.8 and Conj. 9.6.16]{EGH} give a precise description of this sheaf in terms of the geometry of moduli stacks of $(\varphi,\Gamma)$-modules (that are closely related to the trianguline variety). 
More precisely, the categorical approach to the $p$-adic Langlands program asks for a functor from certain (locally analytic) representations of $\underline{G}(\mathbb{Q}_p)$ to sheaves on stacks of $(\varphi,\Gamma)$-modules, and the sheaf of $p$-adic automorphic forms is the globalization of the evaluation of this functor on a specific representation.
One of the punchlines of \cite{EGH} (see section 1.6 therein for a more detailed discussion) is that avatars of the envisioned functor have been around in number theory during the past decades in the context of the Taylor--Wiles patching method, in particular \emph{patching functors} as used for example in \cite{EGS} (or also in \cite[5.]{BHS3})
A crucial point in the proof of the main theorem is the identification of such a patching functor with an explicit local functor, see Theorem \ref{thm:explsheaves} below. This partially confirms expectations in the categorical picture, see \cite[Expectation 6.2.27]{EGH}.

Note that the multiplicity result in Theorem \ref{thm:main} has some striking consequence for the $p$-adic Langlands Program for $\GL_3(\QQ_p)$. It implies that the locally analytic representation of $\GL_3(\QQ_p)$ on the Hecke eigenspace of overconvergent $p$-adic modular forms over $\underline{G}$ corresponding to a Galois representation $\rho$ as in Theorem \ref{thm:main} contains locally analytic vectors which are \emph{not} in the socle of the representation (see Remark \ref{rema:loc_alg_not_in_socle}). After finishing this works, the authors learned that Ding also proved examples of this penomena for generic Galois representations (see \cite{Ding_loc_alg}).

We now describe our results in more detail. Let $F$ be a totally real
number field and let $E/F$ be a CM (imaginary) quadratic extension in
which every place $v | p$ in $F$ splits in $E$. Let $U$ be a unitary
group (over $\QQ$) in $n$ variables for the quadratic extension $E/F$
which is compact at infinity. By the hypothesis on $p$ the group
$U_{\QQ_p}$ is a product of general linear groups over finite
extensions of $\QQ_p$ and we denote $\underline T$ a maximal torus of
$U_{\QQ_p}$. We also fix a finite extension $L/\QQ_p$ which is big
enough to split $E$. Let $\cO_L\subset L$ be its ring of integers, $\pi_L$ a uniformizer and
$k_L$ its residue field.

For any continuous
character $\delta : \underline T(\QQ_p) \fleche
L^\times$, we can define a weight $\kappa$ (which is given by the derivative of $\delta$ at $1$) and a character of the Atkin--Lehner ring $\mathcal{A}(p)$ (the ring of Hecke-operators at $p$, see Definition \ref{def:Hecke_action}) that we still denote by $\delta$. 
We will assume that $\delta_{|T^0}$ is algebraic where $T^0 \subset
\underline T(\QQ_p)$ is the maximal compact subgroup.
Let $K^p\subset U(\mathbb{A}^p)$ be a tame level and let $S$ be a finite set, containing places above $p$, away from which
$K^p$ is hyperspecial. We write $\mathbb{T}^S$ for the unramified Hecke algebra at places not in $S$ and $\mathbb{T}=\mathbb{T}^S\otimes_{\ZZ} \mathcal{A}(p)$. 
Associated to these data we consider the spaces $S_\kappa^\dagger(K^p)$ and $S_\kappa^{\rm cl}(K^p)$, see Definition \ref{def:classical} for the precise definition, which come equipped with an action of $\mathbb{T}^S$ and $\mathcal{A}(p)$. 

Given a character $\chi^S:\mathbb{T}^S\rightarrow L$ let $\chi=\chi^S\otimes \delta$ and consider the eigenspaces $S_\kappa^\dagger(K^p)[\chi]$ and $S_\kappa^{\rm cl}(K^p)[\chi]$. We note that the classical subspace $S_\kappa^{\rm cl}(K^p)[\chi]$ is zero unless $\kappa$ is dominant algebraic.
To an eigenvector $f \in S_\kappa^\dag(K^p)[\chi]$ we can associate a
Galois representation $\rho =\rho_f=\rho_{\chi}: \Gal_E \coloneqq \Gal(\overline{E}/E)
\fleche \GL_n(\overline\QQ_p)$. 
For the precise form of the main result we introduce the following (strong) Taylor--Wiles
hypothesis. Let $\overline\rho : {\rm Gal}_E \fleche \GL_n(k_L)$ be the semisimplification of the
reduction modulo the maximal ideal of $\cO_L$ of $\rho$. We
assume that (see Hypothesis \ref{hyp:TWtext} in the text)
\begin{equation}\label{hyp:TW}
\left\{
\begin{array}{ll}
\bullet & p > 2, \\ \bullet & E/F \text{ is unramified and } \zeta_p
\notin E, \\ \bullet & U\text{ is quasi-split at all finite places of
}F, \\ \bullet & \text{if a place $v$ of $F$ is inert in $E$, then
  $K_v$ is hyperspecial},\\ \bullet & \overline\rho \text{ is
  absolutely irreducible and } \overline\rho(\Gal_{E(\zeta_p)}) \text{ is
  adequate}.
\end{array}\right.
\end{equation}

For simplicity of the exposition we assume now that that $p$ is totally split in $F$ (in the core of the paper we work in the general case). If the
representation $\rho$ is crystalline at $v | p$, it can be described
by its associated \emph{filtered isocrystal} which is a finite
dimensional $L$-vector space $D_{\cris}(\rho_v)$ endowed
with a linear automorphism $\varphi\in\GL(D_{\cris}(\rho_v))$ and a
complete flag $D^\bullet$, called the Hodge--Tate filtration (in our
case, this is a complete flag as $\rho_v$ has necessarily regular Hodge--Tate
weights). We say that $\rho_v$ is \emph{$\varphi$-generic} if the
ratio of two of its eigenvalues is not in $\set{1,p}$. In this case
the character $\delta$ determines an order of the eigenvalues of
$\varphi$ (that is called a \emph{refinement} of $\rho_v$) which in turn (using the fact that the $\varphi$-eigenvalues are pairwise distinct) defines another complete flag $\mathcal F_\bullet$ on
$D_{\cris}(\rho_v)$ which is $\varphi$-stable. We denote
$w_{\rho,\delta,v} \in \mathfrak S_n$ the relative position of the flags
$\mathcal F_\bullet$ and $D^\bullet$ in the flag variety of
$D_{\cris}(\rho_v)$. When $w_{\rho,\delta,v}= w_0$ is the longest element
of $\mathfrak S_n$, i.e.~when the two flags $D^\bullet$ and $\mathcal
F_\bullet$ are in generic position, we say that $f$ is
\emph{non-critical} at $v$. The ``most critical case'' is the case
where $w_{\rho,\delta,v} = 1$, i.e.~when the two flags coincides. In this case we say that $f$ is \emph{very critical} at $v$. 

\begin{theor}
\label{thm:main} Assume $n=3$. 
Let $\delta: \underline{T}(\mathbb{Q}_p)\rightarrow L^\times$ be a continuous character of weight $\kappa$ dominant algebraic. Let $\chi^S:\mathbb{T}^S\rightarrow L$ be a character and let $\chi=\chi^S\otimes\delta$. We assume that the eigenspace $S^\dagger_\kappa(K^p)[\chi]$ is non-zero and that for any $v|p$ the local Galois representation $\rho_v=\rho_\chi|_{\Gal_{E_v}} : \Gal_{E_v} \fleche \GL_3(\overline{\QQ_p})$ is crystalline with distinct Hodge-Tate weights and is $\varphi$-generic. Assume moreover that the Taylor--Wiles hypothesis (\ref{hyp:TW}) is satisfied. 
Let $r$ be the number of places $v|p$ in $F$ such that $w_{\rho_\chi,\delta,v}=1$. Then
\[ \dim S_\kappa^\dag(K^p)[\chi] = 2^r\dim S_\kappa^{\rm cl}(K^p)[\chi].\]
\end{theor}
We refer to Corollary \ref{cor:nonclassicalforms} for a more general statement where $p$ is not necessarily totally split in $F$.

Theorem \ref{thm:main} would be vacuous without proving the existence of characters $\chi$ and $\delta$ (and a group $U$ and a tame level $K^p$) such that the corresponding eigenspace $S_\kappa^{\rm cl}(K^p)[\chi]$ is non-zero and consists of very critical forms.
 As there exist only countably many classical automorphic
forms, but uncountably many flags it doesn't seem very easy to
construct an $f$ with $w_{\rho_f,\delta} =1$. 
This is Corollary \ref{cor:exofverycriticalforms}, the main result of section \ref{sect:critforms}, which uses global automorphic methods that are rather disjoint from the methods of the other parts of the paper. The Galois representation corresponding to the constructed Hecke character is induced from a degree 3 extension of $E$.

We finally discuss the relation of these results with patching functors and the categorical approach to a $p$-adic Langlands correspondence. 
Assume that $\delta=\delta_\lambda\delta_{\mathcal R}^{\sm}$ is the product of a dominant
algebraic character $\delta_\lambda$ and a smooth unramified character
$\delta_{\mathcal R}^{\sm}$ (which is in fact implied by the assumption that $\rho_v$ is crystalline). As the notation suggests,
the character $\delta_{\mathcal R}^{\sm}$ corresponds to the choice of a refinement $\mathcal R$ of $\rho_p \coloneqq (\rho_v)_{v | p}$. 
Let $\mathcal X_{\rho_p}=\Spec(R_{\rho_p})$
be the scheme associated to the universal deformation ring of
$\rho_p$. Using results of \cite{BHS3}, we can construct a subscheme
\[\mathcal X_{\rho_p,\mathcal R}^{\qtri}=\Spec(R_{\rho_p,\mathcal{R}}^{\qtri})\subset \mathcal X_{\rho_p}\] of ``quasi-trianguline''
deformations of $\rho_p$ associated to the refinement $\mathcal
R$. By loc.~cit.~this scheme has a local model modeled on the Steinberg
variety (or rather its ``Grothendieck--Springer'' variant) and its irreducible components $\mathcal X_{\rho,\mathcal R}^{\qtri,w}$ are labeled by the Weyl group $W$ of $\prod_{v | p} \GL_3$. It is known that these irreducible components are normal and Cohen--Macaulay.

Let's denote $\lambda=\delta|_{T^0}(=\delta_{\lambda | T^0})$, this is a dominant algebraic character. 
Using hypothesis (\ref{hyp:TW}) the Taylor--Wiles method, as extended to the setting of completed cohomology in \cite{CEGGPS},
can be used (\cite[5.]{BHS3}) to construct coherent sheaves $\mathcal M_{\infty}(L(\lambda))$ and
$\mathcal M_{\infty}(M(w\cdot\lambda))$ for $w\in W$
over $\mathcal X_{\infty,\mathcal \rho,\mathcal R}^{\qtri}=\Spec(R_{\rho_p,\mathcal R}^{\qtri}[[x_1,\dots,x_g]])$ for some $g\geq 0$, 
that ``patch'' the duals of the spaces of classical, respectively $p$-adic, automorphic forms. More precisely
\begin{align*}
\mathcal M_{\infty}(L(\lambda))\otimes k(\rho_p)&={\rm Hom}_L(S^{\rm cl}_\lambda(K^p)[\chi],L),\\
\mathcal M_{\infty}(M(w\cdot\lambda))\otimes k(\rho_p)&={\rm Hom}_L(S^\dagger_{w\cdot \lambda}(K^p)[\chi],L).
\end{align*}
These coherent sheaves are in a certain precise sense associated to the $U(\mathfrak{g})$-modules $L(\lambda)$ (the algebraic representation of highest weight $\lambda$) respectively the Verma modules $M(w\cdot\lambda)$, where $\mathfrak{g}$ is the Lie algebra of $U_L\cong \prod_{v|p}{\rm GL}_3$.  
The results of \cite{BHS3} show that the coherent
sheaves $\mathcal{M}_\infty(M(w\cdot \lambda))$ have generic rank (when nonzero) equal to
$\dim_L S_\lambda^{\rm cl}(K^p)[\chi]$. Denote $\mathcal X_{\infty,\rho,\mathcal R}^{\qtri,w} \coloneqq \mathcal X_{\infty,\rho,\mathcal R}^{\qtri} \times_{\mathcal X_{\rho,\mathcal R}^{\qtri}} \mathcal X_{\rho,\mathcal R}^{\qtri,w}$. The key to the
proof of Theorem \ref{thm:main} is the following result:
\begin{theor}\label{theo:Verma}
Under the assumptions of Theorem \ref{thm:main}, let $m=\dim_L S_\lambda^{\rm cl}[\chi]$. 
For any $w\in W$, there is an isomorphism
\[\mathcal M_\infty(M(w\cdot\lambda))\cong \omega_{\overline{\mathcal X}_{\infty,\rho,\mathcal R}^{\qtri,ww_0}}^{\oplus m}.\]
Here $\omega_{\overline{\mathcal X}_{\infty,\rho\mathcal R}^{\qtri,ww_0}}$ is the dualizing sheaf of a complete intersection
 $\overline{\mathcal X}_{\infty,\rho,\mathcal R}^{\qtri,ww_0}\subset \mathcal X_{\infty,\rho,\mathcal R}^{\qtri,ww_0}$.
\end{theor}

In order to prove Theorem \ref{theo:Verma}, we extend
$\mathcal M_\infty$ to a functor on the whole category $\mathcal O_\lambda$, the
block of the BGG category $\mathcal{O}$ containing
$L(\lambda)$. This is the \emph{patching functor} alluded to above. 
More precisely, assuming that $\rho_p$ is crystalline with
regular Hodge-Tate weights, and $\delta$ is $\varphi$-generic, we
construct an exact functor
\[\mathcal M_\infty : \mathcal O_\lambda \fleche \Coh(\mathcal
  X_{\infty,\rho,\mathcal R}^{\qtri}),\] such that, for every
$M \in \mathcal O_\lambda$ the sheaf $\mathcal M_\infty(M)$ is Cohen--Macaulay of
the expected dimension. 

In spirit of the categorical approach to the $p$-adic Langlands correspondence the functor $\mathcal{M}_\infty$ should be a ``local'' functor, that is (up to multiplicities coming from contributions at the places away from $p$) the functor $\mathcal{M}_\infty$ should be the pullback, denoted $\mathcal B_\infty$, of a functor
\[ \mathcal B_p : \mathcal O_\lambda \fleche \Coh(\mathcal
  X_{\rho_p,\mathcal R}^{\qtri}).\]
This functor $\mathcal{B}_p$ can be written down explicitly using the local model for $\mathcal
  X_{\rho_p,\mathcal R}^{\qtri}$ and a functor constructed by Bezrukavnikov \cite{BezTwo}, see \ref{sec: Bez} for details.  
Our main local result compares $\mathcal M_\infty$ and $\mathcal B_p$ (see Corollary \ref{cor:functors_isom} for
the general version):

\begin{theor}\label{thm:explsheaves} 
Under the assumptions of Theorem \ref{thm:main}, let $m=\dim_L S_\lambda^{\rm cl}[\chi]$. 
 Then there is an isomorphism of functors $\mathcal
  M_\infty\simeq\mathcal B_\infty^{\oplus m}$. 
  As a consequence, we have
\begin{enumerate}[1)]
\item  \label{case:1} for all $w \in W$, $\mathcal M_\infty(M(w \cdot
  \lambda)^\vee) \simeq \mathcal O_{\overline{\mathcal
      X}^{\qtri,ww_0}_{\infty,\rho,\mathcal R}}^{\oplus m}$ ;
\item \label{case:3} for all $w\in W$, $\mathcal M_\infty(M(w \cdot
  \lambda)) \simeq \omega_{\overline{\mathcal
      X}^{\qtri,ww_0}_{\infty,\rho,\mathcal R}}^{\oplus m}$ ;
\item  \label{case:4} for all $M \in \mathcal O$, we have $\mathcal M_\infty(M^\vee) \simeq \mathcal M_\infty(M)^\vee$ where 
$(\cdot)^\vee$ denote both the dual in $\mathcal O_\lambda$ and the
Serre dual in the category of coherent sheaves.
\end{enumerate}
\end{theor}
\begin{rema}
We can only prove Theorem \ref{thm:explsheaves} in the three dimensional case. However, we expect an isomorphism $\mathcal{M}_\infty\cong \mathcal{B}_\infty^{\oplus m}$ for higher dimensional definite unitary groups as well.
\end{rema}

 In fact $\mathcal{B}_p$ should factor through the category of locally analytic representations, and is expected to extend to a functor with values in coherent sheaves on the stack of all $(\varphi,\Gamma)$-modules (compare \cite[Conjecture 6.2.4 and Expectation 6.2.27]{EGH}). Theorem \ref{thm:explsheaves} should be viewed as some partial evidence for these expectations.

The key to proving Theorem
\ref{thm:explsheaves} is to extend the functor
$\mathcal M_\infty$ to a larger category $\mathcal O_\alg^\infty$ and
to a deformation $\widetilde\cO_\alg$ as introduced in
\cite{SoergelHC}, which we think of as a \emph{deformed version} of
$\mathcal O_\alg$. We would like to emphasize that we first prove
\ref{case:1} and we deduce the
isomorphism $\mathcal M_\infty\simeq\mathcal B_\infty^{\oplus m}$ from this in a second time . The proof of
\ref{case:1} is based on a d\'evissage whose has its origin in the
paper \cite{EGS}. We first prove the result in the case where
$\mathcal X_{\infty,\mathcal R}^{\qtri,w}$ is smooth and then proceed
inductively.  Note that the \emph{existence} of Bezrukavnikov's
functor $\mathcal B_\infty$ plays a key role in this induction.
The second main input into this induction is the computation of
$\mathcal M_\infty(M_I(w\cdot\lambda))$ where $M_I(w\cdot\lambda)$ is
a generalized Verma module (corresponding to some parabolic $P_I$).
These sheaves, that are related to sheaves of $p$-adic automorphic forms on the partial eigenvarieties constructed by Wu \cite{Wu2}, 
are supported on ``partially de Rham quasi-trianguline''
deformation spaces $\mathcal X_{\rho_p,\mathcal R}^{I-\qtri}$ which
have been studied by Breuil and Ding in \cite{BreuilDing}.

We finally note that the component
$\mathcal X^{\qtri,w_0}_{\infty,\rho,\mathcal R}$ is not Gorenstein and its
dualizing sheaf has a $2^r$-dimensional fiber at $\rho_p$, which is the reason for the factor $2^r$ in Theorem
\ref{thm:main}.

We now describe the content of the article. In section \ref{sec:cat_O} we introduce the category $\mathcal O_\alg$ and its deformed versions. Section \ref{sec:emert-jacq-funct} studies Emerton's Jacquet functor and gives the abstract framework to construct patching functors. In section \ref{sec:quasi-triang-local}, we recall the quasi-trianguline deformation spaces of \cite{BHS3}, their local models, and their parabolic version (\cite{BreuilDing,Wu2}). Section \ref{sec:global} recalls the definitions of the global objects like completed cohomology, overconvergent automorphic forms and their patched versions. Section \ref{sec:patching_functors} is devoted to the further study of the functor $\mathcal M_\infty$ and its factorization through $\mathcal X_{\infty,\rho,\mathcal R}^{\qtri}$, the (global) quasi-trianguline deformation space. In section \ref{sec:localcomp}, we study the supports of the sheaves $\mathcal M_\infty(M)$ for specific objects of $\mathcal O_\alg$ (and their deformed version), and we recall results on Bezrukavnikov's functor before deducing Theorem \ref{thm:explsheaves} (in the three dimensional case). Finally, in section \ref{sect:critforms} we explain how to explicitly construct very critical forms satisfying the assumptions in Theorem \ref{thm:main} for $n=3$.

\subsection*{Acknowledgements}

We are very grateful to Roman Bezrukavnikov, George Boxer, Christophe
Breuil, Matthew Emerton, Toby Gee, Arthur-Cesar Le Bras, Simon Riche and Olivier Schiffmann for
discussions related to this work. We also would like to thank the team
maintaining the software Macaulay2 \cite{M2} which has been very
useful to make preliminary computations related to this work.
E.H.~was supported by Germany’s Excellence Strategy EXC 2044-390685587 ``Mathematics M\"unster: Dynamics–Geometry–Structure'' and by the CRC 1442 ``Geometry: Deformations and Rigidity'' of the DFG. Part of this work was done during the Trimester Program 2023 on the arithmetic of the Langlands Program in HIM Bonn where the three authors were invited.

\subsection*{Notations}

Let $p$ be a prime number. When $K$ is a field, we write $\mathrm{Gal}_K = \Gal(K^{sep}/K)$ for its absolute Galois group.
We fix $L$ a finite extension of $\QQ_p$ which will be chosen sufficiently large in the text. 

\section{Variants of the BGG-category $\mathcal O$}
\label{sec:cat_O}

In this section, we fix $L$ to be a field of characteristic $0$. Let
$\underline{G}$ be a split reductive group over $L$. Let
$\underline{B}$ be a Borel subgroup, $\underline{T}$ a maximal split
torus of $\underline{G}$ contained in $\underline{B}$ and
$\underline{N}$ the radical of $\underline{B}$. We use the notation
$\frakg$, $\frakb$, $\mathfrak t$, $\mathfrak n$... for the Lie
algebras of $\underline{G}$, $\underline{B}$, $\underline{T}$,
$\underline{N}$... We denote by $X^*(\underline T)$ the finite free
abelian group $\Hom(\underline T,\mathbb{G}_{m,L})$ of characters of
$\underline T$. This abelian group can be identified with a $\ZZ$-lattice in $\mathfrak t^* := \Hom_L(\mathfrak t,L)$. For $\lambda\in X^*(\underline T)$, we also write
$\lambda$ for the character of $\mathfrak t$ induced by $\lambda$. Let
$\Phi$ be the set of roots of the pair $(\underline G,\underline T)$
and let $\Phi^+\subset\Phi$ be the subset of positive roots with
respect to $\underline B$ and $\Delta\subset\Phi^+$ the subset of
simple roots. As usual we write $\delta_G\in X^*(\underline T)\otimes_{\ZZ}\QQ$ for the
half sum of positive roots. Let $W$ be the Weyl group of
$(\underline G,\underline T)$. For $w\in W$, we write
$\lambda\mapsto w\cdot\lambda$ for the \emph{dot action} of $W$ on
$X^*(\underline T)$ (with respect to $\underline B$, that is
$w\cdot\lambda\coloneqq w(\lambda+\delta_G)-\delta_G$). We equip $W$ with the
Bruhat order corresponding to the choice of $\underline B$ and we
denote $w_0\in W$ the longest element for this order.

If $I\subset\Delta$ is a subset of simple roots, we denote by
$\Phi_I\subset\Phi$ the subset of roots which are sums of elements of
$I$ and $\underline P_I\supset\underline B$ be the standard parabolic
subgroup of $\underline G$ such that
$\frakp_I=\frakb+\sum_{\alpha\in\Phi_I}\frakg_\alpha$. Let
$\underline L_I$ be the standard Levi subgroup of $\underline P_I$ and
$\underline Z_I$ be the center of $\underline L_I$. We say that a
character $\lambda\in X^*(\underline T)$ is \emph{dominant with
  respect to $\underline P _I$} if $\scalar{\lambda,\alpha^\vee}\geq0$
for $\alpha\in I$ and we denote $X^*(\underline T)_I^+$ the set of
such characters. When $I=\Delta$, we have
$\underline P_\Delta=\underline G$ and we write
$X^*(\underline T)^+=X^*(\underline T)^+_{\Delta}$. We use the following
order relation on $X^*(\underline T)$, we say that $\lambda\geq\mu$ if
and only if $\lambda-\mu\in \sum_{\alpha\in\Phi^+}\NN\alpha$.

We write $W_I$ for the
Weyl group of the Levi $\underline L_I$ of $\underline P_I$; it is the subgroup of $W$ generated
by the simple reflexions $s_\alpha$ for $\alpha \in I$. Given
$w \in W$, we denote $w^{\rm min}$ (resp.~$w^{\rm max}$) the unique minimal
(resp.~maximal) element for the Bruhat order having the same class as $w$
in $W_I\backslash W$. This definition depends on $I$ (and on the fact
that the quotient is on the left) but we hope our notation will cause
no confusion. As usual, we write $w_0 \in W$ for the longest element in $W$.
Then we have $(ww_0)^{min} = w^{max}w_0$ and
$(ww_0)^{max} = w^{min}w_0$ for any $w\in W$ . Finally, we write ${}^IW$ for the set of minimal length representatives of $W_I\backslash W$ in $W$.

If $\mathfrak h$ is a Lie algebra we note $\mathfrak h^{\mathrm{ss}}$ its
derived Lie algebra.

\subsection{Recollections}
\label{sec:BGG}

For $I\subset\Delta$, we consider the full subcategory $\mathcal{O}^{I,\infty}$ of the category $U(\frakg)\text{-}{\rm mod}$ of 
$U(\frakg)$-modules that consists of all finitely generated $U(\frakg)$-modules $M$ such that
\begin{itemize}
\item for any $m\in M$, the $L$-vector space $U(\frakp_I)m$ is finite
  dimensional;
\item for any $h\in\mathfrak t$ and any $h$-stable finite dimensional
  $L$-vector subspace $V\subset M$, the characteristic polynomial of
  $h_{|V}$ is split in $L[X]$.
\end{itemize}
 This
is the category $\cO^{\frakp_I,\infty}$ in
\cite[\S3.1]{Agrawal_Strauch}.

For $\mu\in\Hom_L(\mathfrak t,L)$, we write $M^\mu\subset M$ for the
$L$-subspace of those $v\in M$ such that, for any $h\in\mathfrak t$,
$(h-\mu(h))^n\cdot v=0$ for some $n\geq1$. We have
\[ M=\bigoplus_{\mu\in\Hom_L(\mathfrak t,L)}M^\mu. \] We write
$\cO_\alg^{I,\infty}$ for the full subcategory of $\cO^{I,\infty}$ whose
objects $M$ satisfy $M^\mu=0$ for $\mu\notin X^*(\underline T)$.

Moreover, we write $\cO_{\alg}^I\subset\cO_{\alg}^{I,\infty}$ for the full
subcategory whose objects are direct sums of finitely generated
semisimple $U(\mathfrak l_I)$-modules (when seen as $U(\mathfrak l_I)$-modules). This coincides with the usual parabolic (algebraic) category $\mathcal O$, which is denoted
$\cO_\alg^{\frakp_I}$ in \cite{OSJH}). When $I=\emptyset$ we simply
use the notations $\cO_\alg^\infty$ and $\cO_\alg$ for
$\cO_{\alg}^{\emptyset,\infty}$ and $\cO_\alg^\emptyset$. Note that
$\cO_\alg^{I,\infty}\subset\cO_\alg^\infty$ for any $I\subset\Delta$. 
As these categories depend on the choices of $\mathfrak{g}$ and $\mathfrak{b}$ we write $\mathcal O^{\mathfrak
  g,\mathfrak b}$ (with additional decorations) instead of $\mathcal
O$, when the context is unclear.

These categories are stable by subobject and quotients in the category
of $U(\frakg)$-modules. Moreover the category $\cO_{\alg}^{I,\infty}$
is stable under extensions.

For any character $\lambda\in X^*(\underline T)_I^+$, we write
$L_I(\lambda)$ for the simple $U(\mathfrak l_I)$-module of highest weight
$\lambda$. This is a finite dimensional $L$-vector space and we define the \emph{generalized Verma module} of highest weight
$\lambda$ as \[M_I(\lambda) \coloneqq U(\mathfrak g) \otimes_{U(\frakp_I)}
L_I(\lambda).\] The generalized Verma module is an object of $\cO_{\alg}^I$ and has a unique
simple quotient $L(\lambda)$. When $I=\emptyset$, we simply write
$M(\lambda)=M_\emptyset(\lambda)$ and say that $M(\lambda)$ is a
\emph{Verma module}. We also denote by $P(\lambda)$ the projective cover of
the simple module $L(\lambda)$. If $\lambda$ is dominant with respect to $\underline B$, we call 
$P(w_0 \cdot \lambda)$ the \emph{antidominant 
projective} (with respect to $\lambda$).

\subsection{Nilpotent action of $U(\mathfrak t)$}
\label{sec:nilp-acti-umathfr}

Given $I\subset\Delta$ we denote by $\mathfrak{m}_I$ the augmentation
ideal of $U(\mathfrak{z}_I)$ and set
\begin{align*}
A_I&\coloneqq U(\mathfrak{z}_I)_{\mathfrak{m}_I}\\
A \coloneqq A_\emptyset &\coloneqq U(\mathfrak t)_{\mathfrak{m}}.
\end{align*} 
The canonical Lie algebra
decomposition
$\mathfrak{l}_I=\mathfrak{z}_I\oplus\mathfrak{l}_I^{\mathrm{ss}}$ defines a canonical morphism of Lie algebras
$p_I : \mathfrak{l}_I\twoheadrightarrow\mathfrak{z}_I$ which extends to a morphism
$U(\mathfrak l_I)\twoheadrightarrow U(\mathfrak z_I)$ of $L$-algebras also denoted by $p_I$. 
This morphism induces a surjective morphism $A\twoheadrightarrow A_I$ of $A_I$-algebras.

We show that the category $\mathcal{O}^{I,\infty}$ naturally embeds into the category $U(\frakg)_{A_I}\text{-}{\rm mod}$, where $U(\mathfrak{g})_{A_I}\coloneqq U(\frakg)\otimes_LA_I$.

Let $M$ be an object of the category $\cO^{I,\infty}$. Let $h\in \mathfrak{t}$. For $v\in M$ the element $h$ defines an $L$-linear endomorphism of the finite dimensional $L$-vector space $U(\mathfrak t)v$ and we write $h=D_{h,v}+N_{h,v}$ for its Jordan decomposition with semisimple part $D_{h,v}$ and nilpotent part $N_{h,v}$. As $M$ is locally $U(\mathfrak t)$-finite, uniqueness of the Jordan decomposition implies that these endomorphisms ``glue'' into
an endomorphism $D_h$ and a locally nilpotent endomorphism $N_h$ of $M$ such that $D_{h,v}$ respectively $N_{h,v}$ is
the restriction of $D_h$ respectively $N_h$ to $U(\mathfrak t)v$ for any $v\in M$.

\begin{lemma}\label{lemm:nilp_action}
  The endomorphism $N_h$ is $U(\frakg)$-equivariant.
\end{lemma}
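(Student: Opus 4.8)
The plan is to reduce the equivariance of $N_h$ to the commutation relation $[h,x]=\alpha(h)x$ for a root vector $x\in\mathfrak g_\alpha$, exploiting the generalized weight space decomposition $M=\bigoplus_\mu M^\mu$.

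First I would record that $D_h$ acts on each generalized weight space $M^\mu$ by the scalar $\mu(h)$: for $v\in M^\mu$ the space $U(\mathfrak t)v$ is finite dimensional (as $M$ is locally $U(\mathfrak p_I)$-finite and $\mathfrak t\subseteq\mathfrak p_I$) and contained in $M^\mu$, so $h$ has the single eigenvalue $\mu(h)$ on it and the semisimple part of its restriction is $\mu(h)\cdot\mathrm{id}$. Hence $N_h$ acts on $M^\mu$ as $h-\mu(h)$. Since $N_h$ is $L$-linear and $U(\mathfrak g)$ is generated as an $L$-algebra by $\mathfrak t$ together with the root spaces $\mathfrak g_\alpha$, $\alpha\in\Phi$, it suffices to prove $N_h(y\cdot v)=y\cdot N_h(v)$ for $v\in M^\mu$ and for $y\in\mathfrak t$, respectively $y=x\in\mathfrak g_\alpha$.

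Then I would settle the two cases by direct computation. If $y\in\mathfrak t$ then $y\cdot M^\mu\subseteq M^\mu$ (because $y$ commutes with $h-\mu(h)$), so $N_h(yv)=(h-\mu(h))(yv)=y(h-\mu(h))v=y\,N_h(v)$. If $x\in\mathfrak g_\alpha$, then using $hx=x(h+\alpha(h))$ in $U(\mathfrak g)$ an induction on $n$ gives $(h-(\mu+\alpha)(h))^n(xv)=x(h-\mu(h))^nv$; in particular $x\cdot M^\mu\subseteq M^{\mu+\alpha}$, so that $N_h(xv)=(h-(\mu+\alpha)(h))(xv)$, while on the other hand $x\,N_h(v)=x(h-\mu(h))v=(xh-\mu(h)x)v=\big((hx-\alpha(h)x)-\mu(h)x\big)v=(h-\alpha(h)-\mu(h))(xv)$. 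The two expressions agree, which proves that $N_h$ commutes with $\mathfrak g$, hence with all of $U(\mathfrak g)$.

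I do not expect a genuine obstacle here: the argument is essentially the weight shift $\mathfrak g_\alpha\cdot M^\mu\subseteq M^{\mu+\alpha}$ combined with the fact that $D_h$ is constant on each generalized weight space. The only points requiring a word of care, namely the finite dimensionality of $U(\mathfrak t)v$ and the gluing of the local Jordan decompositions into the globally defined operators $D_h$ and $N_h$, have already been taken care of in the paragraph preceding the statement.
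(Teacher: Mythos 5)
Your proof is correct and follows essentially the same route as the paper's: both reduce to root vectors via the weight shift $\mathfrak g_\alpha\cdot M^\mu\subseteq M^{\mu+\alpha}$, the relation $[h,x]=\alpha(h)x$, and the fact that $D_h$ acts on $M^\mu$ by the scalar $\mu(h)$ (so $N_h=h-\mu(h)$ there). The only difference is presentational, in that you spell out the reduction to generators and the explicit computation on each generalized weight space, which the paper does more tersely.
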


\begin{proof}
  By construction $N_h$ and $D_h$ commute with the action of
  $\mathfrak t$ and stabilize each $M^\mu$. Let $\alpha\in\Phi$ and $x\in\frakg_\alpha$. For $v\in M^\mu$, we have
  $x\cdot v\in M^{\mu+\alpha}$ and $[h,x]=\alpha(h)x$ so that
  \[ D_hx\cdot v+N_hx\cdot v=xD_h\cdot v+xN_h\cdot v+\alpha(h) xv. \]
  By definition of $M^\mu$, we have $D_h\cdot v=\mu(h)v$ for any
  $v\in M^\mu$. This implies $D_hx\cdot v=(\mu(h)+\alpha(h))x\cdot v$ and
  $xD_h\cdot v=\mu(h)x\cdot v$. Therefore $N_hx\cdot v=xN_h\cdot
  v$. We conclude that $N_h$ commutes with the endomorphism of $M$
  induced by $x$. Therefore $N_h$ is $U(\frakg)$-equivariant.
\end{proof}

Given $M\in\mathcal{O}^{I,\infty}$ Lemma \ref{lemm:nilp_action} implies that we can define an $U(\mathfrak{t})$-module structure on $M$ by letting $h\in\mathfrak{t}\subset U(\mathfrak{t})$ act via $N_h$. As the action of each $h$ on $M$ is locally nilpotent, this action extends to an $A$-module structure.  
\begin{lemma}
Let $M$ be an object of $\mathcal{O}^{I,\infty}$, then the $A$-action on $M$ factors through $A_I$. Moreover, this $A_I$-module structure makes $\mathcal{O}^{I,\infty}$ into a full subcategory of $U(\mathfrak{g})_{A_I}\text{-}{\rm mod}$.
\end{lemma}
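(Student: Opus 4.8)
The plan is to treat the two assertions in turn. Recall that the surjection $A\twoheadrightarrow A_I$ is the localization of the map $U(\mathfrak t)=\mathrm{Sym}(\mathfrak t)\twoheadrightarrow\mathrm{Sym}(\mathfrak z_I)=U(\mathfrak z_I)$ induced by the projection $\mathfrak t=\mathfrak z_I\oplus(\mathfrak t\cap\mathfrak l_I^{\mathrm{ss}})\twoheadrightarrow\mathfrak z_I$; its kernel is the ideal generated by $\mathfrak t\cap\mathfrak l_I^{\mathrm{ss}}$, and likewise after localizing. Hence, to prove that the $A$-module structure on a given $M\in\mathcal O^{I,\infty}$ factors through $A_I$, it is enough to show that $N_h=0$ on $M$ for every $h\in\mathfrak t\cap\mathfrak l_I^{\mathrm{ss}}$.

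To see this, I would fix $v\in M$ and invoke the defining property of $\mathcal O^{I,\infty}$: the space $U(\mathfrak p_I)v$ is finite dimensional, hence so is its subspace $U(\mathfrak l_I)v$. Now $\mathfrak l_I^{\mathrm{ss}}$ is semisimple and $\mathfrak t\cap\mathfrak l_I^{\mathrm{ss}}$ is a Cartan subalgebra of it, so every $h\in\mathfrak t\cap\mathfrak l_I^{\mathrm{ss}}$ acts as a semisimple operator on any finite dimensional $U(\mathfrak l_I^{\mathrm{ss}})$-module (by the Jordan decomposition in semisimple Lie algebras, or by Weyl complete reducibility together with the weight-space decomposition of finite dimensional modules). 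In particular $h$ acts semisimply on $U(\mathfrak l_I)v$, hence on the $\mathfrak t$-stable subspace $U(\mathfrak t)v$; therefore the nilpotent part $N_{h,v}$ of $h_{|U(\mathfrak t)v}$ vanishes. Since $v$ was arbitrary, $N_h=0$, and the first assertion follows.

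For fullness, I would take a morphism $f\colon M\to M'$ of $U(\mathfrak g)$-modules with $M,M'\in\mathcal O^{I,\infty}$ and check that it automatically intertwines the $N$-actions of $\mathfrak t$. Because $f$ commutes with every $h\in\mathfrak t$ and the generalized weight spaces are cut out by polynomial conditions in these commuting operators, we get $f(M^\mu)\subset M'^\mu$ for all $\mu\in\Hom_L(\mathfrak t,L)$; on these subspaces $D_h$ acts by the scalar $\mu(h)$, so $f$ commutes with $D_h$, and combining this with $f\circ h=h\circ f$ and $N_h=h_{|M}-D_h$ gives $f\circ N_h=N_h\circ f$. Thus $f$ is a morphism of $U(\mathfrak g)_{A_I}$-modules, so the inclusion $\mathcal O^{I,\infty}\hookrightarrow U(\mathfrak g)_{A_I}\text{-}\mathrm{mod}$ is full.

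The argument is essentially formal. The only nontrivial ingredient is the fact that a Cartan element of a semisimple Lie algebra acts semisimply on a finite dimensional representation, combined with the elementary observation that an operator which is semisimple on a space remains so on an invariant subspace; together these force $N_h$ to vanish on $U(\mathfrak t)v$. I do not anticipate a real obstacle beyond bookkeeping with the decomposition $\mathfrak t=\mathfrak z_I\oplus(\mathfrak t\cap\mathfrak l_I^{\mathrm{ss}})$ and the two localizations $A$ and $A_I$.
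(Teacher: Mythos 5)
Your proof is correct and follows essentially the same route as the paper: reduce the factorization to showing $N_h=0$ for $h\in\mathfrak t\cap\mathfrak l_I^{\mathrm{ss}}$, which follows from semisimplicity of $\mathfrak l_I^{\mathrm{ss}}$ acting on the finite-dimensional space $U(\mathfrak l_I)v$, and get fullness from the functoriality of the construction of $N_h$ (your argument via preservation of generalized weight spaces is exactly that functoriality spelled out). The only difference is that you fill in details the paper leaves as a ``direct consequence.''
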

\begin{proof}
In order to prove that the $A$-action factors through $A\rightarrow A_I$ it is enough to prove that for $h\in \mathfrak{t}\cap \mathfrak{l}^{\rm ss}$ the endomorphism $N_h$ is zero. This is a direct consequence of the fact that $\mathfrak{l}^{\rm ss}$ is a semi-simple Lie algebra and that the $L$-vector space $U(\mathfrak{l}^{\rm ss})v$ is finite dimensional for any $v\in M$ (by definition of $\mathcal{O}^{I,\infty}$). As the $U(\mathfrak{g})$-action commutes with the $A$-action by Lemma \ref{lemm:nilp_action} the module $M$ is an $U(\mathfrak{g})_{A_I}$-module. Finally we note that, given $h\in\mathfrak{t}$, the construction of $N_h$ is functorial in $M$. 
\end{proof}
\begin{rema}\label{rema: second version of Mmu}
Let  $M\in\mathcal{O}^{I,\infty}$ and $\mu \in {\rm Hom}_L(\mathfrak{t},L)$ then the above construction implies that 
\[M^\mu=\{v\in M\mid hv=(\mu(h)v)+p_I(v))v\ \forall h\in\mathfrak{t}\}\]
\end{rema}

Let $M\in \mathcal{O}_\alg^{\infty}$. Lemma \ref{lemm:nilp_action} also implies that we can define another
structure of an $U(\frakg)$-module on $M$ where an element
$h\in\mathfrak t$ acts through the semisimple part $D_h$ and the
action of an element $x\in\frakg_\alpha$ for $\alpha\in\Phi$ is not
modified. We denote this $U(\mathfrak{g})$-module structure by $M^{\mathrm{ss}}$. Then $M^{\mathrm{ss}}$ is an object of $\cO_\alg$ and
\cite[Lemm.~3.2]{OSJH} implies that there is a unique structure of
algebraic $\underline{B}$-module on $M$ lifting the structure of
$U(\frakb)$-module on $M^{\mathrm{ss}}$. This $\underline{B}$-action is compatible with the original $U(\mathfrak{g})$-module structure on $M$ in the following sense:

\begin{lemma}\label{lemm:B_action}
  Let $M$ be an object of $\cO_\alg^\infty$ endowed with the
  $\underline B$-module structure defined above. Then 
   \[ b\cdot(X\cdot (b^{-1}\cdot v))=(\Ad(b)X)\cdot v\]
  for any $b\in\underline B(L)$,
  $X\in\frakg$ and $v\in M$.
 
\end{lemma}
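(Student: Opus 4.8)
The plan is to write $\rho\colon U(\frakg)\to\mathrm{End}_L(M)$ for the given $U(\frakg)$-action, so that the asserted formula becomes the equality of linear maps $\rho(\Ad(b)-)=b\circ\rho(-)\circ b^{-1}$ from $\frakg$ to $\mathrm{End}_L(M)$, for every $b\in\underline B(L)$. Both sides are $L$-linear in the argument, and the asserted identity is multiplicative in $b$: if it holds for $b_1$ and for $b_2$ then $(b_1b_2)\circ\rho(X)\circ(b_1b_2)^{-1}=b_1\circ\rho(\Ad(b_2)X)\circ b_1^{-1}=\rho(\Ad(b_1)\Ad(b_2)X)=\rho(\Ad(b_1b_2)X)$. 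Since $\underline B(L)=\underline T(L)\ltimes\underline N(L)$, I would therefore reduce to the two cases $b\in\underline T(L)$ and $b\in\underline N(L)$.

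For $b=t\in\underline T(L)$: the construction of the $\underline B$-action (via \cite[Lemm.~3.2]{OSJH}) makes it lift the $U(\frakb)$-module structure on $M^{\mathrm{ss}}$, on which $\mathfrak t$ acts on $M^\mu$ through the scalar $\mu$ (by $D_h$); hence $\underline T$ acts on each $M^\mu$ through the algebraic character $\mu$, and $M=\bigoplus_\mu M^\mu$ is the $\underline T$-weight decomposition. Then for $v\in M^\mu$ it suffices to treat $X\in\mathfrak t$ and $X\in\frakg_\alpha$ separately: in the first case $\rho(X)v\in M^\mu$ and $\Ad(t)$ fixes $\mathfrak t$; in the second $\rho(X)v\in M^{\mu+\alpha}$ (a containment already established in the proof of Lemma~\ref{lemm:nilp_action}) and $\Ad(t)$ acts on $\frakg_\alpha$ through $\alpha$, so the identity comes down to the elementary bookkeeping $(\mu+\alpha)(t)=\mu(t)\,\alpha(t)$.

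For $b=n\in\underline N(L)$: differentiating the $\underline B$-action and restricting to $\mathfrak n$ recovers $\rho|_{\mathfrak n}$, since the passage to $M^{\mathrm{ss}}$ leaves the root spaces untouched; as $\underline N$ is connected unipotent and $L$ has characteristic $0$, this action is locally nilpotent and the $\underline N$-action is $n=\exp(Y)\mapsto\exp(\rho(Y))$ for $Y\in\mathfrak n$. The identity then reduces to the operator identity $\exp(\rho(Y))\circ\rho(X)\circ\exp(-\rho(Y))=\rho(\exp(\mathrm{ad}\,Y)(X))=\rho(\Ad(n)X)$, which I would obtain by introducing a formal parameter $s$, comparing the coefficient of $s^k$ on both sides of $\exp(s\rho(Y))\rho(X)\exp(-s\rho(Y))=\rho(\exp(s\,\mathrm{ad}\,Y)(X))$, and using that $\rho$ is a morphism of Lie algebras to identify $(\mathrm{ad}\,\rho(Y))^k(\rho(X))=\rho((\mathrm{ad}\,Y)^k(X))$. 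All sums in sight are finite: $\mathrm{ad}\,Y$ is nilpotent on $\frakg$, and, applied to a fixed vector, $\rho(Y)$ is nilpotent on a suitable finite-dimensional $\frakb$-stable subspace of $M$ (here one uses only that $M$ is locally $U(\frakb)$-finite).

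The \emph{main point to be careful about} is that the $\underline B$-action on $M$ is \emph{not} the integral of $\rho|_{\frakb}$ — it is the integral of the modified action on $M^{\mathrm{ss}}$, which differs from $\rho$ precisely on $\mathfrak t$ — so one cannot simply invoke a general compatibility lemma between a group action and ``its'' Lie algebra action; the case $\underline T$ (where the two actions genuinely differ) and the case $\underline N$ (where they agree) really have to be treated by the separate arguments above. A secondary technical nuisance is that $M$ is only locally $U(\frakb)$-finite and not locally $U(\frakg)$-finite, so the manipulations with $\exp(\rho(Y))$ must be carried out vector by vector rather than on a single finite-dimensional subspace.
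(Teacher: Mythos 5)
Your proof is correct and follows essentially the same route as the paper: reduce by multiplicativity to $b\in\underline T(L)$ and $b\in\underline N(L)$, handle the torus case by the weight decomposition and the shift $\frakg_\alpha\colon M^\mu\to M^{\mu+\alpha}$, and handle the unipotent case by the identity $\exp(\rho(Y))\rho(X)\exp(-\rho(Y))=\rho(\exp(\ad Y)X)$, which is exactly the paper's double-sum regrouping. Your extra remarks (why the exponential series is locally finite, and that the $\underline B$-action integrates the semisimplified rather than the original $\mathfrak b$-action) only make explicit what the paper leaves implicit.
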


\begin{proof}
  It is sufficient to prove the formula for $b\in\underline N(L)$ and
  for $b\in\underline T(L)$. If $b\in\underline N(L)$, then
  $b=\exp(n)$ for some $n\in\mathfrak n$. It follows that $\Ad(b)X$ is equal to
  the finite sum $\sum_{k\geq0}\tfrac{1}{k!}\ad(n)^kX$ and that the action
  of $b$ on $M$ is given by the series
  $\sum_{k\geq0}\tfrac{1}{k!}n^k$ (which is locally finite). Therefore we have,
  \begin{align*}
    b\cdot(X\cdot (b^{-1}\cdot v))&=\sum_{k\geq0,\ell\geq0}(-1)^\ell\frac{1}{k!\ell!}n^kXn^\ell\cdot v \\
                                  &=\sum_{m\geq0}\frac{1}{m!}\sum_{k+\ell=m}(-1)^{k-m}\binom{m}{k}n^kXn^\ell\cdot v \\
                                  &=\sum_{m\geq0}\frac{1}{m!}(\ad(n)^mX)\cdot v =\Ad(b)X\cdot v.
  \end{align*}
  If $b\in\underline T(L)$, then if $\alpha\in\Phi\cup\set{0}$ and
  $X\in\frakg_\alpha$, and if $v\in M^\mu$, we have
  \begin{align*}
    b\cdot(X\cdot(b^{-1}\cdot v))&=b\cdot(X\cdot(\mu(b^{-1})v))=(\mu+\alpha)(b)\mu(b^{-1})X\cdot v \\
                                 &=\alpha(b)X\cdot v=\Ad(b)X\cdot v
  \end{align*}
  as $\Ad(b)X=\alpha(b)X$.
\end{proof}

For later use, we note that we can resolve objects in $\mathcal{O}^{\infty}_\alg$ as follows: 
\begin{lemma}\label{lemm:resolution}
  Let $M$ be an object of $\mathcal{O}_\alg^\infty$. Then there exist
  finite dimensional $U(\frakb)$-modules $V_0$ and $V_1$ and an exact sequence of $U(\frakg)$-modules
  \begin{equation}
    \label{eq:resolution_M}
    U(\frakg)\otimes_{U(\frakb)}V_1\longrightarrow
    U(\frakg)\otimes_{U(\frakb)}V_0\longrightarrow M\longrightarrow0.
  \end{equation}
  Moreover, this exact sequence is $\underline{B}$-equivariant for the $\underline{B}$-actions (on each of the three terms) defined just before Lemma \ref{lemm:B_action}.
\end{lemma}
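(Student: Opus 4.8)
The plan is to construct the resolution by taking a finite-dimensional $\underline{B}$-stable generating subspace and then iterating. First I would observe that, since $M$ is finitely generated over $U(\mathfrak g)$ and lies in $\mathcal{O}_\alg^\infty$, each generator lies in a finite-dimensional $U(\mathfrak t)$-stable subspace (by the defining local finiteness), and these subspaces span a finite-dimensional subspace $W_0\subset M$ that generates $M$ as a $U(\mathfrak g)$-module. The key point is that $W_0$ can be enlarged to a finite-dimensional subspace $V_0$ that is stable under the $\underline{B}$-action defined just before Lemma~\ref{lemm:B_action}: indeed that $\underline{B}$-action lifts the $U(\mathfrak b)$-module structure on $M^{\mathrm{ss}}$, and $M^{\mathrm{ss}}$ is still an object of $\mathcal{O}_\alg$, so the $U(\mathfrak b)$-submodule generated by $W_0$ is finite-dimensional; by \cite[Lemm.~3.2]{OSJH} (already invoked in the excerpt) this $U(\mathfrak b)$-submodule is automatically an algebraic $\underline{B}$-submodule, and we take it to be $V_0$. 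By Lemma~\ref{lemm:B_action} the multiplication map $U(\mathfrak g)\otimes_{U(\mathfrak b)}V_0\to M$, $X\otimes v\mapsto X\cdot v$, is then $\underline{B}$-equivariant, where $U(\mathfrak g)\otimes_{U(\mathfrak b)}V_0$ carries the $\underline{B}$-action induced from the adjoint action on $U(\mathfrak g)$ and the chosen action on $V_0$; and it is surjective since $V_0$ generates $M$.

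Next I would analyze the kernel $K$ of this surjection. The point is that $K$ is again an object of $\mathcal{O}_\alg^\infty$: the categories $\mathcal{O}^{I,\infty}$ are stable under subobjects (stated in the excerpt), so it suffices that $U(\mathfrak g)\otimes_{U(\mathfrak b)}V_0$ itself lies in $\mathcal{O}_\alg^\infty$, which follows because it is $U(\mathfrak n^-)$-free of finite rank over a finite-dimensional space with semisimple (integral) $\mathfrak t$-action after passing to $M^{\mathrm{ss}}$-type grading, hence locally $U(\mathfrak t)$-finite with weights in $X^*(\underline T)$, and it is finitely generated. Moreover $U(\mathfrak g)$ is Noetherian, so $U(\mathfrak g)\otimes_{U(\mathfrak b)}V_0$ is a Noetherian module and $K$ is finitely generated. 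Applying the previous paragraph to $K$ in place of $M$ produces a finite-dimensional $\underline{B}$-module $V_1$ and a $\underline{B}$-equivariant surjection $U(\mathfrak g)\otimes_{U(\mathfrak b)}V_1\twoheadrightarrow K$; composing with the inclusion $K\hookrightarrow U(\mathfrak g)\otimes_{U(\mathfrak b)}V_0$ gives the desired two-term complex, which is exact at $U(\mathfrak g)\otimes_{U(\mathfrak b)}V_0$ by construction and $\underline{B}$-equivariant throughout.

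The main obstacle I anticipate is the bookkeeping needed to check that the $\underline B$-action on $U(\mathfrak g)\otimes_{U(\mathfrak b)}V_0$ is well-defined and that the multiplication map is equivariant for it — one must verify that the tensor relation over $U(\mathfrak b)$ is respected by the diagonal action $b\cdot(X\otimes v)=(\mathrm{Ad}(b)X)\otimes(b\cdot v)$, which is exactly the compatibility recorded in Lemma~\ref{lemm:B_action} (applied on $M$) transported to the free module, together with the fact that the $U(\mathfrak b)$-action on $V_0$ is the derivative of its $\underline B$-action. A secondary subtlety is ensuring the finite-dimensional $\underline B$-submodule generated by $W_0$ inside $M$ (equivalently inside $M^{\mathrm{ss}}$) is genuinely finite-dimensional; this is where one uses that $M^{\mathrm{ss}}\in\mathcal{O}_\alg$ and not merely $\mathcal{O}_\alg^\infty$, so that the $U(\mathfrak b)$-module it generates is a finite-dimensional algebraic $\underline{B}$-representation. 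Once these points are in place the rest is a formal two-step truncation of a free resolution.
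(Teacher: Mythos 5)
Your overall two-step strategy (cover $M$ by an induced module from a finite-dimensional $U(\frakb)$-module, then cover the kernel, which is again in $\mathcal{O}_\alg^\infty$ by stability under subobjects and is finitely generated by noetherianity of $U(\frakg)$) is sound, and it is essentially how the resolution must go. But there is a genuine gap in how you produce $V_0$: you take $V_0$ to be the $U(\frakb)$-submodule of $M^{\mathrm{ss}}$ generated by $W_0$, i.e.\ you equip $V_0$ with the \emph{semisimplified} $\frakt$-action (you say explicitly that the $U(\frakb)$-action on $V_0$ is the derivative of its $\underline B$-action). With that structure the evaluation map $U(\frakg)\otimes_{U(\frakb)}V_0\to M$, $X\otimes v\mapsto X\cdot v$ (with $X\cdot v$ the original action, as it must be if the sequence is to be one of $U(\frakg)$-modules ending in $M$), is not well-defined: for $h\in\frakt$ the relation $Xh\otimes v=X\otimes D_h v$ would have to map to $X\cdot(D_hv+N_hv)$ and to $X\cdot D_hv$ simultaneously, which forces $N_h|_{V_0}=0$. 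This is exactly the situation the category $\mathcal{O}_\alg^\infty$ is designed to allow: e.g.\ for $M=\widetilde M(\lambda)\otimes_A A/\frakm^2$ and $W_0$ the line spanned by a generator $e_1$ of the two-dimensional space $M^\lambda$, your $V_0$ is the line $Le_1$, it is not stable under the original $\frakt$-action ($he_1=\lambda(h)e_1+\nu(h)e_2$), and the induced module $U(\frakg)\otimes_{U(\frakb)}Le_1$ is just the Verma $M(\lambda)$, which cannot surject $U(\frakg)$-equivariantly onto $M$. Lemma \ref{lemm:B_action} only gives $\underline B$-equivariance of the evaluation map; it does not repair the failure of $U(\frakb)$-balancedness, which is a statement about the infinitesimal action.

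The repair is easy but changes the choice: use the first bullet in the definition of $\mathcal{O}^{I,\infty}$ with $I=\emptyset$, which says directly that $U(\frakb)m$ is finite-dimensional for the \emph{original} action, and set $V_0=U(\frakb)W_0$ with that original structure. Then the evaluation map is well-defined, $U(\frakg)$-linear and surjective; moreover $V_0$ is automatically stable under the semisimplified action and under the algebraic $\underline B$-action (on each finite-dimensional space $U(\frakt)v$ the operators $D_h$ and $N_h$ are polynomials in $h$, so they preserve $V_0$, and $V_0$ then decomposes into its generalized weight spaces), so the $\underline B$-equivariance assertion still makes sense and follows from Lemma \ref{lemm:B_action}. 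Note that the $V_i$ one obtains this way necessarily carry a non-semisimple $\frakt$-action in general; this is consistent with the paper, which deduces the lemma from Proposition \ref{prop:lift}: there the covering module is a sum of $\widetilde M(\lambda_i)\otimes_LW_i$ modulo a power of $\frakm$, i.e.\ an induced module from $\bigoplus_i W_i|_{\frakb}\otimes_L(A/\frakm^N)(\lambda_i)$, again with nilpotent contributions in the $\frakt$-direction. Apart from this point (and the slightly garbled verification that $U(\frakg)\otimes_{U(\frakb)}V_0$ itself lies in $\mathcal{O}_\alg^\infty$, which should be done with the original structure on $V_0$), your iteration argument is fine.
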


\begin{proof}
  The existence of a finite dimensional $U(\frakb)$-module $V_0$ and
  a surjective map
  $U(\frakg)\otimes_{U(\frakb)}V_0\twoheadrightarrow M$ is a
  consequence of Proposition \ref{prop:lift}. The existence of $V_1$
  and of the map
  $U(\frakg)\otimes_{U(\frakb)}V_1\rightarrow
  U(\frakg)\otimes_{U(\frakb)}V_0$ follows again from Proposition
  \ref{prop:lift} applied to the kernel of
  $U(\frakg)\otimes_{U(\frakb)}V_0\twoheadrightarrow M$. The
  $\underline{B}$-equivariance is a direct consequence of the definition of the
  algebraic action of $\underline{B}$-action on each
  term of the sequence (\ref{eq:resolution_M}).
\end{proof}

\subsection{Deformations of the category $\cO$}
\label{sec:deform-objects-categ}

Fix $I\subset \Delta$ and let $M$ be some $U(\mathfrak{g})_{A_I}$-module. For
$\mu\in X^*(\underline T)$, we define the $A_I$-submodule
\[ M^\mu\coloneqq\set{v\in M\mid \forall h\in\mathfrak{t},\, h\cdot
    v=(p_I(h)+\mu(h))v}.\]
We note that for $M\in\mathcal{O}^{I,\infty}$ this coincides with the generalized eigenspace for $\mu$ by Remark \ref{rema: second version of Mmu}.
Inspired by the construction of \cite[\S3.1]{SoergelHC}, we define
$\widetilde{\mathcal O}_{\alg}^I$ as the category of
$U(\mathfrak{g})_{A_I}$-modules $M$ such that
\begin{itemize}
\item $M$ is finitely generated over $U(\mathfrak{g})_{A_I}$ ;
\item $M=\bigoplus_{\mu\in X^*(\underline{T})}M^\mu$ and each
  $M^\mu$ is a finite free $A_I$-module ;
\item for any $m\in M$ the $A_I$-submodule $(U(\mathfrak{p}_I)\otimes_L A_I)m$ is
  finitely generated.
\end{itemize}

\begin{lemma}\label{lemm:deformed_to_catO}
  Let $M$ be an object of $\widetilde{\mathcal O}_{\alg}^I$. Then for
  any $n\geq0$, the $U(\frakg)$-module $M/\frakm_I^nM$ is an object of
  $\cO_{\alg}^{I,\infty}$ and $M/\frakm_I M$ is in $\cO_{\alg}^I$.
\end{lemma}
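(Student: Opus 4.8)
The plan is to check the two defining properties of $\cO_{\alg}^{I,\infty}$ (resp.\ of $\cO_\alg^I$) for $M/\frakm_I^n M$ directly, using the three bullet points defining $\widetilde{\mathcal O}_\alg^I$. First I would observe that $A_I = U(\mathfrak z_I)_{\frakm_I}$ is a Noetherian local ring with maximal ideal $\frakm_I A_I$, so $A_I/\frakm_I^n$ is a finite-dimensional local $L$-algebra (of length growing with $n$), and in particular Artinian. Since $M$ is finitely generated over $U(\mathfrak g)_{A_I}$, the quotient $M/\frakm_I^n M$ is finitely generated over $U(\mathfrak g)\otimes_L (A_I/\frakm_I^n)$, hence a fortiori finitely generated over $U(\mathfrak g)$ because $A_I/\frakm_I^n$ is finite-dimensional over $L$; this gives the finite generation required in the definition of $\cO^{I,\infty}$.

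Next I would verify the local $U(\mathfrak p_I)$-finiteness. By the third bullet, for each $m \in M$ the $A_I$-submodule $(U(\mathfrak p_I)\otimes_L A_I)m$ is finitely generated over $A_I$; reducing mod $\frakm_I^n$, the space $U(\mathfrak p_I)\cdot(m \bmod \frakm_I^n M)$ is contained in the image of a finitely generated $A_I/\frakm_I^n$-module, hence is finite-dimensional over $L$ since $A_I/\frakm_I^n$ is. For the second defining property of $\cO^{I,\infty}$ (that $\mathfrak t$ acts with $L$-split characteristic polynomials on finite-dimensional $\mathfrak t$-stable subspaces), I would use the weight decomposition $M = \bigoplus_{\mu \in X^*(\underline T)} M^\mu$ with each $M^\mu$ finite free over $A_I$: this passes to $M/\frakm_I^n M = \bigoplus_\mu M^\mu/\frakm_I^n M^\mu$, and on $M^\mu/\frakm_I^n M^\mu$ the torus $\mathfrak t$ acts by $h \mapsto \mu(h) + p_I(h)$, where $p_I(h) \in \frakm_I A_I$ acts nilpotently modulo $\frakm_I^n$; so $\mathfrak t$ acts through the split-and-locally-nilpotent operators characterizing $\cO^{I,\infty}$ via Remark \ref{rema: second version of Mmu}, and all eigenvalues lie in $L$. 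Together with $M^\mu = 0$ for $\mu \notin X^*(\underline T)$, this also shows $M/\frakm_I^n M \in \cO_\alg^{I,\infty}$.

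Finally, for $n=1$ I would show $M/\frakm_I M \in \cO_\alg^I$, i.e.\ that it is a direct sum of finitely generated semisimple $U(\mathfrak l_I)$-modules. Here $A_I/\frakm_I A_I = L$, so the action of $\mathfrak z_I$ on $M/\frakm_I M$ is via scalars (indeed $\mathfrak z_I$ acts by $p_I$-twisted weights, and $p_I(h)$ kills $M/\frakm_I M$), and each weight space $M^\mu/\frakm_I M^\mu$ is a finite-dimensional $L$-vector space. The local $U(\mathfrak l_I)$-finiteness from the previous paragraph, combined with the fact that $\mathfrak l_I^{\mathrm{ss}}$ is semisimple and acts locally finitely (so by Weyl's theorem every finite-dimensional $\mathfrak l_I^{\mathrm{ss}}$-submodule is semisimple) and $\mathfrak z_I$ acts semisimply by scalars, forces $M/\frakm_I M$ to be a (direct sum of) semisimple $U(\mathfrak l_I)$-module(s), which is exactly membership in $\cO_\alg^I$. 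The main obstacle I anticipate is the bookkeeping around the weight decomposition: one must be careful that the decomposition $M = \bigoplus_\mu M^\mu$ with $M^\mu$ \emph{finite free} over $A_I$ is compatible with reduction mod $\frakm_I^n$ and correctly identifies the generalized eigenspaces after reduction (this is where Remark \ref{rema: second version of Mmu} is essential), and that ``finitely generated over $U(\mathfrak p_I)\otimes_L A_I$'' genuinely descends to finite-dimensionality over $L$ after killing a power of $\frakm_I$ --- both are straightforward once $A_I$ is identified as a Noetherian local $L$-algebra with residue field $L$, but they are the points where the proof could go wrong if stated carelessly.
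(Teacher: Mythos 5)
Your verification is correct and is exactly the intended argument: the paper's own proof is just the remark that the statement "is a direct consequence of the definitions," and your write-up spells out that direct check (finite generation via finite-dimensionality of $A_I/\frakm_I^n$ over $L$, local $\frakp_I$-finiteness from the third bullet, split/nilpotent $\mathfrak t$-action on the reduced weight spaces, and for $n=1$ semisimplicity over $U(\mathfrak l_I)$ from Weyl's theorem plus the now-scalar action of $\mathfrak z_I$ on each weight space). The only wording to tighten is that $\mathfrak z_I$ acts by the scalar $\mu(h)$ separately on each weight space of $M/\frakm_I M$, i.e.\ semisimply, not by a single global scalar --- which your parenthetical already effectively says.
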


\begin{proof}
  This is a direct consequence of the definitions.
\end{proof}

For $\lambda\in X^*(\underline T)_I^+$ we define
the \emph{deformed generalized Verma module} of weight $\lambda$ as
\[ \widetilde{M}_I(\lambda)\coloneqq
  U(\mathfrak{g})\otimes_{U(\mathfrak{p}_I)}(L_I(\lambda)\otimes_L
  A_I)\] where $U(\mathfrak{p}_I)$ acts on $A_I$ via the composition
$U(\mathfrak{p}_I)\rightarrow U(\mathfrak{l}_I)\xrightarrow{p_I}
U(\mathfrak{z}_I)\rightarrow A_I$. The module $\widetilde{M}_I(V)$ is
an object of $\widetilde{\mathcal O}_{\alg}^I$ and we
have an isomorphism of $U(\frakg)_{A_I}$-modules
\[ \widetilde{M}_I(\lambda)\otimes_{A_I} A_I/\mathfrak m_I =
  M_I(\lambda).\]

\subsubsection{Duality} Recall that there exist an internal duality functor $M\mapsto M^\vee$
on the category $\cO_\alg$ (see \cite[\S3.2]{HumBGG}). We will define
an analogue on $\widetilde{\cO}_\alg$. Let $M$ be an object of the
category $\widetilde{\mathcal O}_{\alg}^I$. We define an action of
$U(\frakg)$ on $M^*\coloneqq\Hom_{A_I}(M,A_I)$ by
$x\cdot f(m)=f(\tau(x)m)$ where $\tau$ is the anti-involution of
$U(\frakg)$ defined in \cite[\S0.5]{HumBGG}. We then define $M^\vee$ to be the
sub-$U(\frakg)$-module of $M^*$ given by
\[ M^\vee\coloneqq\bigoplus_{\mu\in X^*(\underline T)}(M^*)^\mu. \]
\begin{lemma}\label{lemm:dual_deformed}
  If $M$ is an object of the category $\widetilde{\mathcal O}_\alg^I$,
  then so is $M^\vee$. Moreover there is a canonical isomorphism
  $M\xrightarrow{\sim}(M^\vee)^\vee$. Consequently
  $M^\vee/\frakm_IM^\vee\simeq(M/\frakm_IM)^\vee$ is in the category
  $\cO_{\alg}^{I}$.
\end{lemma}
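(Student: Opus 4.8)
The plan is to mimic the standard verification for the internal duality on $\mathcal{O}_\alg$ (as in \cite[\S3.2]{HumBGG}), carrying along the deformation ring $A_I$. First I would check that $M^\vee$ is an object of $\widetilde{\mathcal O}_\alg^I$. Since $M=\bigoplus_\mu M^\mu$ with each $M^\mu$ finite free over $A_I$, the same holds for $M^* = \Hom_{A_I}(M,A_I)$ after restricting to the subspace $M^\vee = \bigoplus_\mu (M^*)^\mu$: one identifies $(M^*)^\mu$ with $\Hom_{A_I}(M^\mu,A_I)$ (using the weight-space decomposition, the anti-involution $\tau$ fixing $\mathfrak{t}$, and Remark \ref{rema: second version of Mmu} which describes $M^\mu$ via the twisted $\mathfrak{t}$-action by $p_I(h)+\mu(h)$), so each $(M^\vee)^\mu$ is again finite free over $A_I$ of the same rank as $M^\mu$. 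The finite generation of $M^\vee$ over $U(\mathfrak g)_{A_I}$ and the local $(U(\mathfrak p_I)\otimes_L A_I)$-finiteness condition follow from the corresponding properties of $M$ by a standard argument: because $\tau$ sends $\mathfrak{p}_I$ to $\tau(\mathfrak p_I)$ (the ``opposite'' side), the lowest weight spaces of $M$ control the highest weight spaces of $M^\vee$, and since $M$ has finitely many ``extremal'' weights the module $M^\vee$ is generated over $U(\mathfrak g)_{A_I}$ by finitely many weight spaces, each finite over $A_I$; the local $\mathfrak p_I$-finiteness is inherited the same way.

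Next I would construct the canonical map $M\to (M^\vee)^\vee$. As in the classical case, the natural evaluation pairing $M\times M^* \to A_I$ restricts to a pairing $M\times M^\vee \to A_I$ which is $U(\mathfrak g)$-invariant for the $\tau$-twisted actions (this uses $\tau^2=\mathrm{id}$), hence induces an $A_I$-linear, $U(\mathfrak g)_{A_I}$-equivariant map $M\to (M^\vee)^* $ landing in $(M^\vee)^\vee$ (it is weight-preserving by construction). On each weight space this is the double-duality map $M^\mu \to \Hom_{A_I}(\Hom_{A_I}(M^\mu,A_I),A_I)$, which is an isomorphism because $M^\mu$ is finite free over $A_I$. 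Since $M=\bigoplus_\mu M^\mu$ and $(M^\vee)^\vee=\bigoplus_\mu ((M^\vee)^\vee)^\mu$, the map is an isomorphism.

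For the last assertion, reduction modulo $\mathfrak m_I$: applying $-\otimes_{A_I} A_I/\mathfrak m_I$ and using that each $M^\mu$ is finite free over $A_I$, one gets $\Hom_{A_I}(M,A_I)\otimes_{A_I}A_I/\mathfrak m_I \cong \Hom_{A_I/\mathfrak m_I}(M\otimes_{A_I}A_I/\mathfrak m_I, A_I/\mathfrak m_I)$ compatibly with the weight decomposition, and $A_I/\mathfrak m_I = L$. Thus $M^\vee/\mathfrak m_I M^\vee \cong (M/\mathfrak m_I M)^\vee$ where on the right $(\cdot)^\vee$ is the internal duality of $\cO_\alg$ from \cite[\S3.2]{HumBGG} (the anti-involution $\tau$ and the weight-space prescription match, and the $\mathfrak z_I$-action becomes trivial, so one is genuinely in $\cO_\alg$, not merely $\cO_\alg^{I,\infty}$). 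By Lemma \ref{lemm:deformed_to_catO}, $M/\mathfrak m_I M\in\cO_\alg^I$, and the internal dual of an object of $\cO_\alg^I$ is again in $\cO_\alg^I$ (the parabolic category is stable under this duality, cf.~loc.~cit.), giving the claim.

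The main obstacle I anticipate is the bookkeeping around the anti-involution $\tau$ and the twisted $\mathfrak t$-action: one must check carefully that $\tau$ interacts correctly with the decomposition $\mathfrak l_I = \mathfrak z_I \oplus \mathfrak l_I^{\mathrm{ss}}$ and the projection $p_I$, so that the weight spaces $(M^*)^\mu$ are exactly $\Hom_{A_I}(M^{\mu'},A_I)$ for the appropriate $\mu'$ (one expects $\mu' = -\mu$ up to the usual shift, or $\mu'=\mu$ with $\tau$ fixing $\mathfrak t$ pointwise as in \cite[\S0.5]{HumBGG}), and that the local $\mathfrak p_I$-finiteness is genuinely preserved rather than only the weaker local $\mathfrak b$-finiteness. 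Everything else is a routine adaptation of \cite[Thm.~3.2(c)]{HumBGG} with $L$ replaced by $A_I$ and ``finite dimensional over $L$'' replaced by ``finite free over $A_I$''.
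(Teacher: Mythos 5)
Your handling of the easy parts agrees with the paper: the identification $(M^\vee)^\mu\simeq\Hom_{A_I}(M^\mu,A_I)$ (finite free since $M^\mu$ is), the biduality map being an isomorphism weight space by weight space because each $M^\mu$ is free over $A_I$, and the base-change identification $M^\vee/\frakm_IM^\vee\simeq(M/\frakm_IM)^\vee$. The genuine gap is the step you dismiss as ``a standard argument'': finite generation of $M^\vee$ over $U(\frakg)_{A_I}$ together with local $U(\frakp_I)_{A_I}$-finiteness. The classical proof that $\cO_\alg$ (or its parabolic version) is stable under $(\cdot)^\vee$ rests on finite length: duality is exact weight-wise and fixes the simples, so one dualizes a composition series. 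Exactly this is unavailable in $\widetilde{\cO}_\alg^I$, whose objects are of infinite length (free over $A_I$ on each weight space). Your substitute --- ``$M$ has finitely many extremal weights, hence $M^\vee$ is generated by finitely many weight spaces'' --- is not a valid inference: a weight module with weights contained in finitely many cones $\mu_i-\sum\NN\alpha$, with finite free weight spaces and locally $\frakb$-finite, need not be finitely generated at all (e.g.\ $\bigoplus_{k\geq0}M(\lambda-k\alpha)$ over a field has a unique maximal weight, finite-dimensional weight spaces and is locally $\mathfrak n$-finite, yet is not finitely generated), so finiteness of the set of maximal weights by itself proves nothing, and there is no a priori finite set of weight spaces that generates. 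Incidentally, ``the lowest weight spaces of $M$ control the highest weight spaces of $M^\vee$'' is also off: since $\tau$ fixes $\mathfrak t$ pointwise, $M^\vee$ has exactly the same weights and ranks as $M$.

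What the paper supplies at precisely this point, and what your proposal is missing, is a Nakayama d\'evissage through the special fibre: by Lemma \ref{lemm:deformed_to_catO} one has $M/\frakm_IM\in\cO_\alg^I$, the classical parabolic category is stable under duality, so $(M/\frakm_IM)^\vee=U(\mathfrak n_I^-)\cdot\bigl(\bigoplus_{\mu\in S}(M/\frakm_IM)^{\vee,\mu}\bigr)$ for a finite set $S$; since each $(M^\vee)^\mu\simeq\Hom_{A_I}(M^\mu,A_I)$ is finite free over the local ring $A_I$, Nakayama lifts the resulting weight-space surjections from the fibre, giving $M^\vee=U(\mathfrak n_I^-)\cdot\bigl(\bigoplus_{\mu\in S}(M^\vee)^\mu\bigr)$, whence finite generation over $U(\frakg)_{A_I}$ and local $U(\frakp_I)_{A_I}$-finiteness. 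Some argument of this kind (or a replacement, e.g.\ embedding $M^\vee$ into the dual of a finitely generated cover and proving that this dual is finitely generated, which again requires proof) is needed; without it the first assertion of the lemma is not established, while the biduality and reduction-mod-$\frakm_I$ parts of your write-up are fine.
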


\begin{proof}
  We have a canonical isomorphism of $A_I$-modules
  \[M^*\simeq\prod_{\mu\in X^*(\underline T)}\Hom_{A_I}(M^\mu,A_I)\] and
  we easily check that $(M^*)^\mu=\Hom_{A_I}(M^\mu,A_I)$ for
  $\mu\in X^*(\underline T)$. As any $M^\mu$ is a finite free
  $A_I$-module, so is $(M^*)^\mu=(M^\vee)^\mu$.

  Let
  $\mathfrak
  n_I^-\coloneqq\bigoplus_{\alpha\in-\Phi^+\backslash\Phi_I}\frakg_\alpha$ denote the nilpotent radical of the parabolic Lie subalgebra
  opposite to $\frakp_I$. Note that a $U(\frakg)_{A_I}$-module $M$
  such that $M=\bigoplus_\mu M^\mu$ with $M^\mu$ finite free over
  $A_I$ is in $\widetilde{\cO}_{\alg}^I$ if and only if we can write
  $M=U(\mathfrak n^-_I)\cdot\big(\bigoplus_{\mu\in S}M_\mu\big)$
  for some finite set $S\subset X^*(\underline T)$. By Lemma
  \ref{lemm:deformed_to_catO}, the object $M/\frakm_I M$ lies in the
  category $\cO_\alg^I$ and it follows from \cite[\S9.3]{HumBGG} that
  $(M/\frakm_I M)^\vee$ lies in $\cO_\alg^I$. This implies
  that there exists a finite set $S \subset X^*(\underline T)$ such
  that
 \[(M/\frakm_I M)^\vee=U(\mathfrak n_I^-)\cdot\big(\bigoplus_{\mu\in
      S}(M/\frakm_I M)^{\vee,\mu}\big).\] It follows that for any
  $\mu$ such that $(M/\frakm_I M)^{\vee,\mu}\neq0$, the map
  \[
    \bigoplus_{\substack{\nu\in\sum_{\alpha\in-\Phi\backslash\Phi_I}\NN\alpha\\\mu'\in
        S \\ \mu'+\nu=\mu}}(M/\frakm_I
    M)^{\vee,\mu'}\longrightarrow(M/\frakm_I M)^{\vee,\mu} \]
 given by the action of the corresponding element of $U(\mathfrak{n}_I^-)$ on each summand, is surjective. As $M^\mu$ is a finite free $A_I$-module and $A_I$ is
  a local ring, we deduce from Nakayama's Lemma that the map
    \[
      \bigoplus_{\substack{\nu\in\sum_{\alpha\in-\Phi\backslash\Phi_I}\NN\alpha\\\mu'\in
          S \\ \mu'+\nu=\mu}}M^{\vee,\mu'}\longrightarrow
      M^{\vee,\mu} \] is surjective and thus that
    $M^\vee=U(\mathfrak n_I^-)\cdot\left(\bigoplus_{\mu'\in
        S}M^{\vee,\mu'}\right)$. This implies that $M^\vee$ is a
    finitely generated $U(\frakg)_{A_I}$-module and we also deduce
    from this equality that $M^\vee$ is locally $U(\frakp_I)_{A_I}$-finite.
    
    In order to prove that $M\xrightarrow{\sim}(M^\vee)^\vee$ we note that the natural map $M \fleche (M^\vee)^*$ of $U(\mathfrak g)_{A_I}$-modules  factors through $(M^\vee)^\vee$ 
    and respects the weight decomposition. Moreover as $M^\mu$ is free over $A_I$ for all $\mu$, 
    the induced bi-duality $M^\mu \overset{\sim}{\fleche} (M^{\mu,*})^*$ morphism is an isomorphism.
\end{proof}

\subsubsection{Blocks} Let $Z(\mathfrak g)$ denote the center of $U(\frakg)$ and let
$\chi : Z(\mathfrak g)\rightarrow L$ be a character of $Z(\mathfrak g)$. Let
$\cO_\chi$ be the subcategory of objects $M$ of $\cO_\alg$ such that
$z-\chi(z)$ acts nilpotently on $M$ for any $z\in Z(\mathfrak g)$. For
$I\subset\Delta$, we denote by $\cO_\chi^I$ the full subcategory of objects of
$\cO_\alg^I$ which are also in $\cO_\chi$. We deduce from
\cite[Prop.~1.12]{HumBGG} that there is a decomposition into blocks 
\[\cO_\alg^I=\bigoplus_\chi\cO_\chi^I.\] 
We write $\widetilde{\cO}_\chi^I$ for
the subcategory of objects $M$ of $\widetilde{\cO}^I_{\alg}$ such that
$M/\frakm_I M$ lies in $\cO_\chi^I$, and similarly $\mathcal
O^{I,\infty}_\chi$.

\begin{rema}\label{rema:central_char}
  For $\lambda\in X^*(\underline T)$,
    let $\chi$ be the character $\chi_\lambda$ defined in
    \cite[\S1.7]{HumBGG}. Then \emph{loc.~cit.} implies that
    $\widetilde{M}_I(\lambda)$ is in $\widetilde{\cO}^I_{\chi_\lambda}$.
\end{rema}

\begin{lemma}\label{lemm:block_dec}
  We have decompositions
  $\widetilde{\cO}^I_\alg=\bigoplus_\chi\widetilde{\cO}_\chi^I$ and
  $\mathcal O^{I,\infty}_\alg = \bigoplus_\chi \mathcal
  O_\chi^{I,\infty}$.
\end{lemma}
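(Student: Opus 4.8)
The plan is to deduce the block decomposition of $\widetilde{\cO}_\alg^I$ from the analogous decomposition of $\cO_\alg^I$ already recorded in the excerpt (via \cite[Prop.~1.12]{HumBGG}), together with the fact that the central characters $\chi$ that occur are finite in number on any fixed finitely generated object. The key observation is that for $M\in\widetilde{\cO}_\alg^I$, the action of $Z(\mathfrak g)$ on $M$ is $A_I$-linear (it commutes with the $U(\mathfrak g)_{A_I}$-structure since $Z(\mathfrak g)$ is central in $U(\mathfrak g)$), and by Lemma \ref{lemm:deformed_to_catO} the reduction $M/\mathfrak m_I M$ lies in $\cO_\alg^I=\bigoplus_\chi\cO_\chi^I$, so only finitely many $\chi$ appear in $M/\mathfrak m_I M$. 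First I would show that this forces, for each $z\in Z(\mathfrak g)$, the characteristic polynomial of $z$ acting on each finite free $A_I$-module $M^\mu$ to be a product of powers of the linear factors $(X-\chi(z))$ over those finitely many $\chi$: indeed the reduction mod $\mathfrak m_I$ of this characteristic polynomial is $\prod_\chi(X-\chi(z))^{n_\chi}$ by the block decomposition of $\cO_\alg^I$ applied to $M/\mathfrak m_I M$, and since $A_I$ is a complete (or at least Henselian) local ring — it is a localization of $U(\mathfrak z_I)$ at its augmentation ideal, hence Noetherian local — Hensel's lemma lets us lift this factorization into coprime factors over $A_I$; but $\chi(z)\in L\subset A_I$ is already a root, and $z$ acts nilpotently modulo each factor's support, so in fact the characteristic polynomial is literally $\prod_\chi(X-\chi(z))^{n_\chi}$.

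Granting that, I would define, for each central character $\chi$ occurring in $M/\mathfrak m_I M$, the $A_I$-submodule $M_\chi\subset M$ to be the union over $n$ of the kernels of $\bigl(\prod_{z\in\text{generators}}(z-\chi(z))\bigr)^n$ — equivalently, the generalized eigenspace for the maximal ideal $\ker(\chi)\subset Z(\mathfrak g)$. Using the primary decomposition with respect to the (finitely many) maximal ideals $\ker(\chi)$ of $Z(\mathfrak g)/(\text{annihilator of }M)$, together with $A_I$-flatness of each $M^\mu$, one obtains a direct sum decomposition $M=\bigoplus_\chi M_\chi$ of $U(\mathfrak g)_{A_I}$-modules respecting the weight grading. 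Each $M_\chi^\mu$ is then a direct summand of the finite free $A_I$-module $M^\mu$, hence finite free (here $A_I$ local is used again), so $M_\chi$ satisfies the three defining conditions of $\widetilde{\cO}_\alg^I$; and $M_\chi/\mathfrak m_I M_\chi$ is the $\chi$-block of $M/\mathfrak m_I M$ (using flatness so that $(-)\otimes_{A_I}A_I/\mathfrak m_I$ preserves the decomposition), hence lies in $\cO_\chi^I$, so $M_\chi\in\widetilde{\cO}_\chi^I$. This gives the first decomposition.

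For $\mathcal O^{I,\infty}_\alg=\bigoplus_\chi\mathcal O^{I,\infty}_\chi$ the argument is the same but easier, since here each object $M$ already carries a locally nilpotent action of $A_I$ (by the discussion preceding Lemma \ref{lemm:deformed_to_catO}, $A_I$ acts through $N_h$'s which are locally nilpotent), so $Z(\mathfrak g)$ again acts $A_I$-linearly and, on any finitely generated $M$, the quotient $M/\mathfrak m_I M$ lies in $\cO_\alg^I$ hence involves only finitely many $\chi$; since $M$ is $\mathfrak m_I$-adically separated-ish (each vector is killed by a power of $\mathfrak m_I$) one concludes that $z-\chi(z)$ acts locally nilpotently for the same finite set of $\chi$, and the primary decomposition of $M$ over $Z(\mathfrak g)$ again splits $M$ compatibly with the weight grading into pieces lying in the $\mathcal O_\chi^{I,\infty}$.

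The main obstacle I anticipate is the first step — controlling the $Z(\mathfrak g)$-action on the deformed module $M\in\widetilde{\cO}_\alg^I$ over the base $A_I$ rather than just over $L$. One must be careful that ``$z-\chi(z)$ acts nilpotently modulo $\mathfrak m_I$'' does not a priori mean it acts nilpotently, only that its characteristic polynomial reduces to a power of $(X-\chi(z))$; the passage from this to an actual block decomposition over $A_I$ is where the locality/completeness of $A_I$ and the $A_I$-freeness of the weight spaces are essential, and it is worth stating cleanly (perhaps as the observation that $Z(\mathfrak g)$ acts on the finitely generated $A_I$-module $M^\mu$ through a quotient that is a finite $A_I$-algebra, which then decomposes as a product of local $A_I$-algebras indexed by the occurring $\chi$'s since $A_I$ is Henselian local and the residue decomposition is given by the block decomposition of $\cO_\alg^I$). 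Everything else is a routine compatibility check with the weight grading and with reduction mod $\mathfrak m_I$.
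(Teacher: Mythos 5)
The deformed half of your argument has a genuine gap, and it sits exactly where you anticipated trouble. First, the Henselian claim is false: $A_I=U(\mathfrak z_I)_{\mathfrak m_I}$ is the localization of a polynomial algebra at a maximal ideal, hence Noetherian local but neither complete nor Henselian, so residual factorizations of characteristic polynomials do not lift to $A_I$ (already $X^2-(1+t)$ over $L[t]_{(t)}$ has split reduction but is irreducible), and a finite $A_I$-algebra need not be a product of local rings; so your fallback observation at the end fails for the same reason. Second, and more seriously, the conclusion you want to extract -- that the characteristic polynomial of $z\in Z(\mathfrak g)$ on $M^\mu$ is literally $\prod_\chi(X-\chi(z))^{n_\chi}$, so that $M$ is exhausted by the literal generalized eigenspaces $\bigcup_n\ker\bigl(\ker(\chi)\bigr)^n$ -- is false in $\widetilde{\mathcal O}_{\alg}^I$. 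On a deformed Verma module $\widetilde M_I(\lambda)$ a central element $z$ acts by a scalar in $A_I$, namely the image of $\xi(z)$ under evaluation at the deformed weight $\lambda+\iota$ (this is the computation in the proof of Proposition \ref{prop:deformed_Verma_annihilator}, and is precisely why that proposition identifies the annihilator with the ideal $I_w$ of the graph $T_{I,w}$ rather than with the ideal of a point). This scalar is congruent to $\chi_\lambda(z)$ modulo $\mathfrak m_I$ but, as soon as $\mathfrak z_I\neq0$, is not equal to it for all $z$, since $\mathfrak t\to\mathfrak t/W$ is finite and hence non-constant on $\lambda+\delta_G+\mathfrak z_I$. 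For such a $z$ the element $z-\chi_\lambda(z)\in\ker(\chi_\lambda)$ acts on the $A_I$-free module $\widetilde M_I(\lambda)$ by multiplication by a nonzero element of $\mathfrak m_I$, hence injectively; no power of it kills any nonzero vector, so your proposed block $M_{\chi_\lambda}$ is $0$ instead of all of $\widetilde M_I(\lambda)$. In short, $\chi(z)$ is a root of the reduction of the characteristic polynomial, not of the polynomial itself, and the literal generalized eigenspace decomposition collapses on the basic objects of the category.

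The paper's proof avoids both issues by replacing literal nilpotence with $\mathfrak m_I$-adic topological nilpotence: $M^{\mu,\chi}$ is defined as the set of $x\in M^\mu$ with $(z-\chi(z))^nx\to0$ in the $\mathfrak m_I$-adic topology of the finite free $A_I$-module $M^\mu$. Within a block the relevant $A_I$-valued eigenvalues differ from $\chi(z)$ by elements of $\mathfrak m_I$, while for $\chi'\neq\chi$ some difference is a unit, and this yields $M=\bigoplus_\chi M^\chi$ with each $M^\chi$ again in $\widetilde{\mathcal O}_{\alg}^I$. (One could also base change to the completion $\widehat A_I$, where your Hensel argument is valid, but one then still has to check that the resulting idempotents preserve $M\subset M\otimes_{A_I}\widehat A_I$, which again requires knowing that the eigenvalues already lie in $A_I$ and are congruent to the $\chi(z)$ -- a matrix over $A_I$ with characteristic polynomial $X^2-(1+t)$ meets both $\widehat A_I$-eigenspaces trivially.) Your treatment of the second decomposition, for $\mathcal O^{I,\infty}_{\alg}$, is essentially fine: there every vector is killed by a power of $\mathfrak m_I$ and the weight spaces are finite dimensional with $Z(\mathfrak g)$-eigenvalues among the $\chi_\nu(z)$, $\nu\in X^*(\underline T)$, so topological and literal nilpotence coincide; that is the half the paper dismisses as ``similar''.
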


\begin{proof}
  Let $M$ be an object of $\widetilde{\cO}^I_\alg$. For a character
  $\chi : Z(\mathfrak g)\rightarrow L$ and
  $\mu\in X^*(\underline T)$, let $M^{\mu,\chi}$ denote the subset of
  elements $x\in M^\mu$ such that $(z-\chi(z))^nx\rightarrow0$ for the
  $\frakm_I$-adic topology on the finite free $A_I$-module $M^\mu$. We
  easily check that
  $M^\chi\coloneqq\bigoplus_{\mu\in X^*(\underline T)}M^{\mu,\chi}$ is
  an $U(\frakg)_{A_I}$-submodule of $M$ which lies in
  $\widetilde{\mathcal O}^I_\chi$ and that
  $M=\bigoplus_{\chi}M^\chi$. The case of $\cO^{I,\infty}_\alg$ is
  similar.
\end{proof}

\begin{lemma}\label{lemm:Vermas_in_blocks}
  Let $\lambda_1,\lambda_2\in X^*(\underline T)$. Assume that $\widetilde M_I(\lambda_1)$ and
  $\widetilde M_I(\lambda_2)$ are in the same block $\widetilde{\cO}_\chi^I$ for a character $\chi:Z(\mathfrak{g})\rightarrow L$. Then there exists
  $w\in W$ such that $w\cdot\lambda_1=\lambda_2$.
\end{lemma}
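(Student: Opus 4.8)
The plan is to reduce the statement to the classical linkage principle in the (undeformed) category $\cO_\alg$. By Remark \ref{rema:central_char}, if $\widetilde M_I(\lambda_j)$ lies in $\widetilde\cO_\chi^I$, then necessarily $\chi=\chi_{\lambda_1}=\chi_{\lambda_2}$ as characters of $Z(\mathfrak g)$, since $\widetilde M_I(\lambda_j)\in\widetilde\cO^I_{\chi_{\lambda_j}}$ and the block decomposition of Lemma \ref{lemm:block_dec} is a direct sum decomposition (so a nonzero module lies in exactly one block). Hence the hypothesis forces $\chi_{\lambda_1}=\chi_{\lambda_2}$.

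Next I would invoke Harish-Chandra's description of central characters (as recalled in \cite[\S1.7]{HumBGG}): two characters $\chi_{\mu_1}$ and $\chi_{\mu_2}$ of $Z(\mathfrak g)$ coincide if and only if $\mu_1$ and $\mu_2$ lie in the same orbit for the dot action of $W$, i.e.~there exists $w\in W$ with $w\cdot\lambda_1=\lambda_2$. Applying this with $\mu_j=\lambda_j$ yields the claim directly. This is essentially the whole argument; the only point requiring a word of justification is that the character $\chi$ attached to the block containing $\widetilde M_I(\lambda_j)$ really is $\chi_{\lambda_j}$, which is exactly the content of Remark \ref{rema:central_char} combined with the fact that $\widetilde M_I(\lambda_j)$ is nonzero (so it cannot simultaneously lie in two distinct blocks).

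\begin{proof}
By Remark \ref{rema:central_char}, the module $\widetilde M_I(\lambda_j)$ lies in $\widetilde\cO^I_{\chi_{\lambda_j}}$ for $j=1,2$, where $\chi_{\lambda_j}:Z(\mathfrak g)\to L$ is the character defined in \cite[\S1.7]{HumBGG}. Since $\widetilde M_I(\lambda_j)\neq0$ and, by Lemma \ref{lemm:block_dec}, the category $\widetilde\cO^I_\alg$ decomposes as the direct sum $\bigoplus_\chi\widetilde\cO^I_\chi$, each nonzero object lies in a unique block. Therefore the hypothesis that $\widetilde M_I(\lambda_1)$ and $\widetilde M_I(\lambda_2)$ lie in the same block $\widetilde\cO_\chi^I$ forces $\chi=\chi_{\lambda_1}=\chi_{\lambda_2}$. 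By Harish-Chandra's theorem (see \cite[\S1.7]{HumBGG}), the equality $\chi_{\lambda_1}=\chi_{\lambda_2}$ holds if and only if $\lambda_1$ and $\lambda_2$ are in the same orbit under the dot action of $W$ on $X^*(\underline T)$; that is, there exists $w\in W$ with $w\cdot\lambda_1=\lambda_2$.
\end{proof}
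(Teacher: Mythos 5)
Your proof is correct and follows essentially the same route as the paper: both arguments hinge on Remark \ref{rema:central_char} to identify the block of $\widetilde M_I(\lambda_j)$ with $\widetilde{\cO}^I_{\chi_{\lambda_j}}$ and then invoke Harish-Chandra's linkage theorem (\cite[Thm.~1.10/\S1.7]{HumBGG}) to get $w\cdot\lambda_1=\lambda_2$. The only cosmetic difference is that the paper reduces modulo $\frakm_I$ and uses that $M_I(\lambda_j)$ is a quotient of $M(\lambda_j)$, whereas you argue via uniqueness of the block containing a nonzero object; the content is the same.
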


\begin{proof}
  By Remark \ref{rema:central_char}, the
  claim follows from the same claim in the category $\cO_\chi^I$. As
  $M_I(\lambda_1)$ and $M_I(\lambda_2)$ are quotients of
  $M(\lambda_1)$ and $M(\lambda_2)$, this is a consequence of
  \cite[Thm.~1.10]{HumBGG}.
\end{proof}

When $\lambda$ is a character of $\mathfrak t$, we often write by abuse of notation
$\mathcal O_\lambda$ (resp. $\mathcal O^{I,\infty}_{\lambda},\widetilde{\mathcal O}^I_{\lambda}$) 
for the block $\mathcal O_{\chi_\lambda}$ (resp. $\mathcal O^{I,\infty}_{\chi_\lambda},
\widetilde{\mathcal O}^I_{\chi_\lambda}$) where $\chi_\lambda$ is the character of $Z(\mathfrak g)$ 
giving the action of the center on $M(\lambda)$  (see
  \cite[\S1.7]{HumBGG}). In particular, $\chi_\lambda = \chi_\mu$ if, and only if, there is $w \in W$ such that
$w\cdot \lambda = \mu$.

\begin{cor}\label{coro:hightes_weight_vector}
  Let $\lambda\in X^*(\underline T)$ be a dominant weight and let
  $\chi_\lambda$ be the associated character of $Z(\mathfrak g)$. If $M$ is an object of
  $\widetilde{\cO}_{\chi_\lambda}^I$ (resp. $\cO_{\chi_\lambda}^{I,\infty}$), then $M^\lambda=(M^\lambda)^{\mathfrak n}$.
\end{cor}

\begin{proof}
  Assume that this is false. Then there exists $\alpha\in\Phi^+$ and
  $x\in\frakg_\alpha$ such that $xM^{\lambda}\neq0$. Thus there exists
  $\mu>\lambda$ such that $M^{\mu}\neq0$. As $M$ lies in the category
  $\widetilde{\cO}_\alg^I$ (resp. $\cO_\chi^{I,\infty}$), we can choose $\mu$ to be
  maximal which then implies $\mathfrak n M^\mu=0$. 
  As $M^\mu \neq 0$ Nakayama's lemma implies that there exists $v \in M^\mu$ which is non zero in $M^\mu/\mathfrak mM^\mu$.
   Then $v$ defines a 
  map $\widetilde M_I(\mu)\rightarrow M$ with $\mu >\lambda$, which is non-zero after reduction by $\mathfrak m$. Thus it induces a non-zero map $M_I(\mu) \fleche M/\mathfrak mM \in \mathcal O_{\chi_\lambda}$.  It follows that
  $\mu=w\cdot\lambda$ which is a
  contradiction.
\end{proof}

\subsubsection{Deformed Verma modules}
Let $\lambda\in X^*(\underline T)$ and let $V$ be a finite dimensional
$U(\frakg)$-module. Then we have an isomorphism of
$U(\frakg)_A$-modules
\[ \widetilde M(\lambda)\otimes_LV\simeq
  U(\frakg)_A\otimes_{U(\frakb)_A}(V_{|\frakb}\otimes_LA(\lambda)). \]
  Indeed there is a canonical map from the left to the right, which then is easily checked to be an isomorphism.
As $V_{|\frakb}$ is a successive extension of one dimensional
$U(\frakb)$-modules, and as $U(\frakg)_A\otimes_{U(\frakb)_A}(-)$ is an
exact functor (as follows from the PBW Theorem), we have a filtration
$(\Fil_i)$ of $\widetilde M(\lambda)\otimes_LV$ such that each
subquotient $\Fil_i/\Fil_{i-1}$ is isomorphic to
$\widetilde M(\lambda+\nu_i)$ for $\nu_i$ a weight of $V$. Moreover
the family $(\nu_i)$ is the family of weights of $V$ (counted with
multiplicity).

\begin{prop}\label{prop:filt_split}
 Let $K$ denote the fraction field of $A$.
 Then the filtration $(\Fil_i\otimes_AK)$ of
  $(\widetilde M(\lambda)\otimes_LV)\otimes_AK$ splits in the category
  of $U(\frakg)_K$-modules, i.e.~there exists an isomorphism of
  $U(\frakg)_K$-modules
  \[ (\widetilde
    M(\lambda)\otimes_LV)\otimes_AK\simeq\bigoplus_i(\widetilde
    M(\lambda+\nu_i)\otimes_AK) \] compatible with the filtration
  $(\Fil_i\otimes_AK)$.
\end{prop}

\begin{proof}
  This is a consequence of the paragraph preceding
  \cite[Thm.~8]{SoergelHC}.
\end{proof}

\begin{lemma}\label{lemm:dom_Verma_is_projective}
  Let $\lambda\in X^*(\underline T)^+_I$ be a dominant weight (with respect to $\underline{P}_I$) and let $V$
  be a finite dimensional $U(\frakg)$-module. Let $M$ be an object of
  $\cO_\alg^{I,\infty}$. Then the map
  \[\Hom_{U(\frakg)_{A_I}}(\widetilde
  M_I(\lambda)\otimes_LV,M)\rightarrow\Hom_{U(\frakg)}(M_I(\lambda)\otimes_LV,M/\frakm_I
  M)\] given by reduction modulo $\frakm_I$ is surjective.
\end{lemma}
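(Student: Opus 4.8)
The plan is to identify the source of the Hom-surjectivity with a concrete lifting/projectivity property of the deformed generalized Verma module $\widetilde M_I(\lambda)\otimes_L V$, and to deduce the statement from the fact that $M$ is locally $U(\mathfrak{p}_I)$-finite and that $M=\bigoplus_\mu M^\mu$ with the $A$-action on each $M^\mu$ factoring through $A_I$ and being topologically nilpotent. First I would reduce to the case $V$ trivial (or rather, absorb $V$): by the isomorphism $\widetilde M_I(\lambda)\otimes_L V\simeq U(\mathfrak{g})_{A_I}\otimes_{U(\mathfrak{p}_I)_{A_I}}(V_{|\mathfrak{p}_I}\otimes_L L_I(\lambda)\otimes_L A_I)$ — the parabolic analogue of the identity stated just before Proposition~\ref{prop:filt_split}, which holds because $\widetilde M_I(\lambda)=U(\mathfrak{g})\otimes_{U(\mathfrak{p}_I)}(L_I(\lambda)\otimes_L A_I)$ — both sides are of the form $U(\mathfrak{g})_{A_I}\otimes_{U(\mathfrak{p}_I)_{A_I}}(W)$ for a finitely generated $U(\mathfrak{p}_I)_{A_I}$-module $W$ which is finite free over $A_I$ and on which $\mathfrak{z}_I$ acts by $p_I(h)+$(a weight), i.e. $\mathfrak{m}_I$ acts topologically nilpotently. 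So it suffices to treat modules of the form $\mathcal{W}:=U(\mathfrak{g})_{A_I}\otimes_{U(\mathfrak{p}_I)_{A_I}} W$.

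The heart of the argument is then an adjunction: $\Hom_{U(\mathfrak{g})_{A_I}}(\mathcal{W},M)=\Hom_{U(\mathfrak{p}_I)_{A_I}}(W,M)$ and, for the reduction mod $\mathfrak{m}_I$, $\Hom_{U(\mathfrak{g})}(\mathcal{W}/\mathfrak{m}_I\mathcal{W}, M/\mathfrak{m}_I M)=\Hom_{U(\mathfrak{p}_I)}(W/\mathfrak{m}_I W, M/\mathfrak{m}_I M)$, compatibly with the reduction maps. Thus I am reduced to showing that
\[
\Hom_{U(\mathfrak{p}_I)_{A_I}}(W,M)\longrightarrow \Hom_{U(\mathfrak{p}_I)}(W/\mathfrak{m}_I W, M/\mathfrak{m}_I M)
\]
is surjective. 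Here is where dominance enters. A $U(\mathfrak{p}_I)$-homomorphism $\bar\varphi: W/\mathfrak{m}_I W\to M/\mathfrak{m}_I M$ is determined by where it sends a chosen $A_I$-basis of highest-weight vectors generating $W$ as a $U(\mathfrak{l}_I^{\rm ss})$-module (using that $W$, hence $W/\mathfrak{m}_I W$, is built from $L_I(\lambda+\nu)$'s with $\lambda+\nu$ dominant for $\underline P_I$ — this is exactly where $\lambda\in X^*(\underline T)_I^+$ is used, via the filtration discussion: the weights $\nu$ of $V$ can be non-dominant, so one must instead work with the $U(\mathfrak{p}_I)$-module structure directly rather than splitting into Vermas). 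Lifting $\bar\varphi$ amounts to lifting the images of these generators through $M\to M/\mathfrak{m}_I M$ to vectors in $M$ of the right weight, compatibly with the (finitely many) $U(\mathfrak{p}_I)$-relations they satisfy. Since each $M^\mu\to M^\mu/\mathfrak{m}_I M^\mu$ is surjective (as $M^\mu$ is an $A_I$-module), and since the relations are a system of $A_I$-linear equations on these vectors whose reduction mod $\mathfrak{m}_I$ is solvable, one solves them over $A_I$ by successive approximation / completeness along $\mathfrak{m}_I$ — equivalently, applying right-exactness of $(-)\otimes_{A_I} A_I/\mathfrak{m}_I$ to the relevant presentation and Nakayama. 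Concretely: $W$ has a finite presentation as a $U(\mathfrak{p}_I)_{A_I}$-module by finite free $A_I$-modules in each weight; apply $\Hom_{A_I}(-,M^{\text{(appropriate weights)}})$; the functor $\Hom_{U(\mathfrak{p}_I)_{A_I}}(-,M)$ sits in a left-exact sequence, and right-exactness at the end follows because reduction mod $\mathfrak{m}_I$ of the presentation stays a presentation and $M$ is $\mathfrak{m}_I$-adically separated in each weight (each $M^\mu$ finitely generated over $A_I$, $A_I$ Noetherian local).

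I expect the main obstacle to be precisely the bookkeeping around the $U(\mathfrak{p}_I)$-module structure of $W=V_{|\mathfrak{p}_I}\otimes_L L_I(\lambda)\otimes_L A_I$: one cannot naively reduce to the deformed Verma case of Proposition~\ref{prop:filt_split} because that splitting only holds after inverting $\mathfrak{m}_I$ (over the fraction field $K$), whereas here we need an \emph{integral} lifting statement over $A_I$. So the argument must stay at the level of $U(\mathfrak{p}_I)_{A_I}$-presentations and use that $L_I(\lambda)$ is a genuine (finite-dimensional, hence projective-in-its-own-category) $U(\mathfrak{l}_I)$-module — this is the role of $\lambda$ being $\underline P_I$-dominant — together with the topological nilpotence of the $\mathfrak{m}_I$-action on both $W$ and $M$ to run the lifting. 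Once the adjunction and the presentation are set up correctly, surjectivity is a formal consequence of Nakayama over the Noetherian local ring $A_I$ applied weight-by-weight.
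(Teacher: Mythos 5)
Your opening reductions are fine, and they are in fact a genuine variant of the paper's: you keep $V$ on the source via the tensor identity $\widetilde{M}_I(\lambda)\otimes_L V\simeq U(\mathfrak{g})_{A_I}\otimes_{U(\mathfrak{p}_I)_{A_I}}(L_I(\lambda)\otimes_L V_{|\mathfrak{p}_I}\otimes_L A_I)$ and then apply Frobenius reciprocity, whereas the paper moves $V$ to the target using $\Hom_{U(\mathfrak{g})}(M_1\otimes_L V,M_2)\simeq\Hom_{U(\mathfrak{g})}(M_1,M_2\otimes_L\Hom_L(V,L))$ and reduces to $V=L$ with $M$ replaced by $M\otimes_L\Hom_L(V,L)$. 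The problem is your step 3. After Frobenius reciprocity, the surjectivity of
\[
\Hom_{U(\mathfrak{p}_I)_{A_I}}(W,M)\longrightarrow\Hom_{U(\mathfrak{p}_I)}(W/\mathfrak{m}_IW,\,M/\mathfrak{m}_IM)
\]
is \emph{equivalent} to the lemma, and your justification of it (a finite presentation of $W$, lifting the images of generators weight by weight using surjectivity of $M^\mu\to M^\mu/\mathfrak{m}_IM^\mu$, then ``solvable mod $\mathfrak{m}_I$ plus Nakayama/successive approximation'') is not a proof: $\Hom_{U(\mathfrak{p}_I)_{A_I}}(W,-)$ is only left exact, lifting a homomorphism along the quotient $M\to M/\mathfrak{m}_IM$ is an obstructed problem, and Nakayama produces generators of finitely generated modules, not lifts of maps along non-split surjections. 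Concretely, for $I=\emptyset$ and $V=L$ the two sides are $(M^\lambda)^{\mathfrak{n}}$ and $((M/\mathfrak{m}M)^\lambda)^{\mathfrak{n}}$, and surjectivity of $M^\lambda\to(M/\mathfrak{m}M)^\lambda$ does not formally pass to $\mathfrak{n}$-invariants.

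The decisive hypothesis, which your sketch never uses beyond making $L_I(\lambda)$ finite dimensional, is the dominance of $\lambda$, and no dominance-blind formal argument can exist because the statement fails without it. Take $\mathfrak{g}=\mathfrak{sl}_2$, $I=\emptyset$ (so $\underline{P}_I$-dominance is vacuous), $\lambda=-2=s\cdot 0$, $V=L$ and $M=\widetilde{M}(0)\otimes_A A/\mathfrak{m}^2$, an object of $\mathcal{O}_{\alg}^{\infty}$ with $M/\mathfrak{m}M\simeq M(0)$. Then $\Hom_{U(\mathfrak{g})}(M(-2),M(0))\neq0$, but a $U(\mathfrak{g})_A$-map $\widetilde{M}(-2)\to M$ sends the generator to an element $w=f\bar v\cdot a$ of the generalized $(-2)$-space killed by $e$; since $e\cdot(f\bar v\cdot a)=\bar v\cdot(ha)$, this forces $a\in\mathfrak{m}/\mathfrak{m}^2$, so $w\in\mathfrak{m}M$ and every such map reduces to zero: the reduction map is not surjective. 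This is exactly why the paper, after reducing to $V=L$, invokes the block decomposition (Lemma \ref{lemm:block_dec}) to place $M$ in $\mathcal{O}_{\chi_\lambda}^{I,\infty}$ and then Corollary \ref{coro:hightes_weight_vector}: for $\lambda$ dominant, the $\lambda$-weight space of any object of that block is automatically killed by $\mathfrak{n}$, so both Hom spaces are the plain weight spaces $M^\lambda$ and $(M/\mathfrak{m}_IM)^\lambda$ and surjectivity is immediate (this is also how the lemma is used in Proposition \ref{prop:lift}, where the $\lambda_i$ are dominant). Your argument needs to incorporate this block/central-character input, or an equivalent Ext-vanishing; as written, step 3 is a gap.
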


\begin{proof} The $L$-vector space $\Hom_L(V,L)$ has the structure of an
  $U(\frakg)$-module induced by $\frakg$-action defined by $x\cdot\phi=-\phi(x\cdot)$ for
  $x\in\frakg$ and $\phi\in\Hom_L(V,L)$. For any
  $U(\mathfrak g)$-modules $M_1$ and $M_2$, the adjunction isomorphism
  $ \Hom_L(M_1\otimes_L V,M_2) \simeq \Hom_L(M_1,M_2\otimes_L
  \Hom_L(V,L))$ is $\frakg$-equivariant and hence induces an isomorphism,
  \[ \Hom_{U(\mathfrak g)}(M_1\otimes_L V,M_2) \simeq
    \Hom_{U(\mathfrak g)}(M_1,M_2\otimes_L \Hom_L(V,L)).\] Thus, as
  $M\otimes_L \Hom_L(V,L)$ lies in $\mathcal O^{I,\infty}_\alg$ we can
  assume that $V = L$. Using Lemma \ref{lemm:block_dec}, we can assume
  that $M$ is in $\cO_\chi^{I,\infty}$ for some character $\chi$ and by Remark
  \ref{rema:central_char}, it is sufficient to consider the case where
  $\chi=\chi_\lambda$. By construction of the deformed generalized Verma
  modules we have
  $\Hom_{U(\frakg)_{A_I}}(\widetilde
  M_I(\lambda),M)=(M^\lambda)^{\mathfrak n_I}$ and
  $\Hom_{U(\frakg)}(M_I(\lambda),M/\frakm_I M)=((M/\frakm_I
  M)^\lambda)^{\mathfrak n_I}$. However it follows from Corollary
  \ref{coro:hightes_weight_vector} that
  $(M^\lambda)^{\mathfrak n_I}=M^\lambda$ and
  $((M/\frakm_I M)^\lambda)^{\mathfrak n_I}=(M/\frak m_I)^\lambda$. It
  is thus sufficient to prove that the map
  $M^\lambda\rightarrow (M/\frakm_I M)^\lambda$ is surjective, which
  is obvious.
\end{proof}

\begin{prop}\label{prop:lift}
  Let $M$ be an object of the category $\cO_{\alg}^{I,\infty}$. Then there
  exist weights $\lambda_1,\dots,\lambda_r \in X^*(\underline T)^+_I$ and finite
  dimensional $U(\frakg)$-modules $W_1,\dots, W_r$ and a surjective
  map of $U(\frakg)_{A_I}$-modules
  \begin{equation}
    \label{eq:lift}
    (\widetilde
    M_I(\lambda_1)\otimes_LW_1)\oplus\cdots\oplus(\widetilde
    M_I(\lambda_r)\otimes_LW_r)\twoheadrightarrow M.
  \end{equation}
  In particular
  $M$ is a quotient of an object of the category
  $\widetilde{\cO}_{\alg}^I$. Moreover there exists an integer
  $N \geq 0$ such that the map~(\ref{eq:lift}) factors through 
  \[ \left((\widetilde
      M_I(\lambda_1)\otimes_LW_1)\oplus\cdots\oplus(\widetilde
      M_I(\lambda_r)\otimes_LW_r)\right)\otimes_{A_I}A_I/\frakm_I^N. \]
\end{prop}

\begin{proof}
  By \cite[Thm.~9.8]{HumBGG} (and its proof), there exist dominant
  weights $\lambda_1,\dots,\lambda_r$, finite dimensional
  $U(\frakg)$-modules $W_1,\dots,W_r$ and a surjective map
  \[ (M_I(\lambda_1)\otimes_LW_1)\oplus\cdots
    (M_I(\lambda_r)\otimes_LW_r)\twoheadrightarrow M/\frakm_I M. \] By
  Lemma \ref{lemm:dom_Verma_is_projective}, this map can be lifted
  into a $U(\frakg)_{A_I}$-equivariant map
  \[ \widetilde M_I(\lambda_1)\otimes_LW_1\oplus\cdots \widetilde
    M_I(\lambda_r)\otimes_LW_r\twoheadrightarrow M \] which is
  surjective by Nakayama's Lemma. The last assertion is a consequence of
  the fact that $M$ is finitely generated as a $U(\frakg)$-module and
  all its elements are killed by some power of $\frakm_I$ so that $M$
  is killed by $\frakm_I^N$ for some $N\geq0$.
\end{proof}

\subsection{Bimodule structure}
\label{sec:bimodule}

Let $\xi : Z(\mathfrak g) \rightarrow U(\mathfrak t)$ be the Harish-Chandra
map. Recall that it is defined as follows: for $x\in Z(\mathfrak g)$ there exists a
unique element $\xi(x) \in U(\mathfrak t)$ such that $x\in\xi(x)+U(\frakg)\mathfrak n$
(see \cite[Lem.~8.17]{Knapp}). For any $\nu\in X^*(\underline T)$ we denote by 
$t_{\nu}$ the unique endomorphism of $U(\mathfrak t)$ such that $t_{\nu}(x)=x+\nu(x)$ for $x\in\mathfrak t$. 
Note that 
$t_{-\delta_G}\circ\xi$ induces 
an isomorphism from $Z(\mathfrak g)$ on to $U(\mathfrak t)^W$ (see
\cite[Thm.~6.18]{Knapp}). 
For a dominant weight $\lambda\in X^\ast(\underline{T})$ we define
a map
\[ h_\lambda : A\otimes_L Z(\mathfrak g)\xrightarrow{\Id\otimes\xi}A\otimes_L\otimes U(\mathfrak
  t)\xrightarrow{\Id\otimes t_{\lambda}} A\otimes_{A^W}A \] 
 following \cite[\S3.2]{SoergelHC},
It follows from \cite[Thm.~9]{SoergelHC} that $h_\lambda$ is surjective
(note that $\mathcal W_\lambda$ in \emph{loc.~cit.} is trivial in our situation). If
$I\subset\Delta$ is a finite subset, tensorization on the left with $p_I : A\twoheadrightarrow A_I$ yields a map
$h_\lambda : A_I\otimes_LZ(\mathfrak g)\rightarrow A_I\otimes_{A^W}A$.

For $w\in W$, let $I_w\subset A_I\otimes_LZ(\mathfrak g)$
denote the kernel of the map
\[ h_{\lambda,w} : A_I\otimes_LZ(\mathfrak g)\xrightarrow{\Id\otimes
    h_\lambda}A_I\otimes_{A^W}A\xrightarrow{x\otimes y\mapsto
    (xp_I(\Ad(w)y))}A_I. \] It is not hard to see that this kernel only depends on the choice of
$\overline{w}\in W_I\backslash W$.

\begin{prop}\label{prop:deformed_Verma_annihilator}
  For $w\in {}^IW$, the $A_I\otimes_LZ(\mathfrak g)$-modules
  $\widetilde M_I(w\cdot\lambda)$ and
  $\widetilde M_I(w\cdot\lambda)^\vee$ are annihilated by $I_w$.
\end{prop}

\begin{proof}
The result for $\widetilde M_I(w\cdot\lambda)^\vee$ follows from
  the result for $\widetilde M_I(w\cdot\lambda)^\vee$ and the
  inclusion
  \[ \widetilde M_I(w\cdot\lambda)^\vee\subset\Hom_A(\widetilde
    M_I(w\cdot\lambda),A). \]
     Hence it is enough to check that
  the action of $A_I\otimes_LZ(\mathfrak g)$ on
  $\widetilde M_I(w\cdot\lambda)$ factors through $h_{\lambda,w}$. As
  this action is central and $\widetilde M_I(w\cdot\lambda)$ is
  generated by $\widetilde M_I(w\cdot\lambda)^{w\cdot\lambda}$ as an
  $U(\frakg)_{A_I}$-module, it is sufficient to check that the action
  $A_I\otimes_LZ(\mathfrak g)$ on $\widetilde M_I(w\cdot\lambda)^{w\cdot\lambda}$
  factors through $h_{\lambda,w}$. Using the fact that $\mathfrak n$
  acts trivially on $\widetilde M_I(w\cdot\lambda)^{w\cdot\lambda}$, an
  element $x\in Z(\mathfrak g)$ acts on this space via $\xi$. For the
  clarity of the computation let us write
  $\eps_\nu : U(\mathfrak t)\rightarrow A_I$ for the $L$-algebra
  homomorphism associated to an $L$-linear map
  $\nu : \mathfrak t \rightarrow A_I$ and let
  $\iota : \mathfrak t\hookrightarrow A\twoheadrightarrow A_I$. Then
  for $x\in Z(\mathfrak g)$ and $v\in \widetilde M_I(w\cdot\lambda)$, we
  have
  \begin{align*}
    \eps_{w\cdot\lambda+\iota}(\xi(x))=\eps_{w(\lambda+\delta_G+w^{-1}(\iota))}(t_{-\delta_G}(\xi(x)))&=\eps_{\lambda+\delta_G+w^{-1}(\iota)}(t_{-\delta_G}(\xi(x)))\\ &=\eps_{w^{-1}(\iota)}(h_\lambda(x))=p_I(\Ad(w)(h_\lambda(x)))
  \end{align*}
  (where we use that the image of $t_{-\delta_G}\circ\xi$ lies in
  $U(\mathfrak t)^W$). As an element $y\in U(\mathfrak t)$
  acts by multiplication by $\eps_{w\cdot\lambda+\iota}(y)$ on
  $\widetilde M_I(w \cdot \lambda)^{w\cdot\lambda}$, we conclude that an element
  $x\otimes z\in A_I\otimes_LZ(\mathfrak g)$ acts by multiplication by
  $xp_I(\Ad(w)(h_\lambda(x)))$ on $\widetilde M_I(w \cdot \lambda)^{w\cdot\lambda}$,
  which is the desired formula.
\end{proof}

\begin{rema}\label{rema:t_and_tdual}   
  The ring $U(\mathfrak t)$ (resp. $U(\mathfrak z_I)$) is the affine coordinate ring of the (affine) $L$-scheme associated to the dual
  $\mathfrak t^*$ of $\mathfrak t$ (resp. ~to the dual $\mathfrak z_I^*$ of $\mathfrak z_I$) so that $A$ (resp. $A_I$) is the stalk of the structure sheaf of  $\mathfrak t^*$ (resp.~of $\mathfrak z_{I}^*$) at the origin. The ideal $I_w$
  is the ideal defining the irreducible component $T_{I,w}$ of
  $\mathfrak (\mathfrak z_I^*\times_{\mathfrak t^*/W}\mathfrak t^*)_{(0,0)}$
  consisting of pairs $(\lambda,\mu)\in\mathfrak z^*_I \times \mathfrak t^*$ of
  characters such that $\mu=w(\lambda)$. 
  
Later in the paper we will view the $L$-scheme $\mathfrak{t}^*$ as the Lie algebra $\mathfrak{t}^{\vee}$ of the dual torus $\underline{T}_L^\vee$ of the Langlands dual group $\underline{G}_L^\vee$, that we consider as an algebraic group over $L$. 
As we will later specialize to the case where $\underline G$ is isomorphic
  to a product of $r$ copies of $\GL_n$ the reductive group $\underline G$ is self dual and we will identify $\mathfrak{t}^*=\mathfrak{t}^\vee$ with $\mathfrak{t}$ in order to avoid the additional $(-)^\vee$ in the notation.
 In particular we will consider $U(\mathfrak t)$ as the affine coordinate ring
  of $\mathfrak t$. 
  The inclusion $\mathfrak{z}_I^*\hookrightarrow \mathfrak{l}_I^*$ induced by the projection $p_I:\mathfrak{l}_I\rightarrow \mathfrak{z}_I$ is then identified with the inclusion $\mathfrak{z}_I^\vee\hookrightarrow \mathfrak{l}_I^\vee$ of the center of the Lie algebra of the Langlands dual group of $\underline{L}$ and again we use self duality (in the case of products of copies of ${\rm GL}_n$) to identify this map with $\mathfrak{z}_I\hookrightarrow \mathfrak{l}_I$. Hence we obtain a canonical map $\mathfrak{z}_I\hookrightarrow \mathfrak{t}$ of $L$-schemes corresponding to the morphism $U(\mathfrak{t})\rightarrow U(\mathfrak{z}_I)$.
  With this identification the ideal $I_w$ defines
  the irreducible component $T_{I,w}$ of
  $(\mathfrak z_I \times_{\mathfrak t/W}\mathfrak t)_{(0,0)}$ whose
  points are the pairs $(x,y)\in\mathfrak t^2$ such that
  $y=w^{-1}(x)$.
  \end{rema}

We finally recall the following result of Soergel (Endomorphismensatz 7
\cite{Soerg_KatO}).

\begin{prop}\label{prop:endomorphismensatz}
  The action of $Z(\mathfrak g)$ on $P(w_0\cdot\lambda)$ factors through the map
  $t_\lambda\circ\xi : Z(\mathfrak g) \twoheadrightarrow L\otimes_{A^W}A$
  and induces an isomorphism
  $L\otimes_{A^W}A\simeq\End_{\cO}(P(w_0\cdot\lambda))$.
\end{prop}

\section{The Emerton--Jacquet functor}
\label{sec:emert-jacq-funct}

Let $\underline{G}$ be a quasi-split reductive group defined over
$\QQ_p$. Let $\underline{B}$ be a Borel subgroup and $\underline{T}$
be a maximal torus of $\underline{G}$ contained in $\underline{B}$. We
set $G\coloneqq\underline{G}(\QQ_p)$,
$B\coloneqq\underline{B}(\QQ_p)$, $T\coloneqq\underline{T}(\QQ_p)$. We
also fix $L$ a finite extension of $\QQ_p$ which will be the
coefficient field of our representations. We assume that $L$ is big
enough so that the torus $\underline T\times_{\QQ_p} L$ is split (and
then $\underline G\times_{\QQ_p}L$ is split). We denote $\frakg$,
$\frakb$ etc. the Lie algebras of $\underline G\times_{\QQ_p}L$,
$\underline B\times_{\QQ_p}L$ etc.
In the following we will consider the category ${\rm Rep}_L^{\rm la}G$ of locally analytic $G$-representations on locally convex $L$-vector spaces, as well as the corresponding variants for the ($\mathbb{Q}_p$-analytic) groups $B,T$, etc.
In \cite[Def.~3.4.5]{EmertonJacquetI} Emerton constructs a functor
\[J_B:{\rm Rep}_L^{\rm la}G\rightarrow {\rm Rep}_L^{\rm la}T\]
that we refer to as the Emerton--Jacquet functor. 
It is defined as follows: Let $N_0$ be a compact open subgroup
of $N$ and let $T^+\coloneqq\set{t\in T \mid tN_0t^{-1} \subset N_0}$. If $V$ is a
$L$-linear representation of $B$, we endow the $L$-vector space
$V^{N_0}$ with the action of the monoid $T^+$ defined by
\[ [t] v\coloneqq[N_0:tN_0t^{-1}]^{-1}\sum_{u\in
    N_0/tN_0{t^{-1}}}ut(v). \]
 Then $J_B(V)$ is the finite slope space $(V^{N_0})_{\rm fs}$ of $V^{N_0}$ with respect to the action of $T^+$ on which the $T^+$-action extends to a locally analytic representation of $T$. 
 
\subsection{Families of locally analytic representations of the Borel
  subgroup}
\label{sec:Borel}

Let $s\in\ZZ_{\geq0}$ be an integer and let $\Pi$ be a locally
analytic $L$-representation of $\ZZ_p^s\times B$. We consider the
following hypothesis on $\Pi$:
\begin{hypothese}\label{hyp:S_adm_B}
  There exists a locally analytic representation of $N_0$ on a locally
  convex $L$-vector space of compact type $V$ such that
  \[ \Pi_{|\ZZ_p^s\times N_0}\simeq\mathcal{C}^{\la}(\ZZ_p^s,L)\hat{\otimes}_LV. \]
\end{hypothese}

Given $s$, we set $S\coloneqq\cO_L[[\ZZ_p^s]]$ and write $\Spf(S)^{\rig}$ for the rigid analytic generic fiber of $\Spf(S)$. This space is a rigid analytic open polydisc and we write 
\[S^{\rig}=\Gamma(\Spf(S)^{\rig},\mathcal{O}_{\Spf{S}^{\rig}})\]
for its ring of rigid analytic functions, which is a Fr\'echet $L$-algebra (when endowed with its natural topology).
We note that a finitely generated, projective $S^{\rig}$-module $C$ defines a vector bundle on $\Spf(S)^{\rig}$. As every vector bundle on a rigid analytic polydisc is free, it follows that $C$ is free as well, i.e.~every finitely generated projective $S^{\rig}$-module is finite free. 
Moreover, finite dimensional quotients of $S^{\rig}$ admit resolutions by a perfect complexes:
\begin{lemma}\label{lemm:resolution_S}
  Let $\fraka\subset S^{\rig}$ be a closed strict ideal such that
  $\dim_LS^{\rig}/\fraka<\infty$. Then there exists perfect complex
  $C_\bullet$ of $S^{\rig}$-modules which is a resolution
  of $S^{\rig}/\fraka$ and such that $C_0=S^{\rig}$.
\end{lemma}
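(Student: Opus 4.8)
The plan is to construct the resolution by hand, exploiting that $S^{\rig}$ is a regular Fréchet--Stein algebra of dimension $s$ (the ring of functions on an open polydisc) together with the hypothesis that $\fraka$ is a \emph{closed strict} ideal with finite-dimensional quotient. First I would observe that since $\dim_L S^{\rig}/\fraka<\infty$, the module $S^{\rig}/\fraka$ is supported at finitely many (closed) points of the polydisc $\Spf(S)^{\rig}$, so it suffices (by a standard localization/partition-of-unity argument, or by working one point at a time and then taking a direct sum) to treat the case where $S^{\rig}/\fraka$ is supported at a single point $x$. Choosing a point $x$ in the polydisc, the local ring $\mathcal{O}_{\Spf(S)^{\rig},x}$ is regular local of dimension $s$, and $S^{\rig}_\fraka \coloneqq S^{\rig}/\fraka$ has finite length as a module over it.

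Next I would produce a presentation $S^{\rig}\twoheadrightarrow S^{\rig}/\fraka$, giving $C_0 = S^{\rig}$ as required, with kernel $\fraka$ itself. The key point is then to show that $\fraka$ is a finitely generated $S^{\rig}$-module (this uses strictness/closedness of the ideal and coherence of $\mathcal{O}$ on the quasi-Stein space, or equivalently that $S^{\rig}$ is coherent as established by Frommer/Schneider--Teitelbaum-type arguments) and, iterating, that each successive syzygy module is finitely generated. Then the crucial finiteness input is that $S^{\rig}$ has finite global dimension: since it is (at least locally) a regular ring of Krull dimension $s$, every finitely generated module has a finite projective resolution of length $\le s$. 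Concretely I would build $C_\bullet$ step by step --- $C_0 = S^{\rig}$, $C_1 \twoheadrightarrow \fraka$, etc. --- each $C_i$ finitely generated free (using that finitely generated projective $S^{\rig}$-modules are free, as already noted in the excerpt just before the Lemma), and then argue that the process terminates: after at most $s$ steps the kernel is projective, hence free, hence can be taken as the last term $C_\bullet$. This yields a bounded complex of finite free modules, i.e.~a perfect complex, resolving $S^{\rig}/\fraka$, with $C_0 = S^{\rig}$.

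An alternative, cleaner route would be to pick global coordinates $z_1,\dots,z_s$ on the polydisc and, for the single-point case with support at the origin, use a Koszul-type resolution: if $\fraka$ contained a regular sequence of length $s$ cutting out exactly $\{0\}$ one could take the Koszul complex directly. In general $\fraka$ need not be a complete intersection, so instead I would invoke the Auslander--Buchsbaum formula over the regular local ring $\mathcal{O}_{\Spf(S)^{\rig},x}$: $\operatorname{pd}(S^{\rig}/\fraka) = \operatorname{depth}(\mathcal{O}_x) - \operatorname{depth}(S^{\rig}/\fraka) = s - 0 = s$, so a length-$s$ free resolution exists locally, and globalize using that $\Spf(S)^{\rig}$ is quasi-Stein with vanishing higher cohomology of coherent sheaves and that vector bundles on it are trivial.

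The main obstacle is the passage between the local (stalk) statement and the global statement over the Fréchet algebra $S^{\rig}$: one must know that a finitely generated module over $S^{\rig}$ that is locally free of finite rank is in fact globally free, and that "finitely generated'' syzygy modules stay finitely generated. Both follow from the coherence of $S^{\rig}$ together with Theorem~A/B for quasi-Stein spaces and the triviality of vector bundles on the polydisc (already invoked in the text), but this is the point requiring care --- in particular verifying that $\fraka$, being closed and strict, is genuinely a coherent (finitely generated) ideal rather than merely a closed subspace, and that the finite global dimension bound transfers from the regular local stalks to $S^{\rig}$ itself so that the inductive construction of $C_\bullet$ terminates after finitely many steps.
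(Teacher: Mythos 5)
There is a genuine gap at exactly the point you flag at the end but do not resolve: over the non-noetherian Fr\'echet algebra $S^{\rig}$, neither the finite generation of $\fraka$ nor the termination of your syzygy process follows from the facts you invoke. Coherence of $S^{\rig}$ only says that \emph{finitely generated} ideals are finitely presented; it does not imply that an arbitrary closed (strict) ideal is finitely generated, and indeed on the open polydisc there are closed ideals that are not finitely generated (e.g.\ the ideal of functions vanishing on an infinite discrete set of points accumulating at the boundary). So the finite-dimensionality of $S^{\rig}/\fraka$ must enter in an essential way before you can even start the resolution, and your sketch does not supply that argument. Similarly, the Auslander--Buchsbaum formula and ``regular local of dimension $s$ $\Rightarrow$ projective dimension $\le s$'' are statements about noetherian (local) rings; transferring a global-dimension bound from the stalks $\mathcal{O}_{\Spf(S)^{\rig},x}$ to the non-noetherian ring $S^{\rig}$, so that your inductive construction of $C_\bullet$ with finitely generated syzygies terminates, is precisely the kind of statement that needs proof here, not something one can quote. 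The quasi-Stein Theorem~A/B and triviality of vector bundles would handle the ``locally free $\Rightarrow$ globally free'' step, but that is the least problematic part.

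The paper avoids all of these non-noetherian difficulties by a short descent argument: since $S[1/p]$ is dense in $S^{\rig}$ and $S^{\rig}/\fraka$ is finite dimensional, the image of $S[1/p]$ is all of $S^{\rig}/\fraka$, so with $\fraka_0=\fraka\cap S[1/p]$ one has $S[1/p]/\fraka_0\simeq S^{\rig}/\fraka$; one then takes a finite resolution by finite projective modules over the \emph{regular noetherian} ring $S[1/p]$ and base changes along the flat map $S[1/p]\to S^{\rig}$, which yields the desired perfect complex with $C_0=S^{\rig}$. If you want to keep your direct approach, you would need to (i) prove finite generation of $\fraka$ using the finite support, e.g.\ by showing $\fraka$ contains a finite-codimension ideal generated by finitely many polynomial elements and lifting a basis of the finite-dimensional quotient, and (ii) establish a projective-dimension bound over $S^{\rig}$ itself; the paper's reduction to $S[1/p]$ is the standard and much cleaner way to get both at once.
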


\begin{proof}
  As $S[1/p]$ is dense in $S^{\rig}$, its image in $S^{\rig}/\fraka$
  is dense $L$-vector space and, as $S^{\rig}/\fraka$ is finite
  dimensional, is in fact equal to $S^{\rig}/\fraka$. Setting
  $\fraka_0\coloneqq\fraka\cap S[1/p]$, we have
  $S[1/p]/\fraka_0\simeq S^{\rig}/\fraka$. As $S^{\rig}$ is a flat
  $S[1/p]$-module, it is sufficient to prove that $S[1/p]/\fraka_0$
  has a finite resolution by finite projective $S[1/p]$-modules, which is a
  consequence of the fact that $S[1/p]$ is a regular noetherian ring.
\end{proof}

Let $C_\bullet$ be a complex of finite free
$S^{\rig}$-modules. For each $n\geq0$, $C_n$ is endowed with its
canonical topology induced by the topology of $S^{\rig}$, then the
differentials in the complex $C_\bullet$ are continuous. The complex
$\Pi^{\bullet}\coloneqq\Hom_{S^{\rig}}(C_\bullet,\Pi)$ is then a
complex of locally analytic $L$-representations of
$\ZZ_p^s\times B$. We also set
$\Pi^{N_0,\bullet}\coloneqq\Hom_{S^{\rig}}(C_\bullet,\Pi^{N_0})$ and
$J_B(\Pi)^{\bullet}\coloneqq\Hom_{S^{\rig}}(C_\bullet,J_B(\Pi))$.

\begin{lemma}\label{lemm:produit_tens_exact}
  Let $0\rightarrow U\rightarrow V\rightarrow W\rightarrow 0$ be a
  short exact sequence of topological $L$-vector spaces of compact
  type (resp.~nuclear Fr\'echet spaces) and let $X$ be a topological
  $L$-vector space of compact type (resp.~nuclear Fr\'echet
  space). Then the following sequence is exact
  \[ 0\rightarrow U\hat{\otimes}_LX\rightarrow
    V\hat{\otimes}_LX\rightarrow W\hat{\otimes}_LX\rightarrow0.\]
\end{lemma}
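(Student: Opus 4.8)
The plan is to reduce the statement to known facts about the behaviour of the completed tensor product on short exact sequences in the two relevant categories, namely spaces of compact type and nuclear Fréchet spaces, using the fact that the completed tensor product is a \emph{nuclear} tensor product for which exactness statements are available in the literature (Schneider's book on nonarchimedean functional analysis, and Emerton's memoir). First I would fix the category: for the compact-type case, recall that every space of compact type is the locally convex inductive limit of a sequence of Banach spaces with injective compact transition maps, and that the category of such spaces is anti-equivalent (via passing to the strong dual) to the category of nuclear Fréchet spaces; under this duality $\hat{\otimes}_L$ on one side corresponds to $\hat{\otimes}_L$ on the other, and short exact sequences correspond to short exact sequences (exactness of the strong dual functor on strict exact sequences of Fréchet/compact-type spaces). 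This reduces the two cases to each other, so it suffices to treat, say, the nuclear Fréchet case.

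In the nuclear Fréchet setting, the key point is that $X$ is nuclear, and the completed projective tensor product with a nuclear space is exact on short exact sequences of Fréchet spaces in which the surjection is a strict (i.e.\ open) map — this is a classical result (cf.\ the exactness properties recorded in \cite[\S1.1]{EmertonJacquetI} or the functional-analytic references therein, ultimately going back to Grothendieck). Concretely: right-exactness $V\hat{\otimes}_LX\to W\hat{\otimes}_LX\to 0$ is formal from right-exactness of the algebraic tensor product plus density/completion; injectivity of $U\hat{\otimes}_LX\to V\hat{\otimes}_LX$ is where nuclearity of $X$ enters, together with the fact that the inclusion $U\hookrightarrow V$ is a topological embedding onto a closed subspace (automatic here since all spaces are Fréchet and the sequence is exact, by the open mapping theorem); and exactness in the middle follows by combining these with the fact that $\hat{\otimes}_L X$ preserves the Hausdorff completed image. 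One then transports this back through the duality to obtain the compact-type case.

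The main obstacle, and the only genuinely non-formal input, is the injectivity of $U\hat{\otimes}_L X \to V\hat{\otimes}_L X$: completed tensor products are not left-exact in general, and one really must invoke nuclearity of $X$ (equivalently, that the projective and injective topologies on $U\otimes_L X$ agree, and that the induced map on injective tensor products is injective because the injective tensor product is always left-exact). I expect the cleanest writeup simply cites the relevant lemma from \cite{EmertonJacquetI} — Emerton records exactly such statements for the categories of compact type and nuclear Fréchet spaces in his treatment of the Jacquet functor — rather than reproving the functional analysis; so in practice this ``proof'' is a one-line reference, with the remarks above indicating how the two cases are linked and where nuclearity is used.
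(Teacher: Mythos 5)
Your proposal is correct and matches the paper's approach: the paper's proof is itself a one-line citation of known functional-analytic facts (Schraen's exactness lemma for completed tensor products, the Schneider--Teitelbaum duality between compact type and nuclear Fr\'echet spaces, and Emerton's compatibility of $\hat{\otimes}_L$ with strong duals), which is exactly the reduction-via-duality plus nuclearity argument you outline. No gap to report.
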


\begin{proof}
  The claim follows from \cite[Lemm.~4.13]{SchraenGL3},
  \cite[Cor.~1.4]{STlocan} and from \cite[Prop.~1.1.32]{Emertonlocan}.
\end{proof}

\begin{lemma}\label{lemm:N0inv}
  Let $\Pi$ be a locally analytic representation of $\ZZ_p^s\times B$
  satisfying Hypothesis \ref{hyp:S_adm_B}. Then the two complexes
  $\Pi^\bullet$ and $\Pi^{N_0,\bullet}$ are complexes of $L$-vector
  spaces of compact type with strict continuous transition
  maps. Moreover for any integer $n\geq0$, we have an isomorphism of
  topological $T^+$- modules
  \[ H^n(\Pi^{N_0,\bullet})\simeq H^n(\Pi^{\bullet})^{N_0}.\]
\end{lemma}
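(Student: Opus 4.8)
The plan is to build everything out of the single structural input provided by Hypothesis \ref{hyp:S_adm_B}, namely the isomorphism $\Pi_{|\ZZ_p^s\times N_0}\simeq\mathcal C^{\la}(\ZZ_p^s,L)\hat\otimes_L V$, together with the exactness of the completed tensor product recorded in Lemma \ref{lemm:produit_tens_exact}. First I would observe that, since each $C_n$ is finite free over $S^{\rig}$, say of rank $r_n$, the term $\Pi^n=\Hom_{S^{\rig}}(C_n,\Pi)$ is just $\Pi^{\oplus r_n}$ as a topological $L$-vector space (the $S^{\rig}$-module structure being used only to make the differentials $S^{\rig}$-linear). In particular $\Pi^n$ is of compact type, being a finite direct sum of copies of $\Pi$, and similarly $\Pi^{N_0,n}\simeq(\Pi^{N_0})^{\oplus r_n}$ is of compact type because passing to $N_0$-invariants commutes with finite direct sums. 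Since the transition maps in $C_\bullet$ are $S^{\rig}$-linear maps between finite free modules, the induced maps on $\Pi^\bullet$ and $\Pi^{N_0,\bullet}$ are (finite matrices of) continuous $L$-linear maps, hence continuous; that they are strict follows from the closed graph / open mapping theorem for maps of compact type spaces (a continuous surjection of compact type spaces is open, and more generally any continuous linear map between such spaces has closed image and is strict onto it, by \cite[Prop.~1.1.32]{Emertonlocan} and the references in the proof of Lemma \ref{lemm:produit_tens_exact}).

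Next I would identify the functor $\Pi\mapsto\Pi^{N_0}$ on the relevant subcategory. Using Hypothesis \ref{hyp:S_adm_B} we have $\Pi^{N_0}\simeq\bigl(\mathcal C^{\la}(\ZZ_p^s,L)\hat\otimes_L V\bigr)^{N_0}\simeq\mathcal C^{\la}(\ZZ_p^s,L)\hat\otimes_L V^{N_0}$, because $N_0$ acts only on the $V$-factor and taking $N_0$-invariants commutes with $\mathcal C^{\la}(\ZZ_p^s,L)\hat\otimes_L(-)$ — this commutation is exactly an instance of Lemma \ref{lemm:produit_tens_exact} applied to the exact sequence $0\to V^{N_0}\to V\to V/V^{N_0}\to0$ of compact type spaces, tensored with the nuclear Fréchet (or compact type) space $\mathcal C^{\la}(\ZZ_p^s,L)$, which shows that $(-)^{N_0}$ is left exact and compatible with the tensor factor. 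Granting this, for any short exact sequence of compact type locally analytic $\ZZ_p^s\times B$-representations satisfying Hypothesis \ref{hyp:S_adm_B}, the sequence of $N_0$-invariants is again short exact and of compact type.

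Finally, to obtain $H^n(\Pi^{N_0,\bullet})\simeq H^n(\Pi^\bullet)^{N_0}$ I would argue that the functor $(-)^{N_0}$ commutes with the cohomology of the complex $\Pi^\bullet$. Concretely, the complex $\Pi^\bullet$ is, degreewise, a finite direct sum of copies of $\Pi\simeq\mathcal C^{\la}(\ZZ_p^s,L)\hat\otimes_L V$ and its differentials are continuous strict maps with closed image (by the first paragraph), so the cocycles $Z^n$ and coboundaries $B^n$ are closed subspaces and the cohomology $H^n(\Pi^\bullet)=Z^n/B^n$ is of compact type. Applying the exact functor $(-)^{N_0}$ (exact on short exact sequences of compact type spaces by the previous paragraph) to the short exact sequences $0\to B^n\to Z^n\to H^n(\Pi^\bullet)\to0$ and $0\to Z^n\to\Pi^n\to B^{n+1}\to0$ shows that $N_0$-invariants of cocycles are cocycles of $\Pi^{N_0,\bullet}$, that $N_0$-invariants of coboundaries are coboundaries, and that the natural map $H^n(\Pi^{N_0,\bullet})=(Z^n)^{N_0}/(B^n)^{N_0}\to(Z^n/B^n)^{N_0}=H^n(\Pi^\bullet)^{N_0}$ is an isomorphism; since each $u\in N_0$ is an endomorphism of the complex $\Pi^\bullet$, and in fact the whole monoid $T^+$ acts on $\Pi^{N_0,\bullet}$ via the operators $[t]$, this isomorphism is $T^+$-equivariant. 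The main obstacle is the functional-analytic bookkeeping in the first and third paragraphs — ensuring all the relevant subspaces (images of differentials, $N_0$-invariants) are closed and that all maps are strict, so that the ``obvious'' diagram chase is legitimate in the topological category; once strictness and closedness are in hand, the rest is formal. I would isolate the needed facts about compact type spaces (closed image, strictness, exactness of $(-)^{N_0}$ and of $\hat\otimes$) as in the proof of Lemma \ref{lemm:produit_tens_exact} and then run the diagram chase.
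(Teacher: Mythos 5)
There is a genuine gap, and it sits exactly where the real content of the lemma lies: the strictness of the transition maps. You assert that ``any continuous linear map between compact type spaces has closed image and is strict onto it''; this is false. The open mapping theorem for spaces of compact type only gives strictness of continuous \emph{surjections} (or of bijections onto a closed image); a continuous map between compact type spaces can perfectly well have dense, non-closed image (e.g.\ a map from a countable direct sum of lines onto a dense non-closed subspace of $\mathcal{C}^{\la}(\ZZ_p,L)$). Since the whole point of the lemma is that the differentials of $\Pi^{\bullet}$ and $\Pi^{N_0,\bullet}$ are strict with closed image (so that $Z^n$, $B^n$ are closed and the topological diagram chase is legitimate), your argument is missing its key input. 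In the paper this input is \emph{admissibility}: one first rewrites
$\Pi^{\bullet}\simeq \Hom_{S^{\rig}}(C_\bullet,\mathcal{C}^{\la}(\ZZ_p^s,L))\hat{\otimes}_LV$
using Hypothesis \ref{hyp:S_adm_B} and the fact that $C_\bullet$ is finite free (note that the $S^{\rig}$-linear differentials act only through the $\mathcal{C}^{\la}(\ZZ_p^s,L)$-factor), then invokes that $\mathcal{C}^{\la}(\ZZ_p^s,L)$ is an \emph{admissible} locally analytic $\ZZ_p^s$-representation, so that by \cite[Prop.~6.4]{STdist} the complex $\Hom_{S^{\rig}}(C_\bullet,\mathcal{C}^{\la}(\ZZ_p^s,L))$ has strict differentials with closed image, and finally propagates strictness and cohomology through $\hat{\otimes}_LV$ by Lemma \ref{lemm:produit_tens_exact}. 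Note that $\Pi$ itself is not admissible as a $\ZZ_p^s$-representation (the factor $V$ is arbitrary), so admissibility must be used on the $\mathcal{C}^{\la}$-factor before tensoring; there is no purely functional-analytic shortcut of the kind you propose.

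A secondary problem is your use of exactness of $(-)^{N_0}$ in the last step: the short exact sequences $0\to B^n\to Z^n\to H^n(\Pi^\bullet)\to 0$ and $0\to Z^n\to\Pi^n\to B^{n+1}\to 0$ involve subquotients that do \emph{not} satisfy Hypothesis \ref{hyp:S_adm_B}, so the exactness of invariants you derived in your second paragraph does not apply to them (and that derivation itself only shows exactness of the tensored sequence, not the identification $(W\hat{\otimes}_LV)^{N_0}=W\hat{\otimes}_LV^{N_0}$, since $(V/V^{N_0})^{N_0}$ need not vanish; one would have to argue via the Haar idempotent $\pi(\mu)$ for the compact group $N_0$, or as in the paper). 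The paper's decomposition makes all of this unnecessary: since $N_0$ acts only on $V$ and the differentials do not touch the $V$-factor, passing to $N_0$-invariants simply replaces $V$ by $V^{N_0}$ term by term, and the isomorphism $H^n(\Pi^{N_0,\bullet})\simeq H^n(\Pi^\bullet)^{N_0}$ follows from the same strictness-plus-$\hat{\otimes}$ argument. I would redo your first paragraph along these lines; once strictness is obtained from admissibility, the rest of your outline can be salvaged, but as written the proof does not go through.
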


\begin{proof}
  Fix an isomorphism
  $\Pi_{|\ZZ_p^s\times
    N_0}\simeq\mathcal{C}^{\la}(\ZZ_p^s,L)\hat{\otimes}_LV$ whose
  existence comes from hypothesis \ref{hyp:S_adm_B}. As any $C_m$ is a
  finite free $S^{\rig}$-module and as the completed tensor product
  $-\hat{\otimes}_L-$ commutes with finite direct sums
  (\cite[Lem.~1.2.13]{kohlhaase}), we have an isomorphism of complexes
  of topological representations of $\ZZ_p^s\times N_0$:
  \[ \Pi^{\bullet} \simeq
    \Hom_{S^{\rig}}(C_\bullet,\mathcal{C}^{\la}(\ZZ_p^s,L))\hat{\otimes}_LV. \]
  As $\mathcal{C}^{\la}(\ZZ_p^s,L)$ is an admissible locally analytic
  representation of $\ZZ_p^s$, the complex
  $\Hom_{S^{\rig}}(C_\bullet,\mathcal{C}^{\la}(\ZZ_p^s,L))$ has strict
  transition maps with closed images (\cite[Prop.~6.4]{STdist}). We
  deduce from this fact and from Lemma \ref{lemm:produit_tens_exact}
  that the complex $\Pi^\bullet$ has strict transition maps and that
  we have topological isomorphisms
  $H^n(\Pi^\bullet)\simeq
  H^n(\Hom_{S^{\rig}}(C_\bullet,\mathcal{C}^{\la}(\ZZ_p^s,L)))\hat{\otimes}_LV$
  for any $n\geq0$. The commutation of $\hat{\otimes}_L$ with finite
  direct sum implies that we have a topological isomorphism of
  $L$-vector spaces for any $m\geq0$:
  \[ (\Hom_{S^{\rig}}(C_m,\Pi^{N_0})\simeq
    \Hom_{S^{\rig}}(C_m,\mathcal{C}^{\la}(\ZZ_p^s,L))\hat{\otimes}_LV^{N_0}. \]
  We deduce as before that the complex $\Pi^{\bullet,N_0}$ has strict
  transition maps and that we have isomorphisms
  \[ H^n(\Pi^{N_0,\bullet})\simeq H^n(\Pi^\bullet)^{N_0} \]
  for any $n\geq0$.
\end{proof}

\begin{prop}\label{prop:Jacquet_complex}
  For any integer $n\geq0$, there is an isomorphism 
  \[ H^n(J_B(\Pi)^{\bullet})\simeq
    J_B(H^n(\Pi^{\bullet}))\]
    of locally analytic
  $L$-representations of $\ZZ_p^s\times T$.
\end{prop}

\begin{proof}
 It follows from \cite[Prop.~3.2.4.(ii)]{EmertonJacquetI} that there is a natural
  continuous $T^+$-equivariant map of complexes
  $(\Pi^{N_0,\bullet})_{\fs}\rightarrow\Pi^{N_0,\bullet}$ inducing a
  continuous $T^+$-equivariant morphism
  $H^n(\Pi^{N_0,\bullet}_{\fs})\rightarrow H^n(\Pi^{N_0,\bullet})$. By
  \emph{loc.~cit.}, the universal property of the functor $(-)_{\fs}$
  provides a $T$-equivariant map
  $H^n(\Pi^{N_0,\bullet}_{\fs})\rightarrow
  H^n(\Pi^{N_0,\bullet})_{\fs}$. It follows from Lemma
  \ref{lemm:N0inv} that it is sufficient to prove that this map is a
  topological isomorphism.

  We now deduce from \cite[Prop.~3.2.27]{EmertonJacquetI} and
  \cite[Thm.~4.5]{Fu_derived_Jacquet} that given an exact sequence
  $0\rightarrow U\rightarrow V\rightarrow W\rightarrow0$ of spaces of compact type with continuous action of $T^+$,
  then
  $0\rightarrow U_{\fs}\rightarrow V_{\fs}\rightarrow
  W_{\fs}\rightarrow0$ is exact, the image of $U_{\fs}$ is closed in
  $V_{\fs}$ and the map $V_{\fs}\rightarrow W_{\fs}$ is strict. The open mapping theorem then implies that the sequence is strict
  exact. As the complex $\Pi^{N_0,\bullet}$ has strict transition maps
  by Lemma \ref{lemm:N0inv}, we conclude that the map
  $H^n(\Pi^{N_0,\bullet}_{\fs})\rightarrow
  H^n(\Pi^{N_0,\bullet})_{\fs}$ is a topological isomorphism.
\end{proof}

\begin{prop}\label{prop:almost_flatness}
  Let $\Pi$ be a locally analytic $L$-representation of
  $\ZZ_p^s\times B$ satisfying the hypothesis \ref{hyp:S_adm_B}. Let
  $\fraka$ be a closed strict ideal of $S^{\rig}$ such that
  $\dim_LS^{\rig}/\fraka<+\infty$. Then the map
  \[\fraka\otimes_{S^{\rig}}J_B(\Pi)'\longrightarrow
    J_B(\Pi)'\] is injective.
\end{prop}

\begin{proof}
  By Lemma \ref{lemm:resolution_S}, there exists a perfect complex $C_\bullet$
  of $S^{\rig}$-modules such that, $C_0=S^{\rig}$,
  $H_0(C_\bullet)\simeq S^{\rig}/\fraka$ and $H_i(C_\bullet)=0$ for
  $i>0$. By Hypothesis \ref{hyp:S_adm_B}, we have
  $\Pi|_{\ZZ_p^s\times N_0}\simeq\mathcal{C}^{\la}(\ZZ_p^s,L)\hat{\otimes}_LV$ for
  some topological $L$-vector space of compact type $V$. As
  $C_\bullet$ has strict transition maps, it follows from Lemma
  \ref{lemm:produit_tens_exact} that the complex
  $C_\bullet\otimes_{S^{\rig}}\Pi'\simeq C_\bullet\hat{\otimes}_LV'$
  is a resolution of $(S^{\rig}/\fraka)\hat{\otimes}_LV'$. We then deduce from
  $\Hom_{S^{\rig}}(C_i,\Pi)'\simeq C_i\otimes_{S^{\rig}}\Pi'$ for any
  $i\geq0$, that $H^i(\Hom_{S^{\rig}}(C_\bullet,\Pi))=0$ for
  $i>0$. Therefore Proposition \ref{prop:Jacquet_complex} implies that
  $H^i(\Hom_{S^{\rig}}(C_\bullet,J_B(\Pi)))=0$ for $i>0$. 
  We denote by $(-)'$ the duality between spaces of compact type and Fr\'echet spaces. This duality implies that $H_i(C_\bullet\otimes_{S^{\rig}}J_B(\Pi)')=0$ for $i>0$. As
  $\fraka=\coker(C_2\rightarrow C_1)$, we deduce that
  \begin{align*}
    \fraka\otimes_{S^{\rig}}J_B(\Pi)'&=\coker(C_2\otimes_{S^{\rig}}J_B(\Pi)'\rightarrow
    C_1\otimes_{S^{\rig}}J_B(\Pi)')\\ &\subset
    C_0\otimes_{S^{\rig}}J_B(\Pi)'=J_B(\Pi)'. \qedhere \end{align*}
\end{proof}

\subsection{Families of locally analytic representations of $G$}
\label{sec:families_G}

Let $\Pi$ be an admissible locally
analytic $L$-representation of $\ZZ_p^s\times G$. 
The aim of this section is to use $\Pi$ in order to construct a functor \[M\mapsto  \Hom_{U(\mathfrak{g})}(M,\Pi)\] from the category $\mathcal{O}_{\rm alg}^\infty$ to the category of locally analytic $\mathbb{Z}_p^s\times B$-representations, and then, by composing with $J_B$, to locally analytic $\mathbb{Z}_p^s\times T$-representations.
We will usually assume that we are in the following situation:
\begin{hypothese}\label{hyp:S_adm_G}
  There exists a uniform open pro-$p$-subgroup $H$ of $G$, an integer
  $m\geq0$ and a topological $\ZZ_p^s\times H$-equivariant isomorphism
  \[ \Pi_{|\ZZ_p^s\times H}\simeq\mathcal{C}^{\la}(\ZZ_p^s\times H,L)^m.
  \]
\end{hypothese}

Recall from section \ref{sec:nilp-acti-umathfr} that if $M$ is an
object of $\mathcal{O}_{\alg}^\infty$, there is a unique algebraic
action of $\underline{B}(L)$ on $M$ which lifts the structure of
$U(\frakb)$-module on $M^{\mathrm{ss}}$. We endow $M$ with the action
of $B=\underline B(\QQ_p)$ obtained by restriction to $B$.

Let $M$ be an object of $\mathcal{O}_\alg^\infty$ with its semi-simplified $B$-action. We define an
action of $B$ on $\Hom_L(M,\Pi)$ by
\[ b\cdot f=b f(b^{-1}-) \] for $f\in\Hom_L(M,\Pi)$ and $b\in B$. It
follows from Lemma \ref{lemm:B_action} that this action preserves the
subspace $\Hom_{U(\frakg)}(M,\Pi)$. We moreover endow $\Hom_{U(\frakg)}(M,\Pi)$ with the left $\ZZ_p^s$-action inherited from the one on $\Pi$. 
While the definition of the $B$-action using the semi-simplified action on $M$ might not seem very natural at a first glance, the following lemma says that this definition applied to deformed Verma modules allows us to compute generalized eigenspaces. Given an $U(\mathfrak t)$-module $X$ we write
\[ X[(\mathfrak t-\lambda)^k] = \{ x \in X | \forall t \in \mathfrak t, (t-\lambda(t))^kx = 0 \}.\]
With this notation we have the following result: 
\begin{lemma}
\label{lemma:exdefvermaHom}
Let $\lambda \in X^*(\underline T)^+_I$ and $M = \widetilde{M}_I(\lambda) \otimes_{A_I} A_I/\mathfrak m_I^k$.
Then there is an isomorphism 
\[ \Hom_{U(\mathfrak g)}(M,\Pi) \simeq (\Pi^{\mathfrak n_I} \otimes_L L_I(\lambda)')[\mathfrak m_I^k]\]
of $B$-representations, where $(-)'$ denote the dual (algebraic) representation. In particular, when $I = \emptyset$,
\[ \Hom_{U(\mathfrak g)}(\widetilde{M}(\lambda),\Pi) \simeq (\Pi^{\mathfrak n}(\lambda^{-1}))[\mathfrak m^k] \simeq (\Pi^{\mathfrak n}[(\mathfrak t - \lambda)^k])(\lambda^{-1}).\]
\end{lemma}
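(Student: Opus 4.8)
The plan is to unwind both sides of the desired isomorphism using the explicit construction of the deformed generalized Verma module $\widetilde{M}_I(\lambda)=U(\mathfrak g)\otimes_{U(\mathfrak p_I)}(L_I(\lambda)\otimes_L A_I)$ and the tensor-hom adjunction. First I would reduce to the case $k=1$ conceptually and then treat general $k$ by working modulo $\mathfrak m_I^k$ throughout. The key computation is that, for $M=\widetilde{M}_I(\lambda)\otimes_{A_I}A_I/\mathfrak m_I^k$, applying $\Hom_{U(\mathfrak g)}(-,\Pi)$ and using that induction $U(\mathfrak g)\otimes_{U(\mathfrak p_I)}(-)$ is left adjoint to restriction along $U(\mathfrak p_I)\hookrightarrow U(\mathfrak g)$ gives
\[ \Hom_{U(\mathfrak g)}(M,\Pi)\simeq \Hom_{U(\mathfrak p_I)}\big(L_I(\lambda)\otimes_L (A_I/\mathfrak m_I^k),\Pi\big). \]
Now I would use that $\mathfrak n_I$ (the nilpotent radical of $\mathfrak p_I$) acts trivially on $A_I/\mathfrak m_I^k$ through the twisted $U(\mathfrak p_I)$-action (recall $U(\mathfrak p_I)$ acts on $A_I$ via $U(\mathfrak p_I)\to U(\mathfrak l_I)\xrightarrow{p_I}U(\mathfrak z_I)\to A_I$, which kills $\mathfrak n_I$ and $\mathfrak l_I^{\mathrm{ss}}$), and that $L_I(\lambda)$ is a simple $\mathfrak l_I$-module, to rewrite the right-hand side as $\mathfrak n_I$-invariants of $\Pi$ with an appropriate action of $L_I(\lambda)'\otimes_L (A_I/\mathfrak m_I^k)$, i.e.\ as $\big(\Pi^{\mathfrak n_I}\otimes_L L_I(\lambda)'\big)[\mathfrak m_I^k]$.

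The steps, in order: (1) record the adjunction isomorphism above and note that it is $B$-equivariant for the $B$-action on $\Hom_L(M,\Pi)$ defined via the semisimplified $B$-structure on $M$ and the action $b\cdot f=bf(b^{-1}-)$ — here I would invoke Lemma~\ref{lemm:B_action} exactly as in the preceding discussion, together with the fact that the algebraic $B$-action on $\widetilde M_I(\lambda)$ restricts on the highest-weight part to the character coming from $L_I(\lambda)$ and the $A_I$-twist; (2) identify $\Hom_{U(\mathfrak p_I)}(L_I(\lambda)\otimes_L A_I/\mathfrak m_I^k,\Pi)$ with $\Hom_{U(\mathfrak l_I)}(L_I(\lambda),\Pi^{\mathfrak n_I})\otimes$(datum from $A_I/\mathfrak m_I^k$), using that $\mathfrak n_I$ acts trivially on the source so any $U(\mathfrak p_I)$-map factors through $\mathfrak n_I$-coinvariants of the source, equivalently lands in $\mathfrak n_I$-invariants of $\Pi$; (3) since $\mathfrak z_I$ acts on $L_I(\lambda)\otimes_L A_I/\mathfrak m_I^k$ through the composite with $\mathfrak z_I\to A_I/\mathfrak m_I^k$, translate "$U(\mathfrak l_I)$-equivariance" into the condition that the image lies in the subspace where $\mathfrak m_I$ acts nilpotently of order $k$ on $\Pi^{\mathfrak n_I}$, via the semisimplified $\mathfrak t$-action — this is precisely the $[\mathfrak m_I^k]$ decoration and uses Remark~\ref{rema: second version of Mmu}; (4) contract the remaining $\Hom_{U(\mathfrak l_I)}(L_I(\lambda),-)$ against the algebraic dual, obtaining the twist by $L_I(\lambda)'$; (5) specialize $I=\emptyset$, where $L_\emptyset(\lambda)$ is the one-dimensional character $\lambda$, so $L_I(\lambda)'$ becomes the character $\lambda^{-1}$, $\mathfrak n_I=\mathfrak n$, $\mathfrak m_I=\mathfrak m$, yielding $(\Pi^{\mathfrak n}(\lambda^{-1}))[\mathfrak m^k]$; and finally observe that on $\Pi^{\mathfrak n}$ the twisted $\mathfrak t$-action (semisimple part plus the $A$-nilpotent part) makes $[\mathfrak m^k]$ on $\Pi^{\mathfrak n}(\lambda^{-1})$ coincide with $\Pi^{\mathfrak n}[(\mathfrak t-\lambda)^k]$ twisted by $\lambda^{-1}$, which is the asserted last isomorphism.

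The main obstacle I anticipate is bookkeeping the two competing $\mathfrak t$-actions (and the induced $\underline B$-actions) correctly: on $M$ we use the \emph{semisimplified} action $D_h$ to define the algebraic $B$-structure, while the "deformation" direction — the $A_I$-module structure and the nilpotent operators $N_h$ — is what produces the generalized-eigenspace decoration $[\mathfrak m_I^k]$ on the $\Pi$-side. Concretely one must check that under the adjunction the $\mathfrak m_I$-power-torsion condition on the $\Pi^{\mathfrak n_I}$ side matches exactly the reduction $\widetilde M_I(\lambda)\otimes A_I/\mathfrak m_I^k$ on the module side, with no off-by-one in the exponent $k$ and no extra twist from $\delta_G$ or the dot action (there is none here because we are not passing through a different Weyl chamber). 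I expect everything else — exactness of induction via PBW, the identification $\Hom_{S^{\mathrm{rig}}}$-type manipulations are not needed here, and the $B$-equivariance — to be formal once the construction of $N_h$, the $A_I$-structure, and Lemma~\ref{lemm:B_action} are invoked. I would also double-check that $\Pi$ being merely admissible locally analytic (not necessarily satisfying Hypothesis~\ref{hyp:S_adm_G}) suffices for this purely algebraic statement, so that no analytic input beyond "$\Pi$ is a $U(\mathfrak g)$-module with compatible $B$-action" enters.
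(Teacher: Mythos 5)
Your proposal is correct and follows essentially the same route as the paper's proof: Frobenius reciprocity to strip off the induction, the observation that maps land in $\Pi^{\mathfrak n_I}$ since the nilradical kills the inducing module, moving $L_I(\lambda)$ across as its dual, identifying $\Hom_{U(\mathfrak t)}(A_I/\mathfrak m_I^k,-)$ with the $[\mathfrak m_I^k]$-torsion subspace, and checking everything is compatible with the semisimplified $B$-actions. The paper records this as a four-line chain of adjunction identities; your extra bookkeeping of the two $\mathfrak t$-actions and the purely algebraic nature of the argument (no analytic hypotheses needed) is consistent with it.
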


\begin{proof}
We compute using the $U(\mathfrak g)$-structure
\begin{eqnarray*} \Hom_{U(\mathfrak g)}(M,\Pi) & 
=& \Hom_{U(\mathfrak g)}(U(\mathfrak g)\otimes_{U(\mathfrak b)} (L_I(\lambda)\otimes_L A_I/\mathfrak m_I^k),\Pi) \\
&=& \Hom_{U(\mathfrak b)}(L_I(\lambda)\otimes_L A_I/\mathfrak m_I^k,\Pi^{\mathfrak n_I})\\
&=& \Hom_{U(\mathfrak t)}(A_I/\mathfrak m_I^k,\Pi^{\mathfrak n_I}\otimes L_I(\lambda)')\\
&=& (\Pi^{\mathfrak n_I}\otimes L_I(\lambda)'))[\mathfrak m_I^k].
 \end{eqnarray*}
 Moreover each equality is compatible with the semi-simplified $B$-actions.
\end{proof}

\begin{lemma}\label{lemm:M_isot_is_la}
  Let $\Pi$ be a locally analytic representation of $\ZZ_p^s\times G$
  and let $M$ be an object of $\mathcal{O}_\alg^\infty$. Then the
  $\ZZ_p^s\times B$-representation $\Hom_{U(\frakg)}(M,\Pi)$ is locally
  analytic.
\end{lemma}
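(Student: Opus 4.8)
The plan is to reduce the statement to the case of a (deformed generalized) Verma module by using the resolution of Lemma~\ref{lemm:resolution} (or rather its enhancement, Proposition~\ref{prop:lift}), and then to compute $\Hom_{U(\mathfrak g)}(\widetilde M_I(\lambda)\otimes_L W, \Pi)$ explicitly in terms of $\mathfrak n$-invariants of $\Pi$. More precisely, by Proposition~\ref{prop:lift} we may choose a presentation of $M$ of the form
\[
(\widetilde M_I(\lambda_1)\otimes_L W_1)\oplus\cdots\oplus(\widetilde M_I(\lambda_r)\otimes_L W_r)\longrightarrow M\longrightarrow 0,
\]
which in turn (applying the proposition again to the kernel, exactly as in Lemma~\ref{lemm:resolution}) extends one step further to an exact sequence $Q_1\to Q_0\to M\to 0$ with each $Q_i$ a finite direct sum of modules of the form $\widetilde M_I(\mu)\otimes_L W$ (and, if desired, killed by a power of $\mathfrak m_I$). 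Applying the left-exact functor $\Hom_{U(\mathfrak g)}(-,\Pi)$ gives an exact sequence of $\mathbb Z_p^s\times B$-representations
\[
0\longrightarrow \Hom_{U(\mathfrak g)}(M,\Pi)\longrightarrow \Hom_{U(\mathfrak g)}(Q_0,\Pi)\longrightarrow \Hom_{U(\mathfrak g)}(Q_1,\Pi).
\]
Since a closed $\mathbb Z_p^s\times B$-subrepresentation and the kernel of a continuous $\mathbb Z_p^s\times B$-equivariant map between locally analytic representations is again locally analytic, it suffices to prove the statement for each $Q_i$, i.e.\ for modules of the form $\widetilde M_I(\mu)\otimes_L W$ with $W$ finite dimensional.

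Next I would reduce to the case $W=L$ and $I=\emptyset$. For the $W$ reduction: by the $\mathfrak g$-equivariant adjunction $\Hom_{U(\mathfrak g)}(M_1\otimes_L W,\Pi)\simeq\Hom_{U(\mathfrak g)}(M_1,\Pi\otimes_L W')$ (as used in the proof of Lemma~\ref{lemm:dom_Verma_is_projective}), and since $\Pi\otimes_L W'$ is again an admissible locally analytic $\mathbb Z_p^s\times G$-representation (tensoring with a finite-dimensional algebraic representation preserves this), we may assume $W=L$. For the $I$ reduction: $\widetilde M_I(\mu)$ is a quotient of $\widetilde M(\mu)$ (indeed $\widetilde M_I(\mu)\otimes_{A_I}A_I/\mathfrak m_I^k$ is a quotient of $\widetilde M(\mu)\otimes_A A/\mathfrak m^{k'}$ for suitable $k'$ after base change along $A\twoheadrightarrow A_I$, as $L_I(\mu)$ is a quotient of a direct sum of Verma modules for the Levi), so $\Hom_{U(\mathfrak g)}(\widetilde M_I(\mu),\Pi)$ is a closed subrepresentation of $\Hom_{U(\mathfrak g)}(\widetilde M(\mu),\Pi)$; alternatively one works directly. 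It thus remains to treat $M=\widetilde M(\lambda)\otimes_A A/\mathfrak m^k=\widetilde M(\lambda)/\mathfrak m^k\widetilde M(\lambda)$.

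For this last case, Lemma~\ref{lemma:exdefvermaHom} identifies $\Hom_{U(\mathfrak g)}(\widetilde M(\lambda)/\mathfrak m^k,\Pi)$ with $\bigl(\Pi^{\mathfrak n}[(\mathfrak t-\lambda)^k]\bigr)(\lambda^{-1})$ as a $B$-representation (compatibly with the $\mathbb Z_p^s$-action). So the problem is reduced to showing that $\Pi^{\mathfrak n}[(\mathfrak t-\lambda)^k]$, with its natural $\mathbb Z_p^s\times B$-action, is locally analytic. Here $\Pi^{\mathfrak n}$ is the space of vectors annihilated by the (finitely many generators of the) nilpotent radical $\mathfrak n$ acting on $\Pi$, i.e.\ the kernel of a continuous $B$-equivariant map $\Pi\to\Pi^{\oplus\dim\mathfrak n}$; since $\Pi$ is admissible locally analytic, it is an LB-space of compact type and this kernel is a closed $\mathbb Z_p^s\times B$-subrepresentation, hence locally analytic. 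The further condition $(\mathfrak t-\lambda)^k=0$ cuts out the kernel of another continuous $B$-equivariant endomorphism (the $t$-action is by continuous operators on the locally analytic representation $\Pi^{\mathfrak n}$, since $\mathfrak t$ lies in the Lie algebra of $B$ and acts via the derived action, which is continuous on the locally analytic vectors), so again a closed subrepresentation, hence locally analytic. Twisting by the algebraic character $\lambda^{-1}$ preserves local analyticity. The main obstacle, such as it is, is bookkeeping: making sure that all the maps appearing (the ones from the resolution, the $\mathfrak n$-action, the $\mathfrak t$-action) are genuinely continuous $\mathbb Z_p^s\times B$-equivariant maps between locally analytic representations so that one may legitimately pass to kernels and conclude local analyticity — this rests on Hypothesis~\ref{hyp:S_adm_G} (which guarantees $\Pi$ is of compact type with well-behaved $\mathfrak n$- and $\mathfrak t$-actions) and on the permanence properties of the category of locally analytic representations under closed subobjects and kernels of continuous equivariant maps.
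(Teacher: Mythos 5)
Your argument is correct, and its skeleton is the same as the paper's: resolve $M$ by modules induced from finite-dimensional $U(\frakb)$-modules and realize $\Hom_{U(\frakg)}(M,\Pi)$ as a closed $\ZZ_p^s\times B$-stable subspace of a representation that is visibly locally analytic. The difference is in how much machinery you pass through. The paper simply takes the two-step resolution of Lemma \ref{lemm:resolution} by modules $U(\frakg)\otimes_{U(\frakb)}V_i$ with $V_i$ finite dimensional, identifies $\Hom_{U(\frakg)}(U(\frakg)\otimes_{U(\frakb)}V_0,\Pi)\simeq(V_0'\otimes_L\Pi)^{\frakb}$, and observes that this is a closed $B$-stable subspace of $V_0'\otimes_L\Pi$, which is locally analytic since $V_0$ is a finite-dimensional algebraic $B$-representation; no admissibility, no Hypothesis \ref{hyp:S_adm_G}, and no explicit description of the Hom space is needed. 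Your route through Proposition \ref{prop:lift}, the reductions to $W=L$ and $I=\emptyset$, and Lemma \ref{lemma:exdefvermaHom} lands on the same kind of statement (a generalized $\mathfrak t$-eigenspace inside $\Pi^{\mathfrak n}$, suitably twisted, is a closed $B$-stable subspace of a locally analytic representation), so it works, at the cost of extra bookkeeping; note also that when you apply Proposition \ref{prop:lift} to the kernel you must first truncate by a power of $\mathfrak m_I$ so that the kernel lies in $\cO_\alg^{\infty}$ (this is exactly what Lemma \ref{lemm:resolution} does, so your appeal to it covers this, but the truncation is not optional). Two small imprecisions to fix: the map $v\mapsto(Xv)_{X\in\mathfrak n}$ cutting out $\Pi^{\mathfrak n}$ is not $B$-equivariant for the untwisted action on the target — only its kernel is $B$-stable, because $\Ad(B)$ preserves $\mathfrak n$, and that is all you need; and neither admissibility of $\Pi$ nor Hypothesis \ref{hyp:S_adm_G} is assumed in the lemma or required for the argument — local analyticity of $\Pi$ plus the permanence of local analyticity under passage to closed stable subspaces suffices, exactly as in the paper's proof.
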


\begin{proof}
  Let $U(\frakg)\otimes_{U(\frakb)}V_1\rightarrow
  U(\frakg)\otimes_{U(\frakb)}V_0\rightarrow M\rightarrow0$ be a
  resolution as in Lemma \ref{lemm:resolution}. Then
  $\Hom_{U(\frakg)}(M,\Pi)$ is the kernel of the map
  \[ \Hom_{U(\frakg)}(U(\frakg)\otimes_{U(\frakb)}V_0,\Pi)\simeq
    (V_0'\otimes_L\Pi)^{\frakb}
    \longrightarrow\Hom_{U(\frakg)}(U(\frakg)\otimes_{U(\frakb)}V_1,\Pi)\simeq
    (V_1'\otimes_L\Pi)^{\frakb} \] which is continuous and
  $B$-equivariant. Therefore $\Hom_{U(\frakg)}(M,\Pi)$ is isomorphic
  to a closed $B$-stable subspace of $V_0'\otimes_L\Pi$. As $V_0$ is
  an algebraic finite dimensional representation of $B$, the
  representation $V_0'\otimes_L\Pi$ is locally analytic and hence so is
  $\Hom_{U(\frakg)}(M,\Pi)$.
\end{proof}

As $\Hom_{U(\frakg)}(M,\Pi)$ is a locally analytic representation of $B$ this action may be derived and induces the structure of an $U(\mathfrak{b})$-module on $\Hom_{U(\frakg)}(M,\Pi)$. Via restriction to $U(\mathfrak{t})\subset U(\mathfrak{b})$ we may view $\Hom_{U(\frakg)}(M,\Pi)$ as an $U(\mathfrak{t})$-module. 
\begin{lemma}\label{lemm:action_Ut}
  Let $\Pi$ be a locally analytic representation of $\ZZ_p^s\times G$
  and let $M$ be an object of $\mathcal{O}_\alg^\infty$. Then the $U(\mathfrak{t})$ action on $\Hom_{U(\frakg)}(M,\Pi)$ factors through a finite dimensional quotient.
\end{lemma}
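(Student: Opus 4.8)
The plan is to reduce the statement to the case of a deformed generalized Verma module by using the surjection from Proposition~\ref{prop:lift} and then to use the explicit computation of $\Hom_{U(\mathfrak g)}(\widetilde M_I(\lambda)\otimes_L W, \Pi)$ provided by Lemma~\ref{lemma:exdefvermaHom}. More precisely, by Proposition~\ref{prop:lift} there exist dominant weights $\lambda_1,\dots,\lambda_r$, finite dimensional $U(\mathfrak g)$-modules $W_1,\dots,W_r$ and an integer $N\geq 0$ together with a surjection
\[ P\coloneqq \bigoplus_{i=1}^r \bigl(\widetilde M_I(\lambda_i)\otimes_L W_i\bigr)\otimes_{A_I}A_I/\mathfrak m_I^N\twoheadrightarrow M.\]
Applying the left exact functor $\Hom_{U(\mathfrak g)}(-,\Pi)$ gives an injection $\Hom_{U(\mathfrak g)}(M,\Pi)\hookrightarrow \Hom_{U(\mathfrak g)}(P,\Pi)$ which is moreover $U(\mathfrak t)$-equivariant (the $\mathfrak t$-action is functorial, being derived from the functorial $B$-action on $\Hom_{U(\mathfrak g)}(-,\Pi)$ coming from the semisimplified action, once we know these representations are locally analytic by Lemma~\ref{lemm:M_isot_is_la}). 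Hence it suffices to show that the $U(\mathfrak t)$-action on $\Hom_{U(\mathfrak g)}(P,\Pi)$ factors through a finite dimensional quotient, and since $\Hom_{U(\mathfrak g)}(-,\Pi)$ turns finite direct sums into finite direct sums, it is enough to treat a single summand $M_i=(\widetilde M_I(\lambda_i)\otimes_L W_i)\otimes_{A_I}A_I/\mathfrak m_I^N$.

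For such a summand, I would use the filtration argument preceding Proposition~\ref{prop:filt_split}: the $U(\mathfrak g)_{A_I}$-module $\widetilde M_I(\lambda_i)\otimes_L W_i$ admits a finite filtration whose graded pieces are deformed generalized Verma modules $\widetilde M_I(\lambda_i+\nu_j)$ for the weights $\nu_j$ of $W_i$ (here one needs the analogue of the paragraph before Proposition~\ref{prop:filt_split} for generalized Verma modules, which follows the same way using that $U(\mathfrak g)_{A_I}\otimes_{U(\mathfrak p_I)_{A_I}}(-)$ is exact; alternatively one restricts further to $I=\emptyset$ after noting $\widetilde M_I(\lambda)$ is a quotient of $\widetilde M(\lambda)$, but the $\mathfrak t$-action must be tracked carefully). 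Reducing modulo $\mathfrak m_I^N$ and applying $\Hom_{U(\mathfrak g)}(-,\Pi)$, which is left exact, embeds $\Hom_{U(\mathfrak g)}(M_i,\Pi)$ into a successive extension (as $U(\mathfrak t)$-modules) of the spaces $\Hom_{U(\mathfrak g)}\bigl((\widetilde M_I(\lambda_i+\nu_j))\otimes_{A_I}A_I/\mathfrak m_I^N,\Pi\bigr)$. By Lemma~\ref{lemma:exdefvermaHom} each of these is isomorphic, as a $U(\mathfrak t)$-module (the $\mathfrak t$-action being the derivative of the semisimplified $B$-action, hence a twist by an algebraic character of the action on $\Pi^{\mathfrak n_I}[\mathfrak m_I^N]$), to $(\Pi^{\mathfrak n_I}\otimes_L L_I(\lambda_i+\nu_j)')[\mathfrak m_I^N]$, on which $\mathfrak t$ acts through $A_I/\mathfrak m_I^N$ up to an algebraic twist. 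Since $A_I/\mathfrak m_I^N$ is a finite dimensional $L$-algebra and $L_I(\lambda_i+\nu_j)'$ is finite dimensional, $\mathfrak t$ acts on each graded piece through a finite dimensional quotient of $U(\mathfrak t)$.

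Finally I would assemble these: the $U(\mathfrak t)$-module $\Hom_{U(\mathfrak g)}(M_i,\Pi)$ is a submodule of a finite iterated extension of modules on each of which $\mathfrak t$ acts through a finite dimensional quotient, so $\mathfrak t$ acts on the whole extension, and a fortiori on the submodule, through a finite dimensional quotient of $U(\mathfrak t)$ (take the product of the finitely many finite dimensional quotients that occur). Tracking back through the direct sum decomposition and the injection $\Hom_{U(\mathfrak g)}(M,\Pi)\hookrightarrow\bigoplus_i\Hom_{U(\mathfrak g)}(M_i,\Pi)$ concludes the proof. The main obstacle is bookkeeping the $U(\mathfrak t)$-action correctly: the action on $\Hom_{U(\mathfrak g)}(-,\Pi)$ comes from the \emph{semisimplified} $B$-action on the source, not the original $U(\mathfrak g)$-module structure, so one must check that the embeddings coming from Proposition~\ref{prop:lift} and from the Verma filtration are $U(\mathfrak t)$-equivariant for this semisimplified action (which they are, by functoriality of the construction of $N_h$ in Section~\ref{sec:nilp-acti-umathfr} and of the $B$-action), and that the twist by the algebraic character $L_I(\lambda)'$ in Lemma~\ref{lemma:exdefvermaHom} does not destroy finiteness — which it does not, as algebraic characters are accounted for by finite dimensional quotients of $U(\mathfrak t)$ as well.
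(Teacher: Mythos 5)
Your proposal is correct and follows essentially the same route as the paper's proof: reduce via Proposition~\ref{prop:lift} (including the factorization through a power of $\mathfrak{m}$) to a direct sum of deformed Verma modules tensored with finite-dimensional modules, and then read off the $U(\mathfrak{t})$-action from the computation in Lemma~\ref{lemma:exdefvermaHom}. The only harmless differences are that the paper works with $I=\emptyset$ (the case forced by $M\in\mathcal{O}^{\infty}_{\mathrm{alg}}$, so you should fix $I=\emptyset$ rather than leave it unspecified) and handles the tensor factor $V_i$ by adjunction, reading off the finitely many weights of $V_i(\lambda_i)$ directly, whereas you d\'evisse $\widetilde{M}(\lambda_i)\otimes_L V_i$ through its filtration by deformed Vermas — both yield the same finite-dimensional quotient of $U(\mathfrak{t})$.
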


\begin{proof}
  By Proposition \ref{prop:lift} there exist dominant weights $\lambda_1,\dots,\lambda_r$, finite dimensional $\frakg$-modules $V_1,\dots, V_r$ and a surjective map
  \[ \widetilde M(\lambda_1)\otimes_LV_1\oplus\cdots\oplus \widetilde
    M(\lambda_r)\otimes_LV_r\twoheadrightarrow M. \] Moreover by Lemma
  \ref{prop:lift} there exists $k\geq1$ such that this map factors
  through $\frakm^k$ (recall that $A$ is 
  the localization of $U(\mathfrak t)$ at its augmentation
  ideal $\frakm$). Therefore we have an inclusion of $U(\mathfrak t)$-modules
  \[
    \Hom_{U(\frakg)}(M,\Pi)\hookrightarrow\bigoplus_{i=1}^r
    \Hom_{U(\frakg)}(\widetilde{M}(\lambda_i)\otimes_A A/\mathfrak m^k \otimes_L V_i,\Pi). \]
  By Lemma \ref{lemma:exdefvermaHom}, $ \Hom_{U(\frakg)}(\widetilde{M}(\lambda_i)/\mathfrak m^k \otimes_L V_i,\Pi) = (\Pi \otimes V_i(\lambda_i)')^{\mathfrak n}[\mathfrak m^k]$.
  
  Let $\mu_1,\dots,\mu_s$ be the finitely many characters which
  appears in the restriction to $U(\mathfrak t)$ of
  $V_1(\lambda_1),\dots,V_{r}(\lambda_r)$. Then the action of $U(\mathfrak{t})$ on
  $\Hom_{U(\mathfrak{g})}(M,\Pi)$ factors through the quotient of
  $U(\mathfrak{t})$ by the intersection of the $k$-th powers of the
  kernels of the $\mu_i$.
\end{proof}

\begin{lemma}\label{lemm:hyp_ok}
  Assume that $\Pi$ is an admissible locally analytic
  $L$-representation of $\ZZ_p^s\times G$ satisfying Hypothesis
  \ref{hyp:S_adm_G} and $M \in \cO_\alg^{\infty}$. Then $\Hom_{U(\frakg)}(M,\Pi)$ satisfies
  Hypothesis \ref{hyp:S_adm_B}
\end{lemma}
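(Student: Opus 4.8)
The plan is to combine the resolution from Lemma \ref{lemm:resolution} with the product structure of $\Pi$ provided by Hypothesis \ref{hyp:S_adm_G}, so that everything reduces to the exactness of $\mathcal{C}^{\la}(\ZZ_p^s,L)\hat{\otimes}_L(-)$ on spaces of compact type, i.e.\ to Lemma \ref{lemm:produit_tens_exact}.

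First I would fix a uniform open subgroup $H$ as in Hypothesis \ref{hyp:S_adm_G} and assume $N_0\subseteq H$ (we are free to take $N_0 = N\cap H$, as $J_B$ does not depend on this choice). Using $\mathcal{C}^{\la}(\ZZ_p^s\times H,L)=\mathcal{C}^{\la}(\ZZ_p^s,L)\hat{\otimes}_L\mathcal{C}^{\la}(H,L)$ with $\ZZ_p^s$ acting on the left factor and $H$ (hence $\frakg$ and $U(\frakg)$) on the right, Hypothesis \ref{hyp:S_adm_G} yields a $\ZZ_p^s\times N_0$-equivariant isomorphism $\Pi_{|\ZZ_p^s\times N_0}\simeq\mathcal{C}^{\la}(\ZZ_p^s,L)\hat{\otimes}_LX$, where $X\coloneqq(\mathcal{C}^{\la}(H,L)^m)_{|N_0}$ is a locally analytic $N_0$-representation of compact type on which $\frakg$ acts, and in which $\ZZ_p^s$ acts only on $\mathcal{C}^{\la}(\ZZ_p^s,L)$; in particular $\Pi$ itself already satisfies Hypothesis \ref{hyp:S_adm_B}.

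Next I would follow the proof of Lemma \ref{lemm:M_isot_is_la}: a resolution $U(\frakg)\otimes_{U(\frakb)}V_1\to U(\frakg)\otimes_{U(\frakb)}V_0\to M\to0$ as in Lemma \ref{lemm:resolution}, with $V_0,V_1$ finite-dimensional algebraic $\underline B$-representations, gives a $\ZZ_p^s\times B$-equivariant exact sequence with continuous maps
\[0\longrightarrow\Hom_{U(\frakg)}(M,\Pi)\longrightarrow(V_0'\otimes_L\Pi)^{\frakb}\xrightarrow{\phi}(V_1'\otimes_L\Pi)^{\frakb}.\]
Restricting to $\ZZ_p^s\times N_0$ and using the decomposition above, $V_i'\otimes_L\Pi\simeq\mathcal{C}^{\la}(\ZZ_p^s,L)\hat{\otimes}_L(V_i'\otimes_LX)$, with $\ZZ_p^s$ acting only on the left factor and $\frakb$ and $N_0$ only on $V_i'\otimes_LX$. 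Since $V_i'\otimes_LX$ is of compact type and $\frakb$ acts on it through finitely many continuous operators, Lemma \ref{lemm:produit_tens_exact} identifies $(V_i'\otimes_L\Pi)^{\frakb}$ with $\mathcal{C}^{\la}(\ZZ_p^s,L)\hat{\otimes}_LW_i$, where $W_i\coloneqq(V_i'\otimes_LX)^{\frakb}$ is a closed, hence compact type, locally analytic $N_0$-subrepresentation. Moreover $\phi$ is induced purely by the $U(\frakg)$-action on $\Pi$ and the underlying map $V_1\to U(\frakg)\otimes_{U(\frakb)}V_0$, hence is $\mathcal{C}^{\la}(\ZZ_p^s,L)$-linear, of the form $\Id\hat{\otimes}\psi$ for a continuous $N_0$-equivariant map $\psi\colon W_0\to W_1$. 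Applying Lemma \ref{lemm:produit_tens_exact} once more gives
\[\Hom_{U(\frakg)}(M,\Pi)\simeq\mathcal{C}^{\la}(\ZZ_p^s,L)\hat{\otimes}_LV,\qquad V\coloneqq\ker\psi=\Hom_{U(\frakg)}(M,\mathcal{C}^{\la}(H,L)^m),\]
$\ZZ_p^s\times N_0$-equivariantly with $\ZZ_p^s$ acting on the left factor; and $V$, being a closed $N_0$-subrepresentation of $W_0$, is locally analytic and of compact type. This is precisely Hypothesis \ref{hyp:S_adm_B}.

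The step I expect to be the main obstacle is the repeated use of Lemma \ref{lemm:produit_tens_exact} to commute $\mathcal{C}^{\la}(\ZZ_p^s,L)\hat{\otimes}_L(-)$ past the invariants $(V_i'\otimes_LX)^{\frakb}$ and the kernel of $\psi$: this requires that the ambient short exact sequences of compact type spaces be strict, which should follow, as in the proof of Lemma \ref{lemm:N0inv}, from the closedness of spaces of invariants in locally analytic representations together with the open mapping theorem for spaces of compact type. As an alternative to the resolution, one can deduce the statement from Lemma \ref{lemma:exdefvermaHom} and Proposition \ref{prop:lift}, which reduce it to the analogous assertion for $\Pi^{\mathfrak n}$ and its generalized $\mathfrak t$-eigenspaces.
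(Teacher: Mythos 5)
Your outline follows the same route as the paper: resolve $M$ by $U(\frakg)\otimes_{U(\frakb)}V_i$ via Lemma \ref{lemm:resolution}, use the decomposition $\Pi_{|\ZZ_p^s\times H}\simeq\mathcal{C}^{\la}(\ZZ_p^s,L)\hat{\otimes}_L\mathcal{C}^{\la}(H,L)^m$, and commute $\mathcal{C}^{\la}(\ZZ_p^s,L)\hat{\otimes}_L(-)$ past the relevant kernels. The problem is exactly at the step you flag and defer: Lemma \ref{lemm:produit_tens_exact} applies only to \emph{strict} exact sequences of compact type spaces, so to identify $(V_i'\otimes_L\Pi)^{\frakb}$ with $\mathcal{C}^{\la}(\ZZ_p^s,L)\hat{\otimes}_LW_i$ and $\ker(\Id\hat{\otimes}\psi)$ with $\mathcal{C}^{\la}(\ZZ_p^s,L)\hat{\otimes}_L\ker\psi$ you must first know that the maps built from the $\frakb$-action and from $\psi$ on $V_i'\otimes_L\mathcal{C}^{\la}(H,L)^m$ are strict (have closed image). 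Your proposed justification does not deliver this: closedness of invariant subspaces only says the \emph{kernels} are closed, which is automatic, and the open mapping theorem for compact type spaces yields strictness only after one already knows the image is of compact type (e.g.\ closed in the target) --- which is precisely the point at issue. Nor does the argument of Lemma \ref{lemm:N0inv} transfer: there strictness came from admissibility of $\mathcal{C}^{\la}(\ZZ_p^s,L)$ as a representation of $\ZZ_p^s$, whereas here the needed strictness lives in the $H$-direction, and $\mathcal{C}^{\la}(H,L)$ is \emph{not} admissible as an $N_0$-representation (since $N_0=N\cap H$ is not open in $H$).

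The paper supplies exactly this missing ingredient by a small but essential trick: it equips $V_i'\otimes_L\mathcal{C}^{\la}(H,L)^m$ with the action of the whole group $H$ by right translations (trivial on $V_i'$), which commutes with the diagonal $U(\frakb)$-action because the latter is derived from left translations. The spaces $(V_i'\otimes_L\mathcal{C}^{\la}(H,L)^m)^{\frakb}$ are then closed $H$-stable subspaces of admissible locally analytic $H$-representations, hence admissible, and the relevant map is $H$-equivariant, hence automatically strict; only then does Lemma \ref{lemm:produit_tens_exact} apply. Without this (or a substitute, e.g.\ a proof that $\mathcal{C}^{\la}(\ZZ_p^s,L)\hat{\otimes}_L(-)$ takes continuous injections of compact type spaces to injections, which would let one commute kernels with the completed tensor product by applying Lemma \ref{lemm:produit_tens_exact} to $0\to\ker\psi\to W_0\to W_0/\ker\psi\to 0$ --- an argument you neither state nor prove) your reduction does not go through. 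Your alternative closing suggestion via Proposition \ref{prop:lift} and Lemma \ref{lemma:exdefvermaHom} has the same defect: it only exhibits $\Hom_{U(\frakg)}(M,\Pi)$ as a closed $\ZZ_p^s\times N_0$-stable subspace of a representation satisfying Hypothesis \ref{hyp:S_adm_B}, and such a subspace need not itself be of the form $\mathcal{C}^{\la}(\ZZ_p^s,L)\hat{\otimes}_LV$.
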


\begin{proof}
  We can assume that $N_0\subset H$. As we assume Hypothesis \ref{hyp:S_adm_G}, there is an isomorphism
  $\Pi\cong \mathcal{C}^{\la}(\ZZ_p^s\times
  H,L)^m\simeq\mathcal{C}^{\la}(\ZZ_p^s,L)\widehat{\otimes}_L\mathcal{C}(H,L)^m$
  of $\ZZ_p^s\times H$-representation. 
  
  Let
  $[U(\frakg)\otimes_{U(\frakb)}V_1\rightarrow
  U(\frakg)\otimes_{U(\frakb)}V_0]$ be a resolution of $M$ as in Lemma
  \ref{lemm:resolution}. Then
  $\Hom_{U(\frakg)}(M,\mathcal{C}^{\la}(H,L)^m)$ is the kernel of the
  map
  \begin{equation}\label{eqn complex of Hrepns}
  (V_0'\otimes_L\mathcal{C}^{\la}(H,L)^m)^{\frakb}\rightarrow
  (V_1'\otimes_L\mathcal{C}^{\la}(H,L)^m)^{\frakb}.
  \end{equation} 
  We claim that this is a strict map, then the lemma follows, as 
  exactness of the functor
  $\mathcal{C}^{\la}(\ZZ_p^s,L)\hat{\otimes}_L(-)$ implies that we have an
  isomorphism of locally analytic $\ZZ_p^s\times N_0$-representation
  \[
    \Hom_{U(\frakg)}(M,\Pi)\simeq\mathcal{C}^{\la}(\ZZ_p^s,L)\hat{\otimes}_L\Hom_{U(\frakg)}(M,\mathcal{C}^{\la}(H,L))^m.\]
In order to prove that (\ref{eqn complex of Hrepns}) is strict, we use an additional $H$-action.  
We let $H$-act on $\mathcal{C}^{\la}(H,L)$ by right translation and extend this to $V_i'\otimes_L\mathcal{C}^{\la}(H,L)$ by acting trivially on $V_i'$. This action commutes with the (diagonal) action of $U(\mathfrak b)$, as the $U(\mathfrak b)$ action on  $\mathcal{C}^{\la}(H,L)$ is induced by left translations. It follows that $(V_i'\otimes_L\mathcal{C}^{\la}(H,L)^m)^{\frakb}$ is a closed $H$-stable subspace of an admissible locally analytic $H$-representation, and hence an admissible locally analytic $H$-representation itself. Hence (\ref{eqn complex of Hrepns}) is an $H$-equivariant map between admissible locally analytic $H$-representations and hence a strict map which proves the claim.
\qedhere
\end{proof}

\begin{prop}\label{prop:ess_adm}
  Let $\Pi$ be an admissible locally analytic representation of $\ZZ_p^s\times G$ satisfying the
  hypothesis \ref{hyp:S_adm_G} and let $M$ be an object of
  $\cO_{\alg}^{\infty}$. Then the locally analytic representation
  $J_B(\Hom_{U(\frakg)}(M,\Pi))$ of $\ZZ_p^s\times T$ is
  essentially admissible.
\end{prop}

\begin{proof}
  Let
  $U(\frakg)\otimes_{U(\frakb)}V_1\rightarrow
  U(\frakg)\otimes_{U(\frakb)}V_0\rightarrow M\rightarrow0$ be a
  resolution of $M$ given by Lemma \ref{lemm:resolution}. Then we have
  an exact sequence 
  \[  0\longrightarrow\Hom_{U(\frakg)}(M,\Pi)\longrightarrow \Hom_{U(\frakg)}(U(\frakg)\otimes_{U(\frakb)}V_0,\Pi)
    \longrightarrow\Hom_{U(\frakg)}(U(\frakg)\otimes_{U(\frakb)}V_1,\Pi) \]
    of locally analytic representations of
  $\ZZ_p^s\times B$ (see Lemma \ref{lemm:M_isot_is_la}).
  As the functor $J_B$ is left exact
  (\cite[Lem.~3.4.7.(iii)]{EmertonJacquetI}), this induces a short exact
  sequence 
  \begin{align*} 0\longrightarrow J_B(\Hom_{U(\frakg)}(M,\Pi))&\longrightarrow
    J_B(\Hom_{U(\frakg)}(U(\frakg)\otimes_{U(\frakb)}V_0,\Pi))\\
    &\longrightarrow  J_B(\Hom_{U(\frakg)}(U(\frakg)\otimes_{U(\frakb)}V_1,\Pi)) 
    \end{align*}
    of locally analytic representations of $\ZZ_p^s\times T$.
  As the kernel of a morphism between essentially admissible
  representations is essentially admissible
  (\cite[Thm.~3.1.3]{EmertonJacquetI}), it is sufficient to prove that
  $J_B(\Hom_{U(\frakg)}(U(\frakg)\otimes_{U(\frakb)}V,\Pi))$ is
  essentially admissible for any finite dimensional algebraic
  representation $V$ of $B$. As an algebraic representation of $B$ is an
  extension of rank $1$ object, it is sufficient to prove this when $V$ is
  $1$-dimensional and $V^{\mathfrak n}=V$. The left exactness of $J_B$ implies
  that
  \[ J_B(\Hom_{U(\frakg)}(U(\frakg)\otimes_{U(\frakb)}V,\Pi))\simeq
    J_B(\Hom_{U(\frakb)}(V,\Pi^{\mathfrak{n}}))\simeq\Hom_{U(\mathfrak{t})}(V,J_B(\Pi)). \]
  By \cite[Prop.~3.4]{BHS1} (whose proof follows
  \cite[Thm.~0.5]{EmertonJacquetI}), the locally analytic
  representation $J_B(\Pi)$ of $\ZZ_p^s\times T$ is essentially
  admissible. As $U(\mathfrak{t})$ is finitely generated, we conclude
  that $\Hom_{U(\mathfrak{t})}(V,J_B(\Pi))$ is essentially admissible.
\end{proof}

\begin{lemma}\label{lemm:exactness}
  Let $\Pi$ be a locally analytic representation of $\ZZ_p^s\times G$
  satisfying Hypothesis \ref{hyp:S_adm_G}.
  \begin{enumerate}[(i)]
  \item\label{lemm:exactness1} The functor
    $M\mapsto\Hom_{U(\frakg)}(M,\Pi)$ from $\cO_{\alg}^\infty$ to the
    category of locally analytic representations of $\ZZ_p^s\times B$
    is exact.
  \item\label{lemm:exactness2} The functor
    $M\mapsto\Hom_{U(\frakg)}(M,\Pi)^{N_0}$from $\cO_{\alg}^\infty$
to the category of locally convex
    $L$-vector spaces sends short exact sequences on short exact
    sequences with strict maps.
  \end{enumerate}
\end{lemma}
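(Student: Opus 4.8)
The plan is to deduce both assertions from the concrete description of $\Hom_{U(\frakg)}(M,\Pi)$ obtained via the resolutions constructed in Lemma \ref{lemm:resolution}, combined with the exactness properties of the induction functor $U(\frakg)\otimes_{U(\frakb)}(-)$ and of the $(N_0)$-invariants / finite-slope constructions recalled above. For part \eqref{lemm:exactness1}, let $0\to M'\to M\to M''\to 0$ be a short exact sequence in $\cO_\alg^\infty$. Since the PBW theorem makes $U(\frakg)$ free (hence faithfully flat) over $U(\frakb)$, the functor $U(\frakg)\otimes_{U(\frakb)}(-)$ is exact, and one checks as in the proof of Lemma \ref{lemm:M_isot_is_la} that for a finite dimensional algebraic $\underline B$-module $V$ one has $\Hom_{U(\frakg)}(U(\frakg)\otimes_{U(\frakb)}V,\Pi)\simeq (V'\otimes_L\Pi)^{\frakb}$, a functor which is exact in $V$ because $\Pi$ is a (very) large space and $V\mapsto V'\otimes_L\Pi$ is exact while $(-)^{\frakb}$, being a finite intersection of kernels of the continuous operators coming from a basis of $\frakb$, commutes with the relevant exact sequences of Fréchet / compact type spaces. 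Choosing compatible $\underline B$-equivariant resolutions of $M',M,M''$ by modules of the form $U(\frakg)\otimes_{U(\frakb)}V_\bullet$ (using Lemma \ref{lemm:resolution} and the horseshoe lemma), one presents $\Hom_{U(\frakg)}(-,\Pi)$ on the three modules as the cohomology in degree $0$ of a short exact sequence of two-term complexes; the long exact sequence then collapses (because the higher $\Ext$ of an induced module vanish: $\Hom_{U(\frakg)}(U(\frakg)\otimes_{U(\frakb)}V,-)$ is exact) to give the desired short exact sequence of locally analytic $\ZZ_p^s\times B$-representations.

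For part \eqref{lemm:exactness2} we apply the functor of $N_0$-invariants to the short exact sequence produced in \eqref{lemm:exactness1}. Since $N_0$ is compact and we are in characteristic $0$, taking $N_0$-invariants is exact on abstract vector spaces; the only issue is strictness of the resulting maps in the topological category. Here I would invoke Lemma \ref{lemm:hyp_ok}: each $\Hom_{U(\frakg)}(M,\Pi)$ satisfies Hypothesis \ref{hyp:S_adm_B}, so $\Hom_{U(\frakg)}(M,\Pi)^{N_0}$ is, after fixing an isomorphism $\Pi|_{\ZZ_p^s\times N_0}\simeq \mathcal C^{\la}(\ZZ_p^s,L)\hat\otimes_L V$, of the form $\mathcal C^{\la}(\ZZ_p^s,L)\hat\otimes_L W$ for a space of compact type $W$, and the transition maps are strict by the argument of Lemma \ref{lemm:N0inv} (using \cite[Prop.~6.4]{STdist} together with Lemma \ref{lemm:produit_tens_exact}). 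Concretely: realize $\Hom_{U(\frakg)}(M,\Pi)$ as a closed subspace of $V_0'\otimes_L\Pi$ as in Lemma \ref{lemm:M_isot_is_la}, pass to $N_0$-invariants, and use that $(-)^{N_0}$ of a strict exact sequence of admissible locally analytic $H$-representations is strict exact (as $(-)^{N_0}$ corresponds under duality to tensoring the coadmissible modules with the flat $D(N_0,L)$-module, or more elementarily because $N_0$-invariants of admissible representations is exact with closed image). Thus the short exact sequence of $B$-representations stays exact with strict maps after applying $(-)^{N_0}$.

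The main obstacle I expect is \emph{strictness} in part \eqref{lemm:exactness2}: abstract exactness of $(-)^{N_0}$ is immediate, but controlling the topology requires that all the intervening spaces be of compact type with strict transition maps, which is exactly why Hypothesis \ref{hyp:S_adm_G} (and its consequence Lemma \ref{lemm:hyp_ok}) is needed — it is what forces $\Pi$ to restrict to a ``coadmissible'' shape on $N_0$ and lets one treat $\Hom_{U(\frakg)}(M,\Pi)^{N_0}$ as a closed subrepresentation of an admissible locally analytic $N_0$-representation, where the open mapping theorem applies. A secondary subtlety is checking that the resolutions of Lemma \ref{lemm:resolution} can be chosen $\underline B$-equivariantly and compatibly for a short exact sequence (horseshoe lemma in the category of $U(\frakb)$-modules that are finite dimensional algebraic), but this is routine given Proposition \ref{prop:lift} and the $\underline B$-equivariance statement already built into Lemma \ref{lemm:resolution}.
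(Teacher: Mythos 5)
There is a genuine gap, and it sits exactly at the point where the content of the lemma lives. In part (\ref{lemm:exactness1}) your argument collapses the long exact sequence by asserting that ``the higher $\Ext$ of an induced module vanish: $\Hom_{U(\frakg)}(U(\frakg)\otimes_{U(\frakb)}V,-)$ is exact''. By Frobenius reciprocity this functor is $\Hom_{U(\frakb)}(V,-)$, whose higher derived functors are $\frakb$-cohomology groups $H^{>0}(\frakb, V'\otimes_L(-))$; these do \emph{not} vanish for general coefficients (already $H^1(\frakb,L)\neq 0$), and a finite-dimensional $V$ is never projective over $U(\frakb)$. Likewise the sub-claim that $(-)^{\frakb}$ ``commutes with the relevant exact sequences'' because it is an intersection of kernels is precisely the failure of right-exactness you would need to prove. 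What is true — and what the lemma really asserts — is that this vanishing holds for the specific $\Pi$ of Hypothesis \ref{hyp:S_adm_G}, and your proposal never uses that hypothesis in an essential way for (\ref{lemm:exactness1}). The paper's proof is where that input enters: one writes $\Pi_{|\ZZ_p^s\times H}=\varinjlim_{r<1}\Pi_r$ with $\Pi_r\simeq\Hom_L^{\cont}(D_r(\ZZ_p^s\times H),L)^m$, uses exactness of $M\mapsto M_r=U_r(\frakg)\otimes_{U(\frakg)}M$ and the fact that $D_r(H)$ is finite free over $U_r(\frakg)$, so that $\Hom_{U(\frakg)}(M,\Pi_r)$ becomes the continuous dual of the exact functor $M\mapsto D_r(H)\otimes_{U_r(\frakg)}M_r$. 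Without this (or an equivalent computation of the $\frakb$- or $\frakn$-cohomology of $\mathcal{C}^{\la}(H,L)$), your dévissage through resolutions by induced modules proves only left exactness, which is automatic.

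Part (\ref{lemm:exactness2}) has the same problem in a different guise: the opening claim that ``since $N_0$ is compact and we are in characteristic $0$, taking $N_0$-invariants is exact on abstract vector spaces'' is false for locally analytic (non-smooth) representations of a compact $p$-adic group — there is no averaging projector, and e.g.\ for $\ZZ_p$ acting by translation on $\mathcal{C}^{\la}(\ZZ_p,L)$ the invariants of the quotient by the constants are strictly larger than the image of the invariants. The paper uses compactness and characteristic $0$ only to replace $N_0$ by a finite-index open subgroup (averaging over the finite quotient), and then arranges $N_0=H\cap N$ inside an Iwahori-type factorization $H=(\overline N\cap H)(T\cap H)(N\cap H)$; the actual exactness-with-strictness again comes from Kohlhaase's theorem that $D_r(H)$ is finite free as a right $U_r(\frakg)\otimes_{U_r(\mathfrak{n})}D_r(N_0)$-module, applied to the identification $\Hom_{U(\frakg)}(M,\Pi_r)^{N_0}=\Hom_{U_r(\frakg)\otimes_{U_r(\mathfrak{n})}D_r(N_0)}(M_r,\Pi_r)$. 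Your appeal to a general principle that ``$N_0$-invariants of admissible representations is exact'' (or that the augmentation module is flat over $D(N_0,L)$) is not available; Lemmas \ref{lemm:hyp_ok} and \ref{lemm:N0inv} handle topological strictness but not the algebraic surjectivity that is the heart of the statement. So both parts need the distribution-algebra/freeness argument (or a substitute), and as written the proposal does not supply it.
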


\begin{proof}
  The assertion \ref{lemm:exactness1} is \cite[Lem.~5.2.5]{BHS3}. We
  recall the proof as we will need notation for the proof of
  \ref{lemm:exactness2}. Let $M$ be an object of the category
  $\cO_{\alg}^\infty$. Let $H\subset G$ be a uniform compact open
  pro-$p$-subgroup. Recall (see for example the proof of
  \cite[Prop.~6.5]{STdist}) that
  $\Pi_{|\ZZ_p^s\times H}=\varinjlim_{r<1}\Pi_r$ with
  \[\Pi_r=\Hom_L^{\cont}(D_r(\ZZ_p^s\times H)\otimes_{D(\ZZ_p^s\times
    G,L)}\Pi,L).\] As $M$ is a finitely presented $U(\frakg)$-module,
  we have
  \[
    \Hom_{U(\frakg)}(M,\Pi)\simeq\varinjlim_{r<1}\Hom_{U(\frakg)}(M,\Pi_r)=\varinjlim_r\Hom_{U_r(\frakg)}(M_r,\Pi_r) 
    \]
  with $M_r\coloneqq U_r(\frakg)\otimes_{U(\frakg)}M$.
    Note that
  there exists an integer $m\geq0$ such that
  $\Pi_r\simeq\Hom_L^{\cont}(D_r(\ZZ_p^s\times H),L)^m$. Therefore we
  have
  \begin{align*}
    &\Hom_{D_r(H)}(D_r(H)\otimes_{U(\frakg)}M,\Pi_r)\\\simeq &\Hom_L^{\cont}(D_r(H)\otimes_{U_r(\frakg)}M_r,\Hom_L^{\cont}(D_r(\ZZ_p^s,L),L))^m, \end{align*}
    for $r<1$.
  As the functor $M\mapsto M_r$ is exact and $D_r(H)$ is a finite free
  $U_r(\frakg)$-module, this proves \ref{lemm:exactness1}.

  Now we prove \ref{lemm:exactness2}. As $N_0$ is a compact group and
  $L$ is of characteristic $0$, it is equivalent to prove
  \ref{lemm:exactness2} after replacing $N_0$ by an open
  subgroup. Therefore we can assume that $N_0=H\cap N$ and that
  $H=(\overline{N}\cap H)(T\cap H)(N\cap H)$ where $\overline{N}$ is
  the group of $\QQ_p$-points of the unipotent subgroup of
  $\underline{G}$ opposite to $\underline{N}$. Let $r<1$. The space
  $\Hom_{U(\frakg)}(M,\Pi_r)^{N_0}$ is the space of maps from $M$ to $\Pi_r$ that are equivariant for the actions of $N_0$ and $U(\mathfrak{g})$. Therefore we
  have
  \begin{multline*}
    \Hom_{U(\frakg)}(M,\Pi_r)^{N_0}=\Hom_{U_r(\frakg)\otimes_{U_r(\mathfrak{n})}D_r(N_0)}(M_r,\Pi_r)\\
    \simeq\Hom_L^{\cont}(D_r(H)\otimes_{(U_r(\frakg)\otimes_{U_r(\mathfrak{n})}D_r(N_0))}M_r,\Hom_L^{\cont}(D_r(\ZZ_p^s,L),L))^m.
  \end{multline*}
  As $D_r(H)$ is a finite free right
  $U_r(\frakg)\otimes_{U_r(\mathfrak{n})}D_r(N_0)$-module (see
  \cite[Thm~1.4]{kohlhaase}), this proves the claim.
\end{proof}

\begin{theor}\label{theo:exactness_Jacquet}
  The functor $M\mapsto J_B(\Hom_{U(\frakg)}(M,\Pi))$ from the category
  $\cO_{\alg}^\infty$ to the category of essentially admissible
  representations of $T$ is exact.
\end{theor}

\begin{proof}
  This is essentially a consequence of Lemma
  \ref{lemm:exactness}~\ref{lemm:exactness2} and we conclude as at the
  end of the proof Proposition \ref{prop:Jacquet_complex}.
\end{proof}

\subsection{The case of Banach representations with coefficients}
\label{sec:case-banach-repr}

Let $R$ be a complete local noetherian $\cO_L$-algebra. As above we will write $R^{\rig}$ for the ring of rigid analytic functions on $(\Spf R)^{\rig}$. 
Let $\Pi$ be
an $R$-admissible $R$-Banach representation of the group $G$ (see
\cite[Def.~3.1]{BHS1}). We assume that our representations satisfies
the following property:
\begin{hypothese}\label{hyp:patching}
  there exists an integer $s\geq0$, a local morphism of
  $\cO_L$-algebras $S\coloneqq\cO_L[[\ZZ_p^s]]\rightarrow R$ such that,
  for some (resp.~any) open pro-$p$-subgroup $G_0\subset G$, the
  $S[[G_0]][1/p]$-module $\Pi'\coloneqq\Hom_L^{\cont}(\Pi,L)$ is finite
  free (as a consequence $\Pi$ is also $S$-admissible).
\end{hypothese}
Using the hypothesis, one shows that the $R$-analytic vectors $\Pi^{R-\an}$ and the $S$-analytic vectors $\Pi^{S-\an}$ of $\Pi$ coincide    and they also coincides with the
subspace of $\ZZ_p^s\times G$-locally analytic vectors in $\Pi$ (see
\cite[Prop.~3.8]{BHS1}). We will simply denote this subspace by $\Pi^{\la}$ in what follows. This is a locally analytic representation of
$\ZZ_p^s\times G$ with an action of $R^{\rig}$ commuting with
$G$. Moreover if we forget the $R^{\rig}$-action, the representation
$\Pi^{\la}$ satisfies Hypothesis \ref{hyp:S_adm_G}.

In the following we will write $\widehat T$ for the rigid analytic space of continuous characters of $T$ and $\widehat T_0$ for the space of continuous characters of the maximal compact subgroup $T_0\subset T$. We recall that the ring of rigid analytic functions on $\widehat{T}_0$ is identified with the algebra $D(T_0,L)$ of $L$-valued distributions on $T_0$. 
Restriction to $T_0$ defines a canonical projection $\widehat T\rightarrow \widehat T_0$. Moreover, the derivative of a character at $1$ defines a \emph{weight map} 
\begin{equation}\label{eqn: weight map}
{\rm wt}: \widehat T_0\rightarrow \mathfrak{t}^\ast,
\end{equation}
where by abuse of notation we write $\mathfrak{t}^\ast$ for the rigid analytic space associated to the $L$-vector space $\mathfrak{t}^\ast$. The map ${\rm wt}$ is \'etale and locally finite. Moreover, \'etaleness implies that for any character $\delta_0:T_0\rightarrow L^\times$ we can identify the tangent space of $\widehat T_0$ at $\delta_0$ with the $L$-vector space $\mathfrak{t}^\ast$. 

\begin{lemma}\label{lemm:JB_R_ess_adm}
  For any object $M$ in $\cO_\alg^\infty$, the dual $J_B(\Hom_{U(\frakg)}(M,\Pi^{\la}))'$ of the Emerton-Jacquet module $J_B(\Hom_{U(\frakg)}(M,\Pi^{\la}))$ 
   is coadmissible as an $R^{\rig}\hat\otimes_L\mathcal{O}(\widehat T)$-module. 
\end{lemma}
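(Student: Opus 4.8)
The plan is to reduce the coadmissibility of $J_B(\Hom_{U(\frakg)}(M,\Pi^{\la}))'$ to known coadmissibility statements for $J_B(\Pi^{\la})'$, using the resolution of $M$ provided by Lemma \ref{lemm:resolution} and the exactness properties already established. First I would recall that by Proposition \ref{prop:ess_adm} the representation $J_B(\Hom_{U(\frakg)}(M,\Pi^{\la}))$ is essentially admissible as a locally analytic representation of $\ZZ_p^s\times T$, so that its dual is already a coadmissible $\mathcal{O}(\widehat{\ZZ_p^s}\times \widehat T)$-module; what remains is to upgrade this to a module structure over $R^{\rig}\hat\otimes_L\mathcal{O}(\widehat T)$, using that $R$ is a complete local noetherian $\cO_L$-algebra that is module-finite (as we will see, finite free in the relevant sense) over $S=\cO_L[[\ZZ_p^s]]$ by Hypothesis \ref{hyp:patching}. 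Concretely, the point is that $R^{\rig}$ is a coadmissible $S^{\rig}$-module (finite, in fact, or at least the spectral-theory arguments of \cite{BHS1} apply), and the $R^{\rig}$-action on $\Pi^{\la}$ commutes with the $G$-action and hence with everything appearing in the construction of $\Hom_{U(\frakg)}(M,-)$ and of $J_B$; so the $R^{\rig}\hat\otimes_L\mathcal{O}(\widehat T)$-module structure on the dual is just the refinement of the $\mathcal{O}(\Spf(S)^{\rig})\hat\otimes_L\mathcal{O}(\widehat T)$-structure along $S^{\rig}\to R^{\rig}$.

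Next I would carry out the dévissage. Choose a resolution $U(\frakg)\otimes_{U(\frakb)}V_1\to U(\frakg)\otimes_{U(\frakb)}V_0\to M\to 0$ as in Lemma \ref{lemm:resolution} with $V_0,V_1$ finite dimensional algebraic $\underline B$-modules. Applying the exact functor $\Hom_{U(\frakg)}(-,\Pi^{\la})$ (exactness by Lemma \ref{lemm:exactness}\ref{lemm:exactness1}) and then the left-exact functor $J_B$ (\cite[Lem.~3.4.7.(iii)]{EmertonJacquetI}) gives a left-exact sequence
\[ 0\to J_B(\Hom_{U(\frakg)}(M,\Pi^{\la}))\to J_B(\Hom_{U(\frakg)}(U(\frakg)\otimes_{U(\frakb)}V_0,\Pi^{\la}))\to J_B(\Hom_{U(\frakg)}(U(\frakg)\otimes_{U(\frakb)}V_1,\Pi^{\la})). \]
Dualizing, $J_B(\Hom_{U(\frakg)}(M,\Pi^{\la}))'$ is a quotient (hence a cokernel) of a map between the duals of the outer two terms, so it suffices to prove coadmissibility for the induced modules $U(\frakg)\otimes_{U(\frakb)}V$ with $V$ a finite dimensional algebraic $\underline B$-representation; since such $V$ is a successive extension of rank-one objects and coadmissible modules are stable under extensions, one reduces to $V$ one-dimensional with $V^{\mathfrak n}=V$. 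For such $V$, the computation in the proof of Proposition \ref{prop:ess_adm} gives $J_B(\Hom_{U(\frakg)}(U(\frakg)\otimes_{U(\frakb)}V,\Pi^{\la}))\simeq \Hom_{U(\mathfrak t)}(V,J_B(\Pi^{\la}))$, i.e. (up to the algebraic twist by $V$) the generalized $\mathrm{wt}(V)$-eigenspace of $J_B(\Pi^{\la})$ for the derived torus action — which is cut out from $J_B(\Pi^{\la})$ by the ideal generated by a finite power of the kernel of a single character of $U(\mathfrak t)$.

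Then I would invoke the known coadmissibility of $J_B(\Pi^{\la})'$ as an $R^{\rig}\hat\otimes_L\mathcal{O}(\widehat T)$-module: this is exactly \cite[Prop.~3.4 and its proof]{BHS1} (following \cite[Thm.~0.5]{EmertonJacquetI}), where one shows $J_B(\Pi^{\la})'$ is coadmissible over $R^{\rig}\hat\otimes_L D(T_0,L)$, equivalently over $R^{\rig}\hat\otimes_L\mathcal{O}(\widehat T)$ via the étale weight map (\ref{eqn: weight map}). The operation of passing to a generalized eigenspace for the $U(\mathfrak t)$-action is the pullback along a closed immersion of rigid spaces — restricting $\widehat T$ (or rather $\mathfrak t^\ast$ via $\mathrm{wt}$) to the formal neighborhood cut out by a power of a maximal ideal of $U(\mathfrak t)$ — and coadmissible modules push forward along closed immersions and hence stay coadmissible; equivalently, tensoring a coadmissible module by the finite-rank $\mathcal{O}(\widehat T)$-module $U(\mathfrak t)/(\ker\mu)^k$ preserves coadmissibility because $U(\mathfrak t)$ is finitely generated over $L$ (the point already used at the end of the proof of Proposition \ref{prop:ess_adm}). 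Finally the algebraic twist by the one-dimensional $V$ is just translation on $\widehat T$ and is harmless. Taking cokernels and extensions then gives the result for general $M$.

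\textbf{Main obstacle.} The conceptual content is light — it is a formal consequence of what precedes — but the genuine technical point, and the place where care is needed, is the interplay between two different notions of "coadmissible": coadmissibility over the Fréchet–Stein algebra $R^{\rig}\hat\otimes_L D(T_0,L)$ (the natural home of $J_B(\Pi^{\la})'$ by \cite{BHS1}) versus coadmissibility as an $R^{\rig}\hat\otimes_L\mathcal{O}(\widehat T)$-module. One must check that the étale, locally finite weight map $\mathrm{wt}$ lets one go back and forth (so that essential admissibility of the $T$-representation, i.e. coadmissibility over $\mathcal{O}(\widehat T_0)\hat\otimes$(nuclear Fréchet part), matches up with coadmissibility over $\mathcal{O}(\widehat T)$), and that forming the generalized $U(\mathfrak t)$-eigenspace — a purely infinitesimal operation at the weight — is compatible with the coadmissible module structure on the whole of $\widehat T$. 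Once these identifications are in place, the dévissage is routine.
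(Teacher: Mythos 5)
Your dévissage is the paper's argument: a resolution from Lemma \ref{lemm:resolution}, exactness of $\Hom_{U(\frakg)}(-,\Pi^{\la})$, left exactness of $J_B$, reduction to $\Hom_{U(\mathfrak t)}(V,J_B(\Pi^{\la}))$ for $V$ one-dimensional, and the closure of coadmissible modules under kernels, cokernels, extensions and quotients by the ideal coming from the $U(\mathfrak t)$-action — indeed the paper's proof just says it is "the same proof as Proposition \ref{prop:ess_adm}", with the base case supplied by \cite[Prop.~3.4]{BHS1}. The genuine gap is in the mechanism you give for obtaining coadmissibility over $R^{\rig}\hat{\otimes}_L\mathcal{O}(\widehat T)$ rather than merely over $S^{\rig}\hat{\otimes}_L\mathcal{O}(\widehat T)$. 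You claim that Hypothesis \ref{hyp:patching} makes $R$ module-finite (even finite free) over $S=\cO_L[[\ZZ_p^s]]$ and that the $R^{\rig}$-structure is then a harmless "refinement" along $S^{\rig}\to R^{\rig}$. This is false: the hypothesis only asks that $\Pi'$ be finite free over $S[[G_0]][1/p]$ and imposes no finiteness of $R$ over $S$; in the intended application $R=R_\infty=R^{\mathrm{loc}}[[x_1,\dots,x_g]]$ and $S=S_\infty$, where $\dim R_\infty-\dim S_\infty=[F^+:\QQ]\tfrac{n(n+1)}{2}-1>0$, so $R_\infty$ is certainly not finite over $S_\infty$. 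And without such finiteness, a compatible $R^{\rig}$-action on a module that is coadmissible over $S^{\rig}\hat{\otimes}_L\mathcal{O}(\widehat T)$ does not automatically make it coadmissible over the larger Fréchet--Stein algebra $R^{\rig}\hat{\otimes}_L\mathcal{O}(\widehat T)$; passing "upward" along a non-surjective, non-finite map is exactly the point at issue.

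The correct device — used in the paper's proof and in the proof of \cite[Prop.~3.4]{BHS1} — is to ignore the structural map $S\to R$ and instead pick $s'\gg 0$ and a surjection $\cO_L[[\ZZ_p^{s'}]]\twoheadrightarrow R$ (possible because $R$ is complete local noetherian): then $J_B(\Pi^{\la})$ is essentially admissible as a representation of $\ZZ_p^{s'}\times T$, and since the $\cO_L[[\ZZ_p^{s'}]]^{\rig}$-action factors through the quotient $R^{\rig}$, coadmissibility over $R^{\rig}\hat{\otimes}_L\mathcal{O}(\widehat T)$ follows because coadmissibility descends along surjections of Fréchet--Stein algebras. If, as you in fact do at the start of your third paragraph, you take \cite[Prop.~3.4]{BHS1} as a black box asserting that $J_B(\Pi^{\la})'$ is coadmissible over $R^{\rig}\hat{\otimes}_L\mathcal{O}(\widehat T)$, then your dévissage does prove the lemma and coincides with the paper's proof; but the finiteness claim cannot serve as a substitute for that input, and the "main obstacle" you single out (translating between $D(T_0,L)$ and $\mathcal{O}(\widehat T)$ via the weight map) is not the real crux: essential admissibility is by definition coadmissibility of the dual over $\mathcal{O}(\widehat T)$, and $D(T_0,L)\simeq\mathcal{O}(\widehat T_0)$ canonically, whereas the step that genuinely needs an argument is the $R$-direction you mishandled.
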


\begin{proof}
  This is essentially the same proof than for Proposition
  \ref{prop:ess_adm} using the fact that $J_B(\Pi^{\la})$ is
  essentially admissible as a representation of $\ZZ_p^{s'}\times T$
  for any $s'$ and surjection
  $\cO_L[[\ZZ_p^{s'}]]\twoheadrightarrow R$ by \cite[Prop.~3.4]{BHS1}.
\end{proof}

Let $M$ be an object of $\cO_{\alg}^{\infty}$. It follows from Lemma
\ref{lemm:JB_R_ess_adm} that there exists a unique up to unique
isomorphism coherent sheaf $\mathcal{M}_{\Pi}(M)$ on
$\Spf(R)^{\rig}\times\widehat{T}$ such that
\[
  \Gamma(\Spf(R)^{\rig}\times\widehat{T},\mathcal{M}_\Pi(M))=J_B(\Hom_{U(\frakg)}(M,\Pi^{\la}))'. \]
In particular we obtain a functor from $\mathcal{O}_{\rm alg}^\infty$ to the category of coherent sheaves on $\Spf(R)^{\rig}\times \widehat T$. 
\begin{theor}\label{prop:Cohen_Mac}
  The coherent sheaf $\mathcal{M}_\Pi(M)$ on
  $\Spf(R)^{\rig}\times\widehat{T}$ is, locally on
  $\Spf(R)^{\rig}\times\widehat{T}$, finite free over
  $\Spf(S)^{\rig}$. In particular, if nonzero, it is Cohen--Macaulay
  of dimension $s$.
\end{theor}

\begin{proof}
  Let $T_0$ be the maximal compact subgroup of $T$ and let
  $\widehat{T_0}$ be the rigid analytic space of characters of $T_0$
  over $L$. Set $N\coloneqq J_B(\Hom_{U(\frakg)}(M,\Pi^{\la}))'$. It
  follows from the proof of \cite[Prop.~3.11]{BHS1} that there exists
  a family $\mathcal{I}$ of pairs $(U,V)$ where $U$ is a rational open
  subset of $\Spf(R)^{\rig}\times\widehat{T}$ and $V$ is a rational open
  subset of $\Spf(S)^{\rig}\times\widehat{T_0}$ such that $V$ is the
  image of $U$ and such that $\Supp(\mathcal{M}_\Pi(M))\subset\bigcup_{(U,V)\in\mathcal{I}}U$.  Moreover, we may assume that $\Gamma(U,\mathcal{M}_\Pi(M))$ is a finite
  projective $\cO(V)$-module that is a direct factor of
  $\mathcal{O}(V)\hat{\otimes}_{S^{\rig}\hat{\otimes}_LD(T_0,L)}N$. 
  
 After shrinking each $U$ and $V$ if necessary, we may even assume (by the construction of the family $\mathcal{I}$) that for each
  $(U,V)\in\mathcal{I}$, the rational open $V$ is of the form
  $V_1\times V_2$ with $V_1$ rational open in $\Spf(S)^{\rig}$ and
  $V_2$ rational open in $\widehat{T_0}$. It is sufficient to prove
  that, for any pair $(U,V_1\times V_2)\in\mathcal{I}$, the
  $\cO(V_1)$-module $\Gamma(U,\mathcal{M}(M))$ is finitely generated
  and flat.

  The map $V_2\rightarrow\mathfrak{t}^*$ has finite fibers (as the weight map is locally finite), and hence there
  are only finitely many points of $V_2$ lying over a given character of
  $U(\mathfrak{t})$. It thus follows from Lemma \ref{lemm:action_Ut}
  that the action of $L[T_0]$ on $\Gamma(U,\mathcal{M}(M))$ factors
  through a finite dimensional quotient. It follows that
  $\Gamma(U,\mathcal{M}(M))$ is finitely generated over $\cO(V_1)$.

  Let $\frakm\subset\cO(V_1)$ be a maximal ideal. As $\cO(V_1)$ is an
  affinoid $L$-algebra, $\frakm$ is closed in $\cO(V_1)$ and
  $\cO(V_1)/\frakm$ is a finite extension of $L$. As the image of
  $S^{\rig}$ in $\cO(V_1)$ is dense, we have
  $S^{\rig}/(S^{\rig}\cap\frakm)\simeq\cO(V_1)/\frakm$. The ideal
  $\fraka\coloneqq S^{\rig}\cap\frakm$ of $S^{\rig}$ is finitely
  generated by Lemma \ref{lemm:resolution_S}, so that the sheaf
  $\fraka\otimes_{S^{\rig}}\mathcal{M}_\Pi(M)$ is coherent and
  \[
    \Gamma(\Spf(R)^{\rig}\times\widehat{T},\fraka\otimes_{S^{\rig}}\mathcal{M}_\Pi(M))\simeq
    \fraka\otimes_{S^{\rig}}\Gamma(\Spf(R)^{\rig}\times\widehat{T},\mathcal{M}_\Pi(M)). \]
  As the functor $\mathcal{M}\mapsto\Gamma(U,\mathcal{M}_\Pi)$ is
  exact on the category of coherent sheaves, we have an isomorphism
  \[
    \Gamma(U,\fraka\otimes_{S^{\rig}}\mathcal{M}_\Pi(M))\simeq\fraka\otimes_{S^{\rig}}\Gamma(U,\mathcal{M}_\Pi(M))\simeq\frakm\otimes_{\cO(V_1)}\Gamma(U,\mathcal{M}_\Pi(M)). \]
  Therefore we deduce from Proposition \ref{prop:almost_flatness} that
  the map
  \[
    \frakm\otimes_{\cO(V_1)}\otimes\Gamma(U,\mathcal{M}_\Pi(M))\longrightarrow\Gamma(U,\mathcal{M}(M)) \]
  is injective. This implies that $\Gamma(U,\mathcal{M}_\Pi(M))$ is a flat
  $\cO(V_1)$-module.
\end{proof}

\begin{cor}\label{coro:Jacquet_conclusion}
  Assume that the representation $\Pi$ satisfies Hypothesis \ref{hyp:patching}. Then the functor $M\mapsto\mathcal{M}_\Pi(M)$ is an exact functor from
  the category $\cO_{\alg}^\infty$ to the category of Cohen--Macaulay
  sheaves on $\Spf(R)^{\rig}\times\widehat{T}$. Moreover if
  $\mathcal{M}_\Pi(M)$ is nonzero, its support is $s$-dimensional, where $s$ is as in Hypothesis \ref{hyp:patching}.
\end{cor}

\subsection{Comparison with the parabolic Jacquet functor}
\label{sec:comp-with-jacq}

Let $\Pi$ be an $R$-admissible Banach representation of $G$ satisfying
hypothesis \ref{hyp:patching}. We end this section by computing the evaluation of $\mathcal M_\Pi$ on generalized (deformed) Verma  modules in terms of Emerton's parabolic Jacquet-module.

Let $I\subset \Delta$ be a subset of simple roots. Let
$\lambda\in X^*(\underline T)_I^+$ be an algebraic character dominant
with respect to $\frakp_I$. Recall that, by
\cite[\S3.4]{EmertonJacquetI}, the $L$-representation
$J_{P_I}(\Pi^{\la})$ of $L_I$ is locally analytic. Following
\cite[\S5.2]{Wu2}, we define
\begin{align*} 
J_{P_I}(\Pi^\la)_\lambda&\coloneqq\Hom_{U(\mathfrak
    l^{\mathrm{ss}}_I)}(L_I(\lambda),J_{P_I}(\Pi^{\la}))\otimes_LL_I(\lambda) \\
 J_{I,\lambda}(\Pi^{\la})&\coloneqq J_{B\cap L_I}(J_{P_I}(\Pi^{\la})_\lambda). 
 \end{align*}
Similarly to Lemma \ref{lemm:JB_R_ess_adm} we have the following finiteness result:
\begin{prop}\label{prop:J_I_essadm}
  The $R^{\rig}\widehat\otimes_L\cO(\widehat T)$-module
  $J_{I,\lambda}(\Pi^{\la})'$ is coadmissible.
\end{prop}

\begin{proof}
  This is a consequence of \cite[Lemm.~5.1 \& 5.2]{Wu2}.
\end{proof}
By the above proposition there is a coherent sheaf $\mathcal M_{\Pi}^{I,\lambda}$  on
$\Spf(R)^{\rig}\times\widehat T$ such that
\[\Gamma(\Spf(R)^{\rig}\times\widehat T,\mathcal
M^{I,\lambda}_\Pi)=J_{I,\lambda}(\Pi^{\la})'.\]
For $k\geq1$, let $\widehat T_k^{\sm}$ be the $k$-th infinitesimal
neighborhood of the closed subspace $\widehat T^{\rm sm}$ of smooth characters in $\widehat T$ and let $i_k$ be
the closed immersion of $\widehat T_k^{\sm}$ in $\widehat T$. Moreover, for $\lambda \in X^*(T) \subset \widehat{T}$, we write $t_\lambda : \widehat{T}\fleche \widehat{T}$ for the map defined by $t_\lambda(\delta) = \delta\lambda$.

\begin{prop}\label{prop:comp_Jacquet_parabolic}
  Let $\lambda\in X^*(\underline{T})_I^+$ be an algebraic character of
  $\underline{T}$ dominant with respect to $\underline P_I$ and let
  $M=\widetilde
  M_I(\lambda)\otimes_{A_I}A_I/\frakm^k\in\cO_{\alg}^\infty$. Then
  there is an isomorphism of coherent sheaves on
  $\Spf(R)^{\rig}\times\widehat{T}$:
  \[ \mathcal{M}_\Pi(M)\simeq
    i_{k,*}i_k^*t_\lambda^*\mathcal{M}_\Pi^{I,\lambda}. \] \end{prop}

\begin{proof}
  Using the left exactness of the functor $J_{P_I}(-)$, we have an
  isomorphism an $R^{\rig}$-equivariant morphism of locally analytic
  representations of $L_I$ :
  \[
    J_{P_I}(\Hom_{U(\frakg)}(\widetilde
    M_I(\lambda)\otimes_{A_I}A_I/\frakm^k,
    \Pi^{\la}))\simeq\Hom_{U(\mathfrak
      l_I)}(L_I(\lambda)\otimes_LA_I/\frakm^k,J_{P_I}(\Pi^{\la})). \]
  Therefore
  \begin{multline*} \Hom_{U(\mathfrak
      t)}(\lambda\otimes_LA_I/\frakm^k,J_{B\cap
      L_I}(J_{P_I}(\Pi^{\la})_\lambda))\\
      \simeq J_{B\cap L_I}(\Hom_{U(\mathfrak
        t)}(A_I/\frakm^k,\Hom_{U(\mathfrak
        l_I^{\mathrm{ss}})}(L_I(\lambda),J_{P_I}(\Pi^\la))))\\
        = J_{B\cap L_I}(\Hom_{U(\mathfrak
          l_I)}(L_I(\lambda)\otimes_LA_I/\frakm^k,J_{P_I}(\Pi^\la))) \\
        \simeq J_{B\cap
          L_I}(J_{P_I}(\Hom_{U(\mathfrak{p}_I)}(L_I(\lambda)\otimes_LA_I/\frakm^k,\Pi^\la))) \\
        \simeq J_B(\Hom_{U(\frakg)}(\widetilde
        M_I(\lambda)\otimes_{A_I}A_I/\frakm^k,\Pi^\la))
  \end{multline*}
  where the first isomorphism comes from \cite[Lemm.~5.3]{Wu2}. The claim now follows form the fact that the source of this chain of isomorphisms is the dual (of the global sections) of $ i_{k,*}i_k^*t_\lambda^*\mathcal{M}_\Pi^{I,\lambda}$ and the target is the dual of $\mathcal{M}_\Pi(M)$.
\end{proof}

\section{Quasi-trianguline local deformation rings}
\label{sec:quasi-triang-local}

Let $F$ be a finite extension of $\QQ$. We keep the notation of
section \ref{sec:emert-jacq-funct} but we specialize ourselves to the
case
$\underline G =
\Res_{(F\otimes_{\QQ}\QQ_p)/\QQ_p}(\GL_{n,F\otimes_{\QQ}\QQ_p})\simeq\prod_{v
  | p} \Res_{F_v/\QQ_p} \GL_{n,F_v}$. We fix $\underline B$ the upper
triangular Borel subgroup and $\underline T$ the diagonal torus. It is
therefore sufficient to choose $L$ a finite extension of $\QQ_p$
splitting all the $F_v$. 
We point out that, though the field $L$ of coefficients is the same as in the preceding section, the group $\underline G$ in this section should be considered as the Langlands dual group of the group in section \ref{sec:emert-jacq-funct}.

Let $\Sigma_F$ be the set of embeddings of
$F$ in $L$. This set can be decomposed as  $\Sigma_F=\coprod_{v|p}\Sigma_{F_v}$, where
$\Sigma_{F_v}$ is the set of $\QQ_p$-linear embeddings of $F_v$ into
$L$ and where the index set is the set  of places $v$ of $F$
that divide $p$. We have a decomposition
\[ \frakg\simeq(\bigoplus_{\tau\in\Sigma_F}\Lie(\underline
  G)\otimes_{F\otimes_{\QQ}\QQ_p,\tau}L)\simeq\bigoplus_{\tau\in\Sigma_F}\Lie(\GL_{n,L}). \]
Let $\Delta$ be the set of simple roots of $\underline G_L$ with
respect to $\underline B_L$. Then
\[ \Delta=\coprod_{\tau\in\Sigma_F}\Delta_\tau, \quad
  \Delta_\tau=\set{\alpha_{1,\tau}, \dots,\alpha_{n-1,\tau}} \] where
$\alpha_{1,\tau},\dots,\alpha_{n-1,\tau}$ are the simple roots of the
copy of $\Lie(\GL_{n,L})$ corresponding to $\tau$. For
$I\subset\Delta$ we denote $\underline P_I$ the standard parabolic
subgroup of $\underline G_L$ corresponding to $I$. 

\subsection{Local models}
\label{sec:local-models}

Let
$\widetilde{\mathfrak g} \coloneqq \underline G_L\times^{\underline
  B_L}\frakb$ be the Grothendieck--Springer resolution of $\frakg$ (which is considered as a scheme over $L$ not just as a vector space in this section). We
have a closed embedding
$\widetilde{\frakg}\hookrightarrow\underline
G_L/\underline B\times\frakg$ given by
$(g\underline B,X)\mapsto(b\underline B,\Ad(g)X)$ and set
\[ X \coloneqq \widetilde{\frakg} \times_{\mathfrak g}
  \widetilde{\frakg}\subset\underline G_L/\underline
  B_L\times\frakg\times\underline G_L/\underline B_L. \] More
generally if $I\subset\Delta$, we set
\[ \widetilde{\frakg}_{\frakp_I}^0\coloneqq \underline
  G_L\times^{\underline P_I}(\mathfrak z_I\oplus\mathfrak n_I) \]
where we recall that $\underline{P}_I$ is the parabolic subgroup of $\underline{G}$ associated to $\Delta$ and $\mathfrak p_I$ is its Lie algebra. Moreover, we write $\mathfrak{z}_I$ for the center of $\mathfrak{p}_I$ and $\mathfrak{n}_I$ for its unipotent radical.
Again we consider all these $L$-vector spaces as $L$-schemes.
We
have also a closed embedding
$\widetilde{\frakg}_{\frakp_I}^0\hookrightarrow\underline
G_L/\underline P_I\times\frakg$ given by
$(g\underline P_I,X)\mapsto(g\underline P_I,\Ad(g)X)$ and we set
\[ X_{\frakp_I} \coloneqq
  \widetilde{\frakg}_{\frakp_I}^0\times_{\frakg}\widetilde{\frakg}\hookrightarrow
  \underline G_L/\underline P_I\times\frakg\times\underline
  G_L/\underline B_L. \] In particular we have $X_\mathfrak b = X$. There scheme $X_{\frakp_I}$ decomposes into irreducible components as follows:
\[ X_{\frakp_I} = \bigcup_{w \in W_I\backslash W} X_{\frakp_I,w}
  \subset \underline G_L/\underline P_I\times \frakg \times \underline
  G_L/\underline B.\] Here $X_{\frakp_I,w}$ is the closure of on open subset $V_{\mathfrak p_I,w}\subset X_{\frakp_I}$, which is by definition the preimage of the $\underline G$-orbit of
$\underline G\cdot(1,\widetilde w)\subset \underline G/\underline P_I \times \underline G/\underline B$, where $\widetilde w\in W$ is a lift of $w\in W_I\backslash W$ (see \cite[Cor.~5.2.2]{BreuilDing} for details). 
In this paper we need to control the singularities of $X_{\frakp_I}$. Even though, for our purpose, the result of \cite[Rk.~4.1.6]{BHS3} would be sufficient, we mention the following more general result.
\begin{prop}\label{prop:smoothpartialsteinbergcomp}
 Let $ w \in W$. Then $X_w$ is smooth if, and only if, $w$ is a product of distinct simple
  reflections.
\end{prop}

\begin{proof}
We note that the natural action
\[t \cdot (g\underline B,h\underline B,N) = (g\underline B,h\underline B,tN)\] of $\mathbb{G}_m$ on $X$ by scaling on the $\mathfrak g$-factor extends to an action of the monoid $\mathbb{A}^1$. This action obviously preserves each $X_w$. 
As the singular locus is closed the non-singular locus, if non-empty, contains a point of the form $(g\underline{B}h\underline{B}, 0)$
We will thus prove the previous proposition using \cite{BHS3} Proposition 2.5.3 (ii). 

We first assume that $w$ is a product of distinct simple reflections. In this case it is enough to prove that
\begin{enumerate}[a)]
\item\label{smooth1} $\overline U_w$ is smooth in $\underline G_L/\underline B\times \underline G_L/\underline B$;
\item\label{smooth2} $\mathfrak t^{ww'^{-1}}$ has codimension $\lg(w)-\lg(w')$ in $\mathfrak t$ for all $w' \leq w$ for Bruhat ordering (with $\lg$ the Bruhat length). 
\end{enumerate}
By Fan's Theorem \cite[Theorem 7.2.14]{BL}, if $w$ is a product of
distinct simples reflexions, then $\overline U_w$ is smooth and
\ref{smooth1} is true. Thus we only need to prove \ref{smooth2}.
For $w \in W$, let us introduce
\[ \ell(w) \coloneqq \min \{ k \geq 0 \mid w = r_1 \dots r_k, r_k \in W \text{ a reflection}\}\]
(we recall that reflection is an element of the form $s_\alpha$ where $\alpha\in\Phi$ is a root, but not necessarily a simple root).
By \cite[Lemma 2]{Carter} and \cite[Lemma 2.7]{BHS2} we have $\ell(w) = \dim_L \mathfrak t - \dim_L \mathfrak t^w = d_w$ (in the notations of \cite{BHS2}).

\begin{claim}\label{claim:ell}
If $w$ is a product of distinct simple reflections, we have \[\ell(ww'^{-1}) = \ell(w)-\ell(w') = \lg(w)-\lg(w')\] for all $w' \leq w$.
\end{claim}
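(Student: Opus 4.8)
The plan is to deduce all three displayed equalities from a single statement, namely that $w'$ lies below $w$ in the \emph{absolute order}: $\ell(w')+\ell(w'^{-1}w)=\ell(w)$. First I would dispatch two reductions. If $w=s_{i_1}\cdots s_{i_r}$ with the $s_{i_j}$ pairwise distinct simple reflections, this word is reduced, so $\lg(w)=r$, and the simple coroots $\alpha_{i_1}^\vee,\dots,\alpha_{i_r}^\vee$ are linearly independent; hence $w$ is a product of $r$ reflections with linearly independent roots, which by \cite[Lemma 2]{Carter} is of minimal length, so $\ell(w)=r=\lg(w)$. By the (strong) subword property of the Bruhat order, $w'\le w$ forces $w'$ to be obtained from $(s_{i_1},\dots,s_{i_r})$ by deleting some letters; in particular $w'$ is again a product of distinct simple reflections with $\mathrm{supp}(w')\subseteq\mathrm{supp}(w)$, and $\ell(w')=\lg(w')$ for the same reason. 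Since reflection length is a conjugation-invariant function, $\ell(ww'^{-1})=\ell(w'^{-1}w)$; so once the absolute-order statement is established, $\ell(ww'^{-1})=\ell(w)-\ell(w')=\lg(w)-\lg(w')$, as claimed.

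The core step is to prove, by induction on $r=|\mathrm{supp}(w)|$, that $w'\le w$ (Bruhat) implies $\ell(w')+\ell(w'^{-1}w)=\ell(w)$. The case $r=0$ is trivial. For $r\ge1$, fix a reduced word $w=s_{i_1}\cdots s_{i_r}$ and set $s:=s_{i_1}$, $v:=s_{i_2}\cdots s_{i_r}$, so $w=sv$, $v$ is a product of $r-1$ distinct simple reflections, and $s\notin\mathrm{supp}(v)$. A reduced word of $w'$ appears as a subword of $(s_{i_1},\dots,s_{i_r})$, and I split into two cases. If this subword does not use the first letter, then $w'\le v$ and the inductive hypothesis gives $\ell(w')+\ell(w'^{-1}v)=\ell(v)=r-1$; writing $w'^{-1}w=(w'^{-1}v)(v^{-1}sv)$ with $v^{-1}sv$ a reflection, it suffices to show that multiplying by $v^{-1}sv$ raises $\ell$ by one. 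If the subword does use the first letter, write $w'=sv''$ with $v''\le v$; then $w'^{-1}w=v''^{-1}v$, the hypothesis applies to $(v,v'')$ giving $\ell(v''^{-1}v)=r-1-\ell(v'')$, and it suffices to show $\ell(sv'')=\ell(v'')+1$.

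Both remaining points rely on the elementary fact that, for a reflection $t$ with coroot $\beta^\vee$ and any $u\in W$, one has $\ell(ut)=\ell(u)\pm1$, with the value $\ell(u)+1$ occurring exactly when $\beta^\vee\notin\mathrm{Mov}(u):=(u-\mathrm{id})\mathfrak t$. This follows from $\dim_L\mathrm{Mov}(u)=\ell(u)$ (the identity $\ell(u)=\dim_L\mathfrak t-\dim_L\mathfrak t^u$ recalled above, together with \cite[Lemma 2.7]{BHS2}), the equality $\mathrm{Mov}(ut)+L\beta^\vee=\mathrm{Mov}(u)+L\beta^\vee$, and the parity constraint $\det(u)=(-1)^{\ell(u)}$ on $\mathfrak t$. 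Now $\mathrm{Mov}(v)$ is contained in the span $\mathfrak t_v$ of the simple coroots attached to $\mathrm{supp}(v)$ (as $v$ is a product of such reflections), and likewise $\mathrm{Mov}(w'')$, $\mathrm{Mov}(w')$, hence also $\mathrm{Mov}(w'^{-1}v)$, lie in $\mathfrak t_v$; since $s\notin\mathrm{supp}(v)$, linear independence of simple coroots gives $\alpha_s^\vee\notin\mathfrak t_v$, and similarly $v^{-1}(\alpha_s^\vee)\in\alpha_s^\vee+\mathrm{Mov}(v)\subseteq\alpha_s^\vee+\mathfrak t_v$ is not in $\mathfrak t_v$. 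Applying the fact above with $u=w'^{-1}v$, $t=v^{-1}sv$ in the first case and with $u=v''^{-1}$, $t=s$ (using $\ell(sv'')=\ell(v''^{-1}s)$) in the second yields the required $+1$. Combined with the reductions, this proves the claim.

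The main obstacle is purely bookkeeping: organising the induction so that in each case exactly the right coroot is seen to avoid the right moved space. Everything else is formal, and one could alternatively appeal to known descriptions of the interval below a Coxeter element in the absolute order, using that a product of distinct simple reflections is a Coxeter element of the parabolic subgroup $W_{\mathrm{supp}(w)}$.
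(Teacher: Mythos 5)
Your proof is correct, but it takes a genuinely different route from the paper's. The paper obtains the second equality the same way you do (Carter: the distinct simple reflections have linearly independent roots, so $\ell(w)=\lg(w)$ and, via the subword property, $\ell(w')=\lg(w')$), but it then proves the two inequalities separately: the upper bound $\ell(ww'^{-1})\le\lg(w)-\lg(w')$ by an explicit rewriting of $ww'^{-1}$ as a product of $\lg(w)-\lg(w')$ reflections (conjugates of the letters of $w$ not used by the subword giving $w'$), and the lower bound $\ell(ww'^{-1})\ge\ell(w)-\ell(w')$ by a separate induction (Claim \ref{claim:elllowerbound}) that only needs the crude estimate $\dim_L(\mathfrak t^{ws}\cap\mathfrak t^{s})\ge\dim_L\mathfrak t^{ws}-1$. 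You instead prove the exact identity $\ell(w')+\ell(w'^{-1}w)=\ell(w)$ (i.e.\ that $w'$ lies below $w$ in absolute order) by a single induction on the support, powered by the sharper local criterion that right multiplication by a reflection with coroot $\beta^\vee$ raises reflection length exactly when $\beta^\vee\notin(u-\mathrm{id})\mathfrak t$; your justification of that criterion (rank--nullity giving $\ell(u)=\dim_L(u-\mathrm{id})\mathfrak t$, the two-sided containment of moved spaces up to $L\beta^\vee$, and the determinant parity) is sound, and the support bookkeeping in the two cases of the induction (first letter used or not, with $\alpha_s^\vee$ and $v^{-1}\alpha_s^\vee$ lying outside the span $\mathfrak t_v$ of the coroots in the support of $v$) checks out. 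What your approach buys is a single self-contained argument that pinpoints exactly when multiplication by a reflection increases $\ell$, eliminating the paper's explicit word manipulation; what the paper's approach buys is that each half is very short, and its lower-bound lemma is stated for arbitrary $w$ (only $w'$ needs to be a product of distinct simple reflections), which is all that is needed there, whereas your exact identity genuinely uses the hypothesis on $w$. Your closing remark that one could instead invoke the known description of the absolute-order interval below a (parabolic) Coxeter element is also a legitimate alternative, though the inductive argument you give is already complete.
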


If Claim \ref{claim:ell} is true, we have $\ell(ww'^{-1}) = \dim \mathfrak t - \dim \mathfrak t^{ww'^{-1}} = \lg(w)-\lg(w')$ thus Proposition 2.5.3 of \cite{BHS3} applies and $X_w$ is smooth. We now prove the claim. The second equality of the claim is a consequence of \cite[Lemma 3]{Carter} as $w$ and $w'$ are products of pairwise distinct simple reflections. Indeed, a product of pairwise distinct simple reflexions $s_1\dots s_k$ is always a composition of reflections $s_i$ along vectors $v_i$ such that $v_1,\dots,v_k$ are linearly independent. Thus \cite[Lemma 3]{Carter} implies $\ell(w) = \lg(w)$ and $\ell(w') = \lg(w')$.

We write $w' = s_{i_1} \dots s_{i_k}$ and $w = t_1 \dots t_b$ as reduced expressions of pairwise distinct simple roots such that there exists $a_1\leq \dots \leq a_k$ satisfying $t_{a_j} = s_{i_j}$. For $a_t \leq j < a_{t+1}$ let $r_j$ denote the reflection $r_j := s_{i_1} \dots s_{i_t}t_j s_{i_t}\dots s_{i_1}$. We then have
\begin{align*} ww'^{-1} =& t_1 \dots t_b s_{i_k} \dots s_{i_1} \\ =& t_1 \dots t_{a_1-1} [\underbrace{s_{i_1}t_{a_1+1}s_{i_1}}_{r_{a_1+1}}] \dots [\underbrace{s_{i_1} t_{a_2-1} s_{i_1}}_{r_{a_2-1}}] [\underbrace{s_{i_1}s_{i_2}t_{a_2+1} s_{i_2}s_{i_1}}_{r_{a_2+1}}] \\ &\dots [s_{i_1} \dots s_{i_k} t_{a_k+1} s_{i_k}\dots s_{i_1}] \dots [\underbrace{s_{i_1}\dots s_{i_k}t_{b}s_{i_k}\dots s_{i_1}}_{r_{b}}]\\
 = &t_1 \dots t_{a_1-1} r_{a_1+1} \dots r_{a_2-1} r_{a_2+1}\dots  \dots r_b.
\end{align*}
In particular, $\ell(ww'^{-1}) \leq \lg(w) -
\lg(w')=\ell(w) - \ell(w')$. Now Claim \ref{claim:ell} follows from
\begin{claim}\label{claim:elllowerbound}

Let $w \in W$ and $w'$ be a product of distinct simple reflections. Then $\ell(ww'^{-1}) \geq \ell(w) - \ell(w') = \ell(w)-\lg(w')$.
\end{claim}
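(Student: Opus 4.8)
The plan is to prove the inequality $\ell(ww'^{-1}) \geq \ell(w) - \ell(w')$ by a ``rank/codimension'' argument on the fixed subspaces $\mathfrak t^{\bullet}$, using the identity $\ell(u) = \dim_L \mathfrak t - \dim_L \mathfrak t^u = d_u$ recalled above (from \cite[Lemma 2]{Carter}, \cite[Lemma 2.7]{BHS2}). Concretely, I would set $a = ww'^{-1}$, so that $w = a w'$, and try to bound $\dim \mathfrak t^{w} = \dim \mathfrak t^{aw'}$ from below in terms of $\dim\mathfrak t^a$ and $\dim \mathfrak t^{w'}$. The key sub-claim is the general submodularity-type estimate
\[
\dim \mathfrak t^{aw'} \ \geq\ \dim \mathfrak t^{a} + \dim \mathfrak t^{w'} - \dim \mathfrak t,
\]
equivalently $\ell(aw') \leq \ell(a) + \ell(w')$, which is just subadditivity of $\ell$ under multiplication. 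But subadditivity gives the \emph{wrong} direction, so the real content must come from the hypothesis that $w'$ is a product of \emph{distinct} simple reflections. I would exploit this as follows: write $w' = s_{i_1}\cdots s_{i_k}$ a reduced word in pairwise distinct simple reflections, so by \cite[Lemma 3]{Carter} the roots $\alpha_{i_1},\dots,\alpha_{i_k}$ are linearly independent and $\ell(w') = k = \lg(w')$. Then $\mathfrak t^{w'}$ is exactly the common kernel of $\alpha_{i_1},\dots,\alpha_{i_k}$, a codimension-$k$ subspace, and more importantly every reflection hyperplane involved in any reduced-type expression of $w'$ lies in the span of these $k$ roots.

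The heart of the argument I would run is: factor $w = a w'$ and look at a minimal expression $w = r_1 \cdots r_{\ell(w)}$ as a product of reflections (of length $\ell(w) = d_w$, by Carter). Using the formula for $\ell$ via linear independence of reflection vectors (Carter's Lemma 3 again), $\ell(w) = d_w$ is the dimension of the span of the reflection vectors $v_1,\dots,v_{\ell(w)}$, and these can be taken to be the positive roots $\beta_j$ moved by the $r_j$. Since $w = a w'$ and $w'$ only ``uses'' roots in the $k$-dimensional space $V' := \mathrm{span}(\alpha_{i_1},\dots,\alpha_{i_k})$, I would argue that the reflection vectors for $a = w (w')^{-1}$ can be chosen inside $V' + (\text{reflection vectors of } w)$, whence
\[
\ell(a) \ \leq\ \ell(w) + \dim V' \ =\ \ell(w) + \ell(w'),
\]
but I actually want the reverse inequality $\ell(a) \geq \ell(w) - \ell(w')$. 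To get that, I would instead apply the same span inequality to $w = a w'$: the reflection vectors of $w$ lie in $(\text{reflection vectors of } a) + V'$, so $\ell(w) = d_w \leq \ell(a) + k = \ell(a) + \ell(w')$, which rearranges to exactly $\ell(a) \geq \ell(w) - \ell(w')$, i.e.\ Claim \ref{claim:elllowerbound}. This ``$d_{uv} \leq d_u + d_v$ with $d_{w'} = \ell(w')$ when $w'$ is a product of distinct simple reflections'' is really a statement about spans of reflection vectors, and can be packaged cleanly via the Carter formula.

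The step I expect to be the main obstacle is making precise the assertion that the reflection vectors of a product $uv$ can be chosen inside the sum of the spans of the reflection vectors of $u$ and of $v$ — i.e.\ the inequality $d_{uv} \leq d_u + d_v$ together with the right control of which subspace the $d_{uv}$ vectors live in. The clean way is: $d_u = \mathrm{codim}\,\mathfrak t^u = \mathrm{rank}(u - \mathrm{id})$ acting on $\mathfrak t$ (or on the reflection representation), and $\mathrm{rank}(uv - \mathrm{id}) \leq \mathrm{rank}(u-\mathrm{id}) + \mathrm{rank}(v-\mathrm{id})$ since $uv - \mathrm{id} = (u-\mathrm{id})v + (v - \mathrm{id})$, so $\mathrm{im}(uv-\mathrm{id}) \subseteq \mathrm{im}(u-\mathrm{id}) + \mathrm{im}(v - \mathrm{id})$. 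Applying this with $u = w$, $v = (w')^{-1}$ and noting $d_{(w')^{-1}} = d_{w'} = \ell(w') = \lg(w')$ (the last equalities precisely because $w'$ is a product of distinct simple reflections, via Carter's Lemma 3), and then using $d_{w(w')^{-1}} \geq d_w - d_{(w')^{-1}}$ — which follows by applying the rank subadditivity to $w = (w(w')^{-1}) \cdot w'$ — yields $\ell(ww'^{-1}) = d_{ww'^{-1}} \geq d_w - d_{w'} = \ell(w) - \ell(w') = \ell(w) - \lg(w')$, completing the proof. I would then remark that combining Claims \ref{claim:ell} and \ref{claim:elllowerbound} establishes condition \ref{smooth2} and hence, via \cite[Prop.~2.5.3]{BHS3}, the ``if'' direction of Proposition \ref{prop:smoothpartialsteinbergcomp}; the ``only if'' direction is handled separately by exhibiting a singular point when $w$ is not a product of distinct simple reflections (e.g.\ via failure of \ref{smooth1} using Fan's theorem, or a direct tangent space computation).
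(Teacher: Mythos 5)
Your final argument is correct: using Carter's identity $\ell(u)=\dim_L\mathfrak t-\dim_L\mathfrak t^u=\operatorname{rank}(u-\mathrm{id})$, the factorization $uv-\mathrm{id}=(u-\mathrm{id})v+(v-\mathrm{id})$ gives $\operatorname{rank}(uv-\mathrm{id})\leq\operatorname{rank}(u-\mathrm{id})+\operatorname{rank}(v-\mathrm{id})$, and applying this to $w=(ww'^{-1})\cdot w'$ yields $\ell(ww'^{-1})\geq\ell(w)-\ell(w')$, with the hypothesis on $w'$ used only for $\ell(w')=\lg(w')$ via the linear independence of the simple roots involved (Carter's Lemma~3). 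This is a genuinely different packaging from the paper's proof, which instead reduces by induction to the single-reflection case $w'=s$ and proves $\ell(ws)\geq\ell(w)-1$ by intersecting fixed spaces with the hyperplane $\mathfrak t^s$ (namely $\mathfrak t^{ws}\cap\mathfrak t^s=\mathfrak t^w\cap\mathfrak t^s\subset\mathfrak t^w$, so $\dim\mathfrak t^w\geq\dim\mathfrak t^{ws}-1$). The two arguments are cousins — both translate $\ell$ into codimension of fixed spaces — but yours is a one-shot linear-algebra subadditivity of absolute reflection length, which makes transparent that the inequality $\ell(ww'^{-1})\geq\ell(w)-\ell(w')$ holds for arbitrary $w'$ and that distinctness of the simple reflections enters only through the equality $\ell(w')=\lg(w')$; the paper's iterative hyperplane argument buys the same conclusion with slightly more elementary inputs (no rank identity, just codimension one at a time). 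One caveat: your intermediate remark that subadditivity ``gives the wrong direction'' is a misdiagnosis — applied to $w=(ww'^{-1})w'$ it is exactly the right direction, as your final paragraph in fact exploits — and the digression about spans of reflection vectors is superfluous; only the last rank-subadditivity step is needed.
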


We now prove Claim \ref{claim:elllowerbound}. By induction on 
the number of simple reflexions appearing in $w'$, it is enough to prove $\ell(ws) \geq \ell(w)-1$ when $w' = s$ is a simple reflexion.
Note that for any $w$ we have $\dim_L \mathfrak t^{ws} \cap \mathfrak t^s \geq \dim_L \mathfrak t^{ws} - 1$ as $\mathfrak t^s$ is a hyperplane in $\mathfrak t$. Moreover, $\mathfrak t^{ws} \cap \mathfrak t^s = \mathfrak t^w \cap \mathfrak t^s\subset \mathfrak t^w$. Thus $\dim \mathfrak t^w \geq \dim \mathfrak t^{ws} - 1$. 
 Using $\ell(w) = \dim \mathfrak t - \dim \mathfrak t^{w}$ we hence find
 \[  \ell(w) \leq \ell(ws)+1.\]
Thus $\ell(ws) \geq \ell(w) - 1$, which proves Claim \ref{claim:elllowerbound}.

We now prove the converse, i.e.~that  $X_w$ is singular, if $w$ is \emph{not} a product
of distinct simple reflections.  We hence assume that $w$ is not a product of distinct simple reflections.

It is enough (but actually equivalent) to prove that $X_w$ is singular at $(\underline B,\underline B,0)$. We will use Mowlavi's
results \cite{MowlaviGS}. The pair $(1,w)$ is a good pair
(\cite{MowlaviGS}), and thus \cite[Theorem 6]{MowlaviGS} applies. Hence \cite[Proposition 3.2.2]{MowlaviGS} gives an exact formula for the tangent space at $x = (\underline B,\underline B,0) \in (X_w \cap V_1)(L)$. This can be rewritten as
\begin{align*} 
\dim_L T_{x}X_w &= \dim_L T_{\pi(x)}\overline{U_w} - d_w + \dim_L \mathfrak t +
  \lg(w_0) \\ &> \dim \underline B + \lg(w) - \lg(w) + \dim_L \mathfrak t +
  \lg(w_0),
  \end{align*}
 as $w$ is not a product of distinct simples so $\lg(w) > d_w$ (\cite{BHS2} Lemma 2.7), and where we use the notation \footnote{see \cite{BHS3} just before Proposition 4.1.5} $d_w = \dim_L \mathfrak t - \dim_L \mathfrak t^w$.
 Thus
\[ \dim_L T_{x}X_w > 2 \dim \underline B + \dim_L \mathfrak t = \dim \underline G_L = \dim X_w,\]
i.e.~$X_w$ is not smooth at $x$.
\end{proof}

We write $X_I$ for the inverse image of $X_{\frakp_I}$ under the canonical projection
$\underline G_L/\underline B_L\times\frakg\times\underline G_L/\underline
B_L \rightarrow \underline G_L/\underline P_I\times\frakg\times\underline G_L/\underline
B_L$. This scheme can also be defined as
\[ X_I\coloneqq(\underline G_L\times^{\underline B_L}(\mathfrak
  z_I\oplus\mathfrak n_I))\times_{\frakg}\widetilde{\frakg},\] in
particular $X_\emptyset = X$.  The map $X_I\rightarrow X_{\frakp_I}$
is a $\underline P_I/\underline B_L$-torsor and thus is projective and
smooth. We deduce that we have a decomposition in irreducible
components
\[ X_I=\bigcup_{w\in W_I\backslash W} X_{I,w}, \] where each
$X_{I,w}\rightarrow X_{\frakp_I,w}$ is projective and smooth. Moreover,
we have a closed embedding $X_I \hookrightarrow X$ induced by the
closed embedding $\mathfrak z_I \oplus \mathfrak n_I \hookrightarrow
\mathfrak b$, and this induces a closed embedding $X_{I,w} \hookrightarrow X_{w^{\rm max}}$, as each fiber of $X_I \fleche X_{\mathfrak p_I}$ over a point in $V_{\mathfrak p_I,w}$ 
contains a (dense) open subset consisting of points that lie in the Schubert cell 
$\underline G_L(1,w^{\rm max}) \subset \underline G/\underline B\times \underline G/\underline B.$

\begin{lemma}\label{lemm:generically_reduced}
  The schemes $X_I$ and $X_{\frakp_I}$ are generically reduced.
\end{lemma}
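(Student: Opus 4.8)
The plan is to prove both assertions at once by exhibiting, on each of $X_{\frakp_I}$ and $X_I$, a dense open subscheme that is smooth over $L$. Since a smooth $L$-scheme is reduced and the reduced locus of a scheme of finite type over a field is open, such an open forces the ambient (a priori possibly non-reduced) scheme to be reduced at the generic point of every irreducible component, i.e.\ generically reduced.

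For $X_{\frakp_I}$ I would use the orbit stratification recalled above: $X_{\frakp_I}=\bigcup_{w\in W_I\backslash W}X_{\frakp_I,w}$ with $X_{\frakp_I,w}=\overline{V_{\frakp_I,w}}$ and $V_{\frakp_I,w}\subset X_{\frakp_I}$ the open subscheme lying over the $\underline G$-orbit $O_w=\underline G\cdot(1,\widetilde w)$ in $\underline G_L/\underline P_I\times\underline G_L/\underline B_L$ (cf.~\cite[Cor.~5.2.2]{BreuilDing}), so that $V\coloneqq\bigcup_w V_{\frakp_I,w}$ is a dense open subscheme of $X_{\frakp_I}$ and it suffices to see each $V_{\frakp_I,w}$ is smooth over $L$. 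For this I would observe that the projection $\pi\colon X_{\frakp_I}\to \underline G_L/\underline P_I\times\underline G_L/\underline B_L$ realizes $X_{\frakp_I}$ as the subscheme of the trivial bundle $(\underline G_L/\underline P_I\times\underline G_L/\underline B_L)\times\frakg$ cut out over $(g\underline P_I,h\underline B)$ by the linear conditions $N\in\Ad(g)(\mathfrak z_I\oplus\mathfrak n_I)$ and $N\in\Ad(h)\frakb$; restricted to the single orbit $O_w$ these are the vanishing locus of a morphism of vector bundles of locally constant rank (constancy of the rank being forced by $\underline G$-equivariance and transitivity of $\underline G$ on $O_w$), so, $O_w\cong\underline G/\mathrm{Stab}$ being a smooth $L$-variety, the scheme-theoretic kernel $V_{\frakp_I,w}=\pi^{-1}(O_w)$ is a sub-vector-bundle of $O_w\times\frakg$, hence smooth over $O_w$ and therefore over $L$. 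This yields generic reducedness of $X_{\frakp_I}$. (For $I=\emptyset$ one can alternatively restrict to the regular semisimple locus of $\frakg$, over which $\widetilde\frakg\to\frakg$ is finite étale so $X=\widetilde\frakg\times_{\frakg}\widetilde\frakg$ is étale over a smooth scheme; this shortcut is unavailable for $I\neq\emptyset$ since the image of $\widetilde\frakg^0_{\frakp_I}$ in $\frakg$ avoids the regular semisimple locus.)

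To pass to $X_I$ I would invoke that the canonical morphism $X_I\to X_{\frakp_I}$ is a torsor under the smooth $L$-group scheme $\underline P_I/\underline B_L$, hence smooth surjective (indeed faithfully flat); its restriction over the dense open $V$ is a smooth morphism onto the smooth $L$-scheme $V$, so the preimage of $V$ in $X_I$ — which is dense open, since each component $X_{I,w}\to X_{\frakp_I,w}$ is surjective and therefore sends the generic point of $X_{I,w}$ to that of $X_{\frakp_I,w}$, a point of $V$ — is smooth over $L$, and $X_I$ is generically reduced. The step I expect to require the most care is the input that $V_{\frakp_I,w}$ is a genuine \emph{open} subscheme of $X_{\frakp_I}$ and not merely locally closed and dense in its own closure $X_{\frakp_I,w}$: it is precisely this openness that lets the (smooth) scheme structure of $V_{\frakp_I,w}$ control $X_{\frakp_I}$ at the generic point of the component, and it is here that one must appeal to the precise orbit-by-orbit analysis of \cite[\S5.2]{BreuilDing} — concretely, one may take $V_{\frakp_I,w}\coloneqq\pi^{-1}(O_w)\smallsetminus\bigcup_{w'>w}X_{\frakp_I,w'}$, which is open in $X_{\frakp_I}$ because its points lie on no component other than $X_{\frakp_I,w}$, and remains a dense open of $X_{\frakp_I,w}$ and a vector bundle over an open of $O_w$. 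Everything else (the equivariant vector-bundle structure, the descent along the smooth torsor) is formal.
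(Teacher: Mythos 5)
Your overall skeleton is the paper's: reduce $X_I$ to $X_{\frakp_I}$ along the smooth morphism $X_I\rightarrow X_{\frakp_I}$, and handle $X_{\frakp_I}$ by observing that the preimages of the $\underline G$-orbits in $\underline G_L/\underline P_I\times\underline G_L/\underline B_L$ are smooth, all of the same dimension, and cover $X_{\frakp_I}$; the paper simply quotes \cite[Prop.~5.2.1]{BreuilDing} for the smoothness of these preimages and concludes. Your reduction step for $X_I$ is fine, and re-deriving the smoothness of the orbit preimages by the equivariant constant-rank argument is a reasonable substitute for the citation.

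The genuine gap is exactly at the step you flag as delicate, and your fix does not close it. Your subbundle computation proves smoothness of the scheme-theoretic preimage $X_{\frakp_I}\times_Y O_w$, where $Y=\underline G_L/\underline P_I\times\underline G_L/\underline B_L$, i.e.\ of the locally closed subscheme on which the ideal sheaf of the orbit has been imposed. The set $S=\pi^{-1}(O_w)\setminus\bigcup_{w'>w}X_{\frakp_I,w'}$ is indeed open in $X_{\frakp_I}$ topologically (your one-line reason needs the extra remark that $\pi^{-1}(O_w)$ is open in the closed set $\pi^{-1}(\overline{O_w})\supseteq X_{\frakp_I,w}$), but as an open subscheme of $X_{\frakp_I}$ it carries the ambient structure sheaf, whereas what you have shown to be smooth is the closed subscheme of $S$ cut out by the pullback of the ideal of $O_w$ --- a priori a proper closed subscheme with the same underlying space. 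Showing that this pulled-back ideal vanishes at the generic point of $X_{\frakp_I,w}$ is precisely the generic reducedness you are trying to prove, so as written this part is circular. The formal statement ``covered by smooth locally closed strata-preimages of the same dimension, with locally constant rank on each stratum'' is genuinely insufficient: for $Z=\Spec k[t,u]/(t^2u)\subset\mathbb A^2$, viewed as the kernel of the map $u\mapsto t^2u$ of trivial line bundles over $\mathbb A^1_t$, the preimages of the two strata $\{t\neq0\}$ and $\{t=0\}$ are smooth of dimension $1$ and cover $Z$, yet $Z$ is non-reduced at the generic point of the component $\{t=0\}$. To close the argument one needs input about the honest fiber product --- e.g.\ a tangent-space computation for $X_{\frakp_I}$ at one point of each stratum lying on no other component (for $I=\emptyset$ the regular semisimple shortcut you mention does this), or the precise content of \cite[Prop.~5.2.1]{BreuilDing}, which is the role that citation plays in the paper's proof.
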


\begin{proof}
 As $X_I$ is smooth
  over $X_{\frakp_I}$, it suffices to prove the claim for $X_{\frakp_I}$. For $w\in W$, let $U_w=\underline
  G_L(1,w)\subset\underline G_L/\underline P_I\times\underline
  G_L/\underline B$ and let $V_w\subset X_{\frakp_I}$ be the inverse
  image of $U_w$. It follows from \cite[Prop.~5.2.1]{BreuilDing} that
  the $V_w$ are smooth $L$-schemes, and they all have the same dimension. As they also cover $X_{\frakp_I}$, their generic points are the
  generic points of the irreducible components of $X_{\frakp_I}$. This shows that $X_{\frakp_I}$ is
  generically reduced.
\end{proof}

Recall that we have two maps $\kappa_1, \kappa_2 : X
\rightarrow\mathfrak t$ (see \cite[\S2.3]{BHS3}) defined by
$\kappa_i(g_1\underline B,N,g_2\underline B)=g_i^{-1}Ng_i (\mathrm{mod}\ \mathfrak n)$. By construction, the image of $\kappa_{1|X_I}$ lands in $\mathfrak
z_I$ and the map $\kappa_{1|X_I}$ factors through
$X_{\frakp_I}$. This provides a commutative diagram
\[
  \begin{tikzcd}
    X_I \ar[r,twoheadrightarrow] \ar[rd,"\Theta_I"'] & X_{\frakp_I}
    \ar[d,"\Theta_{\frakp_I}"] \\
    & \mathfrak z_I\times_{\mathfrak t/W} \mathfrak t
  \end{tikzcd} \]
where $\Theta_I$ is the restriction of the map
$(\kappa_1,\kappa_2)$ to $X_I$.

The following result is the analogue of \cite[Lem.~2.5.1]{BHS3} in our
context, with analogous proof.

\begin{lemma}\label{lemm:comp_z_ixt}
  The irreducible components of $\mathfrak z_I\times_{\mathfrak
    t/W}\mathfrak t$ are the $(T_{I,w})_{w\in W_I\backslash W}$ where
  \[ T_{I,w}=\set{(z,\Ad(w^{-1})(z)) \mid z\in\mathfrak z_I} .\] Moreover, the irreducible component
  $X_{I,w}$ (resp.~$X_{\frakp_I,w}$) is the unique   component of $X_I$ (resp.~$X_{\frakp_I}$) whose image under $\Theta_I$
  (resp.~$\Theta_{\frakp_I}$) dominates $T_{I,w}$.
\end{lemma}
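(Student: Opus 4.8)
The plan is to follow the proof of \cite[Lem.~2.5.1]{BHS3}, treating the two assertions in turn. For the first, note that $\mathfrak t\to\mathfrak t/W$ is finite, hence so is the projection $\mathfrak z_I\times_{\mathfrak t/W}\mathfrak t\to\mathfrak z_I$, so that $\mathfrak z_I\times_{\mathfrak t/W}\mathfrak t$ has dimension $\dim\mathfrak z_I$. On points, $\mathfrak z_I\times_{\mathfrak t/W}\mathfrak t$ consists of the pairs $(z,t)$ with $z\in\mathfrak z_I$ and $t\in\mathfrak t$ in the $W$-orbit of $z$; hence it is the union of the closed subsets $T_{I,w}=\set{(z,\Ad(w^{-1})z)\mid z\in\mathfrak z_I}$, each isomorphic to $\mathfrak z_I$ via the first projection, hence irreducible of dimension $\dim\mathfrak z_I$. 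As they all have maximal dimension, none is contained in another, so they are exactly the irreducible components; and $T_{I,w}=T_{I,w'}$ if and only if $w'w^{-1}$ fixes $\mathfrak z_I$ pointwise, which (applying the standard description of the stabilizer of a point in the reflection representation to a generic point of $\mathfrak z_I$, whose stabilizer is $W_I$) happens if and only if $w'w^{-1}\in W_I$. Thus the $(T_{I,w})_{w\in W_I\backslash W}$ are the irreducible components.

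For the second assertion, recall that $\Theta_I$ factors as $X_I\twoheadrightarrow X_{\frakp_I}\xrightarrow{\Theta_{\frakp_I}}\mathfrak z_I\times_{\mathfrak t/W}\mathfrak t$ and that $\Theta_{\frakp_I}=(\kappa_1,\kappa_2)$ is invariant under the $\underline G_L$-action, since each $\kappa_i$ is constant on $\underline G_L$-orbits. Since $X_{I,w}\to X_{\frakp_I,w}$ is surjective, it suffices to compute $\Theta_{\frakp_I}$ on the dense open $V_{\frakp_I,w}\subset X_{\frakp_I,w}$. Fix a representative $\widetilde w$ of $w$. Using $\underline G_L$-invariance of $\Theta_{\frakp_I}$, a point of $V_{\frakp_I,w}$ may be normalized to $(g_1\underline P_I,N,g_2\underline B_L)$ with $g_1\in\underline P_I$ and $g_2\in\widetilde w\underline B_L$, subject to $\Ad(g_1^{-1})N\in\mathfrak z_I\oplus\mathfrak n_I$ and $\Ad(g_2^{-1})N\in\mathfrak b$. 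The first condition, together with $\underline P_I$-stability of $\mathfrak n_I$ and the fact that $\underline P_I$ acts trivially on $(\mathfrak z_I\oplus\mathfrak n_I)/\mathfrak n_I\simeq\mathfrak z_I$, forces $N=z+n$ with $z=\kappa_1\in\mathfrak z_I$ and $n\in\mathfrak n_I$; the second condition then forces $\Ad(\widetilde w^{-1})n\in\mathfrak n$ (it lies in $\mathfrak b$ and is a sum of root vectors, hence has no $\mathfrak t$-component). Writing $\Ad(g_2^{-1})=\Ad(b^{-1})\Ad(\widetilde w^{-1})$ for some $b\in\underline B_L$ and using that $\underline B_L$ acts trivially on $\mathfrak b/\mathfrak n=\mathfrak t$, we obtain
\[ \kappa_2=\Ad(g_2^{-1})N\bmod\mathfrak n=\Ad(\widetilde w^{-1})(z+n)\bmod\mathfrak n=\widetilde w^{-1}(z)=\Ad(w^{-1})(z),\]
the last equality because $W_I$ fixes $\mathfrak z_I$ pointwise. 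Hence $\Theta_{\frakp_I}$ maps $V_{\frakp_I,w}$ into $T_{I,w}$, and onto it (take $n=0$ and let $z$ range over $\mathfrak z_I$). Taking closures gives $\Theta_{\frakp_I}(X_{\frakp_I,w})=T_{I,w}$, and likewise $\Theta_I(X_{I,w})=T_{I,w}$ since $X_{I,w}$ surjects onto $X_{\frakp_I,w}$; so $X_{I,w}$ (resp.~$X_{\frakp_I,w}$) dominates $T_{I,w}$.

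Uniqueness is then formal. The irreducible components of $X_I$ (resp.~$X_{\frakp_I}$) are the $X_{I,w}$ (resp.~$X_{\frakp_I,w}$), and an irreducible component has irreducible image, hence dominates a unique irreducible component of $\mathfrak z_I\times_{\mathfrak t/W}\mathfrak t$; since the $T_{I,w}$ are pairwise distinct by the first part, $X_{I,w}$ is the only component of $X_I$ dominating $T_{I,w}$, and similarly for $X_{\frakp_I}$.

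The one step requiring genuine care is the local computation in the second paragraph: one must verify that, after normalizing a point of $V_{\frakp_I,w}$ using $\underline G_L$-invariance, the defining conditions of $X_{\frakp_I}$ pin $N$ down to the shape $z+n$ with $z=\kappa_1$ and $n\in\mathfrak n_I$ satisfying $\Ad(\widetilde w^{-1})n\in\mathfrak n$, and that such $n$ contributes nothing to $\kappa_2$ modulo $\mathfrak n$. This bookkeeping between $\mathfrak n$ and $\mathfrak n_I$ is the only non-formal ingredient; the rest runs exactly as in \cite[Lem.~2.5.1]{BHS3}.
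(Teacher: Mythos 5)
Your argument is correct and is essentially the proof the paper intends: the paper gives no details beyond saying the proof is analogous to \cite[Lem.~2.5.1]{BHS3}, and your write-up is exactly that analogue — decompose $\mathfrak z_I\times_{\mathfrak t/W}\mathfrak t$ into the graphs $T_{I,w}$ (with the pointwise stabilizer of $\mathfrak z_I$ being $W_I$ giving the indexing by $W_I\backslash W$), then compute $(\kappa_1,\kappa_2)$ on the open stratum $V_{\frakp_I,w}$ after normalizing by the $\underline G_L$-action, and conclude dominance and uniqueness formally. No gaps; the bookkeeping that $n\in\mathfrak n_I$ contributes nothing to $\kappa_2$ modulo $\mathfrak n$ is exactly the point that needs checking, and you check it.
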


\begin{rema}\label{defi:Xn}
For future use, we make the following notational convention:  When $F=\QQ$, we have $\underline G_L=\GL_{n,L}$, we will use the notations $X_n$,
  $X_{n,I}$, $X_{n,I,w}$ etc.~for the schemes $X$, $X_I$, $X_{I,w}$
  etc.
\end{rema}

\subsection{Partially de Rham deformation rings}
\label{sec:quasi-parab-deform}

For each place $v|p$ of $F$, we fix
$r_v : \Gal_{F_v}\rightarrow\GL_n(L)$ a framed
$\varphi$-generic Hodge--Tate regular crystalline representation, that
we assume that the 
$(\varphi,\Gamma)$-module $D_{\rig}(r_v)$ associated to $r_v$ is crystalline
$\varphi$-generic with regular Hodge--Tate type in the sense of
\cite[\S3.3\&\S3.4]{HeMaS}. We also fix a refinement
$\mathcal R_v=(\varphi_1,\dots,\varphi_n)\in L^n$  of
$r_v$ (see \emph{loc.~cit.}). We will use the notation $r=(r_v)_{v|p}$ and
$\mathcal R=(\mathcal R_v)_{v|p}$ and  say that $r$ is
\emph{$\varphi$-generic Hodge--Tate regular} and that $\mathcal R$ is a
\emph{refinement} of $r$.

Let $\mathcal C_L$ be the category of local artinian $L$-algebras. Fix
$v|p$ a place of $F$. Let $\mathcal X_{r_v}^{\Box}$ be the groupoid
over $\mathcal C_L$ of deformations of $r_v$. It is represented by a
formal scheme over $L$ that we also denote by $\mathcal X_{r_v}^\Box$ by abuse of
notation.  We recall from  \cite[3.6]{BHS3} that, given the refinement  $\mathcal{R}_v$, the groupoid of trianguline deformations of $\mathcal M_{\bullet,v}$ 
is representable by a closed formal subscheme $\mathcal X_{r_v,\mathcal R_v}^{\qtri}\subset \mathcal X_{r_v}^\Box$. Here $\mathcal M_{\bullet,v}$ the
$(\varphi,\Gamma)$-module over $\mathcal R_{K,L}[1/t]$ obtained from $D_{\rig}(r_v)$ by inverting $t$ which is equipped with the unique triangulation corresponding
to the refinement $\mathcal R_v$. We set
$W_v=W_{\dR}(D_{\rig}(r_v)[1/t])$ and
$W_{\bullet,v}=W_{\dR}(\mathcal M_{\bullet,v})$ and let
$X_{W_v,W_{\bullet,v}}$ denote the groupoid of deformations of
$(W_v,W_{\bullet,v})$ as defined in \cite[\S3.3]{BHS3}.

Fix a finite subset  $I_v\subset\Delta_v$. For an object $A$ of $\mathcal C_L$, we define
$X_{W_v,W_{\bullet,v}}^{\underline P_{I_v}}(A)$ to be the subset of
all $(W_A,W_{A,\bullet})\in X_{W_v,W_{\bullet_v}}(A)$ such that for any
$\tau\in\Sigma_{F_v}$ and
$\alpha_{i,\tau}\in \Delta_\tau\setminus I_v$, the
$\B_{\dR}^+$-representation
$W_{A,i}\otimes_{K,\tau}L/W_{A,j+1}\otimes_{K,\tau}L$ is de Rham,
where $j$ is the largest integer $<i$ such that
$\alpha_{\tau,j}\notin I$ (and $j=0$ if $i$ is the smallest integer
such that $\alpha_{i,\tau}\notin I$). It is obvious from the definition that
$X_{W_v,W_{\bullet,v}}^{\underline P_I}$ is a subgroupoid of
$X_{W_v,W_{\bullet,v}}$.

For an object $A$ of $\mathcal C_L$ and
$r_A\in X_{r_v,\mathcal R_v}^{\qtri}(A)$, we denote by
$\mathcal M_{A,\bullet}$ the unique triangulation of $D_{\rig}(r_A)$
lifting $\mathcal M_{\bullet,v}$. We say that $r_A$ is
\emph{$\underline P_I$-de Rham} if
\[(W_{\dR}(r_A),W_{\dR}(\mathcal M_{A,\bullet}))\in
X_{W_v,W_{\bullet,v}}^{\underline P_{I_v}}(A)\]  (see \cite[Def.~3.10 ]{Wu2}). It now follows from \cite[Lemm.~3.11]{Wu2} that this functor is
representable by a closed formal subscheme of
$\mathcal X_{r_v,\mathcal R_v}^{\qtri}$ that we denote
$\mathcal X_{r_v,\mathcal R_v}^{I_v-\qtri}$.
More precisely, we have an isomorphism of groupoids
\[ \mathcal X_{r_v,\mathcal R_v}^{I_v-\qtri}\simeq\mathcal
  X_{r_v,\mathcal
    R_v}^\qtri\times_{X_{W_v,W_{\bullet,v}}}X_{W_v,W_{\bullet,v}}^{P_{I_v}}. \]

Fix an $L\otimes_{\QQ_p}F_v$-basis $\alpha_v$ of $W_v^{\Gal_K}$ and
let $X_{W_v}^\Box$ be the groupoid of deformations of the pair
$(W_v,\alpha_v)$. We set
\begin{align*}
X_{W_v,W_{\bullet,v}}^\Box&=X_{W_v}^\Box\times_{X_{W_v}}X_{W_v,W_{\bullet,v}}\\
\mathcal X_{r_v,\mathcal R_v}^{I_v-\qtri,\Box}&=\mathcal
X_{r_v,\mathcal R_v}^{I_v-\qtri}\times_{X_{W_v}}X_{W_v}^\Box.
\end{align*}
As the map
$\mathcal X_{r_v,\mathcal R_v}^\qtri\rightarrow
X_{W_v^+}\times_{X_{W_v}}X_{W_v,W_{\bullet,v}}$ is formally smooth by
\cite[Cor.~3.5.6]{BHS3}, we deduce that the map
$\mathcal X_{r_v,\mathcal R_v}^{I_v-\qtri,\Box}\rightarrow
X_{W_v^+}\times_{X_{W_v}}X_{W_v,W_{\bullet,v}}^{P_{I_v},\Box}$ is
formally smooth as well.

If $I=\coprod_{v|p}I_v\subset\Delta$ and if $\alpha=(\alpha_v)_{v|p}$
is fixed, we set
$\mathcal X_{r,\mathcal R}^{I-\qtri}\coloneqq\prod_{v|p}\mathcal
X_{r_v,\mathcal R_v}^{I_v-\qtri}$ and
$\mathcal X_{r,\mathcal R}^{I-\qtri,\Box}\coloneqq\prod_{v|p}\mathcal
X_{r_v,\mathcal R_v}^{I_v-\qtri,\Box}$.

We consider the point
\begin{equation}\label{eqn: def xpdr}
x_{\pdR} \coloneqq (g\underline B_L,0,h\underline B_L) \in X_I(L)
\subset (\underline G_L/\underline B_L\times \frakg\times \underline
G_L/\underline B_L)(L),
\end{equation} where $g\in \underline G(L)$ (resp.~$h$) is the matrix sending the standard
flag (corresponding to our fixed basis $\alpha$) of $\prod_{v|p}W_v^{\Gal_K}$ to
the complete flag
$\prod_{v|p}W_{\dR}(\mathcal M_{\bullet,v})^{\Gal_K}$ (resp.~to the
Hodge flag). We deduce the following result (see \cite[\S6.3]{BreuilDing} in a
slightly different context):
\begin{theor}\label{thm:formsmoothdiag}
  There exists a diagram of formal $L$-schemes with formally smooth maps  \[
    \begin{tikzcd}
      \mathcal X_{r,\mathcal R}^{I-\qtri} & \mathcal X_{r,\mathcal
        R}^{I-\qtri,\Box} \ar[l,"g"'] \ar[r,"f"] &
      \widehat{X}_{I,x_{\pdR}}
    \end{tikzcd} \]
\end{theor}

\begin{proof}
  Let $I=\coprod_{v\in S_p}I_v$, with $I_v\subset \Delta_v$ for
  $v\in S_p$. Note that we have a decomposition
  $X_I\simeq\prod_{v\in S_p} X_{I_v}$ where $X_{I_v}$ is the
  $L$-scheme defined in the same way as $X_I$ but for the group
  $\Res_{F_v/\QQ_p}\GL_{n,F_v}$. We also write
  $x_{\pdR}=(x_{\pdR,v})_{v\in S_p}$ where $x_{\pdR,v}$ is the image
  of $x_{\pdR}$ in $X_{I_v}$. We just have to check that the groupoid
  \[X_{W_v^+}\times_{X_{W_v}}X_{W_v,W_{\bullet,v}}^{\underline
    P_{I_v}}\times_{X_{W_v}}X_{W_v}^\Box\] is represented by the
  completion of $X_{I_v}$ at $x_{\pdR,v}$. This can be checked easily
  as in the proof of \cite[Lemm.~3.11]{Wu2} using \cite[Cor.~3.1.9
  \&Thm.~3.2.5]{BHS3}.
\end{proof}

We finally note that the map $\kappa_1$ from above induces a map of formal schemes $\kappa_1 : \widehat
X_{I,x_{\pdR}}\rightarrow\widehat{\mathfrak z_I}$, where
$\widehat{\mathfrak z_I}$ is the completion of $\mathfrak z_I$ at $0$,
and thus a map \[\mathcal X_{r,\mathcal
  R}^{I-\qtri,\Box}\rightarrow\widehat{\mathfrak z_I}.\] This maps
factors into a map of formal schemes
$ \kappa_1 : \mathcal X_{r,\mathcal
    R}^{I-\qtri}\longrightarrow\widehat{\mathfrak z_I}$.

For $w\in W$ such that $x_{\pdR}\in X_{I,w}(L)$, we denote by
$\mathcal X_{r,\mathcal R}^{I-\qtri,w}$ the schematic image of
\[\mathcal X_{r,\mathcal
  R}^{I-\qtri,\Box}\times_{\widehat{X}_{I,x_{\pdR}}}\widehat{X}_{I,w,x_{\pdR}}\rightarrow \mathcal X_{r,\mathcal R}^{I-\qtri}\] and by
$\overline{\mathcal X}_{r,\mathcal R}^{\qtri}$
(resp.~$\overline{\mathcal X}_{r,\mathcal R}^{I-\qtri,w}$) the
schematic inverse image of $\set{0}$ under $\kappa_1$ in
$\mathcal X_{r,\mathcal R}^{I-\qtri}$
(resp.~$\mathcal X_{r,\mathcal R}^{I-\qtri,w}$).

The schemes $\mathcal X_{r,\mathcal R}^{I-\qtri}$ and
$\mathcal X_{r,\mathcal R}^{I-\qtri,w}$ are formal spectra of complete local noetherian rings that we denote by $R_{r,\mathcal R}^{I-\qtri}$ and $R_{r,\mathcal R}^{I-\qtri,w}$. It follows from the constructions that moreover $R_{r,\mathcal R}^{I-\qtri,w}$ is an
integral local ring.

\section{Global construction}
\label{sec:global}

Let $F$ be a totally real number field and let $E/F$ be a totally
imaginary CM extension of number fields, in particular $[E:F] = 2$. 
We assume that all places of $F$ dividing $p$ are unramified and split in $E/F$
and denote by $S_p$ the set of places above $p$ in $F$. We fix a set $\Sigma$ of
places of $E$ dividing $p$ such that, for each place $v\in S_p$, there
is exactly one place of $\Sigma$ above $v$.  Let
$U$ be a unitary group in $n$ variables for $E/F$ that we regard, via Weil restriction, as an
algebraic group over $\QQ$. We assume that
$U(\RR)$ is compact and and that $U_{\QQ_p}$ is quasi-split.  This implies in particular that there exists an isomorphism
$U_{\QQ_p} \simeq \prod_{v\in S_p} \Res_{F_v/\QQ_p}\GL_{n,F_v}$ that
we fix from now on.  
From now we note $\underline G=U_{\QQ_p}$ identified with
$\prod_{v\in S_p}\Res_{F_v/\QQ_p}\GL_{n,F_v}$ via this fixed
isomorphism and we use notations of section
\ref{sec:emert-jacq-funct}, i.e.~$L$ is the choice of a field of coefficients that is assumed to be big enough so that all
embeddings of $E$ (equivalently of $F$) in $\overline \QQ_p$ factor
through $L$. Moreover, $\underline B\subset\underline G_L$
is the Borel subgroup of upper triangular matrices,
$\underline T\subset\underline B$ is the maximal torus of diagonal
matrices, $\underline N$ is the unipotent radical of $\underline B$ etc.

\subsection{Classical and $p$-adic automorphic forms}
\label{sec:p_adic_mf}
We write $T=\underline{T}(\QQ_p)\simeq\left(\prod_{v \in S_p}
    F_v^\times\right)^n$ and let $T_0\simeq\left(\prod_{v\in S_p} \mathcal
    O_{F_v}^\times\right)^n\subset T$ denote its maximal compact subgroup.  We denote by $\widehat{T}$ (resp.~$\widehat{T}_0$) the rigid analytic
  spaces over $L$ parametrizing the continuous characters of
  $T$ (resp.~of
  $T_0$) and recall from \ref{eqn: weight map} that there is a weight map 
  \[{\rm wt}: \widehat{T}_0\rightarrow \mathfrak{t}^\ast\]
   with values in the dual Lie algebra $\mathfrak{t}^\ast$ of $\underline{T}$ (considered as a rigid space over $L$).
    We will often, by abuse of notation, also write ${\rm wt}$ for the composition of ${\rm wt}$ with the canonical projection $\widehat{T}\rightarrow \widehat{T}_0$.
  Recall that we had identified $X^\ast(\underline{T})$ with a $\ZZ$-lattice in $\mathfrak{t}^\ast$. 
  Often we will identify $X^\ast(\underline{T})$ with $\ZZ^{n[F:\QQ]}$.
\begin{defin}\label{defin:weights}
Let $\delta\in \widehat{T}$ (resp.~$\in \widehat{T_0}$) be a character.\\
\noindent (i) The \emph{weight} of $\delta$ is the image ${\rm wt}(\delta)$ under the weight map. \\
\noindent (ii) The character $\delta$ is called \emph{of algebraic weight} if ${\rm wt}(\delta)\in X^\ast(\underline{T})\subset \mathfrak{t}^\ast$. \\
\noindent (iii) The character $\delta$ is called \emph{algebraic} if it is of the form 
\[\delta_{\underline k}: (z_1\otimes1,\dots,z_n\otimes1)\longmapsto \prod_{\tau}
    \left(\tau(z_1)^{k_1^\tau} \cdots \tau(z_n)^{k_n^\tau}\right)\]
for some $\underline k=(k_1^\tau,\dots,k_n^\tau)_{\tau : F\hookrightarrow
    L}\in\ZZ^{n[F:\QQ]}$. It is called \emph{dominant algebraic} if $\underline k\in X^\ast(\underline{T})^+$, i.e.~if $k_1^\tau \geq \dots \geq k_n^\tau$ for all $\tau$.
\end{defin}
Note that $\underline{k}\mapsto \delta_{\underline{k}}$ defines a section of the weight map over the algebraic weights, and we use this map to identify $X^\ast(\underline{T})$ with a subset of $\widehat{T}$ (resp.~$\widehat{T}_0$).

Let $K^p \subset U(\mathbb A^{\infty,p})$ be a compact open subgroup,
called a \emph{tame level} that we assume to be of the form
$\prod_{\ell\neq p}K_\ell$ where $K_\ell$ is a compact open subgroup
of $U(\QQ_\ell)$. Let $I_p$ be the Iwahori subgroup of
$G=\underline G(\QQ_p)=U(\QQ_p)$ with respect to our choice of $\underline
B$. 
For any compact open $K_p \subset U(\QQ_p)$ we consider the Shimura set
\[ Sh_{K^pK_p} \coloneqq U(\QQ)\backslash U(\mathbb A^\infty)/K^pK_p.\]
As $U(\RR)$ is compact, this is indeed a finite set of points.

\begin{defin}\label{def:completed_cohomology}
The \emph{completed cohomology} of the tower $(Sh_{K^pK_p})_{K_p\subset U(\QQ_p)}$ of Shimura sets is:
\[\Pi \coloneqq \Pi^\circ\otimes_{\cO_L}L, \quad \text{with}\quad \Pi^\circ \coloneqq \varprojlim_n \varinjlim_{K_p}
    H^0(Sh_{K^pK_p},\mathcal
    O_L/\pi_L^n),\]
    see \cite{Emint}.
    \end{defin}

The completed cohomology is an $L$-Banach space endowed with a continuous action of
$U(\QQ_p)$. This space is naturally identified with the space of continuous functions
\begin{equation}\label{eq:padic_forms}
f : U(\QQ)\backslash U(\mathbb A^\infty)/K^p \fleche L.
\end{equation}
We denote $\Pi^{\la}$ the subspace of locally analytic vectors in
$\Pi$ for $U(\QQ_p)$. This is the subspace of functions in
(\ref{eq:padic_forms}) which are locally analytic. As $\Pi^{\rm la}$ is a locally analytic representation, there is a natural $U(\mathfrak{g})$-action on $\Pi^{\rm la}$ obtained by deriving the $G=\underline{G}(\QQ_p)$-action. Here, as above, we write $\mathfrak{g}$ for the Lie algebra of $G$, and $\mathfrak{b}, \mathfrak{t}, \mathfrak{n}$ for the Lie algebras of the Borel $B$, of the torus $T$ and of the unipotent radical $N$ of $B$. 
\begin{defin}
The space of \emph{overconvergent $p$-adic automorphic forms of tame weight $K^p$} is the space 
\[S^\dagger(K^p)=(\Pi^{\la})^{\mathfrak n} = \varinjlim_{N_0\subset\underline
    N(\QQ_p)} (\Pi^{\la})^{N_0},\] where $N_0$ varies among the
  compact open subgroups of $\underline N(\QQ_p)$.
Given a weight $\kappa\in \mathfrak{t}^\ast$, the space of overconvergent $p$-adic automorphic forms of tame weight $K^p$ and \emph{weight} $\kappa$ is the eigenspace  \[S^\dagger_\kappa(K^p)\subset S^\dagger(K^p)\]
of eigenvalue $\kappa$ for the $U(\mathfrak{t})$-action. 
\end{defin}

Denote by
$\mathbb T(K^p)\coloneqq\ZZ[K^p\backslash U(\mathbb
A^{\infty,p})/K^p]$ the Hecke algebra of Hecke operators over $\ZZ$ of
tame level $K^p$. Then $\mathbb T(K^p)$ acts by convolution on
$S^\dag(K^p)$ and $S^\dagger_\kappa(K^p)$. Let $S$ be a finite set of prime numbers
containing $p$ and all the $\ell$ such that $K_\ell$ is not
hyperspecial. The subalgebra
$\mathbb T^S \coloneqq \bigotimes_{\ell \notin S} \mathbb T_\ell
\subset \mathbb T(K^p)$ is commutative.

\begin{defin}\label{def:Hecke_action}
 Let
  \[\underline T(\QQ_p)^+ \coloneqq \set{ \diag(a_1^v,\dots,a_n^v)_v \in \underline
      T(\QQ_p) \mid v(a_1^v) \geq \dots \geq v(a_n^v), \ \forall v\in
      S_p}.\] The \emph{Atkin--Lehner ring} $\mathcal{A}(p)$ is the sub-algebra of
  $\ZZ[\underline T(\QQ_p)]$ generated by the elements
  $t\in\underline T(\QQ_p)^+$. \\
  \end{defin}
   Let $\delta: T\rightarrow L^\times$ be a continuous character. Then we can extend $\delta$ to a character $\mathcal{A}(p)\rightarrow L$ whose restriction to $T^+$ is given by $\delta$. By abuse of notation we still write $\delta$ for this character of $\mathcal{A}(p)$.

Note that there is a cofinal system of compact open subgroups $N_0\subset N=\underline N(\QQ_p)$ such that $tN_0t^{-1}\subset N_0$ for all $t\in T^+$.  
We hence can define a Hecke action of $\mathcal{A}(p)$ on $S^\dagger(K^p)=(\Pi^{\la})^{\mathfrak n}$ by letting $t\in\underline T(\QQ_p)^+$ act on $f\in (\Pi^{\la})^{N_0}$ via
  \[ [t] f \coloneqq\left(x \mapsto \frac{1}{[N_0:tN_0t^{-1}]}\sum_{n
        \in N_0/tN_0t^{-1}} f(xnt)\right),\] where $N_0$ is a sufficiently small compact open subgroup of $N$ such that
  $f \in (\Pi^{\la})^{N_0}$ and such that $tN_0t^{-1}\subset N_0$.

Let $\mathbb{T}$ be the commutative algebra
$\mathbb T^S \otimes_{\ZZ} \mathcal A(p)$. Definition
\ref{def:Hecke_action} provides a structure of $\mathbb{T}$-module on
$S^\dagger(K^p)$ and $S^\dagger_\kappa(K^p)$.

\begin{defin}\label{def:finite_slope}
An overconvergent $p$-adic automorphic form $f \in S^\dagger(K^p)=(\Pi^{\la})^{\mathfrak n}$ is called a \emph{finite slope
eigenvector} for the $\mathcal{A}(p)$-action if, for any $t \in \underline T(\QQ_p)^+$, there exists $a_t \in
L^\times$ such that
\[ [t] f = a_t f.\] 
More generally
$f$ is of \emph{finite slope} for the $\mathcal{A}(p)$-action if for
all $t \in \underline T(\QQ_p)^+$, there exists a polynomial $P\in L[X]$ such that
$P(0) \neq 0$ and $P([t])f = 0$.
\end{defin}
Given a continuous character $\delta: T\rightarrow L^\times$, we write $S^\dagger(K^p)[\delta]$ for the eigenspace with respect to the $\mathcal{A}(p)$-action of eigensystem $\delta: \mathcal{A}(p)\rightarrow L$. Note, that by definition this eigensystem is automatically of finite slope and of weight $\kappa={\rm wt}(\delta)$. Moreover, the $\mathcal{A}(p)$-action on $S^\dagger(K^p)[\delta]$ uniquely extends to an action of $\ZZ[T(\QQ_p)]$. 

\begin{rema}
\label{rema:overconvergentmffunction}
An overconvergent automorphic form of tame level $K^p$ with eigenvalue $\delta:T\rightarrow L^\times$ for the Hecke-action at $p$ (i.e.~for the action of the Atkin--Lehner ring)  is thus the same as a locally analytic function
\[ f : U(\QQ)\backslash U(\mathbb A^\infty)/K^p \fleche L,\] such that
there exists a compact open subgroup $N_0\subset \underline N(\QQ_p)$
so that, for all $g \in U(\mathbb A^\infty), t \in T_0, n \in N_0$,
\[ f(gtn) = \delta(t)f(g),\]
and such that moreover, for all $t \in \underline T(\QQ_p)^+$, $[t]f =
\delta(t)f$. 
\end{rema}

\begin{defin}\label{def:classical}
The space of \emph{classical automorphic forms of tame level $K^p$} is the subspace 
$S^{\rm cl}(K^p)=(\Pi^{\rm cl})^{\mathfrak n})$ of $S^\dagger(K^p)=(\Pi^{\rm la})^{\mathfrak
  n}$ of elements which are $K_p$-finite for some (resp.~any) compact
open $K_p \subset U(\QQ_p)$. 
\end{defin}
We note that this subspace is stable under the action of $\mathbb{T}$. 

For any character $\chi^S : \mathbb T^S \fleche L$, we let 
$\Pi[\chi^S]$ (resp.~$S^\dag(K^p)[\chi^S]$,
resp.~$S^{\rm cl}(K^p)[\chi^S]$) denote the subspace of
$\chi^S$-eigenvectors for $\mathbb T^S$ in $\Pi$
(resp.~$S^\dag(K^p)$, resp.~$S^{\rm cl}(K^p)$). 
If $\delta:T\rightarrow L$ is a character of $T$ (defining a character of $\mathcal{A}(p)$) and if $\chi=\chi^S\otimes\delta$ is the corresponding character of $\mathbb{T}=\mathbb{T}^S\otimes_{\ZZ}\mathcal{A}(p)$, we write $S^\dag(K^p)[\chi]$ etc.~for the corresponding eigenspace.

Let $\mathfrak m$ be a maximal ideal in $\mathbb T^S$. We then define
\[ \Pi_{\mathfrak m}\coloneqq\Pi^\circ_{\mathfrak m}\otimes_{\mathcal{O}_L} L, \quad
  \text{where} \quad \Pi^\circ_{\mathfrak
    m}\coloneqq\varprojlim_n(\Pi^\circ/\pi_L^n\Pi^\circ)_{\mathfrak
    m}. \] As there are only finitely many maximal ideals
$\mathfrak m$ of $\mathbb T^S$ such that
$(\Pi^\circ/\pi_L\Pi^\circ)_{\mathfrak m}$ is nonzero,
the space $\Pi_{\mathfrak m}$ is a topological direct summand of $\Pi$ stable
under the actions of $U(\QQ_p)$ and $\mathbb{T}$.

Recall that if $\mathfrak{m}$ is a maximal ideal (whose residue field is assumed to equal $k_L$) such that $\Pi_{\mathfrak m}$ is non zero, then we may associate to $\mathfrak{m}$ a continuous representation $\overline \rho : \Gal_E \fleche \GL_n(k_L)$ which is conjugate autodual, and unramified away from
$S$. Such representations $\overline\rho$ are called modular (see for example
\cite[\S2.4]{BHS1}).

\subsection{Patching the completed cohomology}
\label{sec:appl-patch-repr}

We fix a maximal ideal $\mathfrak{m}\subset \mathbb{T}^S$ such that $\Pi_{\mathfrak{m}}\neq 0$ is non-zero and denote by $\overline{\rho} : \Gal_E \fleche \GL_n(k_L)$ the corresponding modular Galois representation. For each place $v$ of $F$ which splits in $E$ we write
\[ \overline\rho_v := \overline{\rho}_{|\Gal_{E_{\tilde v}}},\] for a
choice of $\widetilde v | v$ of $E$. From now on we assume that, for
$v\in S$, the place $v$ splits in $E/F$, we make a fixed choice $\widetilde{v} | v$ as before such that $\widetilde{v} \in \Sigma$ if $v | p$, and denote $\widetilde S = \{ \widetilde{v} | v \in S\}$ so that $\widetilde S$ is in bijection with $S$ and contains $\Sigma$. For $v\in S$ we write 
$R_{\overline\rho_v}^\Box$ for the universal lifting (i.e. framed
deformation) ring of $\overline\rho_v$ and  define
\[R_{\overline\rho_v}^\Box\twoheadrightarrow \overline{R}_{\overline\rho_v}^{\Box}\] to be the maximal reduced
$\ZZ_p$-flat quotient. 
\begin{rema}
If $v | p$ we
have in fact, by the main results of \cite{BIP},
$\overline{R}_{\overline\rho_v}^{\Box}= R_{\overline\rho_v}^\Box$. Using the main result of \cite{dat2024moduli} we find that the same applies to places $v\nmid p$, as the deformation rings $R_{\overline\rho_v}^\Box$ may be identified with versal rings to the moduli space of L-parameters. We still keep the notations introduced above in order to be consistent with the notations from the references for the patching construction below.
\end{rema}
We
denote by $R_{\overline\rho,\mathcal S}$ the quotient of
$R_{\overline\rho}$ corresponding to the deformation problem
\[ \mathcal S = (E/F, S,\widetilde{S},\mathcal O_L,\overline\rho,\eps^{1-n}\delta^n_{E/F},\{\overline{R}_{\overline\rho_v}^{\Box}\}_{v \in S}) \]
in the notations of \cite[§2.3]{CHT}, where $\delta_{E/F} : \Gal_F \fleche \{\pm 1\}$ is the quadratic character associated to $E/F$, and
\[ R^{\rm loc} \coloneqq \widehat{\bigotimes_{v \in S}} \overline{R}_{\overline \rho_v}^{\Box}.\]
We assume the following (strong) Taylor-Wiles hypothesis
\begin{hypothese}\label{hyp:TWtext}
\begin{enumerate}
\item $p >2$ ;
\item the extension $E/F$ is unramified and $E$ does not contain a (non-trivial) $p$-th
  root $\zeta_p$ of $1$ ;
\item the group $U$ is quasi-split at all finite places of $\QQ$ ;
\item the level $K^p$ is chosen such that $K_v$ is hyperspecial whenever the
  finite place $v$ of $F$ is inert in $E$ ;
\item the representation $\overline{\rho}_{|\Gal_{E(\zeta_p)}}$ is adequate.
\end{enumerate}
\end{hypothese}

By \cite{CEGGPS} sections 2.7,2.8, (see also \cite[Théorème 3.5]{BHS1}), we have the following data.

\begin{prop}
There exist
\begin{enumerate}
\item an integer $g \geq 1$ ;
\item a continuous, admissible, unitary $R_\infty$-representation $\Pi_\infty$ of $U(\QQ_p)$ over $L$, where \[R_\infty \coloneqq R^{\rm loc}[[x_1,\dots,x_g]];\]
\item a local map of local rings $S_\infty := \mathcal
  O_L[[y_1,\dots,y_t]] \fleche R_\infty$ with \[t = g + \dim
  R^{loc}-[F^+:\QQ]\frac{n(n+1)}{2}\] and a local map of local rings $R_\infty \fleche
  R_{\overline\rho,\mathcal S}$ such that
\begin{enumerate}[(i)]
\item there exists an $\cO_L$-lattice $\Pi_\infty^0 \subset \Pi_\infty$ stable by $U(\QQ_p)$ and $R_\infty$ such that
\[ (\Pi_\infty^0)' = \Hom_{\mathcal O_L}(\Pi_\infty^0,\mathcal O_L),\]
is a projective $S_\infty[[K_p]]$-module of finite type (via $S_\infty
\fleche R_\infty$) for some (equivalently all) compact open subgroup
$K_p\subset U(\QQ_p)$ ;
\item the map $R_\infty\rightarrow R_{\overline\rho,\mathcal S}$ induces an isomorphism
\[ R_\infty/\mathfrak aR_\infty \simeq R_{\overline\rho,\mathcal S},\]
of local noetherian $\mathcal O_L$-algebras and an isomorphism of continuous admissible unitary $R_\infty/\mathfrak aR_\infty$-representations of $U(\QQ_p)$ on $L$ 
\[ \Pi_\infty[\mathfrak a] \simeq \Pi_{\mathfrak m} ,\]
where $\mathfrak a = (y_1,\dots,y_t)$ denotes the augmentation ideal of $S_\infty$,
\end{enumerate}
\end{enumerate}
\end{prop}

It is a direct consequence of this proposition that the $R_\infty$-representation
$\Pi_\infty$ of $U(\QQ_p)$ satisfies Hypothesis
\ref{hyp:patching}. We note that the same applies to a slightly more general context:
\begin{lemma}
\label{lemma:piinftysathyp}
  Let $V$ be a finite dimensional algebraic representation of
  $U(\QQ_p)$ over $L$. Then the $R_\infty$-Banach representation
  $\Pi_\infty\otimes_LV$ satisfies Hypothesis \ref{hyp:patching}.
\end{lemma}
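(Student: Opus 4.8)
The plan is to reduce the assertion to the case of $\Pi_\infty$ itself --- which, as noted just after the Proposition above, already satisfies Hypothesis \ref{hyp:patching} --- using only that tensoring a Banach representation with a finite-dimensional continuous representation preserves the relevant freeness. So I would keep the same integer $s = t$ and the same local map $S_\infty = \cO_L[[y_1,\dots,y_t]] \to R_\infty$, fix an open pro-$p$ subgroup $G_0 \subset U(\QQ_p)$, and use that $\Pi_\infty' := \Hom_L^{\cont}(\Pi_\infty,L)$ is finite free over $A := S_\infty[[G_0]][1/p]$. The goal becomes: $(\Pi_\infty \otimes_L V)'$ is finite free over this same ring $A$.

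As $V$ is finite-dimensional there is a canonical isomorphism of $L$-vector spaces $(\Pi_\infty \otimes_L V)' \simeq \Pi_\infty' \otimes_L V'$, with $V' = \Hom_L(V,L)$ the contragredient, and the diagonal $G_0$-action on $\Pi_\infty \otimes_L V$ corresponds on the right to the ``diagonal'' $A$-module structure: writing $A = S_\infty[1/p]\,\widehat\otimes_L\,\cO_L[[G_0]][1/p]$, where $\cO_L[[G_0]]$ carries its Hopf algebra structure ($g \mapsto g \otimes g$, antipode $g \mapsto g^{-1}$), the $A$-action on $\Pi_\infty' \otimes_L V'$ is assembled from the given action on $\Pi_\infty'$ and the $\cO_L[[G_0]][1/p]$-action on $V'$ (with $S_\infty$ acting trivially on $V'$) through the comultiplication $A \to A\,\widehat\otimes_{S_\infty[1/p]}\,A$. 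In particular $N \mapsto N \otimes_L V'$, with this diagonal structure on the target, is a well-defined additive endofunctor $\mathcal F$ of the category of $A$-modules, and $(\Pi_\infty \otimes_L V)' = \mathcal F(\Pi_\infty')$. Since $\Pi_\infty' \simeq A^{\oplus r}$ for some $r$, additivity gives $\mathcal F(\Pi_\infty') \simeq \mathcal F(A)^{\oplus r}$, so it is enough to check that $\mathcal F(A) = A \otimes_L V'$ is finite free over $A$.

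This last point is the ``translation isomorphism'' for the Hopf algebra $A$: the map $a \otimes v' \mapsto \sum a_{(1)} \otimes \bigl(S(a_{(2)})\cdot v'\bigr)$ identifies $A \otimes_L V'$ (with the diagonal structure) with $A \otimes_L V'_{\mathrm{triv}} = A^{\oplus \dim_L V}$, where $V'_{\mathrm{triv}}$ is $V'$ with trivial $G_0$-action. At the level of representations this is, after dualizing and base changing along $\cO_L[[G_0]][1/p] \to A$, the orbit-map isomorphism $\mathcal C(G_0,L)\,\widehat\otimes_L\,V \xrightarrow{\ \sim\ } \mathcal C(G_0,L)^{\oplus \dim_L V}$, $F \mapsto \bigl(x \mapsto x^{-1}\cdot F(x)\bigr)$, which intertwines the diagonal $G_0$-action with the left-regular action on $V$-valued functions on which $G_0$ acts trivially on the values; the point is simply that $V$ is finite-dimensional, so the target is a finite sum of copies of $\mathcal C(G_0,L)$. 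Granting this, $(\Pi_\infty \otimes_L V)'$ is finite free over $S_\infty[[G_0]][1/p]$, hence $\Pi_\infty \otimes_L V$ satisfies Hypothesis \ref{hyp:patching} with $S = S_\infty$. (Equivalently, one can argue integrally: choosing a $G_0$-stable $\cO_L$-lattice $M_V \subset V$, the lattice $\Pi_\infty^0 \otimes_{\cO_L} M_V$ is stable under $U(\QQ_p)$ and $R_\infty$, and the $\cO_L$-integral orbit map shows its $\cO_L$-linear dual $(\Pi_\infty^0)' \otimes_{\cO_L} M_V^\vee$ is finite projective, hence free, over $S_\infty[[G_0]]$.) The only non-formal ingredient is the orbit-map isomorphism; the rest is bookkeeping with contragredients, with the interaction of the $S_\infty$- and $G_0$-actions, and with inverting $p$.
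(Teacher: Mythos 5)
Your argument is correct and is essentially the paper's proof in dual form: the paper restricts $\Pi_\infty$ to $\ZZ_p^t\times H$, writes it as $\mathcal{C}(\ZZ_p^t\times H,L)^m$, and uses the same translation/orbit-map isomorphism $\mathcal{C}(\ZZ_p^t\times H,V)\simeq\mathcal{C}(\ZZ_p^t\times H,L)^{\dim_L V}$ to untwist the diagonal action, which is exactly your identification of the diagonally twisted module $A\otimes_L V'$ with $A^{\oplus\dim_L V}$ after dualizing. The Hopf-algebra bookkeeping is a heavier packaging of the same single non-formal ingredient, so there is nothing to add.
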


\begin{proof}
  As $\Pi_\infty$ satisfies Hypothesis \ref{hyp:patching}, for any
  open pro-$p$-subgroup $H$ of $U(\QQ_p)$ there exists an isomorphism
  of $\ZZ_p^t\times H$-representations $\Pi_{\infty|\ZZ_p^t\times
    H}\simeq\mathcal{C}(\ZZ_p^t\times H,L)^m$ for some $m\geq1$. But then
  \[ (\Pi_\infty\otimes_LV)_{|\ZZ_p^t\times
      H}\simeq\mathcal{C}(\ZZ_p^t\times H,V)^m\simeq
    \mathcal{C}(\ZZ_p^t\times H,L)^{m\dim_LV}. \qedhere\]
\end{proof}

In the reminder of this paper we will use the following notations:
we set
\[\mathcal X^p\coloneqq\Spf(\widehat{\bigotimes}_{v\in S\backslash
  S_p}\overline{R}_{\overline\rho_v}^\Box)^{\rig}\simeq\prod_{v\in
  S\backslash S_p}\Spf(\overline R_{\overline\rho_v}^\Box)^\rig,\] where
$\mathbb{U}^g\coloneqq\Spf(\cO_L[[x_1,\dots,x_g]])^\rig$ is an open polydisc. Moreover, we set
\begin{align*}
\mathcal X_{\overline\rho_p}&\coloneqq\Spf(\widehat{\bigotimes}_{v\in
  S_p}R_{\overline\rho_v}^\Box)^\rig,\\
  \mathcal X_\infty &\coloneqq \Spf(R_\infty)^{\rig} \simeq\mathcal X^p\times\mathcal
X_{\overline\rho_p}\times \mathbb{U}^g.
\end{align*} 
By construction the space $\mathcal{X}_\infty$ contains  $\mathcal X_{\rhobar,\mathcal S}=(\Spf R_{\rhobar,\mathcal S}
)^{\rig}$ as a closed subspace. For a point $x=(x^p,x_p,z)\in\mathcal X_\infty(L)$ and a place $v$ of $F$
dividing $p$, we denote by $\rho_{x,v}$ the framed representation
$\Gal_{F_v}\rightarrow\GL_n(L)$ associated to $x$. Finally we write $\rho_{x,p}$ for the the family of representations $(\rho_{x,v})_{v\mid p}$.

\section{Patching functors}
\label{sec:patching_functors}

In this section, we keep notations and conventions of section
\ref{sec:global}. In particular, we have
$\underline G\simeq\prod_{v\in
  S_p}(L\times_{\QQ_p}\Res_{F_v/\QQ_p}\GL_{n,F_v})$ which is an
algebraic group over $L$ and we consider the associated categories
$\mathcal O, \mathcal O_{\alg}^\infty$ and $ \widetilde{\mathcal O}_{\alg}$
as in section
\ref{sec:cat_O} (for the choice of the upper triangular Borel subgroup $\underline B$).

We fix once and for all a point $x\in\mathcal X_\infty(L)$ such that $x$ maps to the origin in $(\Spf\, S_\infty)^{\rig}$ (i.e.~the point defined by the augmentation ideal of $S_\infty$) and we denote by 
$\widehat R_{\infty,x}$ the completed local ring of
$\mathcal X_{\infty}$ at $x$.

\subsection{Locally analytic patching functors}

We fix a smooth and unramified character
$\eps : \underline T(\QQ_p)\rightarrow L^\times$ and consider $\eps$ as a point of $\widehat{T}$. 

By Lemma \ref{lemma:piinftysathyp}, we can apply Corollary \ref{coro:Jacquet_conclusion} to the admissible locally analytic representation $\Pi_\infty^\la$, and obtain a functor
\begin{align*}
\cO_\alg^{\infty}&\rightarrow {\rm Coh}(\mathcal{X}_\infty\times \widehat{T})\\
M&\mapsto \mathcal M_{\Pi_\infty}(M).
\end{align*}
\begin{defin} For $M \in \cO_\alg^{\infty}$ we define
\[\mathcal M_{\infty,x,\eps}(M)\coloneqq \mathcal M_{\Pi_\infty}(M)_{x,\eps}\] to be the stalk of 
$\mathcal M_{\Pi_\infty}(M)$ at $(x,\eps)$. 
\end{defin}
It follows from
Proposition \ref{prop:Cohen_Mac} that $\mathcal M_{\infty,x,\eps}(M)$ is a Cohen--Macaulay
$\widehat R_{\infty,x}$-module and is follows from Theorem
\ref{theo:exactness_Jacquet} that the functor
$M\mapsto\mathcal M_{\infty,x,\eps}(M)$ is exact.

\begin{rema}\label{rema:other_description}
  We also have the following description:
  \[ \mathcal M_{\infty,x,\eps}(M) \simeq \left( \Hom_{U(\mathfrak
        g)}(M,\Pi^{\la}_\infty[\mathfrak m_x^\infty])^{N_0}[\mathfrak
      m_{\eps}^\infty]\right)'\] 
   where $\mathfrak m_{\eps}$ is the
  maximal ideal of
  $\mathcal A(p)\otimes_{\ZZ_p}\QQ_p=\QQ_p[\underline T(\QQ_p)^+]$
  corresponding to the character $\eps$ and $\mathfrak m_x$ is the
  maximal ideal of $R_{\infty}[1/p]$ corresponding to $x$.
\end{rema}

\begin{rema}\label{rema:2_U(t)actions}
  Note that we have two $U(\mathfrak t)$-module structures on
  $\mathcal M_{\infty,x,\eps}(M)$: The first one comes
  from the nilpotent $U(\mathfrak t)$-module structure on $M$ as in
  section \ref{sec:nilp-acti-umathfr}. The second one comes from the
  action of $U(\mathfrak t)$ induced from the locally analytic
  $T$-structure on $\Pi_\infty^{\la}$. It is a tautological consequence of the construction, but we point out that these two actions coincide.
\end{rema}
\begin{defin}
Let $I\subset\Delta$ be a finite subset of simple roots and let $M$ be
an object of $\widetilde{\mathcal O}_{\alg}^I$. Then we define
\[ \mathcal M_{\infty,x,\eps}(M)\coloneqq\varprojlim_n\mathcal
  M_{\infty,x,\eps}(M/\frakm_I^n).\]
  \end{defin}

\begin{prop}\label{prop:deformed_functor}
 The functor
  $M\mapsto \mathcal M_{\infty,x,\eps}(M)$ is exact on
  $\widetilde{\mathcal O}_\alg^I$ and for each $M\in \widetilde{\mathcal O}_{\alg}^I$ the $\widehat R_{\infty,x}$-module $\mathcal M_{\infty,x,\eps}(M)$
  is finitely generated and Cohen--Macaulay of dimension
  $t+\dim_K\mathfrak z_I$.  Moreover
  $\mathcal M_{\infty,x,\eps}(M)$ is flat over $U(\mathfrak z_I)$.
\end{prop}

\begin{proof}
  Let $\widehat S_\infty$ be the completion of $S_\infty[1/p]$ along
  the maximal ideal generated by the augmentation ideal $\mathfrak a$
  of $S_\infty$. Moreover, we write $\widehat{U}_I$ for the completion of $U(\mathfrak{z}_I)$ at the maximal ideal $\mathfrak{m}_I$.
  
  By exactness of the functor
  $\mathcal M_{\infty,x,\eps}$, we have
  \[\mathcal M_{\infty,x,\eps}(M/\frakm_I^{n+1})/\frakm_I^n\simeq
  \mathcal M_{\infty,x,\eps}(M/\frakm_I^n)\] for any $n\geq1$. It
  follows from Theorem \ref{prop:Cohen_Mac} that $\mathcal M_{\infty,x,\eps}(M/\frakm_I)$
  is a finite projective 
  $\widehat S_\infty$-module. We denote its rank by $r\geq0$. The exactness of
  $\mathcal M_{\infty,x,\eps}$ implies that
  $\mathcal M_{\infty,x,\eps}(M/\frakm_I^n)$ is a finite projective
  $\widehat S_\infty\otimes_LU(\mathfrak z_I)/\frakm_I^n$-module of
  rank $r$ and it follows that $\mathcal M_{\infty,x,\eps}(M)$
  is a finite projective
  $\widehat S_\infty\hat\otimes_L\widehat{U}_I$-module of rank $r$. As the action of
  $\widehat S_\infty\hat\otimes_L\widehat{U}_I$ factors through $\widehat R_{\infty,x}$ we deduce the
  result. The exactness of the functor
  $\mathcal M_{\infty,x,\eps}$ is a consequence of the exactness
  of $\mathcal M_{\infty,x,\eps}$ restricted to $\cO_{\alg}^{\infty}$
  and the fact that each system
  $(\mathcal M_{\infty,x,\eps}(M/\frakm_I^n))_n$ satisfies the
  Mittag-Leffler condition.

  Let $\underline t=(t_1,\dots,t_m)$ be a regular sequence generating
  the maximal ideal of $U(\mathfrak z_I)_{\mathfrak m_I}$. This is
  also a regular sequence generating the maximal ideal of the
  completion $\widehat{U}_I$. By exactness of the functor
  $\widehat S_\infty\otimes_L-$ on strict exact sequences of Fr\'echet
  $L$-algebras, the sequence $\underline t$ is
  $\widehat S_\infty\widehat\otimes_L\widehat{U}_I$-regular. As
  $\mathcal M_{\infty,x,\eps}(M)$ is a finite free
  $\widehat S_\infty\widehat\otimes_L\widehat{U}_I$-module, the sequence $\underline t$ is
  $\mathcal M_{\infty,x,\eps}(M)$-regular. This is equivalent
  to flatness over $U(\mathfrak z_I)_{\mathfrak m_I}$.
\end{proof}

\subsection{A factorization property}
\label{sec:fact-prop}

We use the spaces and notations introduced in section \ref{sec:quasi-triang-local}. A point $x\in\mathcal X_\infty(L)$ is said to be crystalline $\varphi$-generic and Hodge--Tate regular if for all $v|p$ the representation $\rho_{x,v}$ is crystalline $\varphi$-generic and Hodge--Tate regular. Let $x=(\rho^p,\rho_p,z)\in\mathcal X_\infty(L)$ be such a
$\varphi$-generic Hodge--Tate regular point. We fix a refinement $\mathcal R$ of $\rho_p$. 

Recall that
$\underline G\simeq\prod_{v\in
  S_p}(L\times_{\QQ_p}\Res_{F_v/\QQ_p}\GL_{n,F_v})$.  If $I$ is a set
of simple roots of $\underline G$, we set
\[ \mathcal X_{\infty,x,\mathcal R}^{I-\qtri} \coloneqq
  \widehat{\mathcal X^p}_{\rho^p} \times \mathcal X_{\rho_p,\mathcal
    R}^{I-\qtri} \times \widehat{\mathbb U^g}.\] This is a closed
subscheme of $(\widehat{\mathcal X_\infty})_x$ and we write
$\widehat R_{\infty,x}\twoheadrightarrow R_{\infty,x,\mathcal R}^{I-\qtri}$ the corresponding
quotient map. Moreover, for $w\in W$, we set
\[ \mathcal X_{\infty,x,\mathcal R}^{I-\qtri,w} \coloneqq
  \widehat{\mathcal X^p}_{\rho^p} \times \mathcal X_{\rho_p,\mathcal
    R}^{I-\qtri,w} \times \widehat{\mathbb U^g}.\]

If
$\mathcal
R=(\varphi_{1,v},\dots,\varphi_{n,v})_{v|p}\in\prod_{v|p}(L^\times)^n$,
we define $\delta_{\mathcal R}$ to be the smooth unramified character of $T$
defined by
\[
  (x_{1,v},\dots,x_{n,v})_{v|p}\mapsto\prod_{v|p}\prod_i(\varphi_{i,v}^{v_{F_v}(x_{i,v})}q_v^{i-n}) \]
where $q_n$ denotes the cardinality of the residue field of
$F_v$. We use the notation
$\mathcal M_{\infty,x,\mathcal R}\coloneqq\mathcal
M_{\infty,x,\delta_{\mathcal R}}$. The goal of this section is to prove the following result.

\begin{theor}\label{theo:qtri}
  Let $x\in\mathcal X_\infty(L)$ be a $\varphi$-generic Hodge--Tate
  regular crystalline point and let $\mathcal R$ be a refinement of
  $x$. Then, for any $M \in \mathcal O_\alg^{\infty,I}$, the
  $\widehat R_{\infty,x}$-module $\mathcal M_{\infty,x,\mathcal R}(M)$
  is killed by the kernel of the map
  $\widehat R_{\infty,x}\twoheadrightarrow 
  R_{\infty,x,\mathcal R}^{I-\qtri}$. Equivalently its support is
  contained in $\mathcal X_{\infty,x,\mathcal R}^{I-\qtri}$.
\end{theor}

\begin{proof}
  This is a consequence of Proposition
  \ref{prop:comp_Jacquet_parabolic},   Proposition \ref{prop:lift} and Corollary
  \ref{coro:annihilator_I_final} which will be proved below.
\end{proof}

We will prove the auxiliary statements in (the proof of) this theorem by making use of variants of the construction of eigenvarieties.
More precisely, for a subset $I\subset \Delta$, a character $\lambda\in X^\ast(\underline T)^+_I$ (dominant with respect to $\underline{P}_I$) and an algebraic representation $V$ of $\underline G$ we will consider the scheme-theoretic supports 
\begin{align*}
\mathcal{E}_\infty^I(\lambda)={\rm supp}(\mathcal M_{\Pi_\infty}^{I,\lambda})&\subset \mathcal X_\infty\times\widehat T\\
\mathcal{E}_\infty^I(\lambda,V)={\rm supp}(\mathcal M_{\Pi_\infty}^{I,\lambda,V})&\subset \mathcal X_\infty\times\widehat T,
\end{align*}
where $\mathcal M_{\Pi_\infty}^{I,\lambda}$ respectively $\mathcal M_{\Pi_\infty}^{I,\lambda,V}$ are the coherent sheaves associated to $J_{I,\lambda}(\Pi_\infty^\la)'$ 
respectively to $J_{I,\lambda}((\Pi_\infty\otimes_LV)^\la)'$ (see section \ref{sec:comp-with-jacq} for the notation).
We will link the completions of $\mathcal{E}_\infty^I(\lambda)$ resp.~$\mathcal{E}_\infty^I(\lambda,V)$ at points $(x,\delta)\in \mathcal{X}_\infty\times \widehat{T}$ to the quasi-trianguline deformation rings of section \ref{sec:quasi-triang-local}. 
This is done in two steps: we first show that the set-theoretic support of $\mathcal M_{\Pi_\infty}^{I,\lambda}$ resp.~of $\mathcal M_{\Pi_\infty}^{I,\lambda,V}$ is contained in the (quasi-)trianguline locus (see the proof of Proposition \ref{prop:case_of_a_Verma}). 
We then prove that $\mathcal{E}_\infty^I(\lambda)$ resp.~$\mathcal{E}_\infty^I(\lambda,V)$ is reduced (see the proof of Proposition \ref{prop:reduced_tens_W}). The proof of the latter statement follows the usual argument in the case of eigenvarieties, see e.g.~\cite[Corollaire 3.12 and Corollaire 3.20]{BHS1}: the general properties of eigenvarieties (deduced from the fact that the sheaves $\mathcal M_{\Pi_\infty}^{I,\lambda}$ resp.~$\mathcal M_{\Pi_\infty}^{I,\lambda,V}$ are locally finite projective over $(\Spf S_\infty)^{\rig}\times\widehat{T}_0$ imply that $\mathcal{E}_\infty^I(\lambda)$ resp.~$\mathcal{E}_\infty^I(\lambda,V)$ have no embedded components. Hence it is enough to produce on each of their irreducible components a point $y$ such that   $\mathcal{E}_\infty^I(\lambda)$ resp.~$\mathcal{E}_\infty^I(\lambda,V)$ are reduced in a neighborhood of $y$. By the same projectivity argument as above, the point $y$ can be chosen so that the weight map to $\widehat{T}_0$ is smooth at this point. Reducedness then boils down to checking that the Hecke operators (that generate the local ring of $\mathcal{E}_\infty^I(\lambda)$ resp.~$\mathcal{E}_\infty^I(\lambda,V)$ at $y$) act semi-simply on the fiber of $\mathcal M_{\Pi_\infty}^{I,\lambda}$ resp.~$\mathcal M_{\Pi_\infty}^{I,\lambda,V}$ over $\widehat{T}_0$ which in turn follows from the fact that Hecke-operators act semi-simply on spaces of classical automorphic forms. 
We now give the details of these arguments. 

Let $\delta=(\delta_{1,v},\dots, \delta_{n,v})_{v|p} \in \hat{T}(L)$ be a parameter for a quasi-triangulation of $x$ at $p$, i.e. the trianguline filtration of the $(\varphi,\Gamma)$-module $D^\dagger_{\rm rig}(\rho_v)[1/t]$ over $\mathcal R_{K,L}[1/t]$ has graded pieces $\mathcal R_{K,L}(\delta_{i,v})[1/t]$.
As $x$ is Hodge--Tate regular, there is a natural map
\[ \omega_{\delta} : \mathcal X_{\infty,x,\mathcal R}^{\qtri} \fleche \widehat{T}^\wedge_{\delta},\]
mapping a deformation at $p$ of the  $(\varphi,\Gamma)$-module $D^\dagger_{\rm rig}(\rho_v)[1/t]$, equipped with its trianguline filtration, to its parameter (see e.g. \cite[eq (3.15)]{BHS3}). If $\delta$ is locally algebraic of the form $\delta = \lambda \delta_{\mathcal R}$ for $\lambda \in X^*(T)$ and some smooth character $\delta_{\mathcal R} \in \widehat{T}(L)$, we shift the previous map to get
\[ \omega = t_{-\lambda}\omega_{\delta} : \mathcal X_{\infty,x,\mathcal R}^{\qtri} \fleche \widehat{T}^\wedge_{\delta_{\mathcal R}}\]
which only depends on the chosen refinement. This induces a map
\[  i \times \omega : \mathcal X_{\infty,x,\mathcal R}^{\qtri} \fleche \widehat{\mathcal X_{\infty,x}}\times  \widehat{T}^\wedge_{\delta_{\mathcal R}},\]
or equivalently, a homomorphism $\widehat{R}_{\infty,x} \otimes \mathcal O^\wedge_{\hat{T},\delta_{\mathcal R}} \fleche  R_{\infty,x,\mathcal R}^{I-\qtri}$.

\begin{prop}\label{prop:case_of_a_Verma}
  Let $\lambda\in X^*(\underline T)_I^+$ be a weight dominant with respect
to $\underline P_I$. The $\widehat R_{\infty,x}$-module
  $\mathcal M_{\infty,x,\mathcal R}(\widetilde M_I(\lambda))$ is
  annihilated by the kernel of
  $\widehat R_{\infty,x}\rightarrow  R_{\infty,x,\mathcal
    R}^{I-\qtri}$. More precisely, $\mathcal M_{\infty,x,\mathcal R}(\widetilde M_I(\lambda))$ is an $\widehat{R}_{\infty,x} \otimes \mathcal O^\wedge_{\hat{T},\delta_{\mathcal R}}$-module and 
  annihilated by the kernel of \[\widehat{R}_{\infty,x} \otimes \mathcal O^\wedge_{\hat{T},\delta_{\mathcal R}} \fleche  R_{\infty,x,\mathcal R}^{I-\qtri}.\]
\end{prop}

\begin{proof}
It follows from
  Proposition \ref{prop:comp_Jacquet_parabolic} and the definition of
  $\mathcal M_{\infty,x,\mathcal R}(\widetilde M_I(\lambda))$ that
  \[\mathcal M_{\infty,x,\mathcal R}(\widetilde M_I(\lambda))=(t_\lambda^*\mathcal M_{\Pi_\infty}^{I,\lambda})^\wedge_{(x,\delta_{\mathcal R})}\]
 as an $\widehat R_{\infty,x}\otimes \mathcal O^\wedge_{\hat{T},\delta_{\mathcal R}}$-module. 
 It is thus enough to show that the
  completion of $\mathcal M_{\Pi_\infty}^{I,\lambda}$ at the point
  $(x,\lambda\delta_{\mathcal R}) \in\mathcal X_{\infty}(L)\times\widehat T(L)$
  is supported at the closed subspace \[i \times w_{\delta} : \mathcal X_{\infty,x,\mathcal R}^{I-\qtri} \fleche \widehat{\mathcal X_{\infty,x}}\times  \widehat{T}^\wedge_{\delta_\lambda\delta_{\mathcal R}}.\]
  
We closely follow the proof of \cite[Prop.~5.13]{Wu2}. Let us write $\mathcal{E}_\infty\subset \mathcal{X}\times \widehat{T}$ for the scheme-theoretic support of the coherent sheaf defined by $J_B(\Pi_\infty)'$. By \cite[5.4]{Wu2} this contains $\mathcal{E}_\infty^I(\lambda)$ as a closed subspace.
 As in the proof of \cite[Prop.~5.13]{Wu2} we consider a proper birational map $f : \mathcal E_\infty'\rightarrow\mathcal E_\infty$ 
 such that the universal $(\varphi,\Gamma)$-module over $\mathcal E_\infty'$ has a quasi-triangulation, and write $\mathcal E_\infty''$ for the preimage of
  $\mathcal E_\infty^I(\lambda)$ in $\mathcal E_\infty'$. 
  Let
  $Y\subset\mathcal E_\infty''$ be the Zariski closed reduced subspace
  of $\mathcal E_\infty''$ whose points are exactly the points of
  $\mathcal E_\infty''$ where the universal filtered
  $(\varphi,\Gamma)$-module over $\mathcal R[1/t]$ is $P_I$-de
  Rham. As in \cite{Wu2}, the existence of $Y$ is a consequence of
  \cite[Prop.~A.10]{Wu2}. It follows that for any $y\in Y$ lying above $(x,\delta_{\mathcal R})$ the map 
  \[\widehat{Y}_y\rightarrow \mathcal X_\infty \times \widehat{T}\]
  factors through
  $\mathcal X_{\infty,y,\mathcal R_y}^{I-\qtri}$.
  Let $U\subset \mathcal E_\infty^I(\lambda)$
  be an affinoid open subset containing $x$ and a Zariski dense subset
  of points which are de Rham (and in particular $P_I$-de Rham) and trianguline with parameter given by
  $\mathcal E_{\infty}^I(\lambda) \fleche \widehat{T}$. Such a neighborhood  exists by \cite[Prop.~5.11 \& 5.12]{Wu2}. We deduce that
  $f(Y)\supset U$ and  hence $f^{-1}(U)\subset Y$ and we conclude as in the
  proof of \cite[Prop.~3.7.2]{BHS3} (see the erratum in \cite{BreuilDing})
  that the map \[\widehat U_{x,\lambda\delta_{\mathcal R}}\rightarrow \mathcal X_\infty\times\widehat{T}\] factors through
  $\mathcal X_{\infty,x,\mathcal R}^{I-\qtri}$. 
\end{proof}

\begin{cor}\label{coro:annulateur_puissance}
  Let $V$ be an algebraic representation of $\underline G$, then
  \[\mathcal{M}=\mathcal M_{\infty,x,\mathcal R}(\widetilde
  M_I(\lambda)\otimes_LV)\] is annihilated by some power of the kernel
  of
 $\widehat{R}_{\infty,x} \otimes \mathcal O^\wedge_{\hat{T},\delta_{\mathcal R}} \fleche  R_{\infty,x,\mathcal R}^{I-\qtri}$.
\end{cor}

\begin{proof}
   We recall that 
  \[\widetilde M_I(\lambda)\otimes_L V=U(\mathfrak{g})\otimes_{U(\mathfrak{p}_I)}(L_I(\lambda))\otimes_LV\cong U(\mathfrak{g})\otimes_{U(\mathfrak{p}_I)}(L_I(\lambda)\otimes V_{|P_I})\] 
and that $V_{|P_I}$ is an extension of algebraic
  irreducible representations of $\underline L_I$. Exactness of
  $\mathcal M_{\infty,x,\mathcal R}$ (see Proposition
  \ref{prop:deformed_functor}) implies that the $\widehat R_{\infty,x}$-module
  $\mathcal M$ is an extension of
  $\widehat R_{\infty,x,\mathcal R}$-module of the form
  $\mathcal M_{\infty,x,\mathcal R}(\widetilde M_I(\mu))$ for
  $\mu\in X^*(\underline T)_I^+$. We deduce the result from
  Proposition \ref{prop:case_of_a_Verma}.
\end{proof}

\begin{prop}\label{prop:reduced_tens_W}
  Let $V$ be an algebraic representation of $\underline G$. Then 
  the schematic support $\mathcal{E}_\infty^I(\lambda,V)$ of the coherent sheaf associated to
  $J_{I,\lambda}((\Pi_\infty\otimes_LV)^\la)'$ is reduced.
\end{prop}

\begin{proof}
  We follow closely the proof of \cite[Cor.~3.20]{BHS1} replacing,
  where it is nedeed, some arguments by results of \cite{Wu2}. To simplify notations we just write $\mathcal E=\mathcal{E}_\infty^I(\lambda,V)$ and $\mathcal M=\mathcal M_{\Pi_\infty}^{I,\lambda,V}$ for the reminder of this proof.

  Let
  $\mathcal N$ be the radical ideal of $\cO_{\mathcal E}$. Assume that
  $\mathcal N\neq0$ and let $x\in\mathcal E$ be a point in the support
  of $\mathcal N$. Let $\widehat T^\circ_{\lambda}$ be the preimage of
  $\lambda_{|\mathfrak t \cap\mathfrak l_I^{\mathrm{ss}}}\in (\mathfrak
  t\cap\mathfrak l_I^{\mathrm{ss}})^*$ under the map
  \[\widehat T\rightarrow \mathfrak t^*\rightarrow(\mathfrak
  t\cap\mathfrak l_I^{\mathrm{ss}})^*,\]
  where the first map is the weight map (\ref{eqn: weight map}). 
  According to \cite[\S5.4]{Wu2} there exists an open affinoid neighborhood $U$ of $x$ and an open affinoid subset $W\subset \widehat T^\circ_\lambda\times\Spf(S_\infty)^\rig$ such that
  $\Gamma(U,\mathcal M)$ is a finite free $\cO(W)$-module (such a data
  exists according to the results of \cite[\S5.4]{Wu2}). Then
  $\Gamma(U,\mathcal N)$ is the radical ideal of $\cO(U)$. Moreover, as $\mathcal{O}(U)=\Gamma(U,\mathcal{O}_{\mathcal{E}})$ is a
  sub-$\cO(W)$-module of $\End(\Gamma(U,\mathcal M))$ (by the same argument as in the proof of Theorem \ref{prop:Cohen_Mac} respectively of \cite[Prop.~3.11]{BHS1}), the same is true for $\Gamma(U,\mathcal N)$. Therefore
  $\Gamma(U,\mathcal N)$ is a torsion free $\cO(W)$-module and its
  support has the same dimension as $W$ and hence contains an irreducible component $U_0$ of $U$. As a
  consequence the support of $\mathcal N$ contains an admissible open subset of $\mathcal E$. As the support of $\mathcal N$ is also a
  closed analytic subset of $\mathcal E$, it follows from
  \cite[Lemm.~2.2.3]{ConradIrred} that the support of $\mathcal N$
  contains an irreducible component of $\mathcal E$. It hence suffices to produce on each irreducible component of $\mathcal{E}$ a point $y$ such that $\mathcal{E}$ is reduced in a neighborhood of $y$.
  
  By
  \cite[Prop.~5.11]{Wu2} every irreducible component of
  $\mathcal E$ contains a point with algebraic weight.
  
Therefore we fix a point
  $x\in\mathcal E(L)$ with integral weight $\lambda' \in \widehat T^\circ_{\lambda}$. Let $U$ be an
  open affinoid neighborhood of $x$ and
  $W\subset\widehat T^\circ_\lambda\times\Spf(S_\infty)^\rig$ an open
  affinoid open subset such that $M=\Gamma(U,\mathcal M)$ is a direct
  factor of
  $\cO(W)\hat{\otimes}_LJ_{B_I}(J_{P_I}(\Pi_\infty\otimes_LV)_\lambda)'$. Let
  $A=\cO(W)$ and $B=\cO(U)$. Then $M$ is a finitely generated
  $B$-module and a finite projective $A$-module. Let $C>0$ and $C'>0$
  as in the proof of \cite[Prop.~5.11]{Wu2}. We set $Z\subset W$ be
  the subset of algebraic character $\delta_{\lambda'}$ such that, for
  any simple root $\alpha\notin I$, $\scalar{\lambda'+\nu,\alpha}>C'$
  for any $\nu$ weight of $V^\vee$. This is a Zariski dense subset of
  $W$. Then for $z=\delta_{\lambda'}\delta_{\sm}$ with $\delta_{\sm}$ a smooth character, using Proposition
  \ref{prop:comp_Jacquet_parabolic}, we see that the $B$-module
  $M_z=M\otimes k(z)$ is a direct factor of
  $J_B(\Hom(M_I(\lambda'),\Pi_\infty\otimes_LV))'$. Let
  $(x,\delta)\in U$ be a point above $z$,
  i.e.~$\delta=\delta_{\lambda'}\delta_\sm$, then arguing as in
  \emph{loc.~cit.}, we have
  $\Hom_G(\mathcal{F}_{\overline{B}}^G(N\otimes_LV^\vee,\delta_\sm\delta_B^{-1}),\Pi_\infty[\frakp_x])=0$
  for any subquotient $N$ of $M_I(\lambda')$ different from
  $L(\lambda')$. This implies that $M_z$ is actually a quotient of
  $J_B(\Hom_{U(\frakg)}(L(\lambda')\otimes_LV^\vee,\Pi_\infty))$ which
  is isomorphic to a finite direct sum of
  $J_B(\Hom_{U(\frakg)}(L(\mu),\Pi_\infty))$ with $\mu$ dominant. The proof of
  \cite[Cor.~3.20]{BHS1} shows that the global sections of the
  coherent sheaf associated to each
  $J_B(\Hom_{U(\frakg)}(L(\mu),\Pi_\infty))'$ on
  $U\cap\kappa^{-1}(\set{\delta_{\lambda'}})$ is a semisimple
  $B$-module. This concludes the proof.
\end{proof}

\begin{cor}\label{cor:reduced}
  The rigid analytic space $\mathcal E_\infty^I(\lambda)$ is reduced.
\end{cor}

\begin{proof}
  This is Proposition \ref{prop:reduced_tens_W} with $V$ the trivial
  representation.
\end{proof}

\begin{cor}\label{coro:annihilator_I_final}
Let $V$ be an irreducible algebraic representation of $\underline{G}$.
Then the $\widehat R_{\infty,x} \otimes \mathcal O_{\widehat{T}^\wedge_{\delta_{\mathcal R}}}$-module
  $\mathcal M_{\infty,x,\mathcal R}(\widetilde
  M_I(\lambda)\otimes_LV)$ is killed by the kernel of the map 
  \[\widehat R_{\infty,x}\otimes \mathcal O_{\widehat{T}^\wedge_{\delta_{\mathcal R}}}\rightarrow R_{\infty,x,\mathcal
    R}^{I-\qtri}.\]
\end{cor}

\begin{proof}
  By Proposition \ref{prop:reduced_tens_W}, the support of the module
  $\mathcal M_{\infty,x,\mathcal R}(\widetilde
  M_I(\lambda)\otimes_LV)$ is reduced for any
  $\lambda\in X^*(\underline T)$ dominant with respect to
  $\underline P_I$ and any algebraic representation $V$ of
  $\underline G$. Therefore the result follows from Corollary
  \ref{coro:annulateur_puissance}.
\end{proof}

\subsection{Bi-module structure on the patched functor}
\label{sec:two-acti-umathfr}

Let $M$ be an object of $\cO_\alg^\infty$ or $\widetilde \cO_\alg^I$. As
seen in section \ref{sec:nilp-acti-umathfr}, there is a natural
structure of $A=U(\mathfrak t)_{\frakm}$-module on $M$ which provides, by functoriality,
the structure of an $A$-module on $\mathcal M_{\infty,x,\mathcal R}(M)$.
This $A$-module structure extends to an action of the completion $\widehat A$ of $A$ with respect to the maximal ideal
$\frakm$. We recall from Remark
\ref{rema:2_U(t)actions} that this action coincides with the structure of an
$\widehat A$-module on $\mathcal M_{\infty,x,\mathcal R}(M)$ induced
from the $T$-action on $\Pi_\infty$. 

On the other hand, the ring
$R_{\infty,x,\mathcal R}^{\qtri}$ also carries a structure of
an $\widehat A$-module induced from the map $\kappa_1$ defined in section
\ref{sec:local-models}. This gives a further structure of an $\hat A$-module on the $R_{\infty,x,\mathcal R}^{\qtri}$-module $\mathcal M_{\infty,x,\mathcal R}(M)$. We will show that these $\hat A$-module structures agree.

For $a\in\widehat A$, we denote by $a$
(resp.~$\widetilde a$) the endomorphism of
$\mathcal M_{\infty,x,\mathcal R}(M)$ defined by the first
(resp.~second) action. Note that if $M$ is an object of
$\widetilde\cO_\alg$, then $\mathcal M_{\infty,x,\mathcal R}(M)$ is a
finite free $\widehat A\widehat\otimes_L\widehat S_\infty$-module for
the first $\hat A$-module structure by the proof of Proposition
\ref{prop:deformed_functor}. Thus it is $A$-torsion free (since
$\widehat A$ is domain).

\begin{lemma}\label{lemm:first_coincides}
  For any $a\in\widehat A$ and any $M$ in $\cO_\alg^{\infty}$ or
  $\widetilde \cO_\alg$, there is an equality \[a=\widetilde a\in{\rm End}(\mathcal M_{\infty,x,\mathcal R}(M)).\]
\end{lemma}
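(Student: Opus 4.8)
The strategy is a standard reduction: first reduce from general $M$ to the deformed Verma modules $\widetilde{M}_I(\lambda)$, and then prove the coincidence $a=\widetilde a$ for these explicit modules by a direct computation matching the two actions on the highest-weight line. For the reduction step, recall from Proposition \ref{prop:lift} that any $M\in\cO_\alg^{\infty}$ admits a surjection from a finite direct sum of modules of the form $\widetilde M_I(\lambda_j)\otimes_L W_j$ which factors through $\bigoplus_j (\widetilde M_I(\lambda_j)\otimes_L W_j)\otimes_{A_I}A_I/\frakm_I^{N}$. By exactness of $\mathcal M_{\infty,x,\mathcal R}$ (Corollary \ref{coro:Jacquet_conclusion} and Proposition \ref{prop:deformed_functor}) together with functoriality of \emph{both} $\widehat A$-actions (each is built naturally out of the nilpotent $U(\mathfrak t)$-structure, resp.\ the $T$-structure on $\Pi_\infty$, both of which are functorial in $M$), it suffices to establish the equality $a=\widetilde a$ on modules of the form $\widetilde M_I(\lambda)\otimes_L W$, and then again, using the Verma filtration of $\widetilde M_I(\lambda)\otimes_L W$ described just before Proposition \ref{prop:filt_split} (whose subquotients are deformed Verma modules $\widetilde M_I(\lambda+\nu_i)$) and the exactness of $\mathcal M_{\infty,x,\mathcal R}$, to reduce to the case $M=\widetilde M_I(\lambda)$ itself, or even $M=\widetilde M_I(\lambda)\otimes_{A_I}A_I/\frakm_I^k$. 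Here one uses that $\mathcal M_{\infty,x,\mathcal R}(\widetilde M_I(\lambda))$ is $\widehat A$-torsion free for the first structure (as noted before the lemma), so it is enough to check the equality after inverting the parameters, i.e.\ on a Zariski-dense locus, or directly on the associated-graded pieces.

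For the case $M=\widetilde M_I(\lambda)\otimes_{A_I}A_I/\frakm_I^k$, I would use the explicit description of $\Hom_{U(\mathfrak g)}(M,\Pi_\infty^{\la})$ provided by Lemma \ref{lemma:exdefvermaHom}: it is identified with $(\Pi_\infty^{\mathfrak n_I}\otimes_L L_I(\lambda)')[\frakm_I^k]$ as a $B$-representation, compatibly with the semi-simplified $B$-action. Passing to $N_0$-invariants, then to $J_B$, then dualizing, one gets the stalk $\mathcal M_{\infty,x,\mathcal R}(M)$. On this model, the second $\widehat A$-action is by definition induced by the action of $U(\mathfrak t)\subset U(\mathfrak b)$ coming from the locally analytic $T$-action on $\Pi_\infty$; the first $\widehat A$-action is the nilpotent $U(\mathfrak t)$-action on $M$, which by Remark \ref{rema: second version of Mmu} (and the isomorphism in Lemma \ref{lemma:exdefvermaHom}, where it becomes the natural $A_I/\frakm_I^k$-action) is precisely the \emph{difference} between the genuine $\mathfrak t$-action and the weight $\lambda$ (twisted by $L_I(\lambda)'$). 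Thus the two actions differ a priori by the locally algebraic character $\lambda$; but this is exactly accounted for in the normalization, since $\mathcal M_{\infty,x,\mathcal R}=\mathcal M_{\infty,x,\delta_{\mathcal R}}$ is the stalk at the \emph{smooth} point $\delta_{\mathcal R}$ after the shift $t_\lambda$ built into Proposition \ref{prop:comp_Jacquet_parabolic}. Unwinding these normalizations, the two $\widehat A$-actions on $\mathcal M_{\infty,x,\mathcal R}(M)$ become literally the same, so $a=\widetilde a$.

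The main obstacle I expect is bookkeeping the precise normalizations and twists: the weight map, the $t_\lambda$ shift appearing in Proposition \ref{prop:comp_Jacquet_parabolic}, the dualization $(-)'$ which flips the sign of the $\mathfrak t$-action, and the passage through $J_B$ and $(-)^{N_0}$ — all of these must be tracked carefully so that the ``obvious'' difference by $\lambda$ between the nilpotent action and the honest $\mathfrak t$-action matches exactly the shift built into the definition of $\mathcal M_{\infty,x,\mathcal R}$, rather than being off by a sign or a $\delta_B$-type factor. A clean way to organize this is to prove the statement first for $\widetilde M(\lambda)\otimes_A A/\frakm^k$ in the case $I=\emptyset$ using the second displayed isomorphism in Lemma \ref{lemma:exdefvermaHom}, namely $\Hom_{U(\mathfrak g)}(\widetilde M(\lambda),\Pi_\infty)\simeq(\Pi_\infty^{\mathfrak n}[(\mathfrak t-\lambda)^k])(\lambda^{-1})$, which manifestly exhibits the nilpotent action as ``$\mathfrak t$ acting, minus $\lambda$'' and simultaneously records the $\lambda^{-1}$-twist that the $T$-action picks up; comparing with the definition of $\mathcal M_{\infty,x,\mathcal R}$ as a stalk at $\delta_{\mathcal R}$ (a smooth character) after untwisting by $\lambda$ then gives the identity of endomorphisms. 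The general parabolic case $I\neq\emptyset$ follows the same lines, replacing $\widetilde M(\lambda)$ by $\widetilde M_I(\lambda)$, $\mathfrak t$ by $\mathfrak z_I$, and using that the $\mathfrak l_I^{\mathrm{ss}}$-part acts trivially (by the definition of $\cO^{I,\infty}$), so that only the $\mathfrak z_I$-directions are relevant and $\kappa_1$ (landing in $\mathfrak z_I$) records exactly the same data.
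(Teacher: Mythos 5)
Your proposal misidentifies which two actions the lemma compares, and this is fatal to the whole approach. The comparison you carry out in your ``base case'' --- between the nilpotent $U(\mathfrak t)$-action on $M$ (transported through the functor) and the action induced by the locally analytic $T$-action on $\Pi_\infty$ --- is exactly the content of Remark \ref{rema:2_U(t)actions}, which the paper points out is a tautological consequence of the construction. In Lemma \ref{lemm:first_coincides} the endomorphism $a$ is this (automorphic-side) action, while $\widetilde a$ is the action of $\widehat A$ on $\mathcal M_{\infty,x,\mathcal R}(M)$ coming from the map $\kappa_1:\mathcal X^{\qtri}_{\rho_p,\mathcal R}\rightarrow\widehat{\mathfrak t}$ on the quasi-trianguline deformation ring, acting through the factorization of the support given by Theorem \ref{theo:qtri}. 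So the lemma is a global--local compatibility statement between the Hecke/weight parameter and the Galois-theoretic parameter of the triangulation; it cannot be proved by unwinding the twists in Lemma \ref{lemma:exdefvermaHom}, since that computation never sees the Galois side. In the paper the arithmetic input enters precisely in the base case $M=\widetilde M(\mu)\otimes_A A/\mathfrak m^n$, which is deduced from \cite[Thm.~3.21]{BHS1} and the commutative diagram \cite[(3.30)]{BHS3} (identification of the patched eigenvariety with the trianguline variety compatibly with the maps to $\widehat T$), together with Remark \ref{rema:2_U(t)actions}. Your observation that ``the two actions become literally the same'' after bookkeeping should have been a warning sign: if that were so, the statement would be empty.

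There is also a gap in your reduction step, independent of the above. Knowing $a=\widetilde a$ on the graded pieces of the Verma filtration of $\widetilde M(\mu)\otimes_L V$ does not give the equality on $\mathcal M_{\infty,x,\mathcal R}(\widetilde M(\mu)\otimes_L V)$: it only shows that $a-\widetilde a$ is filtration-lowering, so ``directly on the associated-graded pieces'' does not suffice. Your alternative, ``check after inverting the parameters'', is the right instinct (torsion-freeness over $A$ for the first structure), but you still have to prove the equality after localization, and the splitting of Proposition \ref{prop:filt_split} lives on the source module over $U(\mathfrak g)_K$, not on its image under the functor. The paper bridges this by using Soergel's theorem that $\End_{U(\frakg)_K}((\widetilde M(\mu)\otimes_LV)\otimes_AK)\simeq\End_{U(\frakg)}(\widetilde M(\mu)\otimes_LV)\otimes_AK$ to clear denominators, producing integral endomorphisms $\alpha_i=qp_i$ to which $\mathcal M_{\infty,x,\mathcal R}$ can be applied; the images $M_\infty^{(i)}=\alpha_i(\Fil_iM_\infty)$ are then submodules on which $a=\widetilde a$, their sum has quotient killed by $q^d$, and $A$-torsion-freeness concludes. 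Without this transfer of the splitting through the functor, your reduction to the single deformed Verma module is unjustified.
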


\begin{proof}
  If $M=\widetilde M(\mu)\otimes_{U(\mathfrak t)}U(\mathfrak t)/m^n$
  for some $\mu\in X^*(\underline T)$, this is a consequence of
  \cite[Thm.~3.21]{BHS1}, the commutative diagram \cite[(3.30)]{BHS3}
  and Remark \ref{rema:2_U(t)actions}. This implies that for any
  $\mu \in X^*(\underline T)$, we have $a=\widetilde a$ on
  $\mathcal M_{\infty,x,\mathcal R}(\widetilde M(\mu))$.

  Now we consider the general case. It follows from Proposition
  \ref{prop:lift} that it is sufficient to prove the equality
  $\widetilde a=a$ when $M=\widetilde M(\mu)\otimes_LV$ for
  $\mu\in X^*(\underline T)$ dominant and $V$ a finite dimensional
  $U(\frakg)$-module. Let $(\Fil_i)$ be an increasing filtration of
  $\widetilde M(\mu)\otimes_LV$ such that
  $\Fil_i/\Fil_{i-1}\simeq\widetilde M(\mu_i)$ where
  $\mu_1,\dots,\mu_d\in X^*(\underline T)$ and $d=\dim_LV$ (such a
  filtration exists by \cite[Lem.~8]{SoergelHC}. Let $K$ denote the
  fraction field of $A$. It follows from Proposition
  \ref{prop:filt_split} that we have a decomposition of
  $U(\frakg)_K$-modules
  \[ (\widetilde M(\mu)\otimes_LV)\otimes_AK
    \simeq\bigoplus_{i=1}^d\widetilde M(\mu_i)_K \] splitting the
  filtration $(\Fil_i\otimes_AK)$. Let
  $p_i\in\End_{U(\frakg)_K}((\widetilde M(\mu)\otimes_LV)\otimes_AK)$
  be the projector on $\widetilde M(\mu_i)_K$. As
  \[\End_{U(\frakg)_K}((\widetilde
  M(\mu)\otimes_LV)\otimes_AK)\simeq \End_{U(\frakg)}((\widetilde
  M(\mu)\otimes_LV))\otimes_AK\] by \cite[Thm.~5]{SoergelHC}, there
  exists, for each $1\leq i\leq d$, a nonzero element $q_i\in A$ such
  that $q_ip_i$ actually restricts to an endomorphism of $\widetilde
  M(\mu)\otimes_LV$. We set $q=q_1\cdots q_r$ and 
  $\alpha_i=qp_i$. Then the $\alpha_i$ are endomorphisms of $\widetilde
  M(\mu)\otimes_LV$ that stabilize the filtration $\Fil_\bullet$.  As each $\Fil_i/\Fil_{i-1}$ is a free
  $A$-module, the endomorphisms $\alpha_i$ induce the zero endomorphism of $\Fil_{i-1}$ and
  $\widetilde M(\mu)\otimes_LV/\Fil_i$ and 
  the multiplication by $q$ on $\Fil_i/\Fil_{i-1}$.

  In order to simplify notations we set
  \begin{align*}
  M_\infty&=\mathcal M_{\infty,x,\mathcal R}(\widetilde
  M(\mu)\otimes_LV),\\ 
  \Fil_iM_\infty&=\mathcal M_{\infty,x,\mathcal R}(\Fil_i).
  \end{align*} By construction, for each $i$ the endomorphism 
  $\alpha_i$ induces an $R_{\infty,x}$-linear endomorphisms of
  $\Fil_jM_\infty$ for all $j$. By exactness of
  $\mathcal M_{\infty,x,\mathcal R}$, the family $(\Fil_iM_\infty)$ is a
  filtration of $M_\infty$ and
  $\Fil_iM_\infty/\Fil_{i-1}M_\infty\simeq\mathcal
  M_{\infty,x,\mathcal R}(\widetilde M(\mu_i))$ for any $i$, so that
  $a$ and $\widetilde a$ induces the same endomorphism of
  $\Fil_iM_\infty/\Fil_{i-1}M_\infty$. Finally, for  $1\leq i\leq d$, we denote by  $M_\infty^{(i)}=\alpha_i(\Fil_iM_\infty)$ the image of the $i$-th filtration step under $\alpha_i$. It
  follows from the properties of $\alpha_i$ that
  \begin{itemize}
  \item $M_\infty^{(i)}\subset\Fil_iM_\infty$;
  \item the quotient $\Fil_iM_\infty/(\Fil_{i-1}M_\infty+M_\infty^{(i)})$ is killed by $q$;
  \item $M_\infty^{(i)}$ is isomorphic to a quotient of
    $\Fil_iM_\infty/\Fil_{i-1}M_\infty$.
  \end{itemize}
  Therefore, we have $\widetilde a=a$ on $M_\infty^{(i)}$ for any
  $a\in\widehat A$ and the quotient of $M_\infty$ by the sum of the
  $M_\infty^{(i)}$ is killed by $q^d$. As $M_\infty$ is $A$-torsion free it follows that $\widetilde a=a$. 
\end{proof}

 Let $\xi : Z(\mathfrak g) \rightarrow U(\mathfrak t)$ be the
Harish-Chandra map as recalled in section \ref{sec:bimodule}. As in loc.cit.~we write $t_{\nu}$ for the unique endomorphism
of $U(\mathfrak t)$ mapping $x\in\mathfrak t$ to
$t_{\nu}(x)=x+\nu(x)$.  

Let $h=(h_{1,\tau,v}<\cdots<h_{n,\tau,v})_{\tau,v}\in
X^*(\underline T)$ be the weight corresponding to the Hodge--Tate
weights of $\rho_x=(\rho_v)_{v|p}$ and let
$\delta'_G=(0,-1,-2,\dots,1-n)_{\tau,v}\in X^*(\underline T)$ be fixed
central shift of the half sum of the positive roots
$\delta_G\in X^\ast(\underline T)\otimes\QQ$.  We have a map
\[\kappa_2 : \widehat A=\widehat{U(\mathfrak t)}_{\frakm}\rightarrow
R_{\rho_p,\mathcal R}^{\qtri}\] induced from the map $\kappa_2$ of
section \ref{sec:local-models} and we define the $L$-algebra homomorphism
\[\alpha=\kappa_2\circ t_{h-\delta'_G}\circ\xi:Z(\mathfrak g)\rightarrow R_{\rho_p,\mathcal R}^{\qtri}.\] As in \cite[Def.~4.23]{DPS}, we define, for any $v|p$, an $L$-algebra homomorphism
\[ \zeta_{\widetilde\rho_v^C}^C : Z({\rm
    Lie}(\Res_{F_v/\QQ_p}\GL_n))\longrightarrow
  R_{\overline\rho_v}^{\Box,\rig} \] where $\widetilde\rho_v$ is the
universal family of Galois representations over
$R_{\overline\rho_v}^{\Box,\rig}$. After completion at $\rho_v$ and
taking the tensor product over all $v|p$, we obtain an $L$-algebra
homomorphism
\[ \zeta^C : Z(\mathfrak{g})=\bigotimes_{v|p}Z({\rm
    Lie}(\Res_{F_v/\QQ_p}\GL_n))\longrightarrow
  R_{\rho_p}^\Box\twoheadrightarrow R_{\rho_p,\mathcal R}^{\qtri}. \]
Note that the definition of $\zeta_{\widetilde\rho_v^C}^C$ from
$\rho_v$ depends on a choice of a central shift of $\delta_G$ (see the
discussion ending \cite[\S4.7]{DPS}). We choose it equal to
$\delta'_G$. More concretely $\zeta^C$ is characterized by the
following property. This is the unique continuous homomorphism such
that, for any local artinian $L$-algebra and any local homomorphism
$f : R_{\rho_p,\mathcal R}^{\qtri}\rightarrow A$, corresponding to
$\rho_A=(\rho_{A,v} : \Gal_{F_v}\rightarrow\GL_n(A))_{v|p}$, the
composition map
$Z(\frakg)\xrightarrow{\zeta^C}R_{\rho_p,\mathcal R}^{\qtri}\rightarrow A$ is
$Z(\frakg)\xrightarrow{\xi}U(\mathfrak
t)\xrightarrow{t_{\nu-\delta'_G}}k(x)$ where
\[ \nu\in\Hom_L(U(\mathfrak t)^W,A)\simeq\Hom_L(U(\mathfrak
  t^*)^W,A)\simeq \Hom_L(U(\frakg^*)^{\underline G_L},A) \] is the map
induced by the conjugacy class of the Sen operators
\[(\Theta_{\mathrm{Sen},\rho_{A,v}})_{v|p}\in(\frakg\otimes_{L}A).\]

\begin{prop}\label{prop:compatibility_HC_center}
  The homomorphisms $\zeta^C$ and $\alpha$ defined above coincide.
\end{prop}

\begin{proof}
  It is sufficient to prove that for any local artinian $L$-algebra
  $A$ and any map
  $f : R_{\rho_p,\mathcal R}^{\qtri}\rightarrow A$, we have
  $f\circ\zeta^C=f\circ\alpha$. Note that the map $f$ gives rise to a
  family $(\rho_{A,v})_{v|p}$ of local Galois representations. It follows
  from \cite[Lem.~3.7.5]{BHS3} that, for any embedding
  $\tau : F_v\hookrightarrow L$, the $\tau$-part of the Sen polynomial
  of $\rho_v$ is $\prod_{i=1}^n(X-(h_{i,\tau}+\nu_{i,\tau}))$ where
  $(\nu_{i,\tau})\in\Hom_L(\mathfrak t,A)$ corresponds to
  $f\circ\kappa_2 : U(\mathfrak t)\rightarrow A$. The result is then a
  direct comparison of the definitions of $\alpha$ and
  $\zeta^C$.\qedhere
  \end{proof}

For each element $M$ of the category $\cO_\alg^\infty$ or
$\widetilde \cO_\alg$, there is a natural homomorphism of $L$-algebras
$Z(\mathfrak g) \rightarrow\End(M)$. By functoriality of
$\mathcal M_{\infty,x,\mathcal R}$, this gives a map
\[z : Z(\mathfrak g) \rightarrow\End_{\widehat R_{\infty,x}}(\mathcal
M_{\infty,x,\mathcal R}(M)).\] The following result tells us that this
map factors through $R_{\infty,x,\mathcal R}^{I-\qtri}$.
\begin{cor}\label{cor:compatibility_center}
  For any $x\in Z(\mathfrak g)$, the element $z(x)$ is the multiplication
  by
  $\alpha(x)\otimes 1\in  R_{\infty,x,\mathcal R}^{I-\qtri}$.
\end{cor}

\begin{proof}
  This is a consequence of Proposition
  \ref{prop:compatibility_HC_center} and of \cite[Thm.~9.27]{DPS}.
\end{proof}

\begin{rema}\label{rem:comparaison_HC}
Recall that $h=(h_{1,\tau,v}<\cdots<h_{n,\tau,v})_{\tau,v}$ denotes the weight corresponding to the Hodge--Tate weights of $\rho$. Let $\lambda\coloneqq w_0(h)-\delta'_G\in X^*(\underline T)$, which
  is still a  dominant character. Recall that
  $t_{-\delta_G}\circ\xi$ has image contained in $U(\mathfrak t)^W$. Hence we have
  \[ t_{h-\delta'_G}\circ\xi=t_h\circ\Ad(w_0)\circ
    t_{-\delta'_G}\circ\xi=\Ad(w_0)\circ t_{w_0(h)}\circ
    t_{-\delta'_G}\circ\xi=\Ad(w_0)\circ t_{\lambda}\circ\xi. \] Therefore \[\Id\otimes\alpha=(\Id\otimes\Ad(w_0))\circ h_\lambda:A\otimes_LZ(\mathfrak g)\rightarrow A\otimes_{A^W}A,\] where
  $h_\lambda$ is the map defined in section \ref{sec:bimodule}.
  \end{rema}

\subsection{Computation of a support}
\label{sec:support}

Now we can prove our main result of this section concerning the support of the patched
functor applied to a generalized Verma module respectively applied to its dual.

\begin{theor}\label{thm:component_Verma}
  Let $x\in\mathcal X_\infty(L)$ be a  point whose associated Galois representation is crystalline, $\varphi$-generic and Hodge--Tate
  regular. Let $\mathcal R$ be a refinement of
  $x$. Let
  $h=(h_{1,\tau}<\cdots <h_{n,\tau})_{\tau : F\hookrightarrow
    L}\in X^*(\underline T)$ be the character given by the Hodge--Tate
  weights of $\rho_x$. Let $
    \delta'_G=\det{}^{\frac{1-n}{2}}\delta_G=(0,-1,\dots,1-n)_{\tau
      : F\hookrightarrow L}\in X^*(\underline T)$, where $\delta_G$ is the half sum of the positive roots, and define
  $\lambda\coloneqq w_0(h)-\delta'_G \in X^*(\underline T)^+$.
  
  Then, for
  $I\subset\Delta$ and $w\in W$, the schematic supports of
  $\mathcal M_{\infty,x,\mathcal R}(\widetilde
  M_I(w^{\rm min}\cdot\lambda))$ and
  $\mathcal M_{\infty,x,\mathcal R}(\widetilde
  M_I(w^{\rm min}\cdot\lambda)^\vee)$ are either
  $\mathcal X_{\infty,x,\mathcal R}^{I-\qtri,w^{\rm min}w_0}$ or empty.
\end{theor}

\begin{proof} Let $M$ be $\widetilde M_I(w^{\rm min}\cdot\lambda)$ or
  $\widetilde M_I(w^{\rm min}\cdot\lambda)^\vee$. As
  $R_{\infty,x,\mathcal R}^{I-\qtri}$ is generically reduced and
  equi-dimensional by Lemma \ref{lemm:generically_reduced} and as
  $\mathcal M_{\infty,x,\mathcal R}(M)$ is  Cohen--Macaulay of
  dimension $\dim R_{\infty,x,\mathcal R}^{I-\qtri}$, its schematic
  support is reduced and is a union of irreducible
  components of $\Spec R_{\infty,x,\mathcal R}^{I-\qtri}$, i.e.~it is
  a union of $\Spec R_{\infty,x,\mathcal R}^{I-\qtri,w'}$ for some
  $w'\in W$.

  By Proposition \ref{prop:deformed_Verma_annihilator}, the module $M$ is annihilated by 
  $I_w\subset A_I\otimes_LZ(\mathfrak g)$. This implies in particular
  that the action of $A_I\otimes_LZ(\mathfrak g)$ on $M$ factors through
  $h_\lambda$. By functoriality, this gives rise to a structure of an
  $A_I\otimes_{A^W}A$-module on $\mathcal M_{\infty,x,\mathcal
    R}(M)$. Note that the map $(\kappa_1,\kappa_2)$ of section
  \ref{sec:local-models} provides a morphism of $L$-algebras
  $A_I\otimes_{A^W}A\rightarrow R_{\infty,x,\mathcal R}^{I-\qtri}$
  and, using Theorem \ref{theo:qtri}, a second structure of an
  $A_I\otimes_{A^W}A$-module on $\mathcal M_{\infty,x,\mathcal
    R}(M)$. It follows from Lemma \ref{lemm:first_coincides},
  Corollary \ref{cor:compatibility_center} and Remark
  \ref{rem:comparaison_HC} that this two actions coincide up to
  composition with $\Id\otimes\Ad(w_0)$. We deduce that
  $\mathcal M_{\infty,x,\mathcal R}(M)$ is killed by the ideal of
  $R_{\infty,x,\mathcal R}^{I-\qtri}$ defining the inverse
  image of
  $T_{I,ww_0}\subset\mathfrak z_I\times_{\mathfrak t/W}\mathfrak
  t$. Therefore Lemma \ref{lemm:comp_z_ixt} (see also Remark
  \ref{rema:t_and_tdual}) implies that the action of
  $R_{\infty,x,\mathcal R}^{I-\qtri}$ factors through
  $R_{\infty,x,\mathcal R}^{I-\qtri,ww_0}$ so that the schematic
  support of $\mathcal M_{\infty,x,\mathcal R}(M)$ is
  $\Spec R_{\infty,x,\mathcal R}^{I-\qtri,ww_0}$.
\end{proof}

\section{Main results}
\label{sec:localcomp}

Let $x =(\rho_p,\rho^p,z) \in\mathcal X_{\infty}(L)$ corresponding to a classical
automorphic form of tame level $K^p$. Moreover, we assume that (the Galois representation defined by) $x$ is
crystalline, Hodge--Tate regular and $\varphi$-generic (see section
\ref{sec:fact-prop}) at $p$. This means that
$x\in\mathcal X_{\rhobar,\mathcal S}(L)\subset\mathcal X_\infty(L)$
and that there exists an automorphic representation
$\pi=\pi_{\infty}\otimes_{\CC}\pi_f$ of $U(\mathbb A_{\QQ})$ whose associated Galois representation $\rho$ is the representation defined by $x$ and such that
$\pi_f \otimes W$ occurs in the locally algebraic vectors of
$\Pi_{\frakm}$ for some algebraic representation $W$ depending on $\rho$. 
In particular, the automorphic
representation $\pi$ is finite slope at $p$. It follows from the proof
of \cite[Cor.~3.12]{BHS2} that the image $\rho^p$ of $x$ in
$\Spf(\bigotimes_{v \in S, p \nmid v}
\overline{R}^{\Box}_{\overline\rho_v})^{\rig}$ lies in the smooth
locus.

We fix a refinement 
$\mathcal R = (\varphi_{1,v},\dots,\varphi_{n,v})_v$ of
$x$. Let us denote the $\tau$-Hodge--Tate weights of $\rho_{x,v}$ for $v | p$ in $F$ and $\tau :
F_v\hookrightarrow L$ by $h_{v,\tau}\coloneqq (h_{1,v,\tau} <
\dots < h_{1,v,\tau})$. 
Given this collection of Hodge--Tate weights we write $h=(h_{v,\tau})_{v,\tau}$ and $h_v=(h_{v,\tau})_{\tau}$.
We then define  $R_{\rho_v}^{\cris, h_v}$ to be the crystalline deformation ring of $\rho_v$ of labelled Hodge--Tate weight $h_v$ and set \[R_{\rho_p}^{\cris, h}=\widehat{\bigotimes}_{v|p}R_{\rho_v}^{\cris, h_v}.\] We further define
\[\mathcal X_{\infty,x,\mathcal R}^{\cris,h} = \widehat{\mathcal X^p}_{\rho^p} \times (\Spf R_{\rho_v}^{\cris, h_v}) \times \widehat{\mathbb U^g}.\] Note that is follows from the definitions that $\mathcal X_{\infty,x,\mathcal R}^{\cris,h} $ embeds into  $\mathcal X_{\infty,x,\mathcal R}^{\qtri,w_0}$ for any choice of a refinement $\mathcal R$.

We set \[\mu_{v,\tau} =
(h_{1,v,\tau},h_{2,v,\tau}+1,\dots,h_{n,v,\tau}+(n-1)) = h_{v,\tau} - \delta'_{G,v,\tau},\] and $\mu
= (\mu_{v,\tau})_{v,\tau}$, which is thus antidominant (for the upper
Borel), and $\lambda = w_0(h) - \delta_G' = w_0 \cdot \mu \in X^*(\underline T)^+$. For all $v | p$ in $F$, we denote by $W_v$ the Weyl group of ${\rm GL}_n(F_v)$, which we identify with $\mathfrak S_n$ and denote by 
$s_{1,v},\dots,s_{n-1,v}$ the simple reflections with respect to the choice of the upper Borel $B_v \subset \GL_{n,F_v}$. Moreover, 
$w_{0,v} = s_{n-1,v}\dots s_{2,v}s_{1,v} s_{2,v}\dots s_{n-1,v}$ will denote the longest element of $W_v$. We then write $W = \prod_v W_v$ the Weyl group of 
$\underline{G}_{\QQ_p} \simeq \prod_{v | p} \GL_{n,F_v}$ with respect to the Borel $B = \prod_{v | p} B_v$. Because of the product structure,
 we will sometimes abuse notations and simply write $s_i$ for the simple reflections and $w_0$ for the longest element.
 
 For a scheme $X$ of dimension $d$ we write $Z^0(X)=Z_d(X)$ for the free abelian group on the irreducible components of $X$. Moreover, for $d'\leq d$ we write $Z_{d'}(X)$ for the free abelian group on the irreducible and reduced closed subschemes of dimension $d'$. We recall that a coherent sheaf $\mathcal F$ on $X$ with $d'$-dimensional support defines a class $[\mathcal{F}]\in Z_{d'}(X)$, see e.g.~\cite[Equation (2.13)]{BHS3}.

\subsection{Sheaves and supports.}
\label{subsec:sheavesandsupports}

Let $\lambda=w_0\cdot\mu \in X^*(\underline T)^+$ dominant, integral. 

We moreover write 
\begin{equation}\label{eqn: def mx}
m_x= \dim \mathcal
M_{\infty,x,\mathcal R}(L(\lambda)) \otimes k(x).
\end{equation} It follows from
\cite[Thm.~5.1.3]{BHS3} that $m_x\geq1$ and the proof of
\cite[Thm.~5.3.3]{BHS3} implies that $m_x$ does not depend on the choice of a refinement  $\mathcal{R}$.
To $x$ and $\mathcal R$ we associate a permutation \[w_{x,\mathcal R} = (w_{x,\mathcal R_v})_{v\in \Sigma} = (w_{x,\mathcal R_v,\tau})_{v,\tau} \in W\] defined as in \cite[§ 3.7]{HMS}. We recall that these permutations encode the relative position of the Hodge--Tate flags with respect to the full flag corresponding to the refinement $\mathcal R$.
We recall that, for any object $M$ of $\mathcal O^{\infty}_\alg$ or
$\widetilde{\mathcal O}_\alg$, the sheaf
$\mathcal M_{\infty,x,\mathcal R}(M)$ is zero or Cohen--Macaulay of
dimension $d$.

\begin{lemma}\label{lemm:reduction_cycles}
  Let $R$ be a Cohen--Macaulay noetherian local ring of dimension $d'$
  and let $M$ and $M'$ be two finitely generated Cohen--Macaulay
  modules. Let $(t_1,\dots,t_m)$ be a regular sequence of elements of
  the maximal ideal of $R$ which is also $M$ and $M'$-regular. Assume
  that $[M]=[M']$ in $Z_{d'}(\Spec R)$. Then
  \[[M/(t_1,\dots,t_m)M]=[M'/(t_1,\dots,t_m)M']\in Z_{d'-m}(R).\]
\end{lemma}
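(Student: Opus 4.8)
The statement to prove is Lemma \ref{lemm:reduction_cycles}: cutting by a regular sequence on two Cohen--Macaulay modules with the same cycle class gives modules with the same (lower-dimensional) cycle class.

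\textbf{The plan.} The cycle $[M]$ of a finitely generated module of dimension $d'$ over a $d'$-dimensional ring $R$ is $\sum_{\mathfrak p}\ell_{R_{\mathfrak p}}(M_{\mathfrak p})[\mathfrak p]$, where $\mathfrak p$ runs over the minimal primes (equivalently, over the dimension-$d'$ points, which for an equidimensional ring are the minimal primes). The equality $[M]=[M']$ thus says $\ell_{R_{\mathfrak p}}(M_{\mathfrak p})=\ell_{R_{\mathfrak p}}(M'_{\mathfrak p})$ for every such $\mathfrak p$. After cutting by the regular sequence $(t_1,\dots,t_m)$ we must compare $[M/\underline t M]$ and $[M'/\underline t M']$ in $Z_{d'-m}(\Spec R)$; by induction on $m$ it suffices to treat $m=1$, i.e. a single element $t$ that is a nonzerodivisor on $R$, $M$ and $M'$. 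So I reduce to: if $\ell_{R_{\mathfrak p}}(M_{\mathfrak p})=\ell_{R_{\mathfrak p}}(M'_{\mathfrak p})$ for all minimal $\mathfrak p$ of $R$, then $\ell_{R_{\mathfrak q}}((M/tM)_{\mathfrak q})=\ell_{R_{\mathfrak q}}((M'/tM')_{\mathfrak q})$ for all primes $\mathfrak q$ minimal over $(t)$.

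\textbf{Key step.} Fix $\mathfrak q\supset (t)$ with $\dim R/\mathfrak q=d'-1$; since $R$ is Cohen--Macaulay (hence equidimensional and catenary) and $t$ is a nonzerodivisor, $R_{\mathfrak q}$ is Cohen--Macaulay of dimension $1$ and $t$ is a system of parameters there, with minimal primes exactly the (finitely many) minimal primes $\mathfrak p$ of $R$ contained in $\mathfrak q$. Localize everything at $\mathfrak q$ and rename: we have a $1$-dimensional CM local ring $(R,\mathfrak m)$, a nonzerodivisor $t\in\mathfrak m$, and $M$ a finite $R$-module which is $t$-torsion-free (being CM of dimension $1$). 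I want $\ell_R(M/tM)=\sum_{\mathfrak p}\ell_{R_{\mathfrak p}}(M_{\mathfrak p})\cdot e$ for a factor $e$ depending only on $t$ and $\mathfrak p$ — this is precisely the associativity / additivity formula for Hilbert–Samuel multiplicities (see e.g. the proof of Serre's ``tor-formula'' multiplicity additivity, or Fulton, \emph{Intersection Theory}, Lemma A.2.7 / Prop. 7.1): for a finite module $M$ over a $1$-dimensional local ring,
\[
\ell_R(M/tM)-\ell_R(M[t]) \;=\; \sum_{\mathfrak p}\ell_{R_{\mathfrak p}}(M_{\mathfrak p})\,\ell_R\big(R/(\mathfrak p + tR)\big)
\]
where the sum is over minimal primes $\mathfrak p$ (and $\ell_R(R/(\mathfrak p+tR))=e_t(R/\mathfrak p)$ is the multiplicity of $t$ on $R/\mathfrak p$). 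When $M$ is moreover $t$-torsion-free the term $\ell_R(M[t])$ vanishes. The right-hand side depends on $M$ \emph{only through} the numbers $\ell_{R_{\mathfrak p}}(M_{\mathfrak p})$, which by hypothesis agree for $M$ and $M'$. Hence $\ell_R(M/tM)=\ell_R(M'/tM')$, which is exactly the coefficient of $[\mathfrak q]$ in the respective cycles. Unwinding the localization, $[M/tM]=[M'/tM']$ in $Z_{d'-1}(\Spec R)$.

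\textbf{Finishing and the main obstacle.} For the inductive step with $m>1$: set $R_1=R/(t_1)$, which is Cohen--Macaulay of dimension $d'-1$ by regularity of the sequence, $M_1=M/t_1M$ and $M_1'=M'/t_1M'$ are CM $R_1$-modules (regularity of $t_1$ on $M$, $M'$), $(t_2,\dots,t_m)$ is an $R_1$-, $M_1$- and $M_1'$-regular sequence, and $[M_1]=[M_1']$ in $Z_{d'-1}(\Spec R_1)$ by the $m=1$ case; then apply the inductive hypothesis. The genuinely substantive point — the main ``obstacle'', though it is standard commutative algebra — is the additivity formula above: one must either cite it (Matsumura \emph{Commutative Ring Theory} Thm. 14.6–14.7, or Bruns–Herzog Cor. 4.6.7 / Fulton App. A) or reprove it via a dévissage of $M$ by a filtration with quotients $R/\mathfrak p$, using that length is additive on short exact sequences and that $\ell_R((-)/t(-))$ behaves additively once the $t$-torsion subobjects (which have finite length, $t$ being a parameter) are accounted for. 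The hypothesis that $t$ is $M$- and $M'$-regular is what lets us drop the $\ell_R(M[t])$ correction terms cleanly; without it the formula would still hold but would require $[M]=[M']$ to control the torsion contributions, which it does not in general — so this hypothesis is genuinely used.
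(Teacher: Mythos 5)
Your proof is correct and follows essentially the same route as the paper: reduce to $m=1$ by induction, localize at the dimension-$(d'-1)$ generic points, and conclude by the one-dimensional length formula $\ell(M/tM)-\ell(M[t])=\sum_{\mathfrak p}\ell_{R_\mathfrak p}(M_\mathfrak p)\,\mathrm{ord}_{R/\mathfrak p}(t)$, with the torsion term killed by $M$-regularity of $t$. The paper simply cites this formula as \cite[Lemma 02QG]{stacks-project} rather than Fulton/Matsumura, so there is no substantive difference.
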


\begin{proof}
  By induction it is sufficient to prove the result when $m=1$. Set
  $t=t_1$. Let $\frakp$ be a prime ideal of $R$ which is a generic
  point of $\Supp(M)$ or $\Supp(M')$. It is sufficient to prove that
  $[M_{\frakp}/tM_{\frakp}]=[M'_\frakp/t M'_{\frakp}]$ in
  $Z_{d'-1}(\Spec R_{\frakp}/(t))$, i.e.~that
  $M_{\frakp}/tM_{\frakp}$ and $M'_\frakp /t M'_{\frakp}$ are two
  $R_{\frakp}/(t)$-modules of the same length. This is a consequence
  of \cite[\href{https://stacks.math.columbia.edu/tag/02QG}{Lemma
    02QG}]{stacks-project}.
\end{proof}

Let $\mathcal N\subset\frakg$ be the nilpotent cone and let
$\widetilde{\mathcal N}\rightarrow\mathcal N$ be the Springer
resolution. Similarly to the definition of the closed subschemes $X_w\subset X$ in \ref{sec:local-models} we define
\[Z_w\subset \widetilde{\mathcal N}\times_{\mathcal N}\widetilde{\mathcal
  N}\subset X\] to be the Zariski closure of preimage under $\widetilde{\mathcal N}\times_{\mathcal N}\widetilde{\mathcal
  N}\rightarrow \underline G_L/\underline B\times\underline
G_L/\underline B$ of the orbit
$\underline G_L(1,w)\subset\underline G_L/\underline B\times\underline
G_L/\underline B$. 
Set
\[ \mathcal Z_w=g(f^{-1}(Z_w\cap \widehat X_{I,w,x_{\dR}}))\times
  \widehat{\mathcal X}^p_{\rho^p}\times\widehat{\mathbb U}^g
  \subset\overline{\mathcal X}_{\infty,x,\mathcal R}^{\qtri},\]
  where $f$ and $g$ are the maps from Theorem \ref{thm:formsmoothdiag}. 

In the following we will make use of the following abusive notation for (local) formal schemes: Let $\Spf R$ be a (local) affine formal scheme. Then we will say that $\Spf R$ is reduced, if $R$ is reduced. Moreover, we will say that $\Spf R$ is irreducible if ${\rm Spec}\,R$ is irreducible. More generally, for a given irreducible component ${\rm Spec}\, R/\mathfrak{a}\subset {\rm Spec}\, R$, we will refer to the formal subscheme ${\rm Spf}\,R/\mathfrak{a}\subset {\rm Spf}\,R$ as an irreducible component of ${\rm Spf}\, R$. Similarly, we will write $Z^0({\Spf}\,R)=Z^0({\Spec}\, R)$ for the free abelian group on the irreducible components of $\Spf\,R$ to which we also refer as the irreducible components of $\Spf\, R$, etc.

\begin{prop}\label{prop:supports_faisceaux} 
  Let $w \in W$. Then the following properties hold:
  \begin{enumerate}[1)]
  \item \label{prop:supports_faisceaux0} For all 
    $I \subset \Delta$ and all $\overline{w} \in W_I\backslash W$ satisfying $w^{\min}w_0 \geq w_{x,\mathcal R}$ , the formal subscheme
    $\mathcal X^{I-\qtri,ww_0}_{\infty,x,\mathcal
      R}$ is reduced and irreducible and
    coincides with an irreducible component of
    $\mathcal X^{I-\qtri}_{\infty,x,\mathcal R}$.  \item\label{prop:supports_faisceaux1}  The schematic support of
    $\mathcal M_{\infty,x,{\mathcal R}}(M(w  \cdot \lambda))$, for 
    $w \in W$, is contained in
    $\overline{\mathcal X}_{\infty,x,\mathcal R}^{\qtri,ww_0}$ 
    if $ww_0 \geq w_{x,\mathcal R}$, and this sheaf is zero
    otherwise. Moreover,  \[[\mathcal M_{\infty,x,{\mathcal
        R}}(M(w\cdot \lambda))] = m_x[\overline{\mathcal
      X}_{\infty,x,\mathcal R}^{\qtri,ww_0}]\in Z^0(\overline{\mathcal X}^{I-\qtri}_{\infty,x,\mathcal R})\] for $ww_0 \geq
    w_{x,\mathcal R}$, where $m_x$ is the integer defined by (\ref{eqn: def mx}).
  \item\label{prop:supports_faisceaux1bis} There is an equality
    \[ [\mathcal M_{\infty,x,\mathcal
        R}(L(ww_0\cdot\lambda))]=m_x\sum_{w'\leq w}a_{w,w'}[\mathcal
      Z_w] \in Z^0(\overline{\mathcal X}^{I-\qtri}_{\infty,x,\mathcal R})\]
    where the $a_{w,w'}\in\NN$ are the integers defined in
    \cite[Thm.~2.4.7]{BHS3}. In particular $a_{w,w}=1$.
   \item\label{prop:supports_faisceaux2} For all $I \subset \Delta$, the sheaves \[\mathcal M_{\infty,x,{\mathcal R}}(M_I(w^{\rm min} \cdot \lambda))\ \text{and}\ \mathcal M_{\infty,x,{\mathcal R}}(M_I(w^{\rm min} \cdot \lambda)^\vee)\] are non zero if and only if $w^{\rm min}w_0 \geq w_{x,\mathcal R}$.
\item\label{prop:supports_faisceaux3} For all $I\subset\Delta$, the support of \[\mathcal M_{\infty,x,{\mathcal
    R}}(\widetilde{M}_I(w^{\rm min}  \cdot \lambda))\ \text{and}\ \mathcal M_{\infty,x,{\mathcal
    R}}(\widetilde{M}_I(w^{\rm min}  \cdot \lambda)^\vee),\] for $\overline w \in W_I\backslash W$, is ${\mathcal
X}_{\infty,x,\mathcal R}^{I-\qtri,w^{\rm min}w_0}$ if $w^{\rm min}w_0 \geq w_{x,\mathcal R}$ and these sheaves are zero otherwise.
\item\label{prop:supports_faisceaux4} The module $\mathcal
  M_{\infty,x,\mathcal R}(L(\lambda))$ is free of rank $m_x$
  over 
  $\mathcal X_{\infty,x,\mathcal R}^{\cris,h} \subset
  \mathcal X_{\infty,x,\mathcal R}^{\qtri,w_0}$.
\item\label{prop:supports_faisceaux5} For any $I\subset\Delta$ and any
  $w\in W$, the sheaves
  \[\mathcal M_{\infty,x,\mathcal R}(\widetilde{M}_I(w^{\rm min} \cdot
  \lambda))\ \text{and}\ \mathcal M_{\infty,x,\mathcal R}(\widetilde{M}_I(w^{\rm min} \cdot
  \lambda)^\vee)\] are generically free of rank $m_x$ over their
  support.
\end{enumerate}
\end{prop}

\begin{proof}
We first prove point \ref{prop:supports_faisceaux0}).  As $\mathcal X^p$ is smooth at $\rho^p$ (as recalled in the begining of this section), the formal completion $\widehat{\mathcal X^p}_{\rho^p}$ is formally smooth. As $\widehat{\mathbb U^g}$ is also formally smooth, the claim follows from the fact that \[\mathcal X_{r_v,\mathcal R_v}^{I_v-\qtri,\Box} \fleche \mathcal X_{r_v,\mathcal R_v}^{I_v-\qtri}\ \text{and}\ \mathcal X_{r_v,\mathcal R_v}^{I_v-\qtri,\Box}\fleche \widehat{X_{I,x_\pdR}}\] are formally smooth and that $X_{I,w,x_\pdR}$ is an irreducible component of $\widehat{X_{I,x_\pdR}}$.

By Theorem
\ref{thm:component_Verma}, the schematic support of the Cohen-Macaulay
sheaves
\[\mathcal M_{\infty,x,\mathcal R}(\widetilde{M}_I(w\cdot \lambda))\ \text{and}\ \mathcal M_{\infty,x,\mathcal R}(\widetilde{M}_I(w\cdot \lambda)^\vee)\]
is contained in $\mathcal X_{\infty,x,\mathcal R}^{I-\qtri,w}$ which
is irreducible. By Proposition \ref{prop:deformed_functor}, as the
sheaves are Cohen--Macaulay of dimension
$t + \dim \mathfrak z_I = \dim \mathcal X^{I-\qtri}_{\infty,x,\mathcal
  R}$ (e.g.~\cite[equation (5.8)]{BHS3} and Proposition
\ref{prop:comp_Jacquet_parabolic}), we
deduce that, if non empty, their schematic support is all
$\mathcal X_{\infty,x,\mathcal R}^{I-\qtri,w}$.

By Remark
  \ref{rema:2_U(t)actions} we deduce also that   \[{\rm supp}\big(\mathcal M_{\infty,x,\mathcal R}(M_I(w\cdot\lambda))\big)\subset \overline{\mathcal X}_{\infty,x,\mathcal R}^{I-\qtri,w^{\rm min}w_0}\] for $w\in {}^IW$.
  Note that the Jordan--Hölder factors of
  $M_I(w\cdot\lambda)$ are among the the $L(w'\cdot\lambda)$ with
  $w'\geq w$ and that $L(w\cdot\lambda)$ is the cosocle of
  $M_I(w\cdot\lambda)$. Therefore
  $\mathcal M_{\infty,x,\mathcal R}(\widetilde
  M_I(w\cdot\lambda))\neq0$ if and only if
  $\mathcal M_{\infty,x,\mathcal R}(M_I(w\cdot\lambda))\neq0$ if and
  only if
  $\mathcal M_{\infty,x,\mathcal R}(
  L(w\cdot\lambda))\neq0$. Therefore the non nullity assertions in
  \ref{prop:supports_faisceaux2} and \ref{prop:supports_faisceaux3}
  follow from the exactness of $\mathcal M_{\infty,x,\mathcal R}$
  (Proposition \ref{prop:deformed_functor}) and from
  \cite[Thm.~5.3.3]{BHS3}. This proves \ref{prop:supports_faisceaux2}
  and \ref{prop:supports_faisceaux3}

We prove point \ref{prop:supports_faisceaux4}. By \cite[Remark 4.3.1
and Proof of Theorem 5.3.3, Step 7]{BHS3}, the schematic support of
$\mathcal M_{\infty,x,\mathcal R}(L(\lambda))$ is contained in the crystalline locus $\mathcal X_{\infty,x,\mathcal R}^{\cris,h} \subset \mathcal X_{\infty,x,\mathcal R}^{\qtri}$, which is smooth and irreducible of the same dimension as the support of $\mathcal M_{\infty,x,\mathcal R}(L(\lambda))$. Thus these coincide and $\mathcal M_{\infty,x,\mathcal R}(L(\lambda))$ is free of rank $m_x$ over the crystalline locus.

No we prove point \ref{prop:supports_faisceaux1}. The first assertion
has already been proved with \ref{prop:supports_faisceaux2} and
\ref{prop:supports_faisceaux3}, therefore it remains to prove the
assertion on the cycle. It follows from the
proof \cite[Thm.~5.3.3]{BHS3} that $\mathcal M_{\infty,x,\mathcal
  R}(M(w\cdot\lambda))$ is generically free of rank $m_x$ for
$ww_0\geq w_{x,\mathcal R}$. As ${\mathcal
  X_{\infty,x,\mathcal R}^{\qtri,ww_0}}$ is Cohen--Macaulay, the
result is a consequence of point \ref{prop:supports_faisceaux3} and of
Lemma \ref{lemm:reduction_cycles} applied with \[M=\mathcal O_{\mathcal
  X_{\infty,x,\mathcal R}^{\qtri,ww_0}}^{m_x}\ \text{and}\ M'=\mathcal M_{\infty,x,\mathcal
  R}(M(w\cdot\lambda))\] and to a regular sequence generating the maximal
ideal of $U(\mathfrak t)_{\mathfrak m}$. This sequence is $M'$-regular
by Proposition \ref{prop:deformed_functor}.

We deduce \ref{prop:supports_faisceaux1bis} from
\ref{prop:supports_faisceaux1} together with formulas (5.23) and
(5.24) of \cite{BHS3} and the fact that the Verma modules form a basis
of the Grothendieck group of the category $\cO_{\chi_\lambda}$.
      
We prove point \ref{prop:supports_faisceaux5}. As
$\mathcal X^{I-\qtri,w'}_{\infty,x,\mathcal R}$ is generically smooth
for any $w'$, the module $\mathcal M_{\infty,x,\mathcal R}(M)$ is generically
free, say of rank $r$, over its support where \[M\in \{\mathcal M_{\infty,x,\mathcal R}(\widetilde{M}_I(w^{\rm min}\cdot \lambda)), \mathcal M_{\infty,x,\mathcal R}(\widetilde{M}_I(w^{\rm min}\cdot \lambda)^\vee)\}.\] 
Now we claim that there exists an open an subset $U$
in the regular locus of
$\Spec(R_{\infty,x,\mathcal R}^{I-\qtri,w^{\rm min}w_0})$ such that $U$ intersects the support of
$\mathcal M_{\infty,x,\mathcal R}(L(w^{\rm min}\cdot\lambda))$. The claim then implies
$r=m_x$. Indeed, the restriction of $\mathcal M_{\infty,x,\mathcal
  R}(\widetilde M_I(w^{\rm min}\cdot\lambda))$ to $U$ is locally free since
$U$ is regular. Therefore $\mathcal M_{\infty,x,\mathcal
  R}(M_I(w^{\rm min}\cdot\lambda))$ is locally free of rank $r$ over its
support intersected with $U$. It follows from the point
\ref{prop:supports_faisceaux1bis} that $\mathcal M_{\infty,x,\mathcal
  R}(L(w'\cdot\lambda))$ is not supported at the generic point of
$\mathcal Z_{w^{\rm min}w_0}$ for $w'>w^{\rm min}$ and that $\mathcal M_{\infty,x,\mathcal
  R}(L(w^{\rm min}\cdot\lambda))$ has length $m_x$ at the generic point of
$\mathcal Z_{w^{\rm min}w_0}$. As $L(w^{\rm min}\cdot\lambda)$ appears with multiplicity one in
$M_I(w^{\rm min}\cdot\lambda)$ and all other subquotient are of the form
$L(w'\cdot\lambda)$ with $w'>w^{\rm min}$, we have $r=m_x$. We now construct an open subset $U$ with the claimed properties. We set
 \[ U=g(f^{-1}(V_{w^{\rm min}w_0}\cap \widehat X_{I,w^{\rm min}w_0,x_{\dR}}))\times
   \widehat{\mathcal X}^p_{\rho^p}\times\widehat{\mathbb U}^g, \]
 where $f$ and $g$ are the maps of Theorem \ref{thm:formsmoothdiag}
 and $V_{w^{\rm min}w_0}$ is the preimage of the Schubert cell $\underline
 G_L(1,w^{\rm min}w_0) \subset \underline{G}_L/\underline{B} \times
 \underline{G}_L/\underline{B}$ in $X_{I,w^{\rm min}w_0}$. This is an open and
 smooth subset
 of $X_{I,w^{\rm min}w_0}$: indeed, the maps $f$ and $g$ are formally smooth, the formal scheme $\mathcal X_{\infty,x,\mathcal R}^{\qtri} \fleche \mathcal X_{\rho_p,\mathcal R}^{\qtri}$ is formally smooth and the point as $\rho^p$ lies in the smooth locus of $\mathcal X^p$.
\end{proof}

\begin{prop}\label{prop:freeness_criterion}
  Assume that $x_{\pdR}$ is a smooth point of $X_{w^{\rm min}w_0}$. Then
  \[\mathcal{M}_{\infty,x,\mathcal R}(M(w^{\rm min}\cdot\lambda))\ \text{and}\ \mathcal{M}_{\infty,x,\mathcal R}(M(w^{\rm min}\cdot\lambda)^\vee)\] are finite free
  $\mathcal O_{\overline{\mathcal X}_{\infty,x,\mathcal
      R}^{\qtri,w^{min}w_0}}$-modules.
\end{prop}

\begin{proof}
We write $w^{\rm min}=w$ to simplify the notations. By Remark \ref{rema:2_U(t)actions}, the two $U(\mathfrak t)$-module structures on $\mathcal{M}_{\infty,x,\mathcal R}(\widetilde{M}(w\cdot\lambda))$ coming from the $U(\mathfrak{t})$-action on $\widetilde{M}(w\cdot \lambda)$ and the one coming from the derivative of the locally analytic action, coincide. Thus we have the equality between  $\mathcal{M}_{\infty,x,\mathcal R}(M(w\cdot\lambda))$ and the localisation \[ \mathcal{M}_{\infty,x,\mathcal R}(M(w\cdot\lambda)) \simeq i_*i^*\mathcal{M}_{\infty,x,\mathcal R}(\widetilde{M}(w\cdot\lambda)),\] where $i : \widehat T^{\rm sm} \fleche \widehat T$ denotes the inclusion of the closed subspace of smooth characters. 
A similar remark applies to the dual Verma module.  In particular, it is enough to show that the 
$\mathcal O_{{\mathcal X}_{\infty,x,\mathcal R}^{\qtri,ww_0}}$-modules \[\mathcal{M}_{\infty,x,\mathcal R}(\widetilde{M}(w\cdot\lambda))\ \text{and}\ \mathcal{M}_{\infty,x,\mathcal R}(\widetilde{M}^\vee(w\cdot\lambda))\] are finite free. But these modules are Cohen-Macaulay with support the localization at $x$ of $\mathcal X_{\infty,x,\mathcal R}^{{\rm qtri},ww_0}$, which is smooth.
\end{proof}

\subsection{Recollection on Bezrukavnikov's functor}\label{sec: Bez}

The aim if this section (or even of the paper) is to identify the patching functor that takes objects in $\mathcal{O}_{\rm alg}$ (or more generally in $\mathcal{O}_{\rm alg}^\infty$) to Cohen-Macaulay modules on certain Galois deformation rings with a functor constructed by Bezrukavnikov in geometric representation theory (more precisely: with the pullback from our local models to the Galois deformation rings). Before doing so, we will need to recall the result of Bezrukavnikov.

 Recall that $X = \widetilde{\mathfrak g}\times_{\mathfrak g} \widetilde{\mathfrak g}$ where $\mathfrak g$ is the Lie algebra of $\underline
  G_L=\prod_{v\in\Sigma}(L\times_{\QQ_p}\Res_{F_v/\QQ_p}\GL_n)$ as in
  section \ref{sec:local-models} and denote by $X^\wedge$ the completion
  of $X$ along the preimage of $\set{(0,0)}\in\mathfrak
  t\times_{\mathfrak t/W}\mathfrak t$ in $X$. Moreover, we write $\overline X = X
  \times_{\mathfrak t} \{0\}$, where the fiber product is taken with respect to the map $\kappa_1 : X \fleche \mathfrak t$ of \ref{sec:local-models} that maps $
  (g\underline B,h\underline B,N)$ to $\ad(g^{-1})(N)
  \pmod{\mathfrak n} \in \mathfrak t$.
  As in the preceding sections we fix the shift \[\delta'_{\underline{G}}={\rm det}^{\tfrac{1-n}{2}} \delta_{\underline{G}}\in X^\ast(\underline{T})\] of the half sum of the positive roots $\delta_{\underline{G}}$.

\begin{theor}[Bezrukavnikov]\label{thm:Bez}
Let  $\lambda\in X^*(\underline T)$ be a dominant character. There exists an exact functor
\[ \mathcal{B} : {\mathcal O_{\chi_\lambda}} \fleche \Coh^{\underline G_L}(X^\wedge),\]
such that 
\begin{enumerate}[1)]
\item for all $M \in \mathcal O_{\chi_\lambda}$ the sheaf $\mathcal{B}(M)$ is a Cohen-Macaulay sheaf,
\item for all $w \in W$ there is an isomorphism $\mathcal{B}(M(ww_0\cdot \lambda)^\vee) \simeq \mathcal O_{\overline{X_w}}$,
\item for all $w \in W$ there is an isomorphism $\mathcal{B}(M(ww_0\cdot \lambda)) \simeq \omega_{\overline{X_w}}$,
\item the image $\mathcal{B}(P(w_0 \cdot \lambda))$ of the anti-dominant projective $P(w_0\cdot\lambda)$ is the structure sheaf $\mathcal O_{\overline X}$,
\item the image 
  $\mathcal B(L(\lambda))$ of the algebraic representation $L(\lambda)$ is the line bundle $\mathcal O(-\delta'_{\underline{G}})\boxtimes \mathcal{O}(-\delta'_{\underline G})$ on $\underline G_L/\underline B \times \underline
  G_L/\underline B$ which is viewed as a closed subscheme of $X^\wedge$ via \[(g\underline B,h\underline B)\mapsto(g\underline B,h\underline B,0).\]
\end{enumerate}
\end{theor}

This result is (a small part of a result) due to Bezrukavnikov and his collaborators whose proof is spread out through the papers \cite{BezTwo,BR,BezL,BRTop}). For the convenience of the reader, we explain how to get the result in the previous form.

\begin{proof} By the main result of \cite{BezTwo}, there are reverse equivalence of categories
\[\Psi : D_{I^0,I^0} \leftrightarrow D^b(\Coh(\widetilde{\mathfrak g} \times_{\mathfrak g} \widetilde{\mathfrak g})) : \Phi_{I^0,I^0},\]
which we can then localize on $X^\wedge \subset X$. Up to use translation functors, we can focus on the case $\lambda = 0$. By \cite[Corollary 42 ]{BezTwo} the functor $\Psi$ in fact takes values in  ($\underline G$-equivariant) coherent sheaves on $X$, when restricted to perverse sheaves $F \in \Perv_{\underline N}(\underline{G}/\underline{B})$. 
Moreover, the Beillinson--Bernstein localization theorem, more precisely by \cite{BGWall} Localization Theorem 2.2, provides an exact fully faithfull embedding of categories
\[ \mathcal O_{\chi_0} \fleche \Perv_{\underline{N}}(\underline{G}/\underline{N}).\]
Composing the  Beillinson--Bernstein equivalence with Bezrukavnikov's functor (noting that the blocks $\mathcal O_{\chi_0}$ and $\mathcal O_{\chi_\lambda}$ are equivalent) we get the exact functor $\mathcal B$. 

Denote $\mu = w_0\cdot \lambda$ denote the antidominant weight in the dot-orbit of $\lambda$. Now the proof of \cite[Proposition 5.8]{BezL} implies that 
$\mathcal B({M}(s\cdot \mu)^\vee) = \mathcal O_{\overline{X_s}}$ for all simple reflection $s$ and $\mathcal{B}(P(\mu) )= \mathcal O_{\overline{X}}$. Bezrukavnikov's main result \cite[Theorem 1]{BezTwo} implies that $\Psi$ (hence $\mathcal{B}$) intertwines the convolutions on both sides.
Here the convolution on the category $\mathcal O_{\chi_\lambda}\simeq \mathcal O_{\chi_0}$ is inherited from the convolution in $ \Perv_{\underline N}(\underline{G}/\underline{B})$ defined as in \cite[7.]{BRTop}. We write $w=s_1\dots s_r$ and compute convolutions on both sides. By \cite[Theorem 2.2.1]{BR} we have 
\[ \mathcal O_{\overline{X_w}} = \mathcal O_{\overline{X_{s_1}}}\star \dots \star \mathcal O_{\overline{X_{s_r}}}.\]
By  \cite[Lemma 7.7]{BRTop} we have ${M}(w\cdot \mu)^\vee={M}(s_1\cdot \mu)^\vee \star \dots \star {M}(s_r \cdot \mu)^\vee$  and hence $\mathcal{B}({M}(w\cdot \mu)^\vee)=\mathcal O_{\overline{X_w}}$. Moreover, by \cite[Theorem 2.2.1]{BR} again, the dualizing sheaf of $\overline{X_w}$ is given by the convolution
\[ \omega_{\overline{X_w}}=\omega_{\overline{X_{s_1}}}\star \dots \star \omega_{\overline{X_{s_r}}}.\]
But \cite[Proposition 1.10.3]{BR} implies that the inverse of $\mathcal O_{\overline{X_s}}$ for the convolution is $\omega_{\overline{X_s}}$, and as $\mathcal{B}$ is compatible with convolution, and as the inverse of ${M}(s\cdot \mu)^\vee$ is ${M}(s \cdot \mu)$ (again using \cite[Lemma 7.7]{BRTop}for example), we deduce $\omega_{\overline{X_s}} = \mathcal{B}(M(s\cdot\mu))$. 
Finally  5. is a consequence of \cite[Lemma 6.7]{BezL} (with $P = \underline
G$).
\end{proof}
Recall that we have fixed a point $x\in\mathcal X_\infty$ associated which we have defined the positive integer $m_x$ in (\ref{eqn: def mx}).
\begin{cor}\label{cor:bezandM}
The functor $\mathcal{B}$ induces an exact functor \[\mathcal{B}_x : \mathcal O_{\chi_\lambda} \fleche \Coh(\mathcal X_{\infty,x,\mathcal R}^{\rm qtri})\] such that, for all $M \in \mathcal O_{\chi_\lambda}$ the sheaf $\mathcal{B}_x(M)$ is a Cohen-Macaulay sheaf and such that 
\[ [\mathcal M_{\infty,x,\mathcal R}(M)] = m_x [\mathcal{B}_x(M)]\in Z^0(\overline{\mathcal X}^{I-\qtri}_{\infty,x,\mathcal R}).\]
\end{cor}

\begin{proof}
Let $\underline G_1$ be the completion of $\underline G$ at the unit
element. As the representations $(\rho_v)_{v|p}$ defined by the point $x$ are crystalline and hence  de Rham we may choose a basis $\alpha$ of  $W(x) = \prod_{v \in \Sigma} W_{\rm dR}(D_{\rm rig}(\rho_{x,v})[1/t])$ and define  a point $x_{\rm pdR}$ associated to $x$ (or rather to the representations $(\rho_v)_{v|p}$) as in (\ref{eqn: def xpdr}). For all $M\in \mathcal O_{\chi_\lambda}$, the sheaf $\mathcal{B}(M)$ is a $\underline G_L$-equivariant sheaf
on $X^\wedge$ and hence gives rise to a $\underline G_1$-equivariant sheaf on
$\widehat X_{x_{\pdR}}$. Now by \cite[Theorem 3.4.4. and Corollary 3.5.8]{BHS3}, see also Theorem \ref{thm:formsmoothdiag} above, we have a diagram
\[
  \begin{tikzcd}
  & \mathcal X_{\infty,x,\mathcal R}^{\qtri,\Box}\ar[ld,"\pi"']   \ar[rd,"W"] &
 \\
  \mathcal X_{\infty,x,\mathcal R}^{\qtri} & &X^\wedge_{x_{\pdR}}.
\end{tikzcd} \]
More precisely, the map $\pi$ forgets the deformation of the fixed basis $\alpha$, and hence it is a $\underline G_1$-torsor. Moreover, $W$ formally smooth and $\underline G_1$-equivariant for the natural left actions $g \cdot \widetilde{\alpha} := \widetilde{\alpha} \circ g^{-1}$ on the source (acting only on the deformation of the isomorphisms $\alpha_v : L \otimes_{\QQ_p} F_v \overset{\sim}{\fleche} W_v$) and $g \cdot (kB,hB,N) = (gkB,ghB,g^{-1}Ng)$ on the target of $W$.

It follows that the pullback of $\mathcal{B}(M)^\wedge_{x_{\pdR}}$ at $\widehat{X}_{x_{\pdR}}$ along $W$ is a $\underline G_1$-equivariant sheaf and hence descends to a coherent sheaf  \[\mathcal{B}_x(M)\in {\rm Coh}(\mathcal X_{\infty,x,\mathcal R}^{\qtri}).\] 
It follows from the construction that $M\mapsto \mathcal{B}_x(M)$ and that $\mathcal{B}_x(M)$ is Cohen-Macaulay, as $\mathcal{B}(M)$ is. Moreover, $\mathcal{B}_x$ is exact, as $W$ is formally smooth and hence flat. 

It  remains to check the assertion on cycles. But as taking cycles is additive and $\mathcal B_x$ is exact, we only need to check this equality on a generating set of the Grothendieck group of $\mathcal O_{\chi_\lambda}$, such as the Verma modules $M(w \cdot \mu)$. Hence the desired equality follows from the previous result on Bezrukavnikov's functor together with Proposition \ref{prop:supports_faisceaux}.
\end{proof}

\subsection{A detail study of local models when $n=3$}
\label{subsect:detailn=3}
From now on we assume $n=3$, so that the group $\underline G_L$ is \[\underline G_L \simeq (\Res_{F\otimes_\QQ \QQ_p/\QQ_p}\GL_3)\times_{\QQ_p} L \simeq \prod_{v \in S_p}(L \times_{\QQ_p} \Res_{F_v/\QQ_p}\GL_{3,F_v}) \simeq \prod_{\tau \in \Sigma_F} \GL_{3,L}.\] We identify the previous local Weyl group $W$ with $\prod_\tau W_\tau$ and each $W_\tau$ with $W_{\GL_3} \simeq \mathfrak S_3$ and denote $s_{1,\tau},s_{2,\tau}$ the two simple reflection corresponding to the choice of the upper Borel, and $w_{0,\tau} = s_{1,\tau}s_{2,\tau}s_{1,\tau}$ the longuest element in $W_\tau$. If $\tau$ is understood, we often omit it from the notation.

As in section \ref{sec:local-models} we denote by $X$ the Steinberg variety for the group 
\[ \underline G = \Res_{F\otimes_\QQ \QQ_p/\QQ_p}\GL_3,\]
over $L$. As $L$ is assumed to contain all Galois conjugates of $F$ we have $X \simeq \prod_{\tau \in \Sigma_F} X_3$ (see Remark \ref{defi:Xn} for the notation $X_3$).
The Steinberg variety $X$ (resp.~$X_3$) has dimension $9^{\vabs{\Sigma_F}}$ (resp.~$9$) and $6^{\vabs{\Sigma_F}}$ (resp.~$6$) irreducible components $X_w, w \in W$ (resp. $X_{3,w}, w \in \mathfrak S_3$), see e.g.~\cite[Proposition 2.2.5]{BHS3}.

\begin{prop}\label{prop:geomXn=3}  For $w = (w_\tau)_{\tau \in \Sigma_F}$, let $ s = \vabs{\set{ \tau\in\Sigma_F \mid w_\tau = w_0}}$.
Then the component $X_w$ is smooth if and only if $s = 0$. Moreover, if $s \neq 0$, then the component $X_{w}$ is Cohen--Macaulay but not Gorenstein. More precisel, let 
\[ x_{\pdR}=(g\underline B,h \underline B,N) =(g_\tau \underline
  B_\tau,N_\tau,h_\tau\underline B_\tau)\in X_w(L)=\prod_{\tau\in\Sigma_F}X_{3, w_\tau}(L),\] 
and assume that $N_\tau=0$ when $w_\tau = w_0$. Then 
\[ \dim_L \omega_{X_{w}} \otimes k(x_{\pdR}) = 2^r,\]
where $r  \coloneqq\vabs{ \set{ \tau \mid w_\tau = w_0, \text{ and } g_\tau
      \underline B_\tau = h_\tau \underline B_\tau}}$.
\end{prop}

\begin{proof}   
The smoothness is a consequence of Proposition \ref{prop:smoothpartialsteinbergcomp}. 
As $X = \prod_{\tau \in \Sigma_F} X_3$, it is enough to prove the analogous result for $X_3$ only. Indeed, by base change and composition of upper shriek functors, the dualizing sheaf of $X$ is a derived tensor product $\bigotimes^{\mathbb L}_\tau p_\tau^*\omega_{X_3}$,
where $p_\tau :  X \fleche X_3$ is projection to the $\tau$-component. 
But as the product $X=\prod_\tau X_3$ is a product over a field, we find
\[ \omega_X = \bigotimes_\tau p_\tau^*\omega_{X_3}.\] Thus from now on we denote $X_3$ simply by $X$. 

It is thus enough to prove that the fiber of $\omega_{X_{w_0}}$, is
2-dimensional at a point of the form $(g\underline B,0,g\underline
B)$. Let $q : \widetilde{\mathfrak g} \fleche \mathfrak g$ denote the Grothendieck resolution, then $X \simeq
\underline G_L \times^{\underline B} q^{-1}(\mathfrak b)$. Moreover, $Y \coloneqq q^{-1}(\mathfrak b)$ decomposes into irreducible components $Y=\bigcup_{w\in W} Y_w$ such that $X_w\simeq \underline G_L \times^{\underline B} Y_w$. 
Hence it is enough to prove that $\omega_{Y_{w_0}}$ has fiber dimension $2$ at the point $y_{\rm pdR}=(\underline{B},0)$.
As $X_{w_0}$ is Cohen-Macaulay and flat over $\mathfrak t$ (cf \cite[Proposition 2.2.3]{BHS3}), we have the base change formula $\omega_{X_{w_0}} \otimes_{X} \overline X \simeq \omega_{\overline{X_{w_0}}}$. We are thus reduced to compute the dualizing sheaf $\omega_{\overline{Y_{w_0}}}$ of the irreducible component \[\overline{Y_{w_0}}=Y_{w_0}\times_{\mathfrak t}\{0\}\] of $\overline Y = q^{-1}(\mathfrak n)$. 
This scheme now has dimension $3$ and we can use explicit computations. 

A point of $\overline{Y}(L)$ is of the form 
$(g\underline B,N) \in (\underline G/\underline B \times \mathfrak
g)(L)$. We use the embedding $\underline G/\underline B \hookrightarrow
\mathbb P_L^2 \times (\mathbb P_L^2)^\vee$ that sents a full flag $(0
\subset \mathcal L \subset \mathcal P \subset k^3)$ to $(\mathcal L
\subset k^3, \mathcal P \subset k^3)$. In homogeneous coordinates $([x_0:x_1:x_2],[y_0:y_1:y_2])$ the condition $\mathcal L
\subset \mathcal P$ is given by $x_0y_0 + x_1y_1+x_2y_2 = 0$. 
Let $\overline{Y}^0 \subset \overline{Y}$ denote the open subset defined by
the condition $x_0 = y_2 = 1$. It is enough to compute on this open subset, as this is a neighborhood of the point $y_{\rm pdR}=(\underline{B},0)=([1:0:0],[0:0:1])$. 
On $\overline{Y}^0$ we can thus remove $y_0$ from our equations. Let us write
\[
N = \left(
\begin{array}{ccc}
 0 &  u_{12} &  u_{13} \\
  & 0  &  u_{23} \\
  &   & 0  
\end{array}
\right)
\]
for the universal matrix over $\overline{Y}^0$. The ideal defining \[\overline{Y}^0_{w_0}\subset Z \coloneqq
\Spec(k[x_1,x_2,y_1,u_{12},u_{23},u_{13}])\] is then given by
\[ I_{w_0} = (u_{23}x_2,u_{12}(x_2+x_1y_1),u_{12}x_1+u_{13}x_2,u_{23}y_1 - u_{13}(x_2+x_1y_1)).\]
We remark that we can replace $u_{12}(x_2+x_1y_1)$ by $u_{12}x_2 - x_{13}x_2y_1$ using the third equation, and that automatically $y_1u_{12}u_{23} = 0$ using our new equation and $u_{23}y_1 - u_{13}(x_2+x_1y_1)=0$. We then check (e.g.~using Macaulay2) that
\[0 \fleche \mathcal O_Z^2 \overset{A'}{\fleche} \mathcal O_Z^6 \overset{A}{\fleche} \mathcal O_Z^5 \overset{A''}{\fleche} \mathcal O_Z\]
 is a resolution of $\mathcal O_Z/I_{w_0}$, where 
\[A'=\left(
\begin{array}{cc}
 y_1 & y_1u_{13}-u_{12}   \\
  -x_2&0   \\
x_1   & u_{23} \\
0 & -u_{12}u_{23} \\
0 & - x_2u_{23} \\
0 & x_1u_{12}+x_2u_{12}  
\end{array}
\right)\ ,\ A''=\left(
\begin{array}{c}
x_1u_{12} + x_2u_{13}  \\
  x_2u_{23} \\
   y_1u_{12}u_{23} \\
   x_1y_1u_{13} - y_1u_{23} + x_2u_{13} \\
   x_2y_1u_{13} - x_2u_{12}
\end{array}
\right)^t\]

\[A = \begin{pmatrix}
      -x_2u_{23}& -y_1u_{23}&0&x_2&-y_1u_{13}&0\\
      x_1u_{12}+x_2u_{13}&y_1u_{13}&-y_1u_{12}&-y_1&0&-y_1u_{13}+u_{12}\\
      0&x_1&x_2&0&1&0\\
      0&0&0&{-x_2}&u_{12}&0\\
      0&0&0&x_1&u_{13}&u_{23}\end{pmatrix}.\]
     Let  $i : \overline{Y}^0_{w_0} \hookrightarrow Z$ denote the canonical closed embedding. Then the dualizing sheaf can be computed as  $\omega_{\overline{Y}^0_{w_0}} = i^*{\rm Ext}^3_{\mathcal O_Z}(\mathcal O_{\overline{Y^0_{w_0}}},\mathcal O_Z)$ which is given by
      \[ \omega_{\overline{Y}^0_{w_0}} \simeq \mathcal O_Z^2 /<(y_1,y_1u_{13}-u_{12}),(x_2,0),(x_1,u_{12}),(0,u_{12}u_{23})>,\]
      as $x_2u_{23} = x_2u_{12}+x_2u_{12} = 0$ on
      $\overline{Y}^0_{w_0}$. It follows that the fiber of $\omega_{\overline{Y}^0_{w_0}}$ at $y_\pdR$ is $2$-dimensional.
\end{proof}

\begin{lemma}\label{lemm:X_3_J} Let $J\subset\Delta_{\GL_3}$.
\begin{enumerate}
\item\label{lemm:X_3_J1} For $w \in W(\GL_3) \simeq \mathfrak S_3$ the component $X_{3,w}$ is smooth if $w \neq w_0$.
\item\label{lemm:X_3_J2} If $x_{\pdR} = (g\underline B_3,h\underline B_3,0) \in X_{3,w_0}(L)$, with $g\underline B_3 \neq h\underline B_3$, then $x_{\pdR}$ is a smooth point of $X_{3,w_0}$.
\item\label{lemm:X_3_J3} For $\emptyset \neq J \subset \Delta_{\GL_3}=\{s_1,s_2\}$ the component $X_{3,J,\overline w}$ is smooth for any $\overline w \in W_J\backslash W_{\GL_3}$.
\end{enumerate}
\end{lemma}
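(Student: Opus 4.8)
The three assertions all concern the geometry of the (Grothendieck--Springer version of the) Steinberg variety $X_3$ for $\GL_{3,L}$, so the plan is to reduce everything to Proposition \ref{prop:smoothpartialsteinbergcomp} (for the components of $X_3$) and to the analogous statement for the partial versions $X_{3,\frakp_J}$ that can be extracted from \cite{BreuilDing}. For \ref{lemm:X_3_J1}, the elements of $\mathfrak S_3$ other than $w_0$ are exactly $1$, $s_1$, $s_2$, $s_1s_2$, $s_2s_1$; each of these is a product of \emph{distinct} simple reflections (note $s_1s_2$ and $s_2s_1$ have reduced expressions using the two distinct simple reflections $s_1,s_2$), so Proposition \ref{prop:smoothpartialsteinbergcomp} applies directly and gives smoothness of $X_{3,w}$. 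The only element of $\mathfrak S_3$ which is \emph{not} a product of distinct simple reflections is $w_0=s_1s_2s_1$, which is why it is singled out.

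For \ref{lemm:X_3_J2}, the point is that although $X_{3,w_0}$ is singular (by Proposition \ref{prop:smoothpartialsteinbergcomp}), its singular locus is a proper closed $\underline G_L$-stable subscheme, and one has to see that a point of the form $(g\underline B_3, h\underline B_3,0)$ with $g\underline B_3\neq h\underline B_3$ avoids it. First I would use $\underline G_L$-equivariance of $X_{3,w_0}$ (the group acts diagonally on $\underline G_L/\underline B_3\times\underline G_L/\underline B_3$ and by the adjoint action on $\frakg$, preserving each component $X_{3,w}$) to reduce to the point $x_0=(\underline B_3, \widetilde{w_0}\underline B_3, 0)$, i.e.\ the $\underline G_L$-orbit of $(1,w_0)$ in the double flag variety. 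This is precisely the open Schubert cell $V_{w_0}\subset X_{3,w_0}$ (the preimage of the big cell $\underline G_L(1,w_0)$), and by the same kind of argument as in the proof of Lemma \ref{lemm:generically_reduced} (or directly from \cite[Prop.~5.2.1]{BreuilDing}), $V_{w_0}$ is a smooth $L$-scheme. Hence $x_0$ is a smooth point, and therefore so is every point in its orbit, in particular every $(g\underline B_3,h\underline B_3,0)$ with the two flags in general position, which proves \ref{lemm:X_3_J2}. (The nilpotent part being forced to lie over $0\in\mathfrak t$ is automatic on the open cell since the relative position is $w_0$.)

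For \ref{lemm:X_3_J3}, with $\emptyset\neq J\subset\{s_1,s_2\}$, the relevant Weyl group is $W_J\backslash W_{\GL_3}$, and a representative $\overline w$ of minimal length has length at most $\lg(w_0)-\lg(w_{0,L_J})$; for $|J|=1$ this is at most $2$, for $J=\Delta_{\GL_3}$ the quotient is trivial. By the description of the components $X_{3,J,\overline w}$ in section \ref{sec:local-models}, each $X_{3,J,\overline w}\to X_{3,\frakp_J,\overline w}$ is a projective smooth morphism (a $\underline P_J/\underline B_3$-torsor over its image), so it suffices to show that $X_{3,\frakp_J,\overline w}$ is smooth. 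The partial Steinberg components $X_{3,\frakp_J,\overline w}$ are analysed in \cite[§5.2]{BreuilDing}: the key computation is that $V_{\frakp_J,\overline w}$ is smooth and the closure $X_{3,\frakp_J,\overline w}$ of this open subset is smooth precisely when the minimal representative $\overline w^{\min}$ is a product of distinct simple reflections. I would check (a short case analysis on the at most three nontrivial subsets $J$) that for $n=3$ and $\emptyset\neq J$, every minimal-length coset representative in $W_J\backslash\mathfrak S_3$ is indeed a product of distinct simple reflections --- this is where the smallness of $n$ is used, as it fails for larger $n$ --- and conclude smoothness of $X_{3,J,\overline w}$, possibly invoking the more precise \cite[Rk.~4.1.6]{BHS3} in place of the general statement. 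The main obstacle is the bookkeeping in \ref{lemm:X_3_J3}: one must make sure the ``minimal coset representative'' and the passage between $X_I$, $X_{\frakp_I}$, and the corresponding Schubert cells match up with the conventions of \cite{BreuilDing}, but none of the three parts requires any genuinely new computation beyond Proposition \ref{prop:smoothpartialsteinbergcomp} and the explicit facts about $X_3$ already recorded above.
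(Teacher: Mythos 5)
Your part \ref{lemm:X_3_J1} is fine and is essentially the paper's argument: the five elements of $\mathfrak S_3$ other than $w_0$ are products of distinct simple reflections, so Proposition \ref{prop:smoothpartialsteinbergcomp} applies (the paper phrases this via Proposition \ref{prop:geomXn=3}, which rests on the same proposition). The real problem is part \ref{lemm:X_3_J2}. Your reduction assumes that $g\underline B_3\neq h\underline B_3$ forces the two flags into general position, so that the point lies in the open stratum $V_{w_0}$ and you can conclude by $\underline G_L$-equivariance from smoothness of $V_{w_0}$. For $n=3$ this is false: the relative position $w'$ of the two flags can be any of $s_1,s_2,s_1s_2,s_2s_1$ as well as $w_0$, and since the image of $X_{3,w_0}$ in the double flag variety is all of $\GL_3/\underline B_3\times\GL_3/\underline B_3$, every point $(g\underline B_3,h\underline B_3,0)$ with $w'\neq w_0$ still lies in $X_{3,w_0}$ but outside $V_{w_0}$. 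For such points neither smoothness of $V_{w_0}$ nor equivariance helps: the smooth locus is open and $\underline G_L$-stable, but these points are in the \emph{closure} of the open stratum, not in its orbit, and being a limit of smooth points proves nothing. The paper treats precisely these points via the tangent-space criterion \cite[Prop.~2.5.3(ii)]{BHS3}: at a point over the stratum $U_{w'}$ one needs $\codim_{\mathfrak t}\mathfrak t^{w_0w'^{-1}}=\lg(w_0)-\lg(w')$, which holds because $w_0w'^{-1}$ is a product of distinct simple reflections whenever $w'\neq1$ in $\mathfrak S_3$ — this is exactly where the hypothesis $g\underline B_3\neq h\underline B_3$ and the restriction to $n=3$ enter. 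Your argument only covers the single case $w'=w_0$, so it has a genuine gap.

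For part \ref{lemm:X_3_J3} your enumeration of the minimal coset representatives ($1,s_2,s_2s_1$ for $J=\{s_1\}$, etc.) is correct, but the criterion you lean on — that $X_{3,\frakp_J,\overline w}$ is smooth precisely when $\overline w^{\min}$ is a product of distinct simple reflections — is not established anywhere you can cite: Proposition \ref{prop:smoothpartialsteinbergcomp} is only the case $I=\emptyset$, and \cite{BreuilDing} only gives smoothness of the open pieces $V_{\frakp_J,\overline w}$ plus, via \cite[Cor.~5.3.4]{BreuilDing}, smoothness of the full component when $\lg(\overline w^{\min})\le1$, which does not reach $\overline w^{\min}=s_2s_1$. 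The paper instead disposes of $J=\Delta$ by $X_{3,J}=\widetilde{\frakg}$ and handles $J=\{s_1\}$ (hence $\{s_2\}$) by an explicit computation. So as written your \ref{lemm:X_3_J3} rests on an unproved parabolic analogue of Proposition \ref{prop:smoothpartialsteinbergcomp}; you would need either to prove that analogue or to carry out the explicit computation the paper alludes to.
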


\begin{proof}
  Point \ref{lemm:X_3_J1} is Proposition \ref{prop:geomXn=3}. For the point \ref{lemm:X_3_J2}, denote $w'$ the index of the Schubert stratum in which $x_{pdR}$ lies.
  By \cite[Proposition 2.5.3(ii)]{BHS3} it is thus enough 
  (as $\overline{U_{w_0}} = \GL_3/\underline B_3\times \GL_3/\underline B_3$ is smooth) to prove that 
  $\codim_{\mathfrak t}(\mathfrak t^{w_0w{'-1}}) = \lg(w_0)-\lg(w')$. But this codimension is what we have denoted 
  $\ell(w_0w^{'-1})$ in the proof of Proposition \ref{prop:smoothpartialsteinbergcomp}. As $w' \neq 1$ and $n=3$, 
  $w_0w^{'-1}$ is a product of distinct simple reflections thus $\ell(w_0w^{'-1}) = \lg(w_0w^{'-1}) = \lg(w_0) - \lg(w')$.
  For point \ref{lemm:X_3_J3}, as $n=3$ we have that $J = \{s_1\},\{s_2\}$ or $J = \{s_1,s_2\}$. 
  Denote $\underline P = \underline P_J$. In the case $J = \{s_1,s_2\}$, then $\underline P_J = \GL_3$ and $X_{3,J} = \widetilde{\mathfrak g}$ is smooth.  It is sufficient to prove the case of $J = \{s_1\}$ (the other case is exactly the same), where an explicite computation gives the smoothness (alternatively, when $w^{min}$ has length $\leq 1$, \cite[Corollary 5.3.4]{BreuilDing} also implies smoothness).  
\end{proof}

\begin{cor} \label{cor:freeness}Let $w = (w_\tau)_{\tau} \in W$ and let $I = \coprod_\tau I_\tau\subset \Delta$. Let
$x_{\pdR} = (x_{\pdR,\tau})_\tau = (g_\tau \underline B_\tau,h_\tau \underline B_\tau,N_\tau)$ be a point such that  $N_\tau = 0$ whenever  $I_\tau = \emptyset, w_\tau = 1$.
If \[\mathcal M_{\infty,x,\mathcal R}(M_I(w^{\rm min}\cdot \lambda)) \quad
\text{(resp. }\mathcal M_{\infty,x,\mathcal R}(M_I(w^{\rm min}\cdot \lambda)^\vee)),\]
is not a finite free over
${\overline{\mathcal X}_{\infty,x,\mathcal R}^{I-\qtri,w^{\rm min}w_0}}$-module, then there exists an embedding $\tau$ such 
that $I_\tau = \emptyset$, $w_\tau = 1$ and $w_{x,\mathcal R,\tau}=1$.
  \end{cor}

\begin{proof}
Assume that there is no $\tau$ such that  $I_\tau = \emptyset$ and $w_\tau = w_{x,\mathcal R,\tau} = 1$. Lemma \ref{lemm:X_3_J}, then shows that the local model $X_{I}$ is smooth at $x_\pdR$. By \ref{prop:supports_faisceaux} the support 
\[\mathcal X_{\infty,x,\mathcal R}^{I-\qtri,ww_0}={\rm supp}\ \mathcal M_{\infty,x,\mathcal R}(\widetilde{M}_I(w \cdot \lambda))\]
is smooth. Thus  $\mathcal M_{\infty,x,\mathcal R}(\widetilde{M}_I(w \cdot \lambda))$ is a free of rank $m_x$ over ${\mathcal X_{\infty,x,\mathcal R}^{I-\qtri,ww_0}}$. By Remark \ref{rema:2_U(t)actions} its follows that $\mathcal M_{\infty,x,\mathcal R}({M}_I(w \cdot \lambda))$ is a free of rank $m_x$ over ${\overline{\mathcal X}_{\infty,x,\mathcal R}^{I-\qtri,ww_0}}$.
 
 The same argument also applies to $\mathcal M_{\infty,x,\mathcal R}(\widetilde{M}_I(w \cdot \lambda))$.
\end{proof}

\begin{prop}\label{prop:Lwfree}
For all $w \in W$ the sheaf $\mathcal B_x(L(w\cdot \lambda))$ is cyclic. Moreover, for all $w \in W$ such that $ww_0 \geq w_{x,\mathcal R}$ the sheaf $\mathcal M_\infty(L(w \cdot \lambda))$ is free of rank $m_x$ over its support.
\end{prop}

\begin{proof}
  Recall that, for $w\in W$, $Z_w$ is the closure in
  $\widetilde{\mathcal N}\times_{\mathcal N}\widetilde{\mathcal N}$ of
  the preimage $V_w$ of the Bruhat Cell
  $U_w=\underline G_L(1,w)\subset\underline G_L/\underline
  B\times\underline G_L/\underline B$. By
  \cite[Prop.~3.3.4]{chriss_ginzburg}, $V_w$ can be identified with
  the conormal bundle of $U_w$ in
  $\widetilde{\mathcal N}\times\widetilde{\mathcal N}\simeq
  T^*(\underline G_L/\underline B\times\underline G_L/\underline
  B)$. As $\frakg$ is isomorphic to direct sum of copies of
  $\mathfrak{gl}_3$, the closure $\overline{U_w}$ of $U_w$ in
  $\underline G_L/\underline B\times\underline G_L/\underline B$ is
  smooth, hence a local complete intersection. This proves that the
  conormal bundle of $\overline{U_w}$ is a closed smooth subscheme of
  $\widetilde{\mathcal N}\times\widetilde{\mathcal N}$ containing
  $V_w$ as an open dense subset so that it coincides with $Z_w$ and
  $Z_w$ is smooth. This implies that $\mathfrak Z_w$ is a smooth. As
  $\mathcal M_{\infty,x,\mathcal R}(L(ww_0\cdot\lambda))$ is
  Cohen--Macaulay, it follows from Proposition
  \ref{prop:supports_faisceaux}~\ref{prop:supports_faisceaux1bis} and
  from the fact that $a_{w,w'}=0$ for $w\neq w'$ (see
  \cite[Rk.~2.4.5]{BHS3}) that the sheaf
  $\mathcal M_{\infty,x,\mathcal R}(L(ww_0\cdot\lambda))$ is locally
  free over its support.
\end{proof}

\subsection{The case of dual Vermas}
\label{sec:dualVermas}

For later use, let us recall the following Lemma.
\begin{lemma}\label{lemma:diagmapsurj}
  Let $R$ be a commutative local ring and let $I \subset J$ two ideals
  of $R$. Let $m\geq1$ and $\pi : (R/I)^m \fleche (R/J)^m$ a
  surjective $R$-linear map. Then there exist isomorphisms
  \[ \varphi : (R/J)^m \fleche (R/J)^m, \quad \psi : (R/I)^m \fleche (R/I)^m\]
  such that $\varphi \circ \pi=\pi \circ \psi=\can^{\oplus m}$ where
  $\can : R/I \fleche R/J$ is the quotient map.
\end{lemma}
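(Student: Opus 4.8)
The statement is a purely module-theoretic fact about surjections between free modules over quotients of a local ring, so the plan is to reduce it to Nakayama's lemma and elementary matrix manipulations. First I would choose a basis of $(R/I)^m$ and a basis of $(R/J)^m$ and represent $\pi$ by an $m\times m$ matrix $P$ with entries in $R/J$ (the entries of a lift of $\pi$ reduced modulo $J$; note $\pi$ factors through $(R/J)^m$ automatically since $I\subset J$ forces nothing, but the \emph{target} is already an $R/J$-module, so $P$ has entries in $R/J$). The key observation is that surjectivity of $\pi$ together with Nakayama's lemma (applied over the local ring $R/J$, or equivalently reducing mod the maximal ideal $\mathfrak m$ of $R$) implies that $P \bmod \mathfrak m$ is an invertible matrix over the residue field $R/\mathfrak m$, hence $P$ itself is invertible over $R/J$ (a square matrix over a local ring whose reduction mod the maximal ideal is invertible is invertible).

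Next I would set $\varphi \coloneqq (P)^{-1} : (R/J)^m \to (R/J)^m$, which is an isomorphism of $R/J$-modules (hence of $R$-modules), and observe that $\varphi \circ \pi$ is then represented by the identity matrix, i.e.\ $\varphi\circ\pi = \mathrm{can}^{\oplus m}$ where $\mathrm{can}:R/I\to R/J$ is the quotient map — here I use that a lift of $P^{-1}$ to a matrix over $R$ acts compatibly on $(R/I)^m$ and that $\mathrm{can}^{\oplus m}$ is surjective. For the second isomorphism $\psi$, I would lift the matrix $P^{-1}$ (entrywise) to a matrix $\widetilde Q$ over $R$; since $P^{-1}$ is invertible over $R/J$, its reduction mod $\mathfrak m$ is invertible, so $\widetilde Q \bmod \mathfrak m$ is invertible, hence $\widetilde Q$ defines an isomorphism $\psi \coloneqq \widetilde Q : (R/I)^m \to (R/I)^m$ by the same local-ring criterion applied over $R/I$. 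Then $\pi \circ \psi$ is represented by $P \cdot (P^{-1}) = \mathrm{Id}$ modulo $J$, so $\pi\circ\psi = \mathrm{can}^{\oplus m}$ as desired; the compatibility $\varphi\circ\pi = \pi\circ\psi$ both equalling $\mathrm{can}^{\oplus m}$ is then automatic.

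The only subtlety — and the step I would be most careful about — is bookkeeping the difference between ``$R/J$-linear'' and ``$R$-linear'' and making sure the chosen lift $\widetilde Q$ of $P^{-1}$ genuinely induces a well-defined map on $(R/I)^m$ (it does, since \emph{any} $R$-linear endomorphism of $R^m$ descends to $(R/I)^m$) whose reduction is what we want; there is no real obstacle here, just the need to phrase everything so that the three maps $\varphi,\psi,\mathrm{can}^{\oplus m}$ live in the right categories. I would present this in a few lines: reduce to the matrix picture, invoke Nakayama to get invertibility of the transition matrix over $R/J$, define $\varphi$ as its inverse and $\psi$ as a lift, and check the two commuting triangles by the matrix identities $P^{-1}P = PP^{-1} = \mathrm{Id}$.
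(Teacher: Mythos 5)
Your proof is correct, and in substance it is the paper's argument recast in matrix language. The paper works with bases: it notes that $(\pi(e_1),\dots,\pi(e_m))$ generates $(R/J)^m$, invokes the fact that a surjective endomorphism of a finitely generated module is bijective to conclude it is a basis (defining $\varphi$ by $\pi(e_i)\mapsto f_i$), and then lifts each $f_i$ through $\pi$ to some $f_i'\in(R/I)^m$, using Nakayama to see that the $f_i'$ form a basis and setting $\psi(e_i)=f_i'$. You instead write $\pi=P\circ \mathrm{can}^{\oplus m}$ for a matrix $P$ over $R/J$, detect invertibility of $P$ by reducing modulo the maximal ideal (right-exactness of the reduction plus the local-ring determinant criterion), set $\varphi=P^{-1}$, and take $\psi$ to be the reduction mod $I$ of an entrywise lift $\widetilde Q$ of $P^{-1}$ to $R$. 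The resulting maps are the same: your $\varphi$ coincides with the paper's, and $\psi(e_i)=\widetilde Q e_i$ is just a particular choice of the lifts $f_i'$. The only genuine difference is the key lemma invoked — the determinant/residue-field criterion over the local rings $R/J$ and $R$ versus ``surjective endomorphism of a finitely generated module is bijective'' together with Nakayama applied to the lifted vectors — and your choice of lifting the whole inverse matrix makes the identity $\pi\circ\psi=\mathrm{can}^{\oplus m}$ an immediate matrix computation rather than a basis argument. (Both write-ups tacitly treat the nondegenerate case $J\neq R$; the case $J=R$ is trivial.)
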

  
\begin{proof}
  Let $(e_1,\dots,e_m)$ be the standard basis of $(R/I)^m$ as an
  $(R/I)$-module and $(f_1,\dots,f_m)$ the standard basis of
  $(R/J)^m$. Then $(\pi(e_1),\dots,\pi(e_m))$ is a generating family
  of $(R/J)^m$. As a surjective endomorphism of a module
  is bijective, we see that $(\pi(e_1),\dots,\pi(e_m))$ is also a
  basis of $(R/I)^m$. Therefore we can define $\varphi$ by the formula
  $\varphi(\pi(e_i))=f_i$. Now, for any $1\leq i\leq m$, let
  $f'_i\in (R/I)^m$ such that $\pi(f_i') =f_i$. By Nakayama Lemma the
  family $(f'_1,\dots,f'_m)$ generates $(R/I)^m$ and so is a basis of
  $(R/I)^m$. We can therefore define $\psi$ by the formula
  $\psi(e_i) = f_i'$.
\end{proof}
  
We will use the previous Corollary \ref{cor:freeness} to start a devissage which will be assured by the following two Lemmas.

\begin{lemma}\label{lemm:quotient}
  Let $M$ be an object of $\cO_{\chi_\lambda}$ and let $Q_1,\dots,Q_r$
  be quotients of $M$. Let $Q$ be the smallest quotient of $M$
  dominating all the $Q_i$, i.e.~$Q=M/(M_1\cap \cdots\cap M_r)$ where
  $M_i=\ker(M\rightarrow Q_i)$ for $1\leq i\leq r$. We assume that
  \begin{enumerate}[(i)]
  \item for any $1\leq i\leq r$, the sheaf
    $\mathcal M_{\infty,x,\mathcal R}(Q_i)$ is free of rank $m_x$ over
    it support;
  \item for any $1\leq i\leq r$, the sheaf $\mathcal B_x(Q_i)$ is cyclic
    (generated by one element);
  \item for any $1\leq i\leq r$,
    $\Supp \mathcal M_{\infty,x,\mathcal R}(Q_i)=\Supp \mathcal
    B_x(Q_i)$ ;
  \item the sheaf $\mathcal B_x(Q)$ is cyclic.
  \end{enumerate}
  Then the sheaf $\mathcal M_{\infty,x,\mathcal R}(Q)$ is free of rank
  $m_x$ over its support and
  \[\Supp(\mathcal M_{x,\infty,\mathcal R}(Q) = \Supp(\mathcal
    B_x(Q)).\qedhere\]
\end{lemma}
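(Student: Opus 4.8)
The plan is to produce a surjection $\bigoplus_{i} Q_i \twoheadrightarrow Q$ — or rather, the natural map $M \to \bigoplus_i Q_i$ factors through $Q$ and realizes $Q$ as a submodule of $\bigoplus_i Q_i$, while conversely $Q$ is a quotient of $M$. Applying the exact functors $\mathcal M_{\infty,x,\mathcal R}$ and $\mathcal B_x$ (Proposition \ref{prop:deformed_functor}, Corollary \ref{cor:bezandM}) and using the cycle-theoretic identity $[\mathcal M_{\infty,x,\mathcal R}(N)] = m_x[\mathcal B_x(N)]$ for all $N$, we obtain $[\mathcal M_{\infty,x,\mathcal R}(Q)] = m_x [\mathcal B_x(Q)]$ in $Z^0(\overline{\mathcal X}^{\qtri}_{\infty,x,\mathcal R})$. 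Since $\mathcal B_x(Q)$ is cyclic by hypothesis (iv) and $\mathcal B_x(Q)$ is Cohen--Macaulay, its support is reduced (being a union of the Cohen--Macaulay components $\mathcal X^{\qtri,w'}_{\infty,x,\mathcal R}$, generically reduced by Lemma \ref{lemm:generically_reduced}), so $\mathcal B_x(Q) \simeq \mathcal O_{\mathrm{Supp}\,\mathcal B_x(Q)}$ and the cycle $[\mathcal B_x(Q)]$ has all coefficients $1$. Consequently $\mathrm{Supp}\,\mathcal M_{\infty,x,\mathcal R}(Q)$ and $\mathrm{Supp}\,\mathcal B_x(Q)$ have the same underlying reduced scheme, proving the support equality, and $\mathcal M_{\infty,x,\mathcal R}(Q)$ has generic rank exactly $m_x$ on each component of its support.

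It remains to upgrade "generically free of rank $m_x$" to "free of rank $m_x$". First I would identify $\mathrm{Supp}\,\mathcal M_{\infty,x,\mathcal R}(Q) = \mathrm{Supp}\,\mathcal B_x(Q)$ with the union $Z = \bigcup_{i} \mathrm{Supp}\,\mathcal M_{\infty,x,\mathcal R}(Q_i)$, using hypothesis (iii) and the fact that $\mathcal B_x(Q)$, being cyclic with reduced support, has support equal to the union of the $\mathrm{Supp}\,\mathcal B_x(Q_i)$. The strategy is to compare $\mathcal M_{\infty,x,\mathcal R}(Q)$ with the sheaf $\mathcal O_Z^{\oplus m_x}$ on $Z$. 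The surjection $M \twoheadrightarrow Q$ and the inclusions $Q \hookrightarrow \bigoplus Q_i$, $\mathcal B_x(Q) \hookrightarrow \bigoplus \mathcal B_x(Q_i)$ (exactness) give, after applying both functors, a commutative diagram comparing $\mathcal M_{\infty,x,\mathcal R}(Q)$ and $\bigoplus_i \mathcal M_{\infty,x,\mathcal R}(Q_i) \simeq \bigoplus_i \mathcal O_{\mathrm{Supp}\,\mathcal B_x(Q_i)}^{\oplus m_x}$ by hypotheses (i) and (iii). The key point is that $\mathcal B_x(Q) \simeq \mathcal O_Z$ receives a surjection from $\mathcal B_x(M)$ and injects into $\bigoplus_i \mathcal O_{Z_i}$ where $Z_i = \mathrm{Supp}\,\mathcal B_x(Q_i)$, so $\mathcal O_Z$ is exactly the image of the "diagonal" map; since the $\mathcal B_x(Q_i)$ are cyclic with the same generator coming from $\mathcal B_x(M)$, this forces $Z = Z_1 \cup \dots \cup Z_r$ scheme-theoretically and $\mathcal O_Z = \ker$ or $\mathrm{im}$ of the appropriate map between $\bigoplus \mathcal O_{Z_i}$ and the intersection terms. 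Tensoring the whole picture with $m_x$ copies and using that $\mathcal M_{\infty,x,\mathcal R}$ matches $\mathcal B_x$ up to $m_x$ on the $Q_i$, I would deduce $\mathcal M_{\infty,x,\mathcal R}(Q) \simeq \mathcal O_Z^{\oplus m_x}$.

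Concretely, I expect to proceed as follows. Fix compatible surjections and form the commutative diagram
\begin{equation*}
\begin{tikzcd}
\mathcal M_{\infty,x,\mathcal R}(M) \ar[r,twoheadrightarrow] \ar[d,twoheadrightarrow] & \bigoplus_i \mathcal M_{\infty,x,\mathcal R}(Q_i) \\
\mathcal M_{\infty,x,\mathcal R}(Q) \ar[ru,hook] &
\end{tikzcd}
\end{equation*}
together with the analogous diagram for $\mathcal B_x$. Because $\mathcal B_x(M) \twoheadrightarrow \mathcal B_x(Q_i)$ and $\mathcal B_x(Q_i)$ is cyclic, each $\mathcal B_x(Q_i) \simeq \mathcal O_{Z_i}$ and the composite $\mathcal B_x(M) \to \mathcal B_x(Q) \hookrightarrow \bigoplus \mathcal O_{Z_i}$ identifies $\mathcal B_x(Q)$ with the image of $\mathcal O_{\mathcal X^{\qtri}_{\infty,x,\mathcal R}} \to \bigoplus \mathcal O_{Z_i}$, which is $\mathcal O_{Z_1 \cup \dots \cup Z_r}$. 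On the $\mathcal M_{\infty,x,\mathcal R}$ side I would invoke Lemma \ref{lemma:diagmapsurj} (applied component by component over the appropriate local ring, with $I$ and $J$ the defining ideals of the relevant support strata) to rectify the surjections $\mathcal M_{\infty,x,\mathcal R}(M) \twoheadrightarrow \mathcal M_{\infty,x,\mathcal R}(Q_i) \simeq \mathcal O_{Z_i}^{\oplus m_x}$ so that they all become the canonical quotient maps; then $\mathcal M_{\infty,x,\mathcal R}(Q)$, as the image of $\mathcal M_{\infty,x,\mathcal R}(M)$ in $\bigoplus \mathcal O_{Z_i}^{\oplus m_x}$, is the $m_x$-fold direct sum of $\mathcal O_{Z_1 \cup \dots \cup Z_r}$, i.e.\ free of rank $m_x$ over $Z = \mathrm{Supp}\,\mathcal B_x(Q)$. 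The main obstacle will be making the application of Lemma \ref{lemma:diagmapsurj} genuinely compatible across all $i$ simultaneously — i.e.\ ensuring that a single automorphism of $\mathcal M_{\infty,x,\mathcal R}(M)$ (or rather of its relevant quotients) normalizes all $r$ of the projections at once; this likely requires first passing to the individual local rings of the generic points of the components of $Z$, arguing freeness there via Corollary \ref{cor:freeness} and hypothesis (i), and then using that a coherent sheaf which is Cohen--Macaulay and generically free of constant rank over a reduced equidimensional (here, union of Cohen--Macaulay components) base, and which surjects onto the structure sheaf with the right multiplicities, must itself be the free module — a statement that should follow from depth/reflexivity considerations combined with the cycle identity already in hand.
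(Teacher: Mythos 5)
Your opening reductions are fine: $Q$ embeds into $\bigoplus_i Q_i$, exactness gives $[\mathcal M_{\infty,x,\mathcal R}(Q)]=m_x[\mathcal B_x(Q)]$, and hypothesis (iv) together with the surjections $\mathcal B_x(Q)\twoheadrightarrow\mathcal B_x(Q_i)$ and the injection $\mathcal B_x(Q)\hookrightarrow\bigoplus_i\mathcal B_x(Q_i)$ already identifies $\mathcal B_x(Q)\cong A/(I_1\cap\cdots\cap I_r)$, where $A=\overline R_{\infty,x,\mathcal R}^{\qtri}$ and $I_i=\Ann(\mathcal B_x(Q_i))$; no reducedness is needed for this, and your claim that cyclicity plus Cohen--Macaulayness forces $\mathcal B_x(Q)$ to have reduced support with cycle coefficients $1$ is not justified as stated (a cyclic Cohen--Macaulay module is $A/I$ with $I$ not necessarily radical). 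The genuine gap is the decisive step: identifying the submodule $\mathcal M_{\infty,x,\mathcal R}(Q)\subset\bigoplus_i\mathcal M_{\infty,x,\mathcal R}(Q_i)\cong\bigoplus_i(A/I_i)^{m_x}$ with $(A/\bigcap_iI_i)^{m_x}$. Lemma \ref{lemma:diagmapsurj} cannot be applied to the surjections $\mathcal M_{\infty,x,\mathcal R}(M)\twoheadrightarrow\mathcal M_{\infty,x,\mathcal R}(Q_i)$, since nothing is assumed about $\mathcal M_{\infty,x,\mathcal R}(M)$ (it need not be of the form $(R/I)^m$); and even after normalizing each component map, the image of $\mathcal M_{\infty,x,\mathcal R}(M)$ in the direct sum is not determined by the individual surjections: a submodule of $\bigoplus_i(A/I_i)^{m_x}$ mapping onto each summand is pinned down only by congruence conditions among the components, i.e.\ by what $\mathcal M_{\infty,x,\mathcal R}$ does to the common quotients of the $Q_i$, which your argument never controls. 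Your fallback criterion --- Cohen--Macaulay, generically free of rank $m_x$ over its support, correct cycle, hence free --- is false: over $A=k[[x,y]]/(xy)$ the maximal ideal $(x,y)$ is maximal Cohen--Macaulay, generically free of rank one on both components, has cycle equal to $[\Spec A]$, yet is neither free nor cyclic.

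The missing ingredient is exactly what the paper supplies. It reduces to $r=2$ and introduces the largest common quotient $Q_0=M/(M_1+M_2)$, so that $0\to Q\to Q_1\oplus Q_2\to Q_0\to0$ is exact. After using Lemma \ref{lemma:diagmapsurj} to normalize the identifications $\mathcal M_{\infty,x,\mathcal R}(Q_i)\cong(A/I_i)^{m_x}$ compatibly with the maps to $\mathcal M_{\infty,x,\mathcal R}(Q_0)$, the key point is that the induced surjection $(A/(I_1+I_2))^{m_x}\twoheadrightarrow\mathcal M_{\infty,x,\mathcal R}(Q_0)$ is an isomorphism: exactness of $\mathcal B_x$ gives $A/(I_1+I_2)\cong\mathcal B_x(Q_0)$, hence this ring is Cohen--Macaulay, and a surjection between Cohen--Macaulay modules of the same dimension with equal cycles (Corollary \ref{cor:bezandM}) must be an isomorphism. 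The snake lemma then forces $\mathcal M_{\infty,x,\mathcal R}(Q)\cong(A/(I_1\cap I_2))^{m_x}=\mathcal B_x(Q)^{m_x}$, and induction on $r$ concludes. It is this control of $\mathcal M_{\infty,x,\mathcal R}$ on the common quotient $Q_0$ --- not any intrinsic freeness criterion applied to $\mathcal M_{\infty,x,\mathcal R}(Q)$ --- that excludes the intermediate submodules of $\bigoplus_i(A/I_i)^{m_x}$ which your approach cannot rule out.
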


\begin{proof}
  To ease notation we note $m=m_x$. Let's prove the result when $r=2$.
  Let $A=\overline R_{\infty,x,\mathcal R}^{\qtri}$ be the ring of
  global sections of
  $\overline{\mathcal X}^{\qtri}_{\infty,x,\mathcal R}$ and let
  $I_i=\Ann(\mathcal B_x(Q_i))$ for $i\in\set{1,2}$. Define $Q_0$ the
  largest common quotient of $Q_1$ and $Q_2$,
  i.e.~$Q_0=M/(M_1+M_2)$. Then we have a short exact sequence
  \[ 0\longrightarrow Q\longrightarrow Q_1\oplus Q_2\longrightarrow
    Q_0\longrightarrow0. \] By exactness of
  $\mathcal M_{\infty,x,\mathcal R}$, we have a short exact sequence
  \[ 0\longrightarrow\mathcal M_{\infty,x,\mathcal
      R}(Q)\longrightarrow \mathcal M_{\infty,x,\mathcal
      R}(Q_1)\oplus\mathcal M_{\infty,x,\mathcal
      R}(Q_2)\longrightarrow\mathcal M_{\infty,x,\mathcal
      R}(Q_0)\longrightarrow0 \] where the map
  $Q_1\oplus Q_2\rightarrow Q_0$ is given by $(x,y)\mapsto x-y$.

  We fix isomorphisms
  $(A/I_i)^m\xrightarrow{\sim}\mathcal M_{\infty,x,\mathcal R}(Q_i)$ for
  $i\in\set{1,2}$. As $Q_0$ is a quotient of both $Q_1$ and $Q_2$, we
  have surjective maps
  \[(A/I_i)^{m} \fleche \mathcal M_{\infty,x,\mathcal R}(Q_i) \fleche
  \mathcal M_{\infty,x,\mathcal R}(Q_0),\] which factor through
  $(A/(I_1+I_2))^{m}$. Using Lemma \ref{lemma:diagmapsurj} we can
  choose the previous isomorphisms such that the following diagram
  commutes
  \begin{equation}
    \label{eq:diagram_M}
    \begin{tikzcd}
      (A/I_1)^{m}\oplus (A/I_2)^{m} \ar[r,"(x{,}y)\mapsto x-y"] \ar[d,"\simeq"] & A/(I_1+I_2)^{m} \ar[r] \ar[d,twoheadrightarrow] & 0 \\
      \mathcal
      M_{\infty,x,\mathcal R}(Q_1)\oplus\mathcal M_{\infty,x,\mathcal
        R}(Q_2)\ar[r] & \mathcal M_{\infty,x,\mathcal R}(Q_0)\ar[r]&0.
    \end{tikzcd}
  \end{equation}
   As the kernel of the upper horizontal map is isomorphic to
  $(A/(I_1\cap I_2))^m$, we obtain a commutative diagram
  \begin{equation}
    \label{eq:diagram_M_big}
    \begin{tikzcd}
      0 \ar[r] & (A/(I_1\cap I_2))^{m} \ar[r] \ar[d,hookrightarrow] & (A/I_1)^{m}\oplus (A/I_2)^{m} \ar[r] \ar[d,"\simeq"] & A/(I_1+I_2)^{m} \ar[r] \ar[d,twoheadrightarrow] & 0 \\
      0\ar[r] &\mathcal M_{\infty,x,\mathcal R}(Q) \ar[r] & \mathcal
      M_{\infty,x,\mathcal R}(Q_1)\oplus\mathcal M_{\infty,x,\mathcal
        R}(Q_2)\ar[r] & \mathcal M_{\infty,x,\mathcal R}(Q_0)\ar[r]&0.
    \end{tikzcd}
  \end{equation}
  
  As $\Ann(\mathcal B_x(Q))=I_1\cap I_2$ and $\mathcal B_x(Q)$ is
  cyclic, there exists an isomorphism
  $\mathcal B_x(Q)\simeq A/(I_1\cap I_2)$. Moreover, by hypothesis, we
  have $\Supp(\mathcal B_x(Q_i))=\Spec(A/I_i)$ so that the maps
  $A/(I_1\cap I_2)\simeq\mathcal B_x(Q)\twoheadrightarrow\mathcal
  B_x(Q_i)$ factors through isomorphisms $A/I_i\simeq\mathcal
  B_x(Q_i)$. Therefore, by exactness of $\mathcal B_x$, we also have a
  commutatif diagram
  \[
    \begin{tikzcd}
      0 \ar[r] & (A/(I_1\cap I_2)) \ar[r,"x\mapsto (x{,}x)"] \ar[d,"\simeq"] & (A/I_1)\oplus (A/I_2) \ar[d,"\simeq"] &  &  \\
      0\ar[r] &\mathcal B_x(Q) \ar[r] & \mathcal
      B_x(Q_1)\oplus\mathcal B_x(Q_2)\ar[r] & \mathcal
      B_x(Q_0)\ar[r]&0.
    \end{tikzcd} \] This implies that we have an isomorphism
  $A/(I_1+I_2)\simeq\mathcal B_x(Q_0)$. As $\mathcal B_x(Q_0)$ is
  Cohen--Macaulay, so is $A/(I_1+I_2)$. As the ring $A/(I_1+I_2)$ is
  Cohen--Macaulay, the vertical right arrow of diagram
  (\ref{eq:diagram_M}) is a surjective map
  $(A/(I_1+I_2))^{m}\twoheadrightarrow\mathcal M_{\infty,x,\mathcal
    R}(Q_0)$ between two Cohen--Macaulay modules with the same cycle
  by Corollary \ref{cor:bezandM}. It is therefore an isomorphism and
  the Snake Lemma allows us to conclude that the left vertical arrow
  in (\ref{eq:diagram_M_big}) is an isomorphism.

  Assume that the result is proved for some integer $r\geq2$. Let
  $Q_1,\dots,Q_{r+1}$ be quotients of $M$ satisfying the hypotheses of
  the Lemma. Let $Q'$ be the smallest quotient of $M$ dominating all
  the $Q_i$ for $1\leq i\leq r$. Note that $\mathcal B_x(Q')$ is a
  quotient of $\mathcal B_x(Q)$ and is therefore cyclic. By induction,
  $\mathcal M_{\infty,x,\mathcal R}(Q')$ is free of rank $m$ over its
  support and
  $\Supp\mathcal M_{\infty,x,\mathcal R}(Q')=\Supp\mathcal
  B_x(Q')$. The quotient $Q$ is now the smallest quotient of $M$
  dominating $Q'$ and $Q_{r+1}$. Therefore the case $r=2$ implies that
  $\mathcal M_{\infty,x,\mathcal R}(Q)$ is free of rank $m$ over its
  support and
  $\Supp\mathcal M_{\infty,x,\mathcal R}(Q)=\Supp\mathcal B_x(Q)$,
  which concludes the induction.
\end{proof}

\begin{lemma}\label{lemm:rk_mx_implies_locfree}
  Let $M$ be an object of the category $\cO_{\chi_\lambda}$. Assume
  that $\mathcal M_{\infty,x,\mathcal R}(M)$ is generated by $m_x$
  elements and $\mathcal B_x(M)$ is cyclic. Then
  $\mathcal M_{\infty,x,\mathcal R}(M)$ is locally free of rank $m_x$
  over its support, its support is Cohen--Macaulay and
  $\Supp\mathcal M_{\infty,x,\mathcal R}(M)=\Supp\mathcal B_x(M)$.
\end{lemma}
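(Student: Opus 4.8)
The idea is to combine the numerical input coming from Bezrukavnikov's functor (Corollary \ref{cor:bezandM}) with a cyclicity statement and the Cohen--Macaulay properties of the quasi-trianguline deformation rings in order to upgrade ``generated by $m_x$ elements'' to ``locally free of rank $m_x$''. Concretely, write $A\coloneqq R_{\infty,x,\mathcal R}^{\qtri}$ (or rather the relevant quotient by the kernel of $\widehat R_{\infty,x}\twoheadrightarrow R_{\infty,x,\mathcal R}^{\qtri}$, using Theorem \ref{theo:qtri}), set $I\coloneqq\Ann_A(\mathcal B_x(M))$ so that $\mathcal B_x(M)\simeq A/I$ since $\mathcal B_x(M)$ is cyclic, and let $B\coloneqq A/I$ be the scheme-theoretic support of $\mathcal B_x(M)$. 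First I would note that $B$ is Cohen--Macaulay: indeed by Theorem \ref{thm:Bez} the sheaf $\mathcal B(M)$ on $X^\wedge$ is Cohen--Macaulay, and $\mathcal B_x(M)$ is obtained from it by the flat (formally smooth) base change of Corollary \ref{cor:bezandM}, so $\mathcal B_x(M)=B$ is Cohen--Macaulay and its support is the union of some irreducible components $\mathcal X_{\infty,x,\mathcal R}^{\qtri,w}$ by Proposition \ref{prop:supports_faisceaux} and the fact that the deformation ring is equidimensional and generically reduced (Lemma \ref{lemm:generically_reduced}).

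Next I would produce a surjection $B^{\oplus m_x}\twoheadrightarrow\mathcal M_{\infty,x,\mathcal R}(M)$. The module $\mathcal M_{\infty,x,\mathcal R}(M)$ is generated by $m_x$ elements by hypothesis, hence it is a quotient of $A^{\oplus m_x}$; on the other hand it is supported on $\mathrm{Supp}\,\mathcal M_{\infty,x,\mathcal R}(M)\subseteq\mathrm{Supp}\,\mathcal B_x(M)=\mathrm{Spec}\,B$ — this inclusion of supports should be deduced by reducing to the generalized Verma modules via Proposition \ref{prop:lift} and then invoking Theorem \ref{theo:qtri}, Theorem \ref{thm:component_Verma} and Corollary \ref{cor:bezandM}, which identify the cycles (with multiplicity $m_x$). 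Since $\mathcal M_{\infty,x,\mathcal R}(M)$ is annihilated by $I$, the map $A^{\oplus m_x}\twoheadrightarrow\mathcal M_{\infty,x,\mathcal R}(M)$ factors through $B^{\oplus m_x}$, giving the desired surjection $\varphi\colon B^{\oplus m_x}\twoheadrightarrow\mathcal M_{\infty,x,\mathcal R}(M)$.

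The crucial point is then that $\varphi$ is an isomorphism, which I would prove by comparing cycles and using that $\mathcal M_{\infty,x,\mathcal R}(M)$ is itself Cohen--Macaulay. By Corollary \ref{coro:Jacquet_conclusion} (or Proposition \ref{prop:Cohen_Mac}) the target $\mathcal M_{\infty,x,\mathcal R}(M)$ is Cohen--Macaulay of dimension $\dim B$, and by Corollary \ref{cor:bezandM} we have $[\mathcal M_{\infty,x,\mathcal R}(M)]=m_x[\mathcal B_x(M)]=[B^{\oplus m_x}]$ in $Z^0$. A surjection between Cohen--Macaulay modules of the same dimension over a Cohen--Macaulay ring that induces the identity on top-dimensional cycles must have a kernel supported in smaller dimension; but a Cohen--Macaulay module has no embedded (lower-dimensional) associated primes, so the kernel, being a submodule of the Cohen--Macaulay module $B^{\oplus m_x}$, must vanish. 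Hence $\varphi$ is an isomorphism, $\mathcal M_{\infty,x,\mathcal R}(M)\simeq B^{\oplus m_x}$ is free of rank $m_x$ over $B$ (in particular locally free over $B=\mathrm{Supp}\,\mathcal M_{\infty,x,\mathcal R}(M)=\mathrm{Supp}\,\mathcal B_x(M)$, which is Cohen--Macaulay), and we are done. The main obstacle I anticipate is making the support comparison $\mathrm{Supp}\,\mathcal M_{\infty,x,\mathcal R}(M)\subseteq\mathrm{Supp}\,\mathcal B_x(M)$ and the kernel-vanishing argument watertight; the latter relies on the precise bookkeeping that $B^{\oplus m_x}$ has no embedded primes (so the kernel of a cycle-preserving surjection is genuinely zero and not merely lower-dimensional torsion), which is exactly where Cohen--Macaulayness of $B$ and of $\mathcal M_{\infty,x,\mathcal R}(M)$, together with Corollary \ref{cor:bezandM}, are indispensable.
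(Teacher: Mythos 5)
Your closing step---a surjection between two Cohen--Macaulay modules of the same dimension with the same associated cycle must be an isomorphism---is exactly the paper's concluding argument, and your remarks on Cohen--Macaulayness of the support are fine. The gap is earlier, at the factorization of $\widehat R_{\infty,x}^{\oplus m_x}\twoheadrightarrow \mathcal M_{\infty,x,\mathcal R}(M)$ through $B^{\oplus m_x}$, where $B=\widehat R_{\infty,x}/I$ and $I=\Ann(\mathcal B_x(M))$. For this you need the scheme-theoretic statement $I\subseteq \Ann(\mathcal M_{\infty,x,\mathcal R}(M))$, and none of the results you invoke gives it for a general $M\in\cO_{\chi_\lambda}$. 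Proposition \ref{prop:lift} combined with Theorem \ref{theo:qtri} and Theorem \ref{thm:component_Verma} only shows that $\mathcal M_{\infty,x,\mathcal R}(M)$ is killed by the kernel of $\widehat R_{\infty,x}\twoheadrightarrow R_{\infty,x,\mathcal R}^{\qtri}$, resp.\ computes the schematic support of (deformed generalized) Verma modules and their duals; it says nothing about $\mathcal M_{\infty,x,\mathcal R}(M)$ being killed by the typically much smaller ideal $\Ann(\mathcal B_x(M))$ (think of $M=L(w\cdot\lambda)$, whose $\mathcal B_x$-image is supported on $\mathcal Z_w$). Corollary \ref{cor:bezandM} is a statement about cycles: since both modules are maximal Cohen--Macaulay it gives equality of supports as sets (with multiplicities on top-dimensional components), hence only $\Ann(\mathcal B_x(M))\subseteq\sqrt{\Ann(\mathcal M_{\infty,x,\mathcal R}(M))}$. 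The schematic support of $\mathcal B_x(M)$ need not be reduced, and you have not shown that $\Ann(\mathcal M_{\infty,x,\mathcal R}(M))$ is radical either, so equality of radicals does not produce your surjection $\varphi$. This is precisely where the argument breaks.

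The paper closes exactly this gap by an induction on the length of $M$: the base case is Proposition \ref{prop:Lwfree}, and in the inductive step one chooses $0\to L\to M\to Q\to 0$ with $L$ simple, applies exactness of both $\mathcal M_{\infty,x,\mathcal R}$ and $\mathcal B_x$, and does annihilator bookkeeping (with $I_B=\Ann(\mathcal B_x(M))$, $J=\Ann(\mathcal B_x(Q))$, $K=\Ann(\mathcal B_x(L))$, cyclicity gives $\mathcal B_x(L)\simeq J/I_B\simeq \widehat R_{\infty,x}/K$ and hence $I_B=JK$, while the inductive hypothesis, which includes the equality of supports and thus of annihilators for $L$ and $Q$, forces $JK\subseteq\Ann(\mathcal M_{\infty,x,\mathcal R}(M))$). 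Only after this containment does the surjection $\mathcal B_x(M)^{\oplus m_x}\twoheadrightarrow\mathcal M_{\infty,x,\mathcal R}(M)$ exist, and the cycle-plus-Cohen--Macaulay comparison then finishes as you describe. So as written your proposal is not a complete alternative: you would need to supply the annihilator containment, and the natural way to do so is the d\'evissage you omitted.
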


\begin{proof}
  We prove the result by induction on the length of $M$. If $M$ is
  simple this is done in Proposition \ref{prop:Lwfree}. Thus we can assume that we have a
  short exact sequence
  \[ 0\longrightarrow L\longrightarrow M\longrightarrow
    Q\longrightarrow 0 \]
  with $L$ simple such that $\mathcal M_{\infty,x,\mathcal R}(L)\neq0$
  and that the result is true for $Q$. Let $I=\Ann(\mathcal
  M_{\infty,x,\mathcal R}(M))$, $I_B=\Ann(\mathcal B_x(M))$,
  $J=\Ann(\mathcal B_x(Q))$ and $K=\Ann(\mathcal B_x(L))$. Then we
  have two short exact sequences
  \begin{gather*}
    0\longrightarrow\mathcal B_x(L) \longrightarrow\mathcal
    B_x(M)\longrightarrow \mathcal B_x(Q)\longrightarrow0 \\
    0\longrightarrow\mathcal M_{\infty,x,\mathcal R}(L)
    \longrightarrow\mathcal M_{\infty,x,\mathcal
      R}(M)\longrightarrow\mathcal M_{\infty,x,\mathcal
      R}(Q)\longrightarrow0.
  \end{gather*}
  The first exact sequence shows that $\widehat R_{\infty,x}/K\simeq
  J/I_B$ so that $I_B=JK$. The second exact sequence shows that
  $I_B\subset I$. Therefore, as $\mathcal M_{\infty,x,\mathcal R}(M)$
  is generated by $m_x$ elements, we have a surjective map
  \[ \mathcal B_x(M)^{m_x}\simeq(\widehat
    R_{\infty,x}/I_B)^{m_x}\twoheadrightarrow\mathcal
    M_{\infty,x,\mathcal R}(M). \] These modules are both
  Cohen--Macaulay of the same dimension with identical associated
  maximal cycle by Corollary \ref{cor:bezandM}, therefore this map is
  an isomorphism and $I_B=I$. Moreover as
  $\mathcal M_{\infty,x,\mathcal R}(M)$ is Cohen--Macaulay, so is its
  support.
\end{proof}

\begin{theor}
\label{thm:Vermadualfree}
  For any $w\in W$ such that $ww_0\geq w_{x,\mathcal R}$, the coherent
  sheaf
  $\mathcal M_{\infty,x,\mathcal R}(M(w\cdot\lambda)^\vee)$ is
  locally free of rank $m_x$ over its support.
\end{theor}

\begin{proof}
  As $M(w\cdot\lambda)^\vee$ is a quotient of $M(\lambda)^\vee$ for
  any $w\in W$, Lemma \ref{lemm:rk_mx_implies_locfree} implies that it
  is sufficient to prove the result for $w=1$.

  Recall that $W=\prod_{\tau : F \hookrightarrow L}W_\tau$ and write
  $w_{x,\mathcal R}=(w_{x,\tau})$. Let $J\subset\Hom(F,L)$ be the set
  embeddings such that $w_{x,\tau}=1$. Let $E$ be the set of elements
  $w=(w_v)\in W$ such that $w_\tau\in\set{s_1,s_2}$ if $\tau\in J$ and
  $w_\tau=1$ if $\tau\notin J$. By Corollary \ref{cor:freeness} and
  Lemma \ref{lemm:rk_mx_implies_locfree}, for $w\in E$, the module
  $\mathcal M_{\infty,x,\mathcal R}(M(w\cdot\lambda)^\vee)$ is free of
  rank $m_x$ over its support and
  $\mathcal M_{\infty,x,\mathcal R}(M(w\cdot\lambda)^\vee)=\mathcal
  B_x(M(w\cdot\lambda)^\vee)^{m_x}$. Let $Q$ be the smallest quotient of
  $M(\lambda)^\vee$ dominating all the $M(w\cdot\lambda)^\vee$ for
  $w\in E$. Lemma \ref{lemm:quotient} implies that $\mathcal M_{\infty,x,\mathcal R}(Q)$ is free of
  rank $m_x$ over its support and
  $\mathcal M_{\infty,x,\mathcal R}(Q)=\mathcal
  B_x(Q)^{m_x}$. Let $N$ be the kernel of the map $M\twoheadrightarrow
  Q$. 

  Let $I$ of the form $\coprod_{\tau\in J}\set{s_{i_\tau}}$ where
  $i_\tau\in\set{1,2}$. Then the image of the map
  $M_I(\lambda)^\vee\hookrightarrow M(\lambda)^\vee\twoheadrightarrow
  Q$ is
  $Q_I\coloneqq\Boxtimes_{\tau\in J}L(s_{3-i_\tau}\cdot
  \lambda_\tau)\Boxtimes_{\tau\notin J}M(\lambda_\tau)^\vee$. By
  Corollary \ref{cor:freeness}, the module
  $\mathcal M_{\infty,x,\mathcal R}(M_I(\lambda)^\vee)$ is free of
  rank $m_x$ over its support. Thus
  $\mathcal M_{\infty,x,\mathcal R}(Q_I)$ is generated by $m_x$
  elements, and its quotient
  \[ L_I := \Boxtimes{\tau \in J}
    L(s_{3-i_\tau}\cdot\lambda_\tau)\boxtimes \Boxtimes_{\tau\notin J}
    M(w_{x,\tau}w_0 \cdot \lambda_\tau)^\vee,\] satisfies
  \[\mathcal M_{\infty,x,\mathcal R}(L_I) = \mathcal
    M_{\infty,x,\mathcal R}\big(\Boxtimes_{\tau \in J}
    L(s_{3-i_\tau}\cdot\lambda_\tau)\boxtimes \Boxtimes_{\tau\notin J}
    L(w_{x,\tau}w_0 \cdot \lambda_\tau)\big).\] by Proposition
  \ref{prop:supports_faisceaux}. Moreover, by Proposition
  \ref{prop:Lwfree}, this module is free of rank $m_x$ over its
  support so that its fiber at $x$ has dimension $m_x$. This implies that
  the following surjective maps are all isomorphisms
  \begin{align*}
  k(x)^{m_x} \simeq \mathcal M_{\infty,x,\mathcal
      R}(M_I(\lambda)^\vee)\otimes k(x)&\xrightarrow{\sim}\mathcal
    M_{\infty,x,\mathcal R}(Q_I)\otimes k(x)\\
    &\xrightarrow{\sim}\mathcal M_{\infty,x,\mathcal R}(L_I)\otimes
    k(x) \simeq k(x)^{m_x} . 
    \end{align*}
    As moreover
  $\ker(M_I(\lambda)^\vee\rightarrow Q_I)=N\cap M_I(\lambda)^\vee$, we
  see that the map
  \[ \mathcal M_{\infty,x,\mathcal R}(N\cap M_I(\lambda)^\vee)\otimes
    k(x)\longrightarrow\mathcal M_{\infty,x,\mathcal
      R}(M(\lambda)^\vee)\otimes k(x) \] is zero. As $M(\lambda)^\vee$
  is multiplicity-free, we have $N=\sum_I(N\cap
  M_I(\lambda)^\vee)$ and we conclude that the map
  \[ \mathcal M_{\infty,x,\mathcal R}(N)\otimes
    k(x)\longrightarrow\mathcal M_{\infty,x,\mathcal
      R}(M(\lambda)^\vee)\otimes k(x) \] is zero. Therefore
  $\mathcal M_{\infty,x,\mathcal R}(M(\lambda)^\vee)\otimes
  k(x)\simeq\mathcal M_{\infty,x,\mathcal R}(Q)\otimes k(x)\simeq
  k(x)^{m_x}$ and we conclude with Lemma
  \ref{lemm:rk_mx_implies_locfree} since
  $\mathcal B_x(M(\lambda)^\vee)$ is cyclic.
\end{proof}

\subsection{The case of the antidominant projective}
\label{sec:antidominant_proj}

\begin{theor}\label{theo:antidominant}
  The coherent sheaf
  $\mathcal M_{\infty,x,\mathcal R}(P(w_0\cdot\lambda))$ is free of
  rank $m_x$ over its support.
\end{theor}

\begin{proof}
  Recall that $A=U(\mathfrak t)_{\mathfrak m}$ and set
  $D\coloneqq L\otimes_{A^W}A$. By Proposition
  \ref{prop:endomorphismensatz}, the action of $Z(\mathfrak g)$ on
  $P(w_0\cdot\lambda)$ induces a structure of $D$-module on
  $P(w_0\cdot\lambda)$. As $M(\lambda)^\vee$ is an injective object,
  it follows from \cite[Prop.~6]{Soerg_KatO}, that
  $M(\lambda)^\vee\simeq P(w_0\cdot\lambda)\otimes_D(D/\mathfrak
  m_D)$, where $\mathfrak m_D$ is the maximal ideal of $D$. We have
  also a local map of local algebras
  $\alpha : D\rightarrow\cO_{\overline{\mathcal X}_{\infty,x,\mathcal
      R}^{\qtri}}$ defined in section \ref{sec:two-acti-umathfr}. It
  follows from Corollary \ref{cor:compatibility_center} that these
  define the same action of $D$ on
  $\mathcal M_{\infty,x,\mathcal R}(P(w_0\cdot\lambda))$. As moreover
  the functor $\mathcal M_{\infty,x,\mathcal R}$ is exact, we have an
  isomorphism
  $\mathcal M_{\infty,x,\mathcal R}(M(\lambda)^\vee)\simeq\mathcal
  M_{\infty,x,\mathcal R}(P(w_0\cdot\lambda))\otimes_D(D/\mathfrak
  m_D)$. As moreover the map
  $A\otimes_{A^W}A\rightarrow \cO_{\overline{\mathcal
      X}_{\infty,x,\mathcal R}^{\qtri}}$ is a local map of local
  rings, we have an isomorphism
  $\mathcal M_{\infty,x,\mathcal R}(P(w_0\cdot\lambda))\otimes
  k(x)\xrightarrow{\sim}\mathcal M_{\infty,x,\mathcal
    R}(M(\lambda)^\vee)\otimes k(x)$ and thus
  $\dim_L\mathcal M_{\infty,x,\mathcal R}(P(w_0\cdot\lambda))\otimes
  k(x)=m_x$ by Theorem \ref{thm:Vermadualfree}. We conclude by Lemma
  \ref{lemm:rk_mx_implies_locfree}.
\end{proof}

\begin{cor}\label{cor:quotient_antidominant_proj}
  Let $Q$ be a quotient of the anti-dominant projective $P(w_0\cdot\lambda)$ in the category
  $\cO_{\chi_\lambda}$. If
  $\mathcal M_{\infty,x,\mathcal R}(Q)\neq0$,
  then it is finite free of rank $m_x$ over its support and its
  support is Cohen--Macaulay.
\end{cor}

\begin{proof}
  As $\mathcal M_{\infty,x,\mathcal R}(P(w_0\cdot \lambda))$
  (resp.~$\mathcal B_x(P(w_0\cdot \lambda))$) is free of rank $m_x$
  (resp.~1) over its support by Theorems \ref{theo:antidominant} and
  \ref{thm:Bez}, we have that $\mathcal M_{\infty,x,\mathcal R}(Q)$
  (resp.~$\mathcal B_x(Q)$) is generated by at most $m_x$ elements
  (resp.~cyclic). It follows from Lemma
  \ref{lemm:rk_mx_implies_locfree},
  $\mathcal M_{\infty,x,\mathcal R}(Q)$ is free
  of rank $m_x$ over its support and that its support is Cohen--Macaulay.
\end{proof}

\begin{cor}\label{cor:injectivecoversfree}
For all $w \in W$, the coherent sheaf
\[ \mathcal M_{\infty,x,\mathcal R}(P(w\cdot \lambda)^\vee),\]
is free of rank $m_x$ over its support.
\end{cor}

\begin{proof}
  By Corollary \ref{cor:quotient_antidominant_proj}, it is sufficient
  to prove that $\mathcal M_{\infty,x,\mathcal R}(P(w\cdot\lambda)^\vee)$ is non zero and
  that there exists a surjective map
\[ P(w_0\cdot \lambda) \fleche P(w \cdot \lambda)^\vee.\]

As $P(w_0\cdot\lambda)$ is the projective envelope of
$L(w_0\cdot\lambda)$, this is equivalent to showing that the socle of
$P(w\cdot\lambda)$ is isomorphic to $L(w_0\cdot\lambda)$. By
\cite[Thm.~8.1]{Stroppel}, the socle of $P(w\cdot\lambda)$ is
isomorphic to $L(w_0\cdot\lambda)^m$ with
$m=[P(w\cdot\lambda):M(\lambda)]=[M(\lambda):L(w\cdot\lambda)]$ by
\cite[Thm.~3.9]{HumBGG}. As $\frakg$ is isomorphic to a direct sum of
copies of $\mathfrak{gl}_{3,L}$, we have
$[M(\lambda):L(w\cdot\lambda)]=1$ for any $w\in W$.

Moreover, as $[M(\lambda):L(\lambda)]=1$, we have
\[[P(w\cdot\lambda)^\vee:L(\lambda)]=[P(w\cdot\lambda):L(\lambda)]=1.\] As
$\mathcal M_{\infty,x,\mathcal R}(L(\lambda))\neq0$, we have
$\mathcal M_{\infty,x,\mathcal R}(P(w\cdot\lambda)^\vee)\neq0$.
\end{proof}

\subsection{Duality}
\label{sec:duality}
For a Cohen--Macaulay sheaf $\mathcal F$ on $\overline{\mathcal
  X}_{\infty,x,\mathcal R}^{\qtri}$ of dimension $\dim
\overline{\mathcal X}_{\infty,x,\mathcal R}^{\qtri}$, we
write $\omega^\bullet_{\overline{\mathcal X}_{\infty,x,\mathcal R}^{\qtri}}$ for the dualizing complex and set
\[ \mathcal F^\vee := R\Hom_{\overline{\mathcal X}_{\infty,x,\mathcal R}^{\qtri}}(\mathcal F,\omega^\bullet_{\overline{\mathcal X}_{\infty,x,\mathcal R}^{\qtri}})[-\dim \overline{\mathcal X}_{\infty,x,\mathcal R}^{\qtri}].\]
This complex $\mathcal F^\vee$ is a coherent sheaf concentrated in degree $0$ to which we refer to $\mathcal F^\vee$ as the \emph{shifted} Serre dual of $\mathcal F$.

\begin{lemma}\label{lemm:cycles_dualite}
  Let $\mathcal F$ be a maximal Cohen--Macaulay coherent sheaf over
  $\overline{\mathcal X}_{\infty,x,\mathcal R}^{\qtri}$. Then
  $[\mathcal F^\vee]=[\mathcal F]$. As a consequence if
  $\mathcal Y\subset \overline{\mathcal X}_{\infty,x,\mathcal
    R}^{\qtri}$ is a maximal Cohen--Macaulay closed subscheme, we have
  $[\omega_{\mathcal Y}]=[\mathcal Y]$.
\end{lemma}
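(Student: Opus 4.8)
The statement is a purely commutative-algebra assertion about maximal Cohen--Macaulay modules over the complete local ring $A\coloneqq\mathcal O_{\overline{\mathcal X}_{\infty,x,\mathcal R}^{\qtri}}$, which by Proposition \ref{prop:deformed_functor} (or rather its proof) is a quotient of a power series ring and in particular admits a dualizing complex. The key point is that forming the shifted Serre dual does not change the class of a maximal Cohen--Macaulay sheaf in the group $Z^0$ of top-dimensional cycles, because at each generic point $\mathfrak p$ of $\operatorname{Supp}\mathcal F$ the local ring $A_{\mathfrak p}$ is Artinian and duality over an Artinian Gorenstein base preserves length. I would argue as follows.

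First, reduce to checking the equality of cycles at generic points: since $\mathcal F$ is maximal Cohen--Macaulay, $\operatorname{Supp}\mathcal F$ is a union of (top-dimensional) irreducible components of $\operatorname{Spec}A$, and $[\mathcal F]=\sum_{\mathfrak p}\operatorname{length}_{A_{\mathfrak p}}(\mathcal F_{\mathfrak p})\,[\overline{\{\mathfrak p\}}]$, the sum over the (finitely many) minimal primes $\mathfrak p$ of $A$. The same formula holds for $\mathcal F^\vee$ once we know $\mathcal F^\vee$ is concentrated in degree $0$ and is again maximal Cohen--Macaulay — this is the standard fact that $R\mathcal Hom(-,\omega^\bullet)[-d]$ sends maximal Cohen--Macaulay modules to maximal Cohen--Macaulay modules (and is an involution on them), which I would quote from a standard reference (e.g.~\cite[\href{https://stacks.math.columbia.edu/tag/0A7U}{Tag 0A7U}]{stacks-project} or the classical theory of Grothendieck duality for CM modules). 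Thus it suffices to prove $\operatorname{length}_{A_{\mathfrak p}}(\mathcal F_{\mathfrak p})=\operatorname{length}_{A_{\mathfrak p}}((\mathcal F^\vee)_{\mathfrak p})$ for each minimal prime $\mathfrak p$.

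Second, localize at such a $\mathfrak p$. The ring $B\coloneqq A_{\mathfrak p}$ is a $0$-dimensional local ring; it is Cohen--Macaulay of dimension $0$, hence its dualizing complex $\omega_B^\bullet$ is concentrated in degree $0$ and equals an injective hull, which makes $B$-modules of finite length behave well under Matlis/Grothendieck duality. Concretely, formation of the dualizing complex commutes with localization, so $(\mathcal F^\vee)_{\mathfrak p}\simeq \operatorname{Hom}_B(\mathcal F_{\mathfrak p},\omega_B)$ (the shift by $d$ and the shift of $\omega^\bullet_{\overline{\mathcal X}}$ matching the codimension $d$ of $\mathfrak p$), and Matlis duality over the Artinian local ring $B$ preserves length: $\operatorname{length}_B\operatorname{Hom}_B(M,\omega_B)=\operatorname{length}_B(M)$ for any finite length $B$-module $M$. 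This gives the pointwise equality of lengths and hence $[\mathcal F^\vee]=[\mathcal F]$.

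Finally, for the consequence: if $\mathcal Y\subset\overline{\mathcal X}_{\infty,x,\mathcal R}^{\qtri}$ is a maximal Cohen--Macaulay closed subscheme, then $\omega_{\mathcal Y}=\mathcal O_{\mathcal Y}^\vee$ (the shifted Serre dual of the structure sheaf, by definition of $\omega_{\mathcal Y}$ as the top cohomology of the dualizing complex of $\mathcal Y$, compatibly with the inclusion into the ambient CM scheme), so applying the first part with $\mathcal F=\mathcal O_{\mathcal Y}$ yields $[\omega_{\mathcal Y}]=[\mathcal O_{\mathcal Y}]=[\mathcal Y]$. The main obstacle is purely bookkeeping: making sure the various shifts (the $[-\dim]$ in the definition of $(-)^\vee$, the codimension shift in the local dualizing complex, and the identification of $\omega_{\mathcal Y}$ with $\mathcal O_{\mathcal Y}^\vee$) are all consistent, and that $\mathcal F^\vee$ really is a sheaf in degree $0$ — but this is exactly the content of the CM duality theory and requires no new input.
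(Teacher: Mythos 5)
Your proof is correct and follows essentially the same route as the paper: both reduce the equality of cycles to an equality of lengths at the generic points of the support and then use that zero-dimensional duality preserves length. The only cosmetic difference is that the paper computes $\mathcal F^\vee$ as $\Ext^d_R(\mathcal F,R)$ over a regular presentation $R$ and checks length-preservation via exactness of $\Ext^d_{R_z}(-,R_z)$ on finite-length modules together with $\dim_{k(z)}\Ext^d_{R_z}(k(z),R_z)=1$, whereas you localize the dualizing complex directly and invoke Matlis duality over the Artinian local ring.
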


\begin{proof}
  Let $R$ be local complete regular ring such that
  $\cO_{\overline{\mathcal X}_{\infty,x,\mathcal R}^{\qtri}}$ is
  isomorphic to a quotient of $R$. Then we can compute
  $\mathcal F^\vee$ by the formula
  $\mathcal F^\vee=\Ext^d_R(\mathcal F,R)$ where $d$ is the
  codimension of $\overline{\mathcal X}_{\infty,x,\mathcal R}^{\qtri}$
  in $\Spec(R)$. By definition, we have $[\mathcal F]=\sum_z a(z) z$
  where the sum is over all maximal points in $\Supp(\mathcal F)$ and
  $a(z)$ is the length of the finite length $R_z$-module
  $\mathcal F_z$. Let $z\in\Spec(R)$ be a maximal point of the support
  of $\mathcal F$. The localization $R_z$ of $R$ at $z$ is a local
  regular ring and we have
  $\mathcal F^\vee_z\simeq\Ext^d_{R_z}(\mathcal F_z,R_z)$. As
  $\Ext^d_{R_z}(-,R_z)$ is a an exact functor on the subcategory of
  finite length $R_z$-modules and
  $\dim_{k(z)}\Ext_{R_z}^d(k(z),R_z)=1$, the
  length of the $R_z$-module $\Ext^d_{R_z}(\mathcal F_z,R_z)$ is
  $a(z)$. So we have the proved the claim.
\end{proof}

\begin{prop}\label{prop:subobject}
  Let $M$ be a subobject of the anti-dominant projective $P(w_0\cdot\lambda)$. Assume that
  $\mathcal M_{\infty,x,\mathcal R}(M)\neq0$ and let $\mathcal Y$ be
  the support of $\mathcal M_{\infty,x,\mathcal R}(M)$. Then
  $\mathcal M_{\infty,x,\mathcal R}(M)$ is isomorphic to
  $\omega_{\mathcal Y}^{m_x}$ and $\mathcal Y$ is Cohen--Macaulay.
\end{prop}

\begin{proof}
  Let $Q$ be the quotient of $P(w_0\cdot\lambda)$ by $M$. If $\mathcal
  M_{\infty,x,\mathcal R}(Q)=0$, then Theorem \ref{theo:antidominant} implies
  the result. So we can assume that $\mathcal M_{\infty,x,\mathcal
    R}(M)\neq0$ and $\mathcal M_{\infty,x,\mathcal
    R}(Q)\neq0$. By Corollary \ref{cor:quotient_antidominant_proj},
  $\mathcal M_{\infty,x,\mathcal R}(Q)$ is isomorphic to $\mathcal
  O_{\mathcal Z}^{m_x}$ for $\mathcal Z\subset\overline{\mathcal
    X}_{\infty,x,\mathcal R}^{\qtri}$ maximal
  Cohen--Macaulay. Using Lemma \ref{lemma:diagmapsurj}, we can
  construct a commutative diagram
  \[
    \begin{tikzcd}
      0 \ar[r] & \mathcal M_{\infty,x,\mathcal R}(M) \ar[r] \ar[d,"\simeq"] & \mathcal M_{\infty,x,\mathcal R}(P(w_0\cdot\lambda)) \ar[r] \ar[d,"\simeq"] & \mathcal M_{\infty,x,\mathcal R}(Q) \ar[r] \ar[d,"\simeq"] & 0 \\
      0\ar[r] & \Ker \ar[r] & \cO_{\overline{\mathcal
          X}_{\infty,x,\mathcal R}^{\qtri}}^{m_x} \ar[r,"\can^{m_x}"] &
      \cO_{\mathcal Z}^{m_x}\ar[r]&0.
    \end{tikzcd} 
  \]
  Let $I$ be the ideal defining $\mathcal Z$. As
  $\overline{\mathcal X}^\qtri_{\infty,x,\mathcal R}$ and $\mathcal Z$ are
  Cohen--Macaulay of the same dimension, the involutivity of the
  duality implies that we have $I\simeq \omega_{\cO_{\mathcal Y}}$
  where $\mathcal Y=\Supp(I)$ so that we have the result.
\end{proof}

\begin{theor}\label{thm:vermadomdualizing}
For all $w \in W$, with $ww_0 \geq w_{x,\mathcal R}$, the sheaf $\mathcal M_{\infty,x,\mathcal R}(M(w\cdot \lambda))$ is isomorphic to
\[ \big( \omega_{\overline{\mathcal{X}}^{\qtri,ww_0}_{\infty,x,\mathcal R}}\big)^{\oplus m_x}.\]
\end{theor}

\begin{proof}
  It follows from Propositions \ref{prop:subobject} and
  \ref{prop:freeness_criterion} that
  $\mathcal M_{\infty,x,\mathcal
    R}(M(w\cdot\lambda))\simeq\omega_{\mathcal Y}^{m_x}$ where
  $\mathcal Y=\Supp\mathcal M_{\infty,x,\mathcal R}(M(w\cdot\lambda))$
  is Cohen--Macaulay. However it follows from Theorem
  \ref{thm:component_Verma} that
  $\mathcal Y\subset\overline{\mathcal X}_{\infty,x,\mathcal
    R}^{\qtri,ww_0}$. By Lemma \ref{lemm:cycles_dualite}, we have an
  equality $[\omega_{\mathcal Y}]=[\mathcal Y]$ and it follows from
  Corollary \ref{cor:bezandM} that
  $[\mathcal M_{\infty,x,\mathcal
    R}(M(w\cdot\lambda))]=m_x[\overline{\mathcal X}_{\infty,x,\mathcal
    R}^{\qtri,ww_0}]$. Therefore we have
  $[\mathcal Y]=[\overline{\mathcal X}_{\infty,x,\mathcal
    R}^{\qtri,ww_0}]$ and thus
  $\mathcal Y=\overline{\mathcal X}_{\infty,x,\mathcal
    R}^{\qtri,ww_0}$.
\end{proof}

We choose for all $\lambda$ dominant weight, and all $w \in W$ a surjective map $\pi_w : P(w_0\cdot\lambda) \fleche P(w\cdot \lambda)^\vee$ (see proof of Corollary \ref{cor:injectivecoversfree}).

\begin{lemma}\label{lemm:lift}
For all map $f_{w,w'} : P(w \cdot \lambda)^\vee \fleche P(w'\cdot \lambda)^\vee$ there exists a map 
$\widetilde{f}_{w,w'} : P(w_0 \cdot \lambda) \fleche P(w_0 \cdot \lambda)$ such that the 
following diagram commutes
  \begin{equation}\label{diag:projfww'}
    \begin{tikzcd}
      P(w_0\cdot\lambda) \ar[r,"\widetilde{f}_{w,w'}"] \ar[d,"\pi_w"] & P(w_0\cdot\lambda) \ar[d,"\pi_{w'}"]\\
      P(w \cdot \lambda)^\vee \ar[r,"f_{w,w'}"] & P(w' \cdot
      \lambda)^\vee
    \end{tikzcd} 
  \end{equation}
\end{lemma}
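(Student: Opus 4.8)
The statement is an immediate consequence of the projectivity of $P(w_0\cdot\lambda)$ in the block $\cO_{\chi_\lambda}$, so the proof will be a one-step diagram chase. First I would recall that $P(w_0\cdot\lambda)$ is, by definition, the projective cover of the simple module $L(w_0\cdot\lambda)$ and hence a projective object of the abelian category $\cO_{\chi_\lambda}$ (which has enough projectives). Second, I would recall that the map $\pi_{w'} : P(w_0\cdot\lambda)\twoheadrightarrow P(w'\cdot\lambda)^\vee$ chosen just before the lemma is indeed surjective: this is exactly the surjection produced in the proof of Corollary \ref{cor:injectivecoversfree}, which exists because the socle of $P(w'\cdot\lambda)$ is $L(w_0\cdot\lambda)$ (as $\frakg$ is a sum of copies of $\mathfrak{gl}_{3,L}$, so the relevant multiplicities are $1$), equivalently the cosocle of $P(w'\cdot\lambda)^\vee$ is $L(w_0\cdot\lambda)$, and $P(w_0\cdot\lambda)$ is the projective cover of $L(w_0\cdot\lambda)$.

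\textbf{Main step.} Given $f_{w,w'} : P(w\cdot\lambda)^\vee\to P(w'\cdot\lambda)^\vee$, consider the composite
\[ g\coloneqq f_{w,w'}\circ\pi_w : P(w_0\cdot\lambda)\longrightarrow P(w'\cdot\lambda)^\vee. \]
Since $P(w_0\cdot\lambda)$ is projective and $\pi_{w'}$ is an epimorphism in $\cO_{\chi_\lambda}$, the map $g$ lifts through $\pi_{w'}$: there is a morphism $\widetilde f_{w,w'} : P(w_0\cdot\lambda)\to P(w_0\cdot\lambda)$ with $\pi_{w'}\circ\widetilde f_{w,w'}=g=f_{w,w'}\circ\pi_w$. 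This is precisely the commutativity of diagram (\ref{diag:projfww'}), which completes the proof.

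\textbf{Obstacle.} There is essentially no obstacle here; the only point requiring care is to confirm that $\pi_{w'}$ is genuinely surjective onto $P(w'\cdot\lambda)^\vee$ (rather than, say, onto a proper submodule), which is guaranteed by the construction recalled in Corollary \ref{cor:injectivecoversfree}, and that all morphisms in sight are morphisms in the block $\cO_{\chi_\lambda}$ so that the categorical lifting property of the projective object $P(w_0\cdot\lambda)$ applies verbatim.
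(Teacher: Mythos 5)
Your proof is correct and follows exactly the paper's argument: since $\pi_{w'}$ is surjective and $P(w_0\cdot\lambda)$ is projective, the composite $f_{w,w'}\circ\pi_w$ lifts through $\pi_{w'}$ to the desired $\widetilde f_{w,w'}$. The extra care you take in verifying the surjectivity of $\pi_{w'}$ (via the socle computation from Corollary \ref{cor:injectivecoversfree}) is consistent with how the paper sets up these maps just before the lemma.
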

  
\begin{proof}
  As $\pi_{w'} : P(w_0 \cdot \lambda) \fleche P(w'\cdot \lambda)^\vee$ is surjective and $P(w_0\cdot\lambda)$ is projective, the map 
  $\Hom(P(w_0\cdot\lambda,P(w_0\cdot\lambda)) \fleche \Hom(P(w_0\cdot \lambda),P(w'\cdot\lambda)^\vee)$ is surjective, thus there exists $\widetilde f_{w,w'}$ mapping to $f_{w,w'} \circ \pi_w$. This proves the claim.
\end{proof}

\begin{lemma}
\label{lemma:canisodual}
Let $\mathcal F$ be either $\mathcal B,\mathcal B_x$ or
$\mathcal M_{\infty,x,\mathcal R}$. There exists a family of isomorphisms indexed by $w\in W$
\[ \Psi_w : \mathcal F(P(w\cdot\lambda)^\vee)\overset{\sim}{\fleche} \mathcal F(P(w\cdot\lambda))^\vee.\]
such that for any $w,w'\in W$ and any if $f_{w,w'} : P(w\cdot\lambda)^\vee \fleche P(w'\cdot\lambda)^\vee$, the following diagram commutes
\begin{equation}
  \label{eq:diag_w_w'}
  \begin{tikzcd}
    \mathcal F(P(w\cdot\lambda)^\vee) \ar[r,"\mathcal F(f_{w,w'})"] \ar[d,"\Psi_{w}"] &  \mathcal F(P(w'\cdot\lambda)^\vee) \ar[d,"\Psi_{w'}"]\\
    \mathcal F(P(w\cdot\lambda))^\vee \ar[r,"\mathcal F(f_{w,w'}^\vee)^\vee"] &  \mathcal F(P(w'\cdot\lambda))^\vee
  \end{tikzcd}
\end{equation}
where we denote by the same symbol $(\cdot)^\vee$ the duality in $\mathcal O$ and Serre duality on coherent sheaves.
\end{lemma}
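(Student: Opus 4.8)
\textbf{Proof plan for Lemma \ref{lemma:canisodual}.}

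The statement asserts the existence of a coherent family of duality isomorphisms $\Psi_w$ between $\mathcal F$ evaluated on $P(w\cdot\lambda)^\vee$ and the Serre dual of $\mathcal F$ evaluated on $P(w\cdot\lambda)$, compatible with all morphisms between the injective objects $P(w\cdot\lambda)^\vee$. The plan is to construct the $\Psi_w$ simultaneously for all three possible functors $\mathcal F\in\{\mathcal B,\mathcal B_x,\mathcal M_{\infty,x,\mathcal R}\}$ by reducing everything to the single object $P(w_0\cdot\lambda)$, using the surjections $\pi_w : P(w_0\cdot\lambda)\twoheadrightarrow P(w\cdot\lambda)^\vee$ fixed just before the lemma. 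First I would record the structural input: each of the three functors is exact (Theorem \ref{thm:Bez}, Corollary \ref{cor:bezandM}, Proposition \ref{prop:deformed_functor}/Corollary \ref{coro:Jacquet_conclusion}) and sends objects of $\mathcal O_{\chi_\lambda}$ to (maximal) Cohen--Macaulay sheaves of the top dimension on the ambient space, so that the shifted Serre duality $(-)^\vee$ of section \ref{sec:duality} is an exact, involutive, contravariant functor on the relevant coherent sheaves. The key algebraic fact I would isolate is that, since $P(w_0\cdot\lambda)$ is a projective-injective object whose endomorphism ring is $L\otimes_{A^W}A$ (Proposition \ref{prop:endomorphismensatz}), the internal duality $(-)^\vee$ on $\mathcal O_{\chi_\lambda}$ restricts to an (anti-)automorphism of $P(w_0\cdot\lambda)$, i.e.\ $P(w_0\cdot\lambda)^\vee\cong P(w_0\cdot\lambda)$; fixing one such isomorphism $j : P(w_0\cdot\lambda)\xrightarrow{\sim} P(w_0\cdot\lambda)^\vee$ is the germ of the whole construction.

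Next I would build $\Psi_{w_0}$. Applying $\mathcal F$ to $j$ gives $\mathcal F(j):\mathcal F(P(w_0\cdot\lambda))\xrightarrow{\sim}\mathcal F(P(w_0\cdot\lambda)^\vee)$, and one needs to check this is compatible with Serre duality, i.e.\ that the square relating $\mathcal F(j)$ and $\mathcal F(j)^\vee$ commutes up to a controlled scalar. For $\mathcal F=\mathcal B$ this is part of Bezrukavnikov's package: the functor intertwines the internal duality on $\mathcal O_{\chi_\lambda}$ with (a shift of) Grothendieck--Serre duality on $X^\wedge$, which is exactly the content one extracts from the convolution-compatibility and the identifications $\mathcal B(M(ww_0\cdot\lambda)^\vee)=\mathcal O_{\overline{X_w}}$, $\mathcal B(M(ww_0\cdot\lambda))=\omega_{\overline{X_w}}$ in Theorem \ref{thm:Bez} together with $\mathcal B(P(w_0\cdot\lambda))=\mathcal O_{\overline X}$ being self-dual up to $\omega_{\overline X}\cong\mathcal O_{\overline X}$ on the Gorenstein-in-codimension-zero scheme $\overline X$. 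For $\mathcal F=\mathcal B_x$ the compatibility descends along the formally smooth, $\underline G_1$-equivariant map $W$ of Corollary \ref{cor:bezandM} (flat pullback commutes with the relevant dualizing complexes up to the relative dualizing sheaf of $W$, which is a line bundle that can be absorbed). For $\mathcal F=\mathcal M_{\infty,x,\mathcal R}$ this is precisely point \ref{case:4} of Theorem \ref{thm:explsheaves} applied to $M=P(w_0\cdot\lambda)$ — but since the present lemma is a step \emph{towards} Theorem \ref{thm:explsheaves}, I would instead prove the self-duality of $\mathcal M_{\infty,x,\mathcal R}(P(w_0\cdot\lambda))$ directly: it is free of rank $m_x$ over the Cohen--Macaulay scheme $\overline{\mathcal X}^{\qtri}_{\infty,x,\mathcal R}$ by Theorem \ref{theo:antidominant}, and the $D=L\otimes_{A^W}A$-module structure (Proposition \ref{prop:endomorphismensatz}, Corollary \ref{cor:compatibility_center}) together with the Gorenstein property of $D$ lets one identify its shifted Serre dual with $\mathcal M_{\infty,x,\mathcal R}(P(w_0\cdot\lambda)^\vee)$ compatibly with the $\mathcal O$-internal duality, using that Serre duality of a free module over a Gorenstein base is computed by $\mathrm{Hom}$ into the dualizing module, matched with $\mathrm{Hom}_D(-,D)$ on the algebra side.

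Finally I would propagate $\Psi_{w_0}$ to all $w$ and check the compatibility square \eqref{eq:diag_w_w'}. Given any $f_{w,w'}:P(w\cdot\lambda)^\vee\to P(w'\cdot\lambda)^\vee$, Lemma \ref{lemm:lift} produces $\widetilde f_{w,w'}:P(w_0\cdot\lambda)\to P(w_0\cdot\lambda)$ fitting into diagram \eqref{diag:projfww'}; I would \emph{define} $\Psi_w$ to be the unique map making the analogue of \eqref{diag:projfww'} commute after applying $\mathcal F$ — that is, $\Psi_w$ is induced on the quotient $\mathcal F(P(w\cdot\lambda)^\vee)$ (a quotient of $\mathcal F(P(w_0\cdot\lambda))$ by exactness and the surjectivity of $\mathcal F(\pi_w)$) by $\mathcal F(j)$, using that $\mathcal F(\pi_w^\vee)$ exhibits $\mathcal F(P(w\cdot\lambda))^\vee$ as the corresponding quotient of $\mathcal F(P(w_0\cdot\lambda))^\vee$. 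Well-definedness (independence of the choice of $\widetilde f$) and the fact that $\Psi_w$ is an isomorphism follow from tracking cycles: both source and target of $\Psi_w$ are Cohen--Macaulay of top dimension with the same associated cycle $m_x[\,\overline{\mathcal X}^{\qtri,w^{\max}w_0}_{\infty,x,\mathcal R}\,]$ (by Proposition \ref{prop:supports_faisceaux}, Lemma \ref{lemm:cycles_dualite} and Corollary \ref{cor:injectivecoversfree}), so a surjection between them is automatically an isomorphism. The commutativity of \eqref{eq:diag_w_w'} is then a diagram chase: lift $f_{w,w'}$ and $f_{w,w'}^\vee$ to endomorphisms of $P(w_0\cdot\lambda)$ via Lemma \ref{lemm:lift}, use that the internal duality on $\mathcal O$ sends $\widetilde f_{w,w'}$ to (a lift of) $f_{w,w'}^\vee$ up to an endomorphism of $P(w_0\cdot\lambda)$ acting through $D$, and invoke the already-established $\Psi_{w_0}$-compatibility plus the $D$-linearity to cancel the ambiguity. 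The main obstacle I anticipate is exactly this last bookkeeping: the lift $\widetilde f_{w,w'}$ from Lemma \ref{lemm:lift} is not unique, it differs by $\mathrm{Hom}$ from the kernel of $\pi_w$ into $P(w_0\cdot\lambda)$, and one must verify that this indeterminacy dies after applying $\mathcal F$ and passing to the quotients — this is where one genuinely uses that the relevant cycles and the $D=\mathrm{End}(P(w_0\cdot\lambda))$-action pin things down, rather than any softer formal nonsense.
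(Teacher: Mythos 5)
Your overall strategy is the same as the paper's: reduce everything to the antidominant projective via the fixed surjections $\pi_w:P(w_0\cdot\lambda)\twoheadrightarrow P(w\cdot\lambda)^\vee$, use that $\End_{\mathcal O}(P(w_0\cdot\lambda))\simeq D=L\otimes_{A^W}A$ acts on $\mathcal F(P(w_0\cdot\lambda))$ through the structure sheaf (Corollary \ref{cor:compatibility_center} for $\mathcal M_{\infty,x,\mathcal R}$, \cite[Prop.~23]{BezTwo} for $\mathcal B_x$), and then close the compatibility square by a diagram chase in which the surjectivity of the maps $\mathcal F(\pi_w)$, $\mathcal F(\pi_w^\vee)^\vee$ kills the ambiguity in the lift $\widetilde f_{w,w'}$ of Lemma \ref{lemm:lift}; this is exactly the paper's cube argument. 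However, there is a genuine gap at the point where you construct $\Psi_w$ for general $w$. You define $\Psi_w$ as the map "induced on the quotients" by $\Psi_{w_0}$ (equivalently by $\mathcal F(j)$), and you justify well-definedness by "tracking cycles: both source and target are Cohen--Macaulay with the same cycle, so a surjection between them is an isomorphism." That cycle argument only shows that an induced surjection, \emph{once it exists}, is an isomorphism; it does not show that $\Psi_{w_0}$ carries $\ker\mathcal F(\pi_w)$ into $\ker\big(\mathcal F(\pi_w^\vee)^\vee\big)$, which is what is needed for any map to be induced at all. Equality of cycles of two submodules of $\mathcal F(P(w_0\cdot\lambda))$ does not imply equality of the submodules (already $\mathcal O\oplus 0$ and $0\oplus\mathcal O$ inside $\mathcal O^2$ show this), so the descent is not formal.

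The missing ingredient is the identification of \emph{both} targets as free of rank $m_x$ over the \emph{same} support: for $\mathcal F(P(w\cdot\lambda)^\vee)$ this is Corollary \ref{cor:injectivecoversfree}, while for $\mathcal F(P(w\cdot\lambda))^\vee$ one needs Proposition \ref{prop:subobject} applied to the subobject $P(w\cdot\lambda)\simeq P(w\cdot\lambda)^{\vee\vee}\hookrightarrow P(w_0\cdot\lambda)^\vee\simeq P(w_0\cdot\lambda)$, giving $\mathcal M_{\infty,x,\mathcal R}(P(w\cdot\lambda))\simeq\omega_{\mathcal Y}^{m_x}$ and hence, after Serre duality, $\mathcal O_{\mathcal Y}^{m_x}$ (your sketch never invokes Proposition \ref{prop:subobject}, and the phrase "the corresponding quotient" silently assumes this). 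Once both are of the form $(R/J)^{m_x}$ with the same $J$, Lemma \ref{lemma:diagmapsurj} shows that \emph{any} surjection from the free module $\mathcal F(P(w_0\cdot\lambda))\simeq (R/I)^{m_x}$ onto such a module has kernel $(J/I)^{m_x}$, independent of the surjection; this is what makes the two kernels coincide and the square defining $\Psi_w$ fillable — this is how the paper's proof proceeds. A related minor point: for $\Psi_{w_0}$ in the case $\mathcal F=\mathcal M_{\infty,x,\mathcal R}$, what is needed is not any Gorenstein property of $D$ but the self-duality of $\mathcal M_{\infty,x,\mathcal R}(P(w_0\cdot\lambda))$ as a coherent sheaf, which again comes from combining Theorem \ref{theo:antidominant} (freeness over the support) with Proposition \ref{prop:subobject} (which exhibits the same module as $\omega^{m_x}$ of its support). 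With these two points repaired, your argument coincides with the paper's.
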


\begin{proof}
  Let $w\in W$. The sheaves $\mathcal F(P(w\cdot\lambda)^\vee)$ and $\mathcal F(P(w\cdot\lambda))^\vee$ are isomorphic to the same quotient of $\mathcal F(P(w_0\cdot\lambda))$ by Theorem \ref{thm:Bez} for $\mathcal B,\mathcal B_x$ and Corollary \ref{cor:injectivecoversfree} and Proposition \ref{prop:subobject} for $\mathcal M_{\infty,x,\mathcal R}$. This implies that there exists an isomorphism $\Psi_w : \mathcal F(P(w\cdot\lambda)^\vee)\xrightarrow{\sim}\mathcal F(P(w\cdot\lambda))^\vee$ such that the following diagram commutes
  
 \begin{equation}\label{diag:proj}
    \begin{tikzcd}
     \mathcal F(P(w_0\cdot\lambda)) \ar[r,"\Psi_{w_0}"] \ar[d,"\mathcal F(\pi_w)"] &  \mathcal F(P(w_0\cdot\lambda))^\vee \ar[d,"\mathcal F(\pi_w^\vee)^\vee"]\\
\mathcal F(P(w\cdot\lambda)^\vee) \ar[r,"\Psi_w"] & \mathcal F(P(w\cdot\lambda))^\vee
    \end{tikzcd} 
  \end{equation}
  
Fix $w,w'$ and let's show that the diagram (\ref{eq:diag_w_w'}) is commutative. Let $f_{w,w'}\in\Hom(P(w\cdot \lambda)^\vee, P(w'\cdot\lambda)^\vee)$. By Lemma \ref{lemm:lift}, there exists a map $\widetilde f_{w,w'}\in\End(P(w_0\cdot\lambda))$ such that the diagram (\ref{diag:projfww'}) is commutative. We first consider the following diagram 
 \begin{equation}\label{diag:proj2}
    \begin{tikzcd}
     \mathcal F(P(w_0\cdot\lambda)) \ar[r,"\Psi_{w_0}"] \ar[d,"\mathcal F(\widetilde{f}_{w,w'})"] &  \mathcal F(P(w_0\cdot\lambda))^\vee \ar[d,"\mathcal F(\widetilde f_{w,w'}^\vee)^\vee"]\\
\mathcal F(P(w_0\cdot\lambda)^\vee) \ar[r,"\Psi_{w_0}"] & \mathcal F(P(w_0\cdot\lambda))^\vee
    \end{tikzcd} 
    \end{equation}
But as $\widetilde{f}_{w,w'} \in \End_{\mathcal O}(P(w_0\cdot
\lambda),P(w_0\cdot \lambda)) \simeq D = L \otimes_{A^W}A$, it follows
from Corollary \ref{cor:compatibility_center} for $\mathcal F=\mathcal
M_{\infty,x,\mathcal R}$ and \cite[Prop.~23]{BezTwo} for $\mathcal
F=\mathcal B_x$, and the fact that $\Psi_{w_0}$ is $\mathcal
O_{\mathcal X^{\qtri}_{\infty,x,\mathcal R}}$-linear, that this
diagram commutes. Now consider the diagram
\[
  \begin{tikzcd}
    & \mathcal F(P(w_0\cdot\lambda)) \ar[rr,"\mathcal F(\pi_{w'}^\vee)^\vee"]    \ar[from=dd,"\mathcal F(\widetilde{f}_{w,w'}^\vee)^\vee" near start]  &  & \mathcal F(P(w'\cdot\lambda))^\vee   \\
    \mathcal F(P(w_0\cdot\lambda)) \ar[rr,crossing over,"\mathcal F(\pi_{w'})" near end]
    \ar[ru,"\Psi_{w_0}"] & & \mathcal F(P(w'\cdot\lambda)^\vee)
    \ar[ru,"\Psi_{w'}"]  & \\
    & \mathcal F(P(w_0\cdot\lambda))
    \ar[rr,"\mathcal F(\pi_{w}^\vee)^\vee" near end]
& & \mathcal F(P(w\cdot\lambda))^\vee
    \ar[uu,"\mathcal F({f}_{w,w'}^\vee)^\vee"'] \\
    \mathcal F(P(w_0\cdot\lambda)) \ar[rr,"\mathcal F(\pi_{w})"] \ar[ru,"\Psi_{w_0}"]
    \ar[uu,"\mathcal F(\widetilde{f}_{w,w'})"] & & \mathcal F(P(w\cdot\lambda)^\vee)
    \ar[ru,"\Psi_{w}"'] \ar[uu,crossing over,"\mathcal F({f}_{w,w'})"
    near start]&
  \end{tikzcd} \]
All faces, except maybe the right hand one (which is the one of the statement), of this cube are commutative diagrams by functoriality and diagrams (\ref{diag:projfww'}),  (\ref{diag:proj}), (\ref{diag:proj2}). Moreover $\mathcal F(\pi_w)$, $\mathcal F(\pi_w^\vee)^\vee$, $\mathcal F(\pi_{w'})$, $\mathcal F(\pi_{w'}^\vee)^\vee$  are surjective, thus the last right hand face also commutes.
\end{proof}

\begin{cor}\label{cor:duality}
For any $M \in \mathcal O_\alg$, there is a compatible choice of isomorphisms
\[ \Psi_M : \mathcal F(M^\vee) \overset{\sim}{\fleche} \mathcal F(M)^\vee,\]
where $\mathcal F$ is either the functor $\mathcal B,\mathcal B_x$ or $\mathcal M_{\infty,x,\mathcal R}$. In particular, $\mathcal F$ is compatible with duality.
\end{cor}

\begin{proof}
Choose a resolution
\begin{equation}\label{eq:resM} \bigoplus_i P(\mu_i) \fleche \bigoplus_j P(\lambda_j) \fleche M \fleche 0.\end{equation}
Then we have two exact sequences
\[ 0 \fleche \mathcal F(M^\vee) \fleche  \bigoplus_j \mathcal F(P(\lambda_j)^\vee) \fleche \bigoplus_i \mathcal F(P(\mu_i)^\vee),\]
and
\[  0 \fleche \mathcal F(M)^\vee \fleche  \bigoplus_j \mathcal F(P(\lambda_j))^\vee \fleche \bigoplus_i \mathcal F(P(\mu_i))^\vee.\]
For the second one, recall that if $K$ denote the Kernel in equation (\ref{eq:resM}) so that
\[ 0 \fleche K \fleche \bigoplus_j P(\lambda_j) \fleche M \fleche 0,\]
then, as $\mathcal F(K)$ is CM of the same dimension as the other modules, we have
\[ 0 \fleche \mathcal F(M)^\vee \fleche \bigoplus_j \mathcal F(P(\lambda_j))^\vee \fleche \mathcal F(K)^\vee \fleche 0,\]
which is exact.
Moreover, by the previous Lemma \ref{lemma:canisodual} we have a commutative diagram with vertical isomorphisms
\begin{center}
\begin{tikzpicture}[description/.style={fill=white,inner sep=2pt}] 
\matrix (m) [matrix of math nodes, row sep=3em, column sep=2.5em, text height=1.5ex, text depth=0.25ex] at (0,0)
{ 
0 & \mathcal F(M^\vee) &\bigoplus_j \mathcal F(P(\lambda_j)^\vee)  &  \bigoplus_i \mathcal F(P(\mu_i)^\vee)& \\
0 &  \mathcal F(M)^\vee&  \bigoplus_j \mathcal F(P(\lambda_j))^\vee &\bigoplus_i \mathcal F(P(\mu_i))^\vee & \\
};
\path[->,font=\scriptsize] 
(m-1-1) edge node[auto] {$$} (m-1-2)
(m-1-2) edge node[auto] {$$} (m-1-3)
(m-1-3) edge node[auto] {$$} (m-1-4)
(m-2-1) edge node[auto] {$$} (m-2-2)
(m-2-2) edge node[auto] {$$} (m-2-3)
(m-2-3) edge node[auto] {$$} (m-2-4)

(m-1-3) edge node[auto] {$\bigoplus_j \Psi_{\lambda_j}$} (m-2-3)
(m-1-4) edge node[auto] {$\bigoplus_i \Psi_{\mu_i}$} (m-2-4);
\end{tikzpicture}
\end{center}
which induces an isomorphism $\Psi_M : \mathcal F(M^\vee)\fleche \mathcal F(M)^\vee$.
\end{proof}

\begin{cor}\label{cor:functors_isom}
  There exists an isomorphism of functors
  $\mathcal B_x^{m_x}\simeq\mathcal M_{\infty,x,\mathcal R}$.
\end{cor}

\begin{proof}
  By a similar argument to the proof of Lemma \ref{lemma:canisodual},
  we can construct a family indexed by $w\in W$ of isomorphisms
  \[\Phi_w : \mathcal B_x(P(w\cdot\lambda)^\vee)^{m_x}\overset{\sim}{\fleche} \mathcal
    M_{\infty,x,\mathcal R}(P(w\cdot\lambda)^\vee)\] such that, for any
  $w,w'\in W$ and any
  $f_{w,w'}\in\Hom(P(w\cdot\lambda)^\vee,P(w'\cdot\lambda)^\vee$, the
  following diagram commutes
  \[
    \begin{tikzcd}
      \mathcal B_x(P(w\cdot)^\vee)^{m_x} \ar[r,"\mathcal B_x(f_{w,w'})"] \ar[d,"\Phi_w"] &  \mathcal B_x(P(w'\cdot\lambda)^\vee) \ar[d,"\Phi_{w'}"]\\
      \mathcal M_{\infty,x,\mathcal R}(P(w\cdot\lambda)^\vee) \ar[r,"\mathcal
      M_{\infty,x,\mathcal R}(f_{w,w'})"] & \mathcal M_{\infty,x,\mathcal R}(P(w'\cdot\lambda)^\vee).
    \end{tikzcd} \] Such a family of isomorphisms provides an
  isomorphism of functors between $\mathcal B_x^{m_x}$ and
  $\mathcal M_{\infty,x,\mathcal R}$ restricted to the full
  subcategory of $\cO_\lambda$ of injective objects. As the category
  $\cO_\lambda$ has enough injectives, this isomorphism extends to all
  of $\cO_\lambda$.
\end{proof}

\subsection{Consequences}

In this section we keep the setting introduced in subsection \ref{subsect:detailn=3}. In particular $n =3$.

\begin{lemma}
\label{lemma:fiberrefinement}
Let $\rho,\lambda,\mathcal R$ be as above and let $x \in \mathcal X_\infty(L)$ the point corresponding to $\rho$.
Then for all $M \in \mathcal O_{\chi_\lambda}$,
\[ \mathcal M_{\infty,x,\mathcal R}(M)\otimes k(x) \simeq \left(\Hom_{U(\mathfrak g)}(M,\Pi^{\rm la}[\mathfrak m_\rho])^{N^0}[\mathfrak m_{\delta_{\mathcal R}}]\right)'.\]
\end{lemma}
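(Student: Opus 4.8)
The plan is to unwind the definition of $\mathcal M_{\infty,x,\mathcal R}(M)$ and to commute the operation of taking the fiber at $x$ with all the intermediate functors ($\Hom_{U(\mathfrak g)}(M,-)$, the $N_0$-invariants, Emerton's Jacquet functor, and the localization at $\delta_{\mathcal R}$). By Remark \ref{rema:other_description} we already have the description
\[ \mathcal M_{\infty,x,\eps}(M)\simeq\Big(\Hom_{U(\mathfrak g)}(M,\Pi_\infty^{\la}[\mathfrak m_x^\infty])^{N_0}[\mathfrak m_{\eps}^\infty]\Big)'\]
for $M\in\cO_\alg^{\infty}$, so the main point is to pass from the generalized eigenspaces $[\mathfrak m_x^\infty]$ and $[\mathfrak m_{\delta_{\mathcal R}}^\infty]$ to the honest eigenspaces $[\mathfrak m_x]$ and $[\mathfrak m_{\delta_{\mathcal R}}]$ and to replace $\Pi_\infty$ by $\Pi$. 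First I would note that, since $x$ lies on $\mathcal X_{\rhobar,\mathcal S}$ (i.e.~in the zero locus of the augmentation ideal $\mathfrak a$ of $S_\infty$), the isomorphism $\Pi_\infty[\mathfrak a]\simeq\Pi_{\mathfrak m}$ from the patching data, together with the fact (Proposition \ref{prop:deformed_functor}) that $\mathcal M_{\infty,x,\mathcal R}(M)$ is flat over $S_\infty$ in a neighborhood of $x$, identifies $\mathcal M_{\infty,x,\mathcal R}(M)\otimes_{S_\infty}k$ with the corresponding construction for $\Pi_{\mathfrak m}$ in place of $\Pi_\infty$ (here one uses exactness of $\mathcal M_{\infty,x,\mathcal R}$ together with Lemma \ref{lemm:resolution_S} to see that reducing mod $\mathfrak a$ commutes with the functor; this is exactly the argument that already appears in the proof of Proposition \ref{prop:deformed_functor}).

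The second step is to take the fiber at the remaining variables. Here I would invoke Theorem \ref{prop:Cohen_Mac}, which says $\mathcal M_\Pi(M)$ is locally finite free over $(\Spf S_\infty)^{\rig}$; combined with the $U(\mathfrak t)$-finiteness statement of Lemma \ref{lemm:action_Ut} (so that only finitely many characters of $T_0$ contribute over any affinoid, and the weight $\mathrm{wt}(\delta_{\mathcal R})$ appears with its honest eigenspace once we are at a point of integral weight $\lambda$), one sees that taking the stalk at $(x,\delta_{\mathcal R})$ and then the fiber $\otimes k(x)$ is computed by genuine (not generalized) eigenspaces for $\mathfrak m_x$ and $\mathfrak m_{\delta_{\mathcal R}}$. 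Concretely, $\Hom_{U(\mathfrak g)}(M,\Pi^{\la})$ is a finitely generated $U(\mathfrak t)$-module by Lemma \ref{lemm:action_Ut}, and after applying $J_B$ and passing to the coherent sheaf on $\mathcal X_\infty\times\widehat T$, evaluating at the closed point $(x,\delta_{\mathcal R})$ and taking the residue field fiber turns the completed-eigenspace description into $\big(\Hom_{U(\mathfrak g)}(M,\Pi^{\la}[\mathfrak m_\rho])^{N_0}[\mathfrak m_{\delta_{\mathcal R}}]\big)'$, where $\mathfrak m_\rho\subset\mathbb T$ cuts out the point $x$ (equivalently the Galois representation $\rho$) and $\mathfrak m_{\delta_{\mathcal R}}\subset\mathcal A(p)\otimes\QQ_p$ is the maximal ideal attached to the smooth character $\delta_{\mathcal R}$. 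One should also record that $\Pi^{\la}[\mathfrak m_\rho]=(\Pi_{\mathfrak m}^{\la})[\mathfrak m_\rho]$ so that the notation matches the statement.

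I expect the main obstacle to be a bookkeeping issue rather than a deep one: one must be careful that ``taking the fiber of a coherent sheaf at a closed point'' really does commute with the successive application of $\Hom_{U(\mathfrak g)}(M,-)$, $(-)^{N_0}$, $J_B$ and dualization, and in particular that no $\mathrm{Tor}$-terms appear. This is where the exactness results proved earlier are essential: Lemma \ref{lemm:exactness}, Theorem \ref{theo:exactness_Jacquet}, and the flatness over $S^{\rig}$ respectively over $\cO(V_1)$ established in Theorem \ref{prop:Cohen_Mac} together with Proposition \ref{prop:almost_flatness} (``almost flatness'') guarantee that reducing modulo a maximal ideal of $S^{\rig}$, and then modulo the ideal of the point in the $\widehat T_0$-direction, is exact on the relevant module. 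A resolution of $M$ as in Lemma \ref{lemm:resolution} reduces everything to the case $M=U(\mathfrak g)\otimes_{U(\mathfrak b)}V$ with $V$ one-dimensional, where $\Hom_{U(\mathfrak g)}(M,\Pi^{\la})\simeq\Pi^{\la,\mathfrak n}(\chi_V)$ and the commutation is transparent; one then uses exactness of $\mathcal M_{\infty,x,\mathcal R}$ to propagate back to general $M\in\cO_{\chi_\lambda}$. Finally, one checks that the duality $(-)'$ between coadmissible modules and essentially admissible representations is compatible with these reductions, which is formal. Putting these pieces together yields the claimed isomorphism.
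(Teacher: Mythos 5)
There is a genuine gap at the crucial step, namely the passage from generalized to honest eigenspaces in the $\widehat T$-direction. Starting from Remark \ref{rema:other_description}, taking the fiber $\otimes\, k(x)$ (dually, taking $\mathfrak m_x$-torsion) only replaces $\Pi_\infty^{\la}[\mathfrak m_x^\infty]$ by $\Pi_\infty^{\la}[\mathfrak m_x]$; it leaves you, a priori, with the \emph{generalized} eigenspace $[\mathfrak m_{\delta_{\mathcal R}}^\infty]$ for the Atkin--Lehner/$\widehat T$-action, since $\mathcal A(p)$ does not act through $R_\infty$. None of the results you invoke (Theorem \ref{prop:Cohen_Mac}, Lemma \ref{lemm:action_Ut}, Proposition \ref{prop:almost_flatness}, Lemma \ref{lemm:resolution}, exactness of the functor) addresses this: flatness over $\Spf(S_\infty)^{\rig}\times\widehat T_0$ and the finite-dimensionality of the $U(\mathfrak t)$-action control the weight direction and the dimension of fibers, but say nothing about whether $\mathcal A(p)$ acts on the fiber at $x$ through the scalar character $\delta_{\mathcal R}$ rather than merely topologically nilpotently shifted by it. In fact the claim that the generalized eigenspace collapses to the honest one in the fiber is a nontrivial statement about the eigenvariety: it is exactly the assertion that, on the support of the sheaf near $(x,\delta_{\mathcal R})$, the $\widehat T$-coordinate is a function of the Galois deformation, so that killing $\mathfrak m_x$ forces the Hecke eigenvalue to be exactly $\delta_{\mathcal R}$.

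The paper's proof supplies precisely this missing input via Corollary \ref{coro:annihilator_I_final} (built on Theorem \ref{theo:qtri}): the $\widehat R_{\infty,x}\otimes\mathcal O^{\wedge}_{\widehat T,\delta_{\mathcal R}}$-module structure on $\mathcal M_{\infty,x,\mathcal R}(M)$ is annihilated by the kernel of the map to $R^{\qtri}_{\infty,x,\mathcal R}$, and $\mathcal X^{\qtri}_{\infty,x,\mathcal R}$ is a closed formal subscheme of $\widehat{(\mathcal X_\infty)}_x$, i.e.\ $R^{\qtri}_{\infty,x,\mathcal R}$ is a quotient of the local ring $\widehat R_{\infty,x}$ and the $\widehat T$-action factors through it by a local homomorphism. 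Hence after reducing modulo $\mathfrak m_x$ the ideal $\mathfrak m_{\delta_{\mathcal R}}$ acts by zero, so $[\mathfrak m_{\delta_{\mathcal R}}^\infty]$ becomes $[\mathfrak m_{\delta_{\mathcal R}}]$, and one gets $\bigl(\Hom_{U(\mathfrak g)}(M,\Pi^{\la}_\infty[\mathfrak m_x])^{N_0}[\mathfrak m_{\delta_{\mathcal R}}]\bigr)'$, which is the statement (using $\Pi_\infty[\mathfrak a]\simeq\Pi_{\mathfrak m}$ to rewrite $\Pi^{\la}_\infty[\mathfrak m_x]=\Pi^{\la}[\mathfrak m_\rho]$, the one point you do handle, though more simply than via flatness since $\mathfrak a\subset\mathfrak m_x$). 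Note also that this factorization is where the hypotheses on $\rho$ (crystalline, $\varphi$-generic, Hodge--Tate regular) enter; your proposed argument never uses them, which is another sign that the route cannot work as written.
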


\begin{proof}
By construction (see Remark \ref{rema:other_description}), we have
\[ \mathcal M_{\infty,x,\mathcal R}(M)\simeq \left( \Hom_{U(\mathfrak
        g)}(M,\Pi^{\la}_\infty[\mathfrak m_x^\infty])^{N_0}[\mathfrak
      m_{\delta_{\mathcal R}}^\infty]\right)'.\]
By Corollary  \ref{coro:annihilator_I_final}, the $\mathcal X_{\infty} \times \widehat{T}$-structure on the sheaf $\mathcal M_{\infty,x,\mathcal R}(M)$ factors through $\mathcal X_{\infty,x,\mathcal R}^{\qtri} \fleche \mathcal X_{\infty} \times \widehat{T}$. Thus, 
\[ \mathcal M_{\infty,x,\mathcal R}(M) \otimes k(x) \simeq \left( \Hom_{U(\mathfrak
        g)}(M,\Pi^{\la}_\infty[\mathfrak m_x])^{N_0}[\mathfrak
      m_{\delta_{\mathcal R}}]\right)'.\qedhere\]

\end{proof}

\begin{cor}\label{cor:nonclassicalforms}
Let $\delta:T\rightarrow L^\times$ be a continuous character and let $\chi^S:\mathbb{T}^S\rightarrow L$ be a character such that there exists  $f \in S^\dag(K^p)[\chi^S\otimes \delta]$ an overconvergent $p$-adic eigenform on the group $U(3)$.  Assume that the Galois representation $\rho$ associated to $f$ is crystalline strictly dominant and $\varphi$-generic at $p$ satisfying (\ref{hyp:TW}).  Let $r = \vabs{\set{ \tau \in \Sigma_F \mid \omega_{x,\mathcal R,\tau} = 1}}$.
Then
\[ \dim  S^\dag(K^p)[\chi^S\otimes \delta] = 2^r\dim  S^{\rm cl}(K^p)[\chi^S\otimes \delta]\neq 0.\]
\end{cor}

\begin{proof} The assumptions imply that the character $\delta$ is locally algebraic and that it factors as $\delta = \delta_\lambda \delta_\mathcal R$ for some $\lambda \in X^*(\underline T)^+$ and some unramified character $\delta_{\mathcal R}$.
By Breuil's adjunction formula \cite[Théorème 4.3]{BreuilAnalytiqueII} (see also \cite[eq. (5.5)]{BHS3}) and \cite[Lemma 5.2.3]{BHS3} we have
\begin{align*} S^\dag(K^p)[\chi^S\otimes \delta]  &= \Hom_{U(\mathfrak g)}(M(\lambda),\Pi^{\la}[\chi^S])^{N_0}[\mathfrak m_{\delta_\mathcal R}],\\
 S^{\rm cl}(K^p)[\chi^S\otimes \delta] &= \Hom_{U(\mathfrak g)}(L(\lambda),\Pi^{\la}[\chi^S])^{N_0}[\mathfrak m_{\delta_\mathcal R}].\end{align*}
In particular, by Lemma \ref{lemma:fiberrefinement}, these spaces are indentified with the dual vector spaces of the fiber of $\mathcal M_{\infty,x,\mathcal R}(M(\lambda))$ 
resp.~of $\mathcal M_{\infty,x,\mathcal R}(L(\lambda))$ at $k(x)$. Thus, as 
$m_x = \dim \mathcal M_{\infty,x,\mathcal R}(L(\lambda)) \otimes k(x)$, the result is a direct corollary of Theorem \ref{thm:vermadomdualizing} (and Proposition \ref{prop:geomXn=3}).
\end{proof}

We can also deduce the following corollary on the structure of the completed cohomology $\Pi$ (see Definition \ref{def:completed_cohomology}), which is a representation of $G := U(\QQ_p)$. Let $\mathfrak{gl}_3$ be the Lie algebra (over $L$) of the group $\GL_{3}$ and for a dominant $\lambda$ we consider the extension
\[ N(\lambda) = \left[ L(s_1 \cdot \lambda) \oplus L(s_2 \cdot \lambda) - L(\lambda)\right] \in \Ext_{\mathcal O}^1(L(\lambda),L(s_1 \cdot \lambda) \oplus L(s_2 \cdot \lambda) ),\]
which is non trivial when mapped in each of $ \Ext_{\mathcal O}^1(L(\lambda),L(s_i \cdot \lambda))$, for $i = 1,2$. This extension is the quotient of the Verma module $M(\lambda)$ by $M(s_1s_2\cdot\lambda)+M(s_2s_1\cdot\lambda)$.

As before we consider the Lie algebra \[\mathfrak g={\rm Lie}\big(\underline G_L \simeq (\Res_{F\otimes_\QQ \QQ_p/\QQ_p}\GL_3)\times_{\QQ_p} L\big) \simeq \prod_{\tau \in \Sigma_F} \mathfrak{gl}_{3}\]  with Borel $\mathfrak b \simeq \prod_\tau \mathfrak b_\tau$. Associated to a dominant weight $\lambda = (\lambda_\tau)_\tau \in X^*(\underline T)^+$  and $w_{\mathcal R}= (w_{\mathcal R,\tau})_{\tau\in \Sigma_F} \in W$ we define the object  
\[ N(\lambda,w_{\mathcal R}) = 
\left(\Boxtimes_{\tau : w_{\mathcal R,\tau} \neq 1} L(\lambda_\tau)\boxtimes
\Boxtimes_{\tau : w_{\mathcal R,\tau} =1}N(\lambda_\tau)\right)
\] of the category $\mathcal O_{\chi_{\lambda}} =\Boxtimes_\tau \mathcal O_{\chi_{\lambda_\tau}}^{\mathfrak gl_3,\mathfrak b_\tau}$.
We also define \[ S(\lambda,w_{\mathcal R}) = \Boxtimes_\tau S(\lambda_\tau,w_{\mathcal R,\tau}) \in \mathcal O_{\chi_\lambda},\]
where \[ S(\lambda_\tau,w_{\mathcal R,\tau}) = 
\left\{
\begin{array}{ccc}
 \bigoplus_{w \leq w_{\mathcal R,\tau}w_0} L(w\cdot \lambda_\tau) & \text{if } w_{\mathcal R,\tau} \neq 1 \\
\bigoplus_{\ell(w) \neq 1} L(w\cdot \lambda_\tau) \bigoplus N(\lambda_\tau) & \text{if } w_{\mathcal R,\tau} = 1
\end{array}
\right. ,
\]
so that $S(\lambda,w_{\mathcal R}) = \bigoplus_{w \leq w_{\mathcal R}w_0} L(w\cdot \lambda)$ if $w_{\mathcal R,\tau} \neq 1$ for all $\tau$, and
\[ N(\lambda,w_{\mathcal R}) \subset S(\lambda,w_{\mathcal R}),\]
otherwise.

If $M$ is a $U(\mathfrak g)$-module, we denote $\Hom_E(M,E)$ the $U(\mathfrak g)$-module with underlying vector space $\Hom_E(M,E)$ and action of $\mathfrak r \in U(\mathfrak g)$ given by
\[ (\mathfrak r \cdot \phi)(m) :=  \phi(\dot{\mathfrak r}m), \quad \phi \in \Hom_E(M,E), m \in M,\]
where $\mathfrak r \mapsto \dot{\mathfrak r}$ is the anti-involution of $U(\mathfrak g)$ extending $-1$ on $\mathfrak g$.  We denote $\overline{\underline B}$ the Borel opposite to $\underline B$, whose Lie algebra is $\overline{\mathfrak b}$ with $\overline{\mathfrak n}$ its nilpotent radical. We then denote $B = \underline{B}(\QQ_p), \overline{B} = \overline{\underline{B}}(\QQ_p)$ and $\delta_{B}$ the modulus character of $B$. We then denote $M' := \Hom_E(M,E)^{\overline{\mathfrak n}^\infty}$ the vectors which are killed by a finite power of $\overline{\mathfrak n}$.
If $M = \bigoplus_{\lambda \in X^*(\underline T)_L} M_\lambda \in \mathcal O^{\mathfrak g,\mathfrak b}$, then $M' \in \mathcal O^{\mathfrak g,\overline{\mathfrak b}}$. Finally recall that if $M \in \mathcal O^{\mathfrak g,\overline{\mathfrak b}}$ and $\delta$ is a smooth character of $\underline T(\QQ_p)$, then Orlik-Strauch constructed (see \cite{OrlikStrauch} or also \cite{BreuilAnalytiqueI})
\[ \mathcal F_{\overline{B}}^{G}(M,\delta),\]
which is a locally analytic representation of $G$. In particular, locally analytic principal series are of this form : if $M = M(\lambda)^\vee \in \mathcal O^{\mathfrak g,\mathfrak b}$, then
\begin{equation}\label{eq:PSOS} \mathcal F_{\overline{B}}^{G}((M(\lambda)^\vee)',\delta) = \ind_{\overline{B}}^{G}(\delta_{\lambda}\delta)^{\la}.\end{equation}

Let $\rho : \Gal_E \fleche \GL_n(L)$ be a crystalline, Hodge-Tate regular and $\varphi$-generic autodual representation satisfying Hypothesis \ref{hyp:TWtext} such that $\Pi[\mathfrak m_\rho] \neq 0$ where $\mathfrak m_\rho$ is the ideal of $\mathbb T^S \otimes L$ associated to $\rho$.
Let $\mathcal R$ a choice of refinement
and $\delta_{\mathcal R}$ the associated unramified character. 
Denote $\lambda = (\lambda_\tau)_\tau\coloneqq\HT(\rho)-\delta_G \in X^*(T)^+$ the (dominant) algebraic character associated to $\rho$ as before, where 
$\HT(\rho)=(h_{1,\tau}>\cdots >h_{n,\tau})_{\tau \in \Sigma_F}\in X^*(\underline T)$ gives the Hodge-Tate weights of $\rho$. As $\Pi[m_\rho] \neq 0$ and $\rho$ satisfies Hypothesis \ref{hyp:TWtext}, it corresponds to a point $x \in \mathcal X_\infty(L)$. 
Denote $w_{\rho,\mathcal R} = (w_{\rho,\mathcal R,\tau})_{\tau\in \Sigma_F}$ and $m_\rho := m_x \geq 1$ as in Section \ref{subsec:sheavesandsupports}. 

\begin{cor}
For $\rho,\lambda,\mathcal R$ as above and all $w \leq w_{\mathcal R}w_0$, we have
\[ \dim \Hom_{G}(\ind_{\overline{B}}^{G}(\delta_{w \cdot \lambda}\delta_{\mathcal R}\delta_{B}^{-1})^{\rm la},\Pi^{\la}[\mathfrak m_\rho]) = m_\rho.\]
\end{cor}

\begin{proof}
By \cite[Proposition 4.2]{BreuilAnalytiqueII} and \cite[Lemma 5.2.3]{BHS3}, we have, for all $M \in \mathcal O$
\begin{eqnarray*} \Hom_{U(\mathfrak g)}(M,\Pi^{\la}[\mathfrak m_\rho])^{N^0}[\mathfrak m_{\delta_{\mathcal R}}]
\simeq \Hom_{G(\QQ_p)} (\mathcal F_{\overline B}^{G}(M',\delta_{\mathcal R}\delta_{B}^{-1}), 
\Pi^{\la}[\mathfrak m_\rho])\\
\simeq \Hom_{(\mathfrak g,\overline{B}_p)}(M \otimes_L C^\infty_c(N_B(L),\delta_\mathcal R), 
\Pi[\mathfrak m_\rho]).\end{eqnarray*}
Thus, using equation (\ref{eq:PSOS}) and Lemma \ref{lemma:fiberrefinement} we deduce that the statement is equivalent to
\[ \dim \Hom_{U(\mathfrak g)}(M(w\cdot \lambda)^\vee,\Pi^{\la}[\mathfrak m_\rho)[\mathfrak m_\delta] = \dim \mathcal M_{\infty,x,\mathcal R}(M(w\cdot \lambda)^\vee) \otimes k(x) = m_x,\]
which is Theorem \ref{thm:Vermadualfree}.
\end{proof}

\begin{cor} 
\label{cor:JOPila}For $\rho,\lambda,\mathcal R$ as before,
we have an injection of $(\mathfrak g,B(L))$-modules
\[ \left(S(\lambda,w_{\rho,\mathcal R}) \otimes_L C^\infty_c(N_B(L),\delta_\mathcal R)\right)^{\oplus m_\rho} \hookrightarrow \Pi^{\la}[\mathfrak m_\rho],\]
or, equivalently, an injection of $G$-representations
\[ \mathcal F_{\overline B}^{G}(S(\lambda,w_{\rho,\mathcal R})',\delta_{\mathcal R}\delta_{B}^{-1})^{\oplus m_\rho} \subset \Pi[\mathfrak m_\rho].\]
Moreover each map from  $\mathcal F_{\overline B}^{G}(M(w \cdot \lambda)',\delta_{\mathcal R}\delta_{B}^{-1})$ to $\Pi[\mathfrak m_\rho]$ factors through the previous representation $\mathcal F_{\overline B}^{G}(S(\lambda,w_{\rho,\mathcal R})',\delta_{\mathcal R}\delta_{B}^{-1})$.
\end{cor}

\begin{proof}
The two statements about injections are equivalent and each of the $m_\rho$ asserted maps comes from a section of
\[ \Hom_{U(\mathfrak g)}(S(\lambda,w_{\rho,\mathcal R}),\Pi^{\la}[\mathfrak m_\rho])^{N^0}[\mathfrak m_{\delta_{\mathcal R}}],\]
by the adjunction recalled in the proof of the previous corollary.

We already know, by \cite{BHS3}, that for all $w \leq w_{\rho,\mathcal R}w_0$ we have, in previously used notations
\[ \dim \Hom_{U(\mathfrak g)}(L(w \cdot \lambda),\Pi^{\la}[\mathfrak m_\rho])^{N^0}[\mathfrak m_{\delta_{\mathcal R}}] = m_x=m_\rho.\]
 Moreover, for each $w_{\rho,\mathcal R}w_0 \geq w$ with $w_{\rho,\mathcal R,\tau} \neq 1$ if $w_\tau = 1$, we have
\begin{align*}
m_\rho&=\dim \Hom_{U(\mathfrak g)}(M(w \cdot \lambda),\Pi^{\la}[\mathfrak m_\rho])^{N^0}[\mathfrak m_{\delta_{\mathcal R}}] \\ &  = \dim \Hom_{U(\mathfrak g)}(L(w \cdot \lambda),\Pi^{\la}[\mathfrak m_\rho])^{N^0}[\mathfrak m_{\delta_{\mathcal R}}],
\end{align*}
by Corollary \ref{cor:freeness}. Thus, for those $w$, all maps from $M(w\cdot \lambda)$ factors through $L(w \cdot \lambda)$. 

So we really need to take care of the direct factors of $S(\lambda,w_{\rho,\mathcal R})$ where a factor $N(\lambda_\tau)$ appears. Such a factors is of the form
\[ \Boxtimes_{\tau \in I_1} L(w_\tau \cdot \lambda_\tau) \boxtimes \Boxtimes_{\tau \in I_2} N(\lambda_\tau), \quad \Sigma = I_1 \sqcup I_2, \]
and is a quotient of $M(w \cdot \lambda)$ where $w = (w_\tau)$ with $w_{\tau} = 1$ if $\tau \in I_2$, and even of $M_I(w \cdot \lambda)$ where $I = \{ s_{1,\tau} | \tau \in I_1 \text{ such that } w_\tau = 1 = w_{\rho,\mathcal R,\tau}\}$.

We first prove that any map from $M_I(w \cdot \lambda)$ has to factor through $S(\lambda,w_{\rho,\mathcal R})$ and more precisely through the previous factor.

Choose $\tau_0 \in I_2$ so that $w_{\rho,\mathcal R,\tau_0} = w_{\tau_0} = 1$ and for $i = 1,2$ let $s_i^{\tau_0} \in W$ with $(s_i^{\tau_0})_\tau = w_\tau =$ if $\tau \neq \tau_0$, and $w_{\tau_0}=s_i$. Then
\[ M_I(s_i^{\tau_0} \cdot \lambda) \subset M_I(w \cdot \lambda).\]
Moreover, $M_I(s_i^{\tau_0}\cdot\lambda)$ has a quotient \[Q_i^{\tau_0} := \Boxtimes_{\tau \in I_1} L(\lambda_\tau) \boxtimes L(s_i \cdot \lambda_{\tau_0}) \boxtimes \Boxtimes_{\tau \in I_2 : \tau \neq \tau_0} M(w_\tau \cdot\lambda_\tau),\]
and we first prove that maps from $M_I(s_i^{\tau_0}\cdot \lambda)$ into $\Pi^{\la}[\mathfrak m_\rho]$ factors through $Q_i^{\tau_0}$. 
This is equivalent to proving that $\mathcal M_{\infty,x,\mathcal R}(M(s_i^{\tau_0})) \otimes k(x)\fleche 
\mathcal M_{\infty,x,\mathcal R}(M(\lambda))\otimes k(x)$ factors through 
$\mathcal M_{\infty,x,\mathcal R}(Q_i^{\tau_0})\otimes k(x)$. 
By Corollary \ref{cor:functors_isom}, this is equivalent to the same question for $\mathcal B_x$.

\begin{claim}
If $G = G_1 \times G_2$, $\lambda = (\lambda_1,\lambda_2)$ is an algebraic weight and $\mathcal O_{\chi_\lambda} = 
\mathcal O_{\chi_{\lambda_1}} \boxtimes \mathcal O_{\chi_{\lambda_2}}$, then \[ \mathcal B_G(M_1 \boxtimes M_2) = \mathcal B_{G_1}(M_1) \boxtimes \mathcal B_{G_2}(M_2),\]
where $\mathcal B_H$ is Bezrukavnikov's functor of Theorem \ref{thm:Bez} for the group $H$, under the obvious isomorphism of Steinberg varieties
\[ X_G = X_{G_1} \times X_{G_2}.\]
\end{claim}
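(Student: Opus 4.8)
The claim asserts a K\"unneth-type factorization of Bezrukavnikov's functor. The plan is to reduce it to the analogous factorization of the two building blocks used to construct $\mathcal B$ in the proof of Theorem \ref{thm:Bez}: the Beilinson--Bernstein localization equivalence $\mathcal O_{\chi_\lambda}\hookrightarrow \Perv_{\underline N}(\underline G/\underline B)$ and Bezrukavnikov's equivalence $\Psi$ between the Iwahori-equivariant derived category of sheaves on the affine flag variety and $D^b(\Coh(X))$. Both of these are, by their very construction, compatible with products: for $\underline G=\underline G_1\times\underline G_2$ one has $\underline G/\underline B=\underline G_1/\underline B_1\times \underline G_2/\underline B_2$, the nilpotent cone and Springer resolution factor accordingly, and hence $X_G=X_{G_1}\times X_{G_2}$ as stated; the $\mathcal D$-module localization functor is monoidal for the external product, and the relevant categories of perverse sheaves (resp.\ coherent sheaves) on a product decompose as Deligne tensor products of the factors.

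Concretely, first I would record that the three geometric inputs factor: $X_G\simeq X_{G_1}\times X_{G_2}$ over $L$ (this is already used in the excerpt, see subsection \ref{subsect:detailn=3}); the completion $X_G^\wedge$ along the preimage of $(0,0)$ is the (completed) product of the $X_{G_i}^\wedge$; and $\mathcal O_{\chi_\lambda}\simeq \mathcal O_{\chi_{\lambda_1}}\boxtimes\mathcal O_{\chi_{\lambda_2}}$ by the block decomposition (Lemma \ref{lemm:block_dec}) together with the fact that $U(\mathfrak g)=U(\mathfrak g_1)\otimes_L U(\mathfrak g_2)$. Second, I would check that the Beilinson--Bernstein embedding is compatible with these identifications, i.e.\ that the perverse sheaf attached to $M_1\boxtimes M_2$ is the external product of the perverse sheaves attached to $M_1$ and $M_2$ (equivalently, localization of $\mathcal D$-modules commutes with external tensor product over the base field, which is classical). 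Third, I would invoke the corresponding property of Bezrukavnikov's functor itself: $\Psi_{G}$ is, on the nose, the external product $\Psi_{G_1}\boxtimes \Psi_{G_2}$ under $X_G=X_{G_1}\times X_{G_2}$ (this is built into the construction via convolution, which is defined component-wise for a product group). Putting these together gives $\mathcal B_G(M_1\boxtimes M_2)\simeq \mathcal B_{G_1}(M_1)\boxtimes\mathcal B_{G_2}(M_2)$.

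An alternative, perhaps cleaner route that avoids reopening the geometric black boxes: verify the factorization first on the generating objects, namely the anti-dominant projective $P(w_0\cdot\lambda)$ and the dual Vermas $M(w\cdot\lambda)^\vee$. Here the factorization is explicit from the characterizing properties listed in Theorem \ref{thm:Bez}: $\mathcal B_G(M(ww_0\cdot\lambda)^\vee)=\mathcal O_{\overline{X_w}}$ and $\overline{X_w}=\prod_i \overline{X_{w_i}}$ for $w=(w_1,w_2)$, while $M(w\cdot\lambda)^\vee=M(w_1\cdot\lambda_1)^\vee\boxtimes M(w_2\cdot\lambda_2)^\vee$ by \cite[Lemma 7.7]{BRTop}; similarly $\mathcal B_G(P(w_0\cdot\lambda))=\mathcal O_{\overline X}$ and $\overline X=\overline{X_1}\times\overline{X_2}$, with $P(w_0\cdot\lambda)=P(w_{0,1}\cdot\lambda_1)\boxtimes P(w_{0,2}\cdot\lambda_2)$ since $\mathfrak g$ is a product. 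Since both functors $\mathcal B_G(-)$ and $\mathcal B_{G_1}(-)\boxtimes\mathcal B_{G_2}(-)$ are exact on $\mathcal O_{\chi_\lambda}$, agree on injectives (the $P(w\cdot\lambda)^\vee$), and $\mathcal O_{\chi_\lambda}$ has enough injectives, one extends the isomorphism from injectives to all of $\mathcal O_{\chi_\lambda}$ by the same resolution argument as in Corollaries \ref{cor:duality} and \ref{cor:functors_isom}. One must also check naturality of the isomorphism under morphisms of injectives, which again follows from $\End_{\mathcal O}(P(w_0\cdot\lambda))\simeq D_1\otimes_L D_2$ acting diagonally, exactly as in the proof of Lemma \ref{lemma:canisodual}.

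The main obstacle is the compatibility of the functor $\Psi$ (or, in the second approach, the naturality/functoriality of the factorization isomorphism) with the identifications $X_G=X_{G_1}\times X_{G_2}$: one needs that \emph{all} the auxiliary equivalences in the chain defining $\mathcal B$ (affine Hecke category, $\mathcal D$-modules on the affine flag variety, convolution structures, translation functors) are genuinely monoidal for external products, not merely abstractly equivalent on each factor. For the affine Hecke-type categories this is standard (the affine flag variety of $G_1\times G_2$ is the product), and convolution for a product group is convolution in each coordinate; translation functors likewise act factor by factor. I expect this to be routine but slightly tedious to state carefully, so in practice I would take the second route — work only with $P(w_0\cdot\lambda)$, the $M(w\cdot\lambda)^\vee$, and exactness — and relegate the monoidality of the deep geometric input to a remark, citing that $X_{G_1\times G_2}=X_{G_1}\times X_{G_2}$ together with the product decomposition of all categories involved. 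The remainder of the argument (external product of Verma modules, of the antidominant projective, the resolution/devissage to all of $\mathcal O_{\chi_\lambda}$) is then a direct transcription of arguments already appearing in Section \ref{sec:duality}.
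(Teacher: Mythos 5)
Your overall skeleton matches the paper's: compute both sides explicitly on distinguished objects, then extend to all of $\mathcal O_{\chi_\lambda}$ by the d\'evissage of Lemma \ref{lemma:canisodual} and Corollary \ref{cor:functors_isom}, with naturality controlled through $P(w_0\cdot\lambda)$ and the Endomorphismensatz. However, your preferred ``second route'' has a genuine gap at the step ``agree on injectives (the $P(w\cdot\lambda)^\vee$)''. What you actually verify is the factorization on the dual Vermas $M(w\cdot\lambda)^\vee$ and on the antidominant projective $P(w_0\cdot\lambda)$; among the indecomposable injectives this only covers $I(\lambda)=M(\lambda)^\vee$ and the self-dual big object $P(w_0\cdot\lambda)$. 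The injectives $P(w\cdot\lambda)^\vee$ for the remaining $w$ are not dual Vermas and are not listed in Theorem \ref{thm:Bez}, and you cannot resolve an arbitrary object of $\mathcal O_{\chi_\lambda}$ by direct sums of dual Vermas, so the resolution argument cannot be run with the class of objects you have actually treated. Agreement on dual Vermas determines both functors on the Grothendieck group, but not as functors, which is exactly the issue.

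The paper bridges precisely this point by invoking structural input you propose to ``relegate to a remark'': $\mathcal B_G$ is defined on $D^b(\Perv_{\underline N}(\underline G/\underline B))$ and is compatible with the convolution (monoidal) structure, and both $\mathcal B_G$ and $\mathcal B_{G_1}\boxtimes\mathcal B_{G_2}$ are monoidal and triangulated and interact with translation functors component-wise; from the dual Verma case one then deduces agreement on \emph{all} projective (equivalently injective) objects, after which the canisodual/functors\_isom argument applies verbatim. So either you carry out your first route honestly (compatibility of the whole chain --- Beilinson--Bernstein localization, $\Psi$, convolution, translation functors --- with external products, which indeed gives the statement directly), or, if you insist on the second route, you must supply the propagation step from $\{M(w\cdot\lambda)^\vee, P(w_0\cdot\lambda)\}$ to all $P(w\cdot\lambda)^\vee$; as written, that step is missing, and it is the only nontrivial content of the claim beyond the explicit computations in Theorem \ref{thm:Bez}.
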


\begin{proof}
This follows from the very construction of Bezrukvanikov's functor.  The functors $\mathcal B_G$ is even defined on the larger category $D^b(\Perv_{\underline N}(\underline G/\underline N))$ and compatible with its monoidal structure. Then, by Theorem \ref{thm:Bez}, we know that dual Vermas are sent to the structure sheaves of the respective components by $\mathcal B_G$, and similarly for 
$\mathcal B_{G_i}$. In particular we have an isomorphism
\[ \mathcal B_G(M((w_1,w_2) \cdot (\lambda_1,\lambda_2))^\vee) \simeq \mathcal B_{G_1}(M(w_1 \cdot \lambda_1)^\vee)\boxtimes \mathcal B_{G_2}(M(w_2 \cdot \lambda_2)^\vee).\]
As the functors $\mathcal B_G$ and $\mathcal B_{G_1}\boxtimes\mathcal B_{G_2}$ are both monoïdal and triangulated, using 
translation functors we deduce that $\mathcal B_G$ and $\mathcal B_{G_1}\boxtimes\mathcal B_{G_2}$ are isomorphic on projective 
objects. Thus by the same proof of Lemma \ref{lemma:canisodual} and Lemma \ref{cor:functors_isom}, we deduce the isomorphism
 of functors on $\mathcal O_{\chi_\lambda}$.
\end{proof}

So we want to prove that $\mathcal B_x(M_I(s_i^{\tau_0}\cdot \lambda))\otimes k(x) \fleche \mathcal B_x(M_I(w \cdot \lambda))\otimes k(x)$ factors through $Q_i^{\tau_0}$. By the previous claim, it suffices to show one $\tau$ at a time using $M(s_i^{\tau_0}\cdot\lambda) = M(s_i \cdot \lambda_{\tau_0}) \boxtimes M_I(w^{\tau_0} \cdot \lambda^{\tau_0})$. Thus when $\tau \neq \tau_0$ this is obvious for $\tau \in I_2$ and reduces to freeness of $X_{3,I,w_\tau}$ when $\tau \in I_1$ as before. For $\tau = \tau_0$ this amount to show that, for $k \neq \ell \in \{1,2\}$ the map
\[ \mathcal B_x(M(s_ks_\ell \cdot \lambda_{\tau_0}))\otimes k(x) \fleche  \mathcal B_x(M(s_i \cdot \lambda_{\tau_0})) \otimes k(x),\]
vanishes. But as $\mathcal B_x(M(s_i \cdot \lambda_{\tau_0}))$ is free of rank 1, this is obvious. Thus, we have a factorisation through $Q_i^{\tau_0}$ for all $\tau_0, i=1,2$, thus  
\[\mathcal M_{\infty,x,\mathcal R}(M_I(w \cdot \lambda))\otimes k(x)=\mathcal M_{\infty,x,\mathcal R}(M)\otimes k(x),\]
where \[M = \Boxtimes_{\tau : w_{\rho,\mathcal R,\tau} \neq 1} L(\lambda_\tau) \boxtimes \Boxtimes_{\tau : w_{\rho,\mathcal R,\tau} = 1} \underbrace{M(\lambda_\tau)/(M(s_1s_2 \cdot \lambda_\tau)+M(s_2s_1\cdot\lambda_\tau))}_{N(\lambda_\tau)}.\]
Now we prove the last part of the statement, i.e. any map from (the Orlik-Strauch induction of) a Verma $M(w \cdot \lambda)$ to $\Pi^{\la}[\mathfrak m_\rho]$ will factor through $S(\lambda,w_{\mathcal R})$. Assume given a map in $\Hom_{U(\mathfrak g)}(M(w \cdot \lambda),\Pi^{\la}[\mathfrak m_\rho])^{N^0}
[\mathfrak m_{\delta_\mathcal R}]$, and let $I_1 = \{ \tau \in \Sigma | w_{\rho,\mathcal R,\tau} = w_\tau = 1\}$ and 
$I_2$ its complement, then by the previous argument the map factors through  
\[\Boxtimes_{\tau \in I_1} L(w_\tau \cdot \lambda_\tau) \boxtimes \Boxtimes_{\tau \in I_2} N(\lambda_\tau).\]
As any quotient of this module is a sub-representation of $S(\lambda,w_{\rho,\mathcal R})$, we have that any map 
\[\mathcal F_{\overline B}^{G}(M(w \cdot \lambda)^*,\delta_{\mathcal R}\delta_{B}^{-1})\rightarrow \Pi[\mathfrak m_\rho]\] factors through $\mathcal F_{\overline B}^{G}(S(\lambda,w_{\rho,\mathcal R})^*,\delta_{\mathcal R}\delta_{B}^{-1})$.

We now prove the injective part. As $S(\lambda,w_{\rho,\mathcal R})$ is the direct sum of terms of the form, for $w \in W$,
\[ M_{w,I_1,I_2} = \Boxtimes_{\tau \in I_1} L(w_\tau \cdot \lambda_\tau) \boxtimes \Boxtimes_{\tau \in I_2} N(\lambda_\tau), \quad \Sigma = I_1 \sqcup I_2, \]
where $I_2 \subset \{ \tau \in \Sigma | w_{\rho,\mathcal R,\tau} = w_\tau = 1\}$, we first prove that the direct sum of $m_\rho$ copies of (the Orlik-Strauch induction of) each term $M_{w,I_1,I_2}$ injects in $\Pi^{\la}[m_\rho]$. 
First remark
\[ \dim \Hom_{U(\mathfrak g)}(M_{w,I_1,I_2},\Pi^{\la}[\mathfrak m_\rho])[\mathfrak m_{\delta_{\mathcal R}}] = 2^{|I_2|}m_\rho,\]
by the previous factorisation of $M_I(w \cdot \rho)$ for some $I \subset I_1$ and the computation using $\mathcal B_x$ 
(and using Proposition \ref{prop:geomXn=3}). Now each quotient of $M_{w,I_1,I_2}$ is of the form
\[ M_{w,I_1 \cup J,I_2 \backslash J} = \Boxtimes_{\tau \in I_1 \cup J} L(w_\tau \cdot \lambda_\tau) \boxtimes \Boxtimes_{\tau \in I_2 \backslash J} N(\lambda_\tau),\]
for some $J \subset I_2$ (remark that if $\tau \in I_2, w_\tau = 1$), and thus
\[ \dim \Hom_{U(\mathfrak g)}(M_{w,I_1\cup J,I_2 \backslash J},\Pi^{\la}[\mathfrak m_\rho])[\mathfrak m_{\delta_{\mathcal R}}] = 2^{|I_2|- |J|}m_\rho.\]
We deduce that the dimension of homomorphisms modulo those which factors through a strict quotient is
\[ 2^{|I_2|}m_\rho - \sum_{\emptyset \neq J \subset I_2} (-1)^{|J|+1}2^{|I_2|-|J|}m_\rho = m_\rho.\]
In particular there are $m_\rho$ independent injective maps from $\mathcal F_{\overline{B}}^{G}(M_{w,I_1,I_2}',\delta_{\mathcal R})$ to $\Pi^{\la}$.
Now when $w$ and $I_1,I_2$ varies, these objects have distincts irreducible in their socle. Thus the direct sum of all those maps
\[ \bigoplus_{w,I_1,I_2}  \mathcal F_{\overline{B}}^{G}(M_{w,I_1,I_2}',\delta_{\mathcal R})^{\oplus m_\rho} = \mathcal F_{\overline B}^{G}(S(\lambda,w_{\rho,\mathcal R})',\delta_{\mathcal R}\delta_{B}^{-1})^{\oplus m_\rho},\]
injects into $\Pi[\mathfrak m_\rho]$.\qedhere
\end{proof}

\begin{rema}\label{rema:loc_alg_not_in_socle}
In particular, for each $\tau$ such that $w_{\mathcal R,\tau} = 1$ we deduce the injection of the locally analytic representation
\[ \mathcal F_{\overline{B}_{\tau}}^{G_{\tau}}(N(\lambda_\tau)',\delta_{\mathcal R,\tau}\delta_{B_{\tau}}^{-1}) = [{\rm LA}_{s_1} \oplus {\rm LA}_{s_2} - {\rm LALG}],\]
as representation of $\GL_3(F_\tau)$ (acting through $\tau$), where
\[ {\rm LALG} := L(\lambda_\tau) \otimes_{L} \ind_{\overline{B}_{\tau}}^{G_{\tau}}(\delta_{\mathcal R,\tau}\delta_{B_{\tau}}^{-1}),\]
is an irreducible locally algebraic representation which appears in cosocle, where 
\[ {\rm LA}_s = \mathcal F_{\overline{B}_{\tau}}^{G_{\tau}}(L(s\cdot \lambda_\tau)',\delta_{\mathcal R,\tau}\delta_{B_{\tau}}^{-1})\]
is the irreducible, non-locally algebraic, socle of the locally analytic principal series
\[  {\rm LA}_s \subset \ind_{\overline{B}_{\tau}}^{G_{\tau}}(\delta_{s\cdot \lambda_\tau}\delta_{\mathcal R,\tau}\delta_{B_{\tau}}^{-1})^{\la} .\]
In this case, the locally algebraic representation ${\rm LALG}$ appears with multiplicity $m_\rho$ in the socle by the main result of 
$\cite{BHS3}$, but also with multiplicity $m_\rho$ as an higher order Jordan-Hölder factor, namely, in the cosocle of the previous $\mathcal F_{\overline{B}_{\tau}}^{G_{\tau}}(N(\lambda_\tau)',\delta_{\mathcal R,\tau}\delta_{B_{\tau}}^{-1})$.
\end{rema}

\section{Existence of very critical classical modular forms}
\label{sect:critforms}

In this section we show the existence of a classical form $f$ satisfying the hypothesis of Theorem \ref{thm:main}. The main difficulty is to find a form satisfying the Taylor-Wiles hypothesis, which is moreover completely critical at $p$ (i.e. $w_{\rho_f,\mathcal R}=1$).

For a finite extension $F$ of $\QQ_p$, we denote by
$\rec_F : F^\times\rightarrow {\rm Gal}_{F}^{\ab}$ the local reciprocity
map sending a uniformizer of $F$ on a geometric Frobenius. If $K$ is a
number field we denote by $\Art_K$ the Artin reciprocity map
$\mathbb{A}_K^\times/K^\times\rightarrow{\rm Gal}_{K}^{\ab}$ such that, for
any finite place $v$ of $K$ the precomposition of $\Art_K$ with the
inclusion $K_v^\times\hookrightarrow\mathbb{A}_K^\times$ is
$\rec_{K_v}$. If $\Psi$ is a character of
$\mathbb{A}_K^\times/K^\times$ and $v$ is a finite place of $K$ such
that $\Psi_v$ is unramified, we write $\Psi(v)$ for the evaluation of
$\Psi_v$ at an uniformizer of $F_v^\times$. First, we remark the following,
\begin{lemma}
Let $K /\QQ_p$ be a finite extension and let $\rho_p : {\rm Gal}_{K} \fleche \GL_n(\overline\QQ_p)$ be a crystalline representation with regular Hodge--Tate weights such that there exists a refinement $F_\bullet \subset D_{\rm cris}(\rho_p)$ which contains the Hodge filtration. We moreover assume that the eigenvalues of the linearization of the crystalline Frobenius on $D_{\rm cris}(\rho_p)$ are pairwise distinct. Then $\rho_p$ is a split sum of characters.
\end{lemma}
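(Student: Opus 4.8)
The plan is to translate everything into the category of weakly admissible filtered $\varphi$-modules and to show that the hypothesis forces $D_{\cris}(\rho_p)$ to split completely into rank one pieces. Write $D:=D_{\cris}(\rho_p)$, a weakly admissible filtered $\varphi$-module (with coefficients in $L$, which we enlarge so that it splits $K$, suppressing throughout the $K_0$-semilinear structure of $\varphi$, equivalently the grading over the embeddings $\tau\colon K\hookrightarrow L$ when $K$ is ramified). Since the eigenvalues $\varphi_1,\dots,\varphi_n$ of the linearized Frobenius are pairwise distinct, the underlying $\varphi$-module decomposes as a direct sum of Frobenius eigenlines $D=\bigoplus_{i=1}^n L e_i$ with $\varphi$ acting on $Le_i$ by $\varphi_i$. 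After reordering the $e_i$ according to the given refinement, the flag $F_\bullet$ is exactly $F_j=Le_1\oplus\cdots\oplus Le_j$.

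The key step is the following. Because the Hodge--Tate weights are regular, the Hodge filtration $\Fil^\bullet D_{\dR}$ is itself a complete flag; the assumption that $F_\bullet$ contains it then forces the two complete flags to coincide. Hence every step $F_j$ (after base change to $D_{\dR}$) equals some step $\Fil^{h}D_{\dR}$ of the Hodge filtration, so each $F_j$ is a filtered $\varphi$-submodule of $D$. I would then check directly that the filtration is adapted to the eigendecomposition: for each integer $m$ one has $\Fil^m D_{\dR}=Le_1\oplus\cdots\oplus Le_{k(m)}$ for some $k(m)$, whence $\Fil^m D_{\dR}=\bigoplus_i(\Fil^m D_{\dR}\cap Le_i)$. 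This exhibits an isomorphism of filtered $\varphi$-modules $D\cong\bigoplus_{i=1}^n Le_i$, where each $Le_i$ carries its induced filtration, concentrated in a single jump at the appropriate Hodge--Tate weight.

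Next I would observe that each rank one summand $Le_i$ is weakly admissible: a finite direct sum of filtered $\varphi$-modules is weakly admissible if and only if every summand is (each $Le_i$ is a $\varphi$-submodule of $D$, so $t_H(Le_i)\le t_N(Le_i)$, and summing these inequalities and using $t_H(D)=t_N(D)$ forces equality for all $i$; a rank one filtered $\varphi$-module with $t_H=t_N$ is weakly admissible). By the equivalence of categories between crystalline $\Gal_K$-representations and weakly admissible filtered $\varphi$-modules (Colmez--Fontaine), which is exact and compatible with direct sums, each $Le_i=D_{\cris}(\psi_i)$ for a crystalline character $\psi_i\colon\Gal_K\to L^\times$, and therefore $\rho_p\cong\bigoplus_{i=1}^n\psi_i$, which is the assertion.

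The main obstacle is essentially only bookkeeping: when $K/\QQ_p$ is ramified one must carry the $K_0$-semilinear structure of $\varphi$ (equivalently, the $\tau$-grading) through the argument and interpret ``eigenvalues of the linearization'' and ``$\varphi$-stable flag'' accordingly, which does not change the substance. The one genuine point to pin down carefully is the indexing identifying the $j$-th step of $F_\bullet$ with the correct step of the Hodge filtration, which is precisely where the regularity of the Hodge--Tate weights enters.
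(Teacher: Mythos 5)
Your proof is correct and takes essentially the same route as the paper: both arguments exploit that the pairwise distinct Frobenius eigenvalues give a $\varphi$-splitting, that regularity of the Hodge--Tate weights forces the refinement flag to coincide with the Hodge flag (so the Hodge numbers of the pieces, with their induced filtrations, add up to $t_H(D)$), and then the weak admissibility inequalities together with the equality $t_H(D)=t_N(D)$ force equality on each piece, whence the splitting of $\rho_p$ via the Colmez--Fontaine equivalence. The only organizational difference is that you split $D$ into all $n$ Frobenius eigenlines at once and verify adaptedness of the filtration directly, whereas the paper applies the inequality to each $F_i$ and a $\varphi$-stable complement $G_i$ and splits off one step at a time; this is a repackaging, not a different idea.
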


\begin{proof}
This is a simple application of weak admissibility. Up to extending scalars, we can assume that $D = D_{\rm cris}(\rho_p) = \bigoplus_\tau D_\tau$ is split, and is a filtered $\varphi$-module. We consider the linearization $\varphi^f_\tau$ of the Frobenius on $D_\tau$, where $f = [K_0:\QQ_p]$. We write ${\rm Fil}^{\bullet}D_\tau$ for the filtration on $D_\tau$ induced by the Hodge-filtration on $D$. 
 The assumption is that the Hodge filtration on $D$ is $\varphi$-stable i.e. there is a full flag of $K\otimes \overline\QQ_p$-modules $F_\bullet$, stable under $\varphi$, such that, for all $\tau$, if  
$k_1^\tau \leq \dots \leq k^\tau_n$ are the (opposite) $\tau$-Hodge-Tate weights (with multiplicities) then $F_{i,\tau} \subset {\rm Fil}^{k_{n-i+1}}D_\tau$. Denote the eigenvalues of $\varphi^f$ on $F_{i,\tau}$ by $(\varphi_1,\dots,\varphi_i)$. Thus by weak admissibility, 
\[\frac{1}{f}(v(\varphi_1) + \dots + v(\varphi_i)) \geq \sum_\tau \sum_{k = 1}^i k^\tau_{n+1-k}.\]
Now, if $G_i$ is a complementary $\varphi$-stable subspace of $F_i$ in $D$ (which exists due to the assumptions on the eigenvalues of $\varphi^f$), then we see directly that the $\tau$-Hodge-Tate weights of $G_i$ are $k^\tau_1,\dots,k^\tau_{n-i}$. Thus by weak admissibility again,
\[ \frac{1}{f}(v(\varphi_{i+1}) + \dots + v(\varphi_n)) \geq \sum_\tau \sum_{k = 1}^{n-i} k^\tau_{k}.\]
But by weak admissibility of $D$, the endpoints of both polygons gives
\[ \frac{1}{f}(v(\varphi_1)+\dots+v(\varphi_n)) = \sum_\tau \sum_i k^\tau_i.\]
Thus both $G_i$ and $F_i$ are weakly admissible, thus admissible, thus $\rho_p$ splits accordingly. As this is true for all $i$, we get the Lemma.
\end{proof}

It follows that, when $n=3$, an eigenform $f$ as in Theorem \ref{thm:main} has a split representation at $p$. In the case of modular forms, it was asked by Greenberg (see the work of Ghate and Vatsal \cite{Ghate}, \cite{Ghate_Vatsal}) if a cuspform whose representation is split at $p$ is necessarily a CM form. The natural generalization of this question to $\GL_3$ would suggest that we cannot find a form $f$ to apply Theorem \ref{thm:main} with very large image. Fortunately, we can construct an analog of a CM form for ${\rm GL}_3$ (more precisely for $U(3)$) which still has adequate image modulo $p$.

\subsection{Choosing a Hecke character}
\label{subsect:61}

Let $E$ be a CM field with totally real subfield $E^+ = F$ and let $F'$ be a totally real field disjoint
from $E$, such that $F'/\QQ$ is Galois and such that $[F':\QQ]=3$. Set $K = EF'$. This is a CM field. We moreover assume that all the ramified primes of $K/E$ lie above split primes in $E/E^+$.
Choose two distinct primes $p$ and $\ell$ such that $\ell$ is totally split in $K = EF'$ and primes above $p$ in $E^+=F$ are totally split in $K$. Moreover assume $p > 8 (= 2(n+1)$ when $n = 3$) and $\zeta_p \notin E$.

\begin{example} \begin{enumerate}
\item The easiest choice is $F' = \QQ(\zeta_7)^+$ and $E = \QQ(i\sqrt{3})$ so that $7$ is split in $E$.  
For this $F'$, we can also choose $E =\QQ(i,\sqrt{3})$, with maximal totally real subfield $E^+=\QQ(\sqrt{3})$ so that $E/E^+$ is unramified everywhere.
\item The second easiest choice for $F'$ is $F' = \QQ(\zeta_9)^+$. In this case we can choose $E = \QQ(i\sqrt{5})$.
\item If $E = \QQ(i)$, we can choose $F' = \QQ(\alpha)$ with $\alpha$ a root of $X^3-X^2-4X-1$, which has discriminant $13^2$.
\item If $F' = \QQ(\alpha)$ and $E = \QQ(i)$, we can choose any prime $p>8,\ell$ congruent to $1,5,21,25 \pmod{52}$, like $5,53,73,...$. In particular in that case we better should exclude $p=13$ as in the early version \cite{BellNewex} (who knows?). 
\item If $F' = \QQ(\zeta_7)^+$ and $E = \QQ(i\sqrt{3})$, we can choose any prime congruent to $1,13 \pmod{21}$ like $13,43,97...$.
\item If $F' = \QQ(\zeta_7)^+$ and $E = \QQ(i,\sqrt{3})$, we can take any prime $\ell \equiv 1,13 \pmod{84}$ like 
$13,97,169...$ and $p \equiv 1,13 \pmod{21}$ like $13,43,97...$.
\item If we really want to use $p=13$ and that $p=13$ is inert in $F=E^+$, and if we want moreover $E/E^+$ to be unramified 
everywhere, we can choose $E = \QQ(i,\sqrt{7})$ with $F' = \QQ(\beta) \subset \QQ(\zeta_{43})$ as $43$ is split in $\QQ(i,\sqrt{7})/\QQ(\sqrt{7})$, with $\beta$ a root of $X^3-X^2-14X-8$. 
\end{enumerate}
\end{example}

In the following we say that a weight $\underline k\in\ZZ^{\Hom(K,\CC)}$ is \emph{very
  regular} if, for $\tau_1\neq\tau_2$ in $\Hom(K,\CC)$, we have
$\vabs{k_{\tau_1}-k_{\tau_2}}\geq2$.

Let $\Psi$ be an algebraic Hecke character of $\mathbb A_K^\times$ with algebraic very regular weight $\underline k=(k_v)_{v|\infty}$, such that $\Psi^c = \Psi^\vee$ and such that $\Psi$ is
unramified both at $p$ and $\ell$. Choose an isomorphism $\iota : \CC \simeq \overline{\QQ_p}$. We moreover assume that 
\begin{description}
\item[$(\Psi,p)$]  if $\mathfrak p | p$ in $E$, we have $\Psi(v)\Psi(v')^{-1}\notin\set{1,p}$ for $v\neq v'$ places of $K$ dividing $\mathfrak p$.
\end{description}

\begin{description}
\item[$(\Psi,\ell)$] \label{eq:hypell} There exists $\lambda | \ell$ in $E$, and $\lambda' | \lambda$ in $E(\zeta_p)$, such that for all $v_1 \neq v_2$ places of $K$ dividing $\lambda$, if $v_1',v_2'$ are the corresponding places above $\lambda'$ in $K(\zeta_p)$, $\iota(\Psi(v_1') \pmod{\mathfrak m_{\overline \QQ_p}} \neq \iota(\Psi(v_2')) \pmod{\mathfrak m_{\overline \QQ_p}}$.
\end{description}

Consider moreover the following hypothesis on $\Psi$ :
\begin{description}
\item[$(\Psi,Ram)$] If $v$ is a place of $K$ such that $\Psi$ is
  ramified at $v$, then $v$ divides a
  prime which is totally split in $K/\QQ$.
\end{description}

Let $\Psi_p : \mathbb A_K^\times \fleche \overline \QQ_p^\times$ be
the $p$-adic realization of $\Psi$ and $\iota$, and $\psi_p :
{\rm Gal}_{K}\rightarrow\overline\QQ_p^\times$ such that
$\psi_p=\Psi_p\circ\Art_K$. It is a Galois representation satisfying
$\psi_p^\vee = \psi_p^c$.

\subsection{Galois induction} 
\begin{defin}\label{def galois induction} We denote by $\rho$ the induced Galois representation
\[ \rho = \ind_{{\rm Gal}_K}^{{\rm Gal}_E} \psi_p = \{ f : {\rm Gal}_E \fleche \overline{\ZZ_p}^\times | f(gk) = \psi_p^{-1}(k)f(g) \forall g \in {\rm Gal}_E, k \in {\rm Gal}_K\},\]
where the action of $g \in {\rm Gal}_E$ is given by $(g \cdot f)(x) = f(g^{-1}x)$.
\end{defin}

Then $\rho$ is a three dimensional Galois representation since $[K:E]$ is Galois of degree 3. We claim the following
\begin{lemma}\label{lemma:Ind}
\begin{enumerate}
\item \label{lemma:pt1} The representation $\overline\rho := \rho \otimes \overline{\mathbb F_p}$ is absolutely irreducible, in particular $\rho$ is absolutely irreducible. 
\item \label{lemma:pt2} The representation $\overline\rho(\Gal_{E(\zeta_p)})$ is adequate.
\item \label{lemma:pt3} The representation $\rho$ is polarized, i.e. $\rho^c \simeq \rho^\vee$.
\item\label{lemma:pt4} The representation $\rho_{\Gal_{E_v}}$ is split, $\varphi$-generic,
  Hodge--Tate regular for any $v | p$ in $E$,
\item\label{lemma:pt5} If $v$ is a place of $E$ such that $\rho$ is
  ramified at $v$, then $\Hom_{\Gal_{E_v}}(\rho_v,\rho_v(1))=0$.  
\end{enumerate}
\end{lemma}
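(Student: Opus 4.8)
The proof is a case-by-case verification of the five properties, and the main structural input throughout is the description of $\rho = \mathrm{Ind}_{\Gal_K}^{\Gal_E}\psi_p$ together with the fact that $\Gal(K/E)$ is cyclic of order $3$ and acts on the three characters $\psi_p, \psi_p^{\sigma}, \psi_p^{\sigma^2}$ (where $\sigma$ generates $\Gal(K/E)$) by permutation. I would begin by fixing this notation and recording the general formula for the Jordan--Hölder constituents of a restriction or twist of an induced representation via Mackey theory: for a subgroup $H' \supseteq \Gal_K$ one has $\rho|_{H'} \cong \bigoplus_{g} \mathrm{Ind}_{g\Gal_K g^{-1}\cap H'}^{H'}(\psi_p^g)$, and when $H' = \Gal_K$ itself (e.g.\ at a place split in $K/E$) this is just $\psi_p \oplus \psi_p^\sigma \oplus \psi_p^{\sigma^2}$.

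\emph{For (1) and (2):} absolute irreducibility of $\overline\rho$ will follow from the standard criterion that an induced representation $\mathrm{Ind}_{\Gal_K}^{\Gal_E}\overline\psi_p$ of a degree-$3$ \emph{prime} cyclic extension is irreducible precisely when $\overline\psi_p \neq \overline\psi_p^\sigma$, i.e.\ the three conjugate characters are pairwise distinct; I would deduce this distinctness from hypothesis $(\Psi,\ell)$, which is engineered to guarantee exactly that the reductions of $\Psi$ at the conjugate places above $\lambda$ in $K(\zeta_p)$ are distinct (so $\overline\psi_p, \overline\psi_p^\sigma, \overline\psi_p^{\sigma^2}$ differ already on a decomposition group at $\ell$). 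For adequacy of $\overline\rho(\Gal_{E(\zeta_p)})$, I would invoke the known fact (Thorne, and the adequacy results used in the Taylor--Wiles literature) that for $p > 2(n+1) = 8$ an absolutely irreducible representation whose image is contained in the normalizer of a maximal torus --- here $\overline\rho(\Gal_{E(\zeta_p)})$ is monomial, induced from a character of the index-$3$ subgroup $\Gal_{K(\zeta_p)}$ --- is adequate as soon as the restriction to $\Gal_{K(\zeta_p)}$ remains a sum of three distinct characters and $3 \nmid p$; the bound $p>8$ and $\zeta_p\notin E$ are precisely what make the relevant cohomology vanish and the image large enough. Care is needed to check that passing from $\Gal_E$ to $\Gal_{E(\zeta_p)}$ does not collapse the three conjugate characters, which again is ensured by choosing $\lambda'$ above $\lambda$ in $E(\zeta_p)$ in hypothesis $(\Psi,\ell)$.

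\emph{For (3):} polarizability $\rho^c \cong \rho^\vee$ is formal: conjugation by $c$ (a choice of complex conjugation, which we may take to normalize $\Gal_K$ since $K$ is CM hence $\Gal(K/E)$-stable under $c$) sends $\mathrm{Ind}_{\Gal_K}^{\Gal_E}\psi_p$ to $\mathrm{Ind}_{\Gal_K}^{\Gal_E}(\psi_p^c) = \mathrm{Ind}_{\Gal_K}^{\Gal_E}(\psi_p^\vee)$ using the hypothesis $\Psi^c = \Psi^\vee$, and induction commutes with taking duals up to the same conjugation, giving $\rho^c \cong (\mathrm{Ind}\,\psi_p)^\vee = \rho^\vee$; one must track the cyclotomic normalization factor $\eps^{1-n}\delta_{E/F}^n$ coming from the precise self-duality in the definition of the deformation problem $\mathcal S$, but this is a routine bookkeeping of Hecke/Galois normalizations and matches the weight condition on $\Psi$.

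\emph{For (4) and (5):} at $v \mid p$ in $E$, since primes above $p$ in $F = E^+$ are totally split in $K$ by the choice of $p$, the place $v$ splits completely in $K/E$, so $\rho|_{\Gal_{E_v}} \cong \psi_{p,w_1}\oplus\psi_{p,w_2}\oplus\psi_{p,w_3}$ for the three places $w_i \mid v$ in $K$; this is split, it is crystalline (as $\Psi$ is unramified at $p$, indeed each $\psi_{p,w_i}$ is crystalline by local class field theory for the unramified-at-$p$ Hecke character of very regular weight), Hodge--Tate regular because the weight $\underline k$ is very regular (the three Hodge--Tate weights are the $k_{w_i}$, pairwise differing by $\geq 2 > 0$), and $\varphi$-generic precisely by hypothesis $(\Psi,p)$ which asserts $\Psi(w)\Psi(w')^{-1}\notin\{1,p\}$. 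For (5), if $v$ is a ramified place of $\rho$ then $v$ divides a rational prime totally split in $K/\QQ$ by $(\Psi,\mathrm{Ram})$, so again $\rho|_{\Gal_{E_v}} = \psi_{p,w_1}\oplus\psi_{p,w_2}\oplus\psi_{p,w_3}$ splits completely and $\Hom_{\Gal_{E_v}}(\rho_v,\rho_v(1)) = \bigoplus_{i,j}\Hom_{\Gal_{E_v}}(\psi_{p,w_i},\psi_{p,w_j}(1))$; each summand vanishes because $\psi_{p,w_i}\psi_{p,w_j}^{-1}\eps = 1$ would force a relation on the finite-order parts and the weights that is excluded --- here one uses that $\Psi$ has \emph{very regular} weight (so the infinity types of $\psi_{p,w_i}$ and $\psi_{p,w_j}$ differ unless $i=j$, and for $i=j$ the twist by $\eps$ changes the weight), combined with $\Psi$ being a nontrivial-weight Hecke character.

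\textbf{Main obstacle.} The most delicate points are (1) and (2): one must make absolutely sure that the three $\Gal(K/E)$-conjugates of $\overline\psi_p$ stay pairwise distinct \emph{after} restricting to $\Gal_{E(\zeta_p)}$, and that the resulting monomial image satisfies the precise definition of adequacy (which requires, beyond irreducibility, that $H^1(\overline\rho(\Gal_{E(\zeta_p)}), \mathrm{ad}^0) = 0$ and that $\mathrm{ad}\,\overline\rho$ is spanned by the ``semisimple'' elements in the image). This is exactly what the somewhat baroque hypotheses $(\Psi,\ell)$, $p>8$, and $\zeta_p\notin E$ are designed to force, so the proof amounts to checking that they do their job --- I expect this verification, rather than any of the local computations in (3)--(5), to be where the real work lies, and it will lean on the adequacy criteria for induced (monomial) representations already available in the literature.
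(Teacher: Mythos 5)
Your parts (1)--(4) follow essentially the paper's route: distinctness of the three $\Gal(K/E)$-conjugates of $\overline\psi_p$ after restriction to $\Gal_{E(\zeta_p)}$ is exactly what $(\Psi,\ell)$ is designed to give, irreducibility then follows from the transitive permutation of the three eigenlines, and adequacy is quoted from the literature using $p>2(n+1)=8$ (the paper simply cites Thorne's Lemma 2.4, so the cohomological verification you single out as the ``real work'' is immediate and your extra conditions about monomial image and $3\nmid p$ are unnecessary); polarization is the compatibility of induction with conjugation and duality plus $\Psi^c=\Psi^\vee$, and at $v\mid p$ the total splitness of $v$ in $K/E$ gives the direct sum of crystalline characters, with genericity from $(\Psi,p)$ and Hodge--Tate regularity from the regular weight. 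All of this matches the paper.

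Part (5), however, has a genuine gap. You claim that any place $v$ of $E$ at which $\rho$ is ramified lies over a rational prime totally split in $K/\QQ$ ``by $(\Psi,\mathrm{Ram})$''. That hypothesis only constrains the places where the Hecke character $\Psi$ itself ramifies; but $\rho=\Ind_{\Gal_K}^{\Gal_E}\psi_p$ is also ramified at every place $v$ of $E$ that ramifies in the extension $K/E$, and the setup explicitly allows $K/E$ to be ramified (it only requires the ramified primes of $K/E$ to lie above primes split in $E/E^+$). At such a $v$ there is a unique place $w\mid v$ of $K$, the decomposition group is not contained in $\Gal_K$, and your direct-sum decomposition and the ensuing character comparison simply do not apply. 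The paper treats this case separately: writing $\rho_v=\Ind_{\Gal_{K_w}}^{\Gal_{E_v}}\psi_{p,w}$ and applying Frobenius reciprocity, one must show $\psi_{p,w}\neq\psi_{p,w}^{\sigma^i}\,\chi_{\cyc|K_w}$ for $i=0,1,2$; for $i=1,2$ this follows from the order-$3$ trick (if $\psi_{p,w}=\psi_{p,w}^{\sigma}\chi_{\cyc|K_w}$, then applying $\sigma$ twice and using $\chi_{\cyc}^{\sigma}=\chi_{\cyc}$ forces $\chi_{\cyc|K_w}^{3}=1$, which is false), and $i=0$ is immediate. Without this case your proof of (5) is incomplete. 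A secondary, smaller point: in the split case your appeal to ``infinity types'' of the local characters should be made precise as in the paper's auxiliary lemma ($\Psi_v\neq\Psi_w\vabs{\cdot}_w$ for $v,w$ above a totally split prime), namely by evaluating at a uniformizer and using that the very regular weight gap of at least $2$ beats the shift by $1$ coming from the cyclotomic twist; this is evidently what you intend, but as written it is a global-weight assertion being used for a statement about characters of $\Gal_{E_v}$.
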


\begin{proof}
We will actually prove that $\overline\rho({\rm Gal}_{E(\zeta_p)})$ acts absolutely irreducibly, which will imply point \ref{lemma:pt1} and point \ref{lemma:pt2} will follow by \cite{ThorneAut} Lemma 2.4. To prove point 1, remark that if we denote by $\sigma \in {\rm Gal}_E$ a lift of a generator of the Galois group $\Gal(K/E) = <\overline \sigma> = \ZZ/3\ZZ$, then $\rho$ has a basis given by $f, \sigma \cdot f, \sigma^{2} \cdot f$, where $f$ is the function
\[ f : {\rm Gal}_E = {\rm Gal}_K \coprod \sigma {\rm Gal}_K \coprod \sigma^2 {\rm Gal}_K \fleche
  \overline\ZZ_p^\times, k \in {\rm Gal}_K \mapsto \psi_p^{-1}(k), \sigma
  k,\sigma^2 k \mapsto 0.\]
Then $\sigma^3 \cdot f = \psi_p(\sigma^3) f.$ Thus, after restricting to ${\rm Gal}_K$, there is an isomorphism $\rho_{|{\rm Gal}_K} \simeq \psi_p \oplus \psi_p^\sigma \oplus \psi_p^{\sigma^2}$, where $\psi_p^{\sigma}=\psi_p(\sigma^{-1}\cdot\sigma)$. 
We reduce mod $p$, where we have a similar reduction after restricting to ${\rm Gal}_K$. Because of the hypothesis $(\Psi,\ell)$ away from $p$, we have that 
$\overline\rho_{{\rm Gal}_{E(\zeta_p)_{\lambda'}}}$, for $\lambda' | \ell$, is the sum of three
distinct characters. Moreover the group ${\rm Gal}_E$ acts transitively on
these three eigenspaces. Therefore this representation is absolutely irreducible.
To prove point \ref{lemma:pt3}, we compute $\rho^\vee$. By \cite[Prop.~10.28]{Curtis_Reiner_I}, we have an isomorphism
\[ \rho^\vee\simeq\Ind_{{\rm Gal}_K}^{{\rm Gal}_E}\psi_p^{-1}=\Ind_{{\rm Gal}_K}^{{\rm Gal}_E}\psi_p^c\simeq\rho^c. \]
Let us prove \ref{lemma:pt4}. As $p$ is totally split in $K/F$, we have for $v | p$ in $E$, ${\rm Gal}_{E_v}\subset {\rm Gal}_K$ so that $\rho_{|{\rm Gal}_{E_v}}\simeq\psi_{p,v}\oplus\psi_{p,v}^\sigma\oplus\psi_{p,v}^{\sigma^2}$. As the group ${\rm Gal}_E$ acts transitively on the three places of $K$ over $v$, we have $\rho_{|{\rm Gal}_{E_v}}\simeq\bigoplus_{v'|v}\psi_{p,v'}$. Therefore $\rho_{|{\rm Gal}_{E_v}}$ is crystalline and the eigenvalues of the Frobenius endomorphism of $D_{\cris}(\rho_{|{\rm Gal}_{E_v}})$ are the $\Psi(v')$ for $v' | v$ in $K$. It follows from hypothesis $(\Psi,p)$ that $\rho_{|{\rm Gal}_{E_v}}$ is $\varphi$-generic. Moreover the Hodge--Tate weights of $\rho_{|{\rm Gal}_{E_v}}$ corresponds to the algebraic (infinitesimal) weight of $\Psi$, which was assumed regular so that $\rho_{{\rm Gal}_{E_v}}$ is Hodge--Tate regular.

Finally we prove \ref{lemma:pt5}.  Let $v$ be a place of $E$ such that
$\rho_v$ is ramified. Then either $v$ is ramified in $K/E$ or $\Psi_v$
is ramified. Assume in a first time that $\Psi_v$ is ramified. Then
$(\Psi,Ram)$ implies that $v$ divides a prime of $\QQ$ which is
totally split in $K$. In particular, $v$ is split in $K/E$. As above, we have
$\rho_{v}\simeq\bigoplus_{v'|v}\psi_{p,v'}$ with
$\psi_{p,v'}=\Psi_{v'}\circ\rec_{K_{v'}}^{-1}$ as $v'\nmid
p$. Therefore it follows from Lemma \ref{lemm:forcing_Ram} below that
$\Hom_{{\rm Gal}_{E_v}}(\rho_v,\rho_v(1)) = 0$.

Now assume that $v$ is non split in $K$. As $K/E$ is Galois there is a
unique place $w$ of $K$ over $v$ and
$\rho_v\simeq\Ind_{{\rm Gal}_{K_w}}^{{\rm Gal}_{E_v}}\psi_{p,w}$. By Frobenius
reciprocity, we have
\[
  \Hom_{{\rm Gal}_{E_v}}(\rho_v,\rho_v(1))\simeq\Hom_{{\rm Gal}_{K_w}}(\psi_{p,w}\oplus\psi_{p,w}^\sigma\oplus\psi_{p,w}^{\sigma^2},\psi_{p,w}\chi_{\cyc|K_w}). \]
Assume that $\psi_{p,w}=\psi_{p,w}^\sigma\chi_{\cyc|K_w}$. As
$\chi_{\cyc|K_w}=\chi_{\cyc|K_w}^\sigma$, we deduce
$\psi_{p,w}^\sigma=\psi_{p,w}^{\sigma^2}\chi_{\cyc|K_w}$ and
$\psi_{p,w}^{\sigma^2}=\psi_{p,w}^{\sigma^3}\chi_{\cyc|K_w}=\psi_{p,w}\chi_{\cyc|K_w}$
so that $\psi_{p,w}=\psi_{p,w}\chi_{\cyc|K_w}^3$ which is
false. We prove similarly than
$\psi_{p,w}\neq\psi_{p,w}^{\sigma^2}\chi_{\cyc|K_w}$ and deduce
$\Hom_{{\rm Gal}_{E_v}}(\rho_v,\rho_v(1))=0$. If
$\Psi_w\neq\Psi_w\circ\sigma$, then the characters
$\Psi_w,\Psi_w\circ\sigma,\Psi\circ\sigma^2$ are pairwise distinct and
$\rho_v$ is irreducible so that $\Hom_{{\rm Gal}_{E_v}}(\rho_v,\rho_v(1))
=0$. If $\Psi_w=\Psi_w\circ\sigma$, then $\rho_v$ is not irreducible,
but clearly $\Hom_{{\rm Gal}_{E_v}}(\rho_v,\rho_v(1)) = 0$ (as
$\psi_{p|{\rm Gal}_{E_v}} \neq \psi_{p|{\rm Gal}_{E_v}}(1)$).
\end{proof}

\begin{lemma}\label{lemm:forcing_Ram}
  Let $\Psi : \mathbb A_K^\times/K^\times$ be an algebraic Hecke
  character of very regular weight $\underline k$. Then, if $\ell$ is
  a prime number which is totally split in $K$, then
  $\Psi_v\neq\Psi_w\vabs{\cdot}_w$ for all places $v,w$ of $K$
  dividing $\ell$.
\end{lemma}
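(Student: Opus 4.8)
The plan is to reduce the statement to a concrete archimedean–$p$-adic comparison of valuations of the components of $\Psi$. First I would recall that an algebraic Hecke character $\Psi$ of $\mathbb{A}_K^\times/K^\times$ has an infinity type $\underline{k}=(k_\tau)_{\tau\in\mathrm{Hom}(K,\CC)}$, so that for $x\in K^\times$ one has $\Psi((x)_\infty)=\prod_\tau \tau(x)^{-k_\tau}$ (up to the normalization conventions in use, which I would fix once and for all). The key observation is that at a prime $\ell$ totally split in $K$, every place $v\mid \ell$ satisfies $K_v\cong \QQ_\ell$, so $\Psi_v$ is a character of $\QQ_\ell^\times$ and $\Psi_v|_{\ZZ_\ell^\times}$ determines the ramification while the value $\Psi_v(\ell)$ encodes the ``Frobenius eigenvalue'' part. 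The normalized absolute value $|\cdot|_w$ on $K_w\cong\QQ_\ell$ is just the standard one, with $|\ell|_w=\ell^{-1}$.

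The heart of the argument is a Weil-number / weight computation. For a finite place $v\mid\ell$ of $K$ I would extract, via the reciprocity map $\rec_{K_v}$, the Galois character $\psi_v=\Psi_v\circ\rec_{K_v}^{-1}$ and note that purity of the motive attached to $\Psi$ (equivalently, the relation between $\Psi$ and its infinity type forced by $\Psi^c=\Psi^\vee$, or directly the theory of algebraic Hecke characters) shows that the complex absolute value of $\Psi(v)=\Psi_v(\varpi_v)$ is $\ell^{-w(\underline{k})/2}$ for a fixed weight $w(\underline{k})$ depending only on $\underline{k}$ and not on $v$. Suppose for contradiction that $\Psi_v=\Psi_w|\cdot|_w$ for two places $v\neq w$ over $\ell$. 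Comparing the restrictions to $\ZZ_\ell^\times\cong\mathcal{O}_{K_v}^\times$ (where $|\cdot|_w$ is trivial) shows $\Psi_v$ and $\Psi_w$ have the same ramification, and in particular the same restriction to $\mathcal{O}^\times$. Then the equality $\Psi_v(\ell)=\ell^{-1}\Psi_w(\ell)$ forces a relation among the Frobenius values at $v$ and $w$. I would then invoke the crucial input: because the weight $\underline{k}$ is \emph{very regular}, the archimedean sizes are rigid, while the $v$- and $w$-adic valuations of $\Psi(v)$ and $\Psi(w)$ are governed (via the $p$-adic realization, or directly via the decomposition of the idele class group) by partial sums of the $k_\tau$ over the embeddings inducing $v$ resp.\ $w$, and these partial sums are forced to differ because $\underline{k}$ separates embeddings by at least $2$.

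More concretely, I would argue as follows. Evaluate the character identity $\Psi_v=\Psi_w|\cdot|_w$ against the diagonal embedding and use $\Psi|_{K^\times}=1$: for $x\in K^\times$ prime to all places of $K$ above $\ell$ except to have controlled valuation, the product formula and the explicit infinity type yield an equation of the form $\prod_{\tau\mapsto v}\tau(x)^{-k_\tau}=\ell^{-v(\text{stuff})}\prod_{\tau\mapsto w}\tau(x)^{-k_\tau}$ in $\CC$, which for suitable $x$ (e.g.\ a generator of a power of a prime above $\ell$, times a unit) becomes an equation between $\ell^{a}$ and $\ell^{b}$ with $a,b$ integer combinations of the $k_\tau$ that are \emph{distinct} because of very regularity. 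This contradiction proves $\Psi_v\neq\Psi_w|\cdot|_w$.

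The main obstacle I anticipate is purely bookkeeping: pinning down the exact normalization of the infinity type, of $\rec_{K_v}$ (geometric vs.\ arithmetic Frobenius), and of $|\cdot|_w$, so that the valuation comparison genuinely produces two \emph{different} integers rather than accidentally the same one, and making sure the ``very regular'' hypothesis $|k_{\tau_1}-k_{\tau_2}|\ge 2$ is what rules out the coincidence (the $|\cdot|_w$ twist shifts one side by exactly the integer $1$, so one needs the relevant partial-sum difference to avoid landing on $\pm 1$, which is exactly where a weaker regularity hypothesis would fail and where the factor $2$ is used). Once the normalizations are fixed this is a short computation; I would present it cleanly using the decomposition $\mathbb{A}_K^\times/K^\times \cong (K\otimes\RR)^\times/\mathcal{O}_K^{\times,+}\times(\text{finite part})$ rather than chasing ideles by hand.
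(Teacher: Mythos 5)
Your proposal is correct and follows essentially the same route as the paper: the paper uses the $\ell$-adic avatar $\Psi_\iota$ (whose image lies in $\overline\ZZ_\ell^\times$ by compactness of $\mathbb A_K^\times/K^\times K_\infty^\times$) to show that the $\ell$-adic valuation of $\iota(\Psi_v(\ell))$ equals the single weight $k_{\tau_v}$ attached to the unique embedding corresponding to $v$ (total splitness), so that $\Psi_v=\Psi_w|\cdot|_w$ would force $k_{\tau_v}=k_{\tau_w}+1$, which is exactly the shift-by-one coincidence that very regularity ($|k_{\tau_1}-k_{\tau_2}|\geq 2$, and $=0$ when $v=w$) rules out, as you identify. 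Your more hands-on version of this valuation identity (evaluating against a generator of $\mathfrak q_v^{h}$ and using $\Psi|_{K^\times}=1$, with unit and root-of-unity contributions killed upon taking valuations) does go through, so the difference with the paper is only in the bookkeeping device, not in the idea.
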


\begin{proof}
  Let $\Psi$ and $\ell$ be as in the statement. Fix
  $\iota : \CC\simeq\overline\QQ_\ell$ and let $\vabs{\cdot}_\ell$ be the unique
  absolute value on $\overline\QQ_\ell$ extending the one on $\QQ_\ell$. Let $\Psi_\iota$ be the
  continuous character
  $\mathbb A_K^\times/K^\times
  K_\infty^\times\rightarrow\overline\QQ_\ell^\times$ defined by
  \[
  \Psi_{\iota,w}(x_w)=\begin{cases} \Psi_w(x_w)& \text{if}\ w\not| \ell, w\not|\infty \\ 1 & \text{if}\ w|\infty \\  \iota(\Psi_w(w_w))\prod_{{\tau 
        \in\Hom(K_w,\overline\QQ_\ell),
        \tau|w}}\tau(x_w)^{k_{\iota^{-1}\tau}} & \text{if}\ w|\ell,\end{cases}
  \]
  where $\tau | w$
  means that $|.|_{\ell} \circ\tau$ extends the absolute value given
  by $w$ on $K$, and $(k_\sigma)_{\sigma \in \Hom(K,\CC)}$ is the
  weight of $\Psi$.  As the group
  $\mathbb A_K^\times/K^\times K_\infty^\times$ is compact, we have
  $\Im(\Psi_\iota)\subset\overline \ZZ_\ell^\times$. As $\ell$ is
  totally split $\iota$ induces a bijection between $\set{ v | \ell}$
  and $\Hom(K,\CC)$. Let $v$ be a place of $K$ dividing $\ell$
  corresponding to $\tau$ (i.e. $|.|_\ell \circ \iota^{-1}\tau$
  extends $|.|_v$) and denote $k_v\coloneqq k_{\iota^{-1}\tau}$. We
  have
  \[
    \vabs{\iota{\Psi_v(\ell)}\tau(\ell)^{k_v}}_\ell=1 \] so that
  $\vabs{\iota(\Psi_v(\ell))}=l^{k_v}$. As $\ell$ is a uniformizer of
  $K_v$, for any $v|\ell$, the result follows.
\end{proof}

\subsection{Construction of an explicit set of Hecke characters}

In this subsection we explain one way to find a $\Psi$ as before, satisfying hypothesis $(\Psi,p),(\Psi,\ell),(\Psi,Ram)$. Fix $E$ a CM extension, with $E^+=F$ its maximal totally real subfield, so that $[E:E^+] = 2$. Fix also $F'$ disjoint from $E$, a totally real degree 3 Galois extension of $\QQ$. Choose $p,\ell$ two primes with are totally split in $K := EF'$ such that $p>8$.
The following Lemma is a more precise version of
\cite[Lem.~4.1.1]{CHT}.

\begin{lemma}\label{lemm:ext_Dirichlet_char}
  Let $F$ be a number field. Let $S$ be a finite set of places of
  $F$. Let $\chi_S$ be an unramified continuous character $F_S^\times\coloneqq\prod_{v\in S}F_v^\times\rightarrow \CC^\times$ of finite order. Let $T$ be a set of finite places of $F$, disjoint
  from $S$ and of Dirichlet density $1$. Then there exists a
  continuous character
  $\chi : \mathbb A_F^\times/F^\times\rightarrow\CC^\times$ of finite
  order such that $\chi_{|F_S^\times}=\chi_S$ and the ramification
  places of $\chi$ are in $T$.
\end{lemma}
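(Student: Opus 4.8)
The statement is essentially a strengthening of \cite[Lem.~4.1.1]{CHT}: we want to realize a prescribed unramified local character $\chi_S$ on $F_S^\times$ by a finite-order Hecke character whose ramification is confined to a density-one set $T$. The key point is that this is a question about the finite abelian group obtained as a suitable quotient of the idele class group, together with a Chebotarev-type density argument to control which places of ramification actually appear. First I would reduce to the case where $\chi_S$ has order dividing some fixed integer $m$ (as it is of finite order), so that all characters in sight factor through the quotient $\mathbb{A}_F^\times/F^\times(\mathbb{A}_F^\times)^m$, which is the Galois group of an abelian extension of exponent $m$; equivalently, replace $\CC^\times$ by $\mu_m$ throughout.

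The heart of the argument is to construct $\chi$ by prescribing it at $S$ and at a \emph{single} well-chosen auxiliary finite place $w\in T$, then using global class field theory to see that this partial datum extends to a global character. Concretely: the map $F_S^\times \times F_w^\times \to \mathbb{A}_F^\times/F^\times F_\infty^\times (\mathbb{A}_F^\times)^m$ need not be injective, so one cannot always extend an arbitrary local character; the obstruction lies in the image of the diagonal $\mathcal{O}_{F,S\cup\{w\}}^\times$ (global $S\cup\{w\}$-units) under the local embeddings. To kill this obstruction I would choose $w$ so that the local component at $w$ supplies enough freedom: by Chebotarev, applied to the ray class field of conductor $S$ and exponent $m$, there are infinitely many (a positive density of) places $w$ whose Frobenius realizes any prescribed element of the relevant class group, and since $T$ has density one we may take $w\in T$. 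Having fixed $w$, one defines $\chi$ on $F_S^\times$ to be $\chi_S$, declares $\chi$ unramified outside $S\cup\{w\}$, and uses the surjectivity of $\mathbb{A}_F^\times \to$ (ray class group mod $S$, exponent $m$) to solve for the component at $w$; the resulting character is automatically trivial on $F^\times$ by the reciprocity/compatibility built into the ray class group, and has ramification set contained in $\{w\}\cup S' $ where $S'\subset T$ is a further finite set of places absorbing any residual obstruction — one can in fact arrange ramification inside $T$ since $T$ is cofinite in density and we have freedom to enlarge the auxiliary set within $T$.

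In more detail, the cleanest route is: let $C_{S,m}$ be the finite group $\mathbb{A}_F^\times/\overline{F^\times F_\infty^\times U^S (\mathbb{A}_F^\times)^m}$ where $U^S$ is the product of local units away from $S$; then $\chi_S$ defines a character of the image of $F_S^\times$ in $C_{S,m}$, and we must extend it to a character of all of $C_{S,m}$ with controlled ramification. Extendability of a character from a subgroup to the whole finite abelian group is automatic (characters of finite abelian groups extend along injections of subgroups), giving \emph{some} global finite-order character $\chi_0$ with $\chi_{0|F_S^\times} = \chi_S$; but $\chi_0$ may ramify outside $T$. To fix this, write $\chi_0 = \chi \cdot \eta$ where we want $\eta$ unramified at $S$, ramified only at bad places outside $T$, and $\chi$ ramified only in $T$: equivalently we must show that the bad ramification of $\chi_0$ can be ``moved into $T$''. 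Here Chebotarev enters: for each bad place $v_0\notin T\cup S$, the class of $\chi_0$ in the relevant cohomology/character group can be modified by a character ramified at a single place $w_0\in T$ chosen with appropriate Frobenius (possible since $T$ has density $1$, hence meets every Chebotarev class), without changing the restriction to $F_S^\times$; iterating over the finitely many bad places gives the desired $\chi$.

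\textbf{Main obstacle.} The technical heart — and the step I expect to require the most care — is the simultaneous control of the restriction to $F_S^\times$ \emph{and} of the ramification locus. Extending characters of finite abelian groups is formal, but ensuring that the extension can be chosen \emph{unramified outside $T$} requires the density-one hypothesis on $T$ in an essential way, via Chebotarev applied to a carefully chosen finite abelian extension (the compositum of the ray class field of conductor supported on $S$ with the field of $m$-th roots of unity and the fields cut out by the bad places of the auxiliary $\chi_0$). One must check that the Chebotarev classes one needs to hit are nonempty and that choosing $w$ (or the $w_0$'s) inside $T$ does not disturb the already-fixed local behavior at $S$; this is where the disjointness of $S$ and $T$ is used. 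None of this is deep, but bookkeeping of conductors and the compatibility of the various reciprocity maps is the part that needs to be written out carefully rather than waved through.
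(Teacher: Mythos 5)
Your overall strategy (extend $\chi_S$ over an open subgroup of $\mathbb{A}_F^\times$ after imposing auxiliary congruence conditions at places of $T$, the obstruction being the global elements caught in that subgroup) has the right shape, but both of your concrete routes have genuine gaps. The ``cleanest route'' fails at its first step: the image of $F_S^\times$ in $C_{S,m}=\mathbb{A}_F^\times/\overline{F^\times F_\infty^\times U^S(\mathbb{A}_F^\times)^m}$ annihilates the $S$-units, because for $u\in F^\times\cap U^S$ the idele with component $u$ at $S$ and $1$ elsewhere differs from the principal idele $u$ by an element of $U^S$, hence dies in $C_{S,m}$, while $\chi_S(u)$ need not be $1$. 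So $\chi_S$ does not in general descend to the image of $F_S^\times$ in $C_{S,m}$, and ``extendability is automatic'' is exactly what is unavailable; this is the whole content of the lemma. (Note the internal inconsistency: any character of $C_{S,m}$ is trivial on $U^S$, hence unramified outside $S$, so if your descent worked there would be nothing to repair afterwards and the hypothesis on $T$ would never be used.) The repair step is equally problematic: cancelling the ramification of $\chi_0$ at a bad place $v_0$ by a character ramified only at $v_0$ and at a place of $T$, while not disturbing the restriction to $F_S^\times$, is an instance of the very statement being proved, and ``choose $w_0\in T$ with appropriate Frobenius'' is not a mechanism for it.

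Your first route correctly identifies the obstruction (the global units), but the tool you invoke to kill it --- Chebotarev in the ray class field of conductor $S$ and exponent $m$ --- does not address it: the Frobenius of $w$ in a ray class field carries no information about whether $S$-units become $m$-th powers (or land in $\ker\chi_S$) modulo $\frakp_w$. What is needed, and what the paper uses, is Chevalley's theorem on congruence subgroups of the finitely generated group $E=F^\times\cap U^S$: with $m$ the order of $\chi_S(E)$, there exist finitely many places $w_1,\dots,w_r$, which (by the proof of that theorem) may be chosen inside the density-one set $T$ --- e.g.\ because the admissible places are specified by splitting conditions in the Kummer extension $F(\zeta_m,E^{1/m})$ and hence form sets of positive density that $T$ must meet --- such that every $u\in E$ congruent to $1$ modulo all the $\frakp_{w_i}$ lies in $E^m\subseteq\ker(\chi_S|_E)$. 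One then defines the character on the open subgroup $F^\times F_S^\times U$, with $U$ the full unit group outside $S\cup\{w_1,\dots,w_r\}$ and a small congruence subgroup at the $w_i$, and extends it using divisibility of $\CC^\times$ and finiteness of the quotient, exactly as in the lemma of Clozel--Harris--Taylor. Finally, your assumption that a single auxiliary place suffices is unjustified: one place cannot, for instance, force a non-cyclic $E/E^m$ into $m$-th powers, and you give no argument for the weaker containment in $\ker\chi_S$ either; the finitely many places $w_1,\dots,w_r$ are genuinely needed.
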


\begin{proof}
  Let $U^S$ be the product of the $\cO_{F_v}^\times$ for $v\notin S$. Then
  $F^\times\cap U^S$ is a finitely generated subgroup of
  $F^\times$. Let us write $m$ for the order of the finite cyclic group
  $\chi_S(F^\times\cap U^S)$. It follows from the proof of Theorem 1
  in \cite{Chevalley_2} that we can find finitely many places
  $w_1,\dots,w_r$ in $T$ such that the subgroup of $F^\times\cap U^S$
  congruent to $1$ modulo $\frakp_{w_1},\dots,\frakp_{w_r}$ is
  contained in $(F^\times\cap U^S)^m$. We conclude as in the proof of
  \cite[Lem.~4.1.1]{CHT} choosing for $U$ the product of the $U_v$ for
  $v$ not in $S$ nor $\set{w_1,\dots,w_r}$ and a small enough subgroup
  at $w_1,\dots,w_r$.
\end{proof}

\begin{lemma}\label{lemm:ext_Dir_char}
  Let $K$ be an (imaginary) CM field with totally real subfield $K^+$ 
  and complex conjugacy $c$. Denote $\psi : \mathbb{A}_K^\times/K^\times\rightarrow\CC^\times$ be a
  continuous character. Assume that there exists a finite set $S$ of
  places of $K$  which are split in $K/K^+$ and such that $\psi_v^{-1}=\psi_{cv}$ for
  $v\in S$. Moreover, assume that $S$ contains the Archimedean
  places. Let $T$ be a finite set of places of $K$ that contains $S$ and is stable under $c$,
  such that $\psi$ is unramified outside of $T$. Then there exists a Hecke
  character
  $\widetilde \psi :
  \mathbb{A}_K^\times/K^\times\rightarrow\CC^\times$ such that
  $\widetilde \psi^{-1}=\widetilde \psi^c$ and  $\widetilde \psi_v=\psi_v$
  for $v\in S$ and such that $\widetilde \psi_v$ is unramified outside of $T$.
\end{lemma}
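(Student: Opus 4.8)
The plan is to write $\widetilde\psi=\psi\mu$ for a finite--order Hecke character $\mu$ of $K$ to be constructed. I would use the elementary observation that a continuous character $\eta$ of $\mathbb{A}_K^\times/K^\times$ satisfies $\eta^c=\eta^{-1}$ if and only if $\eta$ is trivial on the subgroup $N_{K/K^+}(\mathbb{A}_K^\times)\subseteq\mathbb{A}_{K^+}^\times\subseteq\mathbb{A}_K^\times$, because $\eta^c(x)\eta(x)=\eta\bigl(x\,c(x)\bigr)=\eta\bigl(N_{K/K^+}(x)\bigr)$; in particular it suffices that $\eta$ be trivial on all of $\mathbb{A}_{K^+}^\times$. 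Next I would record that $\psi$ is of finite order: an archimedean place $v\in S$ has $cv=v$ and $K_v=\CC$, so the hypothesis $\psi_v^{-1}=\psi_{cv}$ forces $\psi_v^2=1$ and hence $\psi_v=1$; thus $\psi$ has trivial infinity type. Set $\psi_0\coloneqq\psi|_{\mathbb{A}_{K^+}^\times}$. This is a finite--order Hecke character of $K^+$, unramified outside the set $T^+$ of places of $K^+$ below $T$, and \emph{trivial at every place of $K^+$ below $S$}: if $u$ is such a place (say below $v\in S$) then $u$ splits in $K$ as $\{v,cv\}$, the $u$--component of $\mathbb{A}_{K^+}^\times$ is the diagonal $(K^+_u)^\times\hookrightarrow K_v^\times\times K_{cv}^\times$, and there $\psi_0=\psi_v\psi_{cv}=\psi_v\psi_v^{-1}=1$ (using $\psi_v=1$ when $u$ is archimedean). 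This triviality is exactly where the self--duality hypothesis at $S$ enters.

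It then suffices to produce a finite--order Hecke character $\mu$ of $K$ such that (i) $\mu|_{\mathbb{A}_{K^+}^\times}=\psi_0^{-1}$, (ii) $\mu_v=1$ for all $v\in S$, and (iii) $\mu$ is unramified outside $T$. Indeed $\widetilde\psi\coloneqq\psi\mu$ is then unramified outside $T$, has $\widetilde\psi_v=\psi_v$ for $v\in S$, and satisfies $\widetilde\psi|_{\mathbb{A}_{K^+}^\times}=\psi_0\psi_0^{-1}=1$, whence $\widetilde\psi^c=\widetilde\psi^{-1}$. To construct $\mu$ I would first extend $\psi_0^{-1}$ to some finite--order Hecke character $\mu_1$ of $\mathbb{A}_K^\times/K^\times$ (possible by the standard fact that characters of the closed subgroup $\mathbb{A}_{K^+}^\times K^\times/K^\times$ extend, $\CC^\times$ being divisible), and then correct it by a finite--order Hecke character $\mu_2$ of $K$ that is trivial on $\mathbb{A}_{K^+}^\times$ — so that $\mu\coloneqq\mu_1\mu_2$ still satisfies (i) — chosen so that $\mu_1\mu_2$ becomes trivial at the places of $S$ and unramified outside $T$. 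The existence of such a correcting character is a local--to--global statement for characters trivial on $\mathbb{A}_{K^+}^\times$ (equivalently, conjugate--self--dual characters): at each place $u$ of $K^+$ below $S$, which splits as $\{v,cv\}$, the constraint that $\mu_2$ be trivial on the diagonal $(K^+_u)^\times$ only pairs its components at $v$ and $cv$, so one may prescribe $\mu_{2,v}$ to be $\psi_v\mu_{1,v}^{-1}$ (and set $\mu_{2,cv}=\mu_{2,v}^{-1}$); at the finitely many places outside $T$ where $\mu_1$ is ramified one kills the ramification by a similar prescription. Concretely this is carried out exactly as in the proof of Lemma~\ref{lemm:ext_Dirichlet_char}, now applied relative to the subgroup $\mathbb{A}_{K^+}^\times$, using the finiteness of the ray class group of $K$ of conductor supported in $T$ to guarantee that the resulting $\mu$ is of finite order.

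The main obstacle is precisely this last step: building $\mu$ with the prescribed exact local components at $S$ while retaining $\mu|_{\mathbb{A}_{K^+}^\times}=\psi_0^{-1}$ and no ramification outside $T$. What makes it possible — and really the content of the lemma — is the triviality of $\psi_0$ below $S$ established above, which guarantees that imposing $\mu|_{\mathbb{A}_{K^+}^\times}=\psi_0^{-1}$ is compatible with imposing $\mu_v=1$ for $v\in S$ (the two conditions only meet on the diagonals $(K^+_u)^\times$, where $\psi_0^{-1}$ is trivial); once that compatibility is in place, the construction reduces to a routine idelic extension argument.
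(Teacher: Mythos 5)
Your overall strategy (twist $\psi$ by a finite--order character that kills it on norms, built by extending a character from a closed subgroup using divisibility of $\CC^\times$, with the $S$-components and the ramification controlled by incorporating $\prod_{v\in S}K_v^\times$ and unit groups) is broadly the paper's strategy, but there is a genuine gap at the archimedean places that then propagates. You read $\psi_{cv}$ at a $c$-fixed complex place literally as $\psi_v$, deduce $\psi_v^2=1$, hence $\psi_v=1$, and conclude that $\psi$ has trivial infinity type and finite order. That is not the intended meaning: the condition at $v\in S$ is to be read through the identification given by $c$ (at split finite places this is the canonical identification through $K^+$, so nothing changes there, but at the complex places it says $\psi_v\cdot(\psi_v\circ c)=1$, i.e.\ $\psi\circ N_{K/K^+}$ is trivial at infinity), which allows components of the form $z\mapsto (z/\bar z)^a$. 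It must allow them: in Proposition~\ref{prop:exist_char} the lemma is applied to $\Psi_1=\Psi_0\theta$ of very regular algebraic weight $\underline k$, whose archimedean components are nontrivial, and the conclusion $\widetilde\psi_v=\psi_v$ for $v\in S$ is needed precisely to preserve that weight. A proof that begins by forcing $\psi_\infty=1$ (and $\psi$ of finite order) only treats a degenerate case and cannot serve where the lemma is used; the paper's proof instead starts from the weaker, correct consequence, namely that $\theta=\psi\circ N_{K/K^+}$ is trivial at infinity and hence of finite order.

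This misreading creates a concrete obstruction in your construction: you demand that $\widetilde\psi$ be trivial on all of $\mathbb{A}_{K^+}^\times$, i.e.\ $\mu|_{\mathbb{A}_{K^+}^\times}=\psi_0^{-1}$ together with $\mu_v=1$ at the (archimedean) places of $S$. Under the correct reading, $\psi_0=\psi|_{\mathbb{A}_{K^+}^\times}$ at a real place of $K^+$ is only forced to have order at most $2$ and may be the sign character, which is incompatible with $\mu$ being trivial at the complex place above it ($\mu$ trivial on $\CC^\times$ restricts trivially to $\RR^\times$). Triviality on the norm subgroup $N_{K/K^+}(\mathbb{A}_K^\times)$ — which, as you note, is all that $\widetilde\psi^c=\widetilde\psi^{-1}$ requires — is the right target, because the local norms from $\CC^\times$ land in $\RR_{>0}$ and absorb the signs; this is exactly what the paper does: it restricts $\psi$ to $N_{K/K^+}(\mathbb{A}_K^\times)$, checks that this restriction is trivial on $N_{K/K^+}(\mathbb{A}_K^\times)\cap K^\times U$ with $U=\big(\prod_{v\notin T}\mathcal{O}_{K_v}^\times\big)\cdot U_T\cdot\big(\prod_{v\in S}K_v^\times\big)$, extends it through the finite group $\mathbb{A}_K^\times/K^\times U$ to a finite--order character $\alpha$ trivial on $K^\times U$, and sets $\widetilde\psi=\psi\alpha^{-1}$. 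Finally, your closing step — producing $\mu$ with prescribed restriction to $\mathbb{A}_{K^+}^\times$, prescribed components at $S$ and no ramification outside $T$ ``exactly as in Lemma~\ref{lemm:ext_Dirichlet_char}'' — is not what that lemma provides (it extends an unramified character of $F_S^\times$ allowing ramification in a density-one set, with Chevalley's theorem controlling units); the compatibility of your local prescriptions with triviality on $K^\times$ (the unit/ray-class interaction) is the genuine global content of the argument, and in your write-up it is asserted rather than proved.
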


\begin{proof}
  Let $\theta=\psi\circ N_{K/K^+}$. As $S$ contains the Archimedean
  places, the character $\theta$ is trivial at Archimedean places and is therefore a
  character of finite order. Let
  $U_T\subset\prod_{v\in T\setminus S}K_v^\times$ be a compact open
  subgroup such that $\theta_{|U_T}$ is trivial and such that
  $c(U_T)=U_T$. Let
  \[ U=\big(\prod_{v\notin T}\cO_{K_v}^\times\big)\cdot U_T\cdot\big(\prod_{v\in
      S}K_v^\times\big). \] We have an injection of compact groups
  \[
    N_{K/K^+}(\mathbb{A}_K^\times)/(N_{K/K^+}(\mathbb{A}_K^\times)\cap
    K^\times U)\hookrightarrow\mathbb{A}_K^\times/K^\times U. \] Under
  our hypothesis, the character
  $\psi_{|N_{K/K^+}(\mathbb A_K^\times)}$
 is trivial on
  $(N_{K/K^+}(\mathbb{A}_K^\times)\cap K^\times U)$. Therefore it
  extends to a character $\alpha$ of finite order of
  $\mathbb A_K^\times$ trivial on $K^\times U$. We thus have 
  $\psi\circ N_{K/K^+}=\alpha\circ N_{K/K^+}$. It is easy to check that
  the character $\widetilde \psi=\psi\alpha^{-1}$ satisfies our
  requirements.
\end{proof}

\begin{prop}\label{prop:exist_char}
  For each choice of fields $E$ and $F'$ and places $p$ and $\ell$ and very regular weight
  $\underline k$ as above there exists a Hecke character
  $\Psi : \mathbb A_K^\times/K^\times\rightarrow\CC^\times$ satisfying
  $(\Psi,p)$, $(\Psi,\ell)$ and $(\Psi,Ram)$ and such that
  $\Psi^{-1}=\Psi^c$.
\end{prop}

\begin{proof}
  Let $\underline k$ be a very regular weight. It follows from
  \cite{Schappacher}, Section 0.3, that there exists a Hecke character
  $\Psi_0$ of $\mathbb A_K^\times/K^\times$ with weight $\underline
  k$. Using Lemma \ref{lemm:ext_Dirichlet_char}, we can construct a
  Hecke character $\theta$ of finite order such that, setting
  $\Psi_1=\Psi_0\theta$, we have
  \begin{itemize}
  \item the character $\Psi_1$ satisfies $(\Psi_1,p)$ and $(\Psi_1,\ell)$ ;
  \item there exists finitely many primes $\ell_1,\dots,\ell_r$,
    different from $p$ and $\ell$, which are totally split in $K$ and
    such that $\Psi_1$ is only ramified at places dividing
    $\ell_1,\dots,\ell_r$ ;
  \item we have $\Psi_{1,w}^{-1}=\Psi_{1,cw}$ for any place $w$ of $K$
    dividing $\ell$ or $p$.
  \end{itemize}
  Now it follows from Lemma \ref{lemm:ext_Dir_char} that there exists
  a Hecke character $\Psi$ of $\mathbb A_K^\times/K^\times$ such that
  \begin{itemize}
  \item $\Psi^{-1}=\Psi^c$ ;
  \item $\Psi_v=\Psi_{1,v}$ if $v$ is a place of $K$ dividing $p$ or
    $\ell$ ;
  \item $\Psi$ is ramified only at places dividing $\ell_1,\dots,\ell_r$.\qedhere
  \end{itemize}
\end{proof}

\subsection{Automorphic Induction and base change}

Let $\Psi$ and $\rho$ as in subsection \ref{subsect:61} and let $U$ denote the unitary group in three variables for $E/E^+$ that is compact at infinity and quasi-split at all finite places. We need to find an automorphic form for $U$ whose associated Galois representation is induced representation $\rho$ from \ref{def galois induction}.

\begin{prop}
There exists an automorphic representation $\Pi$ of $\GL_{3,E}$, cuspidal, cohomological at infinity, unramified at $\ell$ and $p$, polarized, whose associated Galois representation is 
given by $\rho$.
\end{prop}

\begin{proof}
This is the content of \cite{HenniartInduction} Théorème 3 (as $K/E$ cyclic of degree 3) for the existence of the automorphic representation, Théorème 5 for the compatibility with the local correspondence at $\ell$ and $p$ and at infinity (cf. the following remark of \cite{HenniartInduction}). Polarization can be checked after base change of the automorphic induction to $K$, where it follows as $\Psi^c = \Psi^\vee$, and as $\Psi \neq \Psi^\sigma$ for $\sigma \in \Gal(K/E)$ such that $\sigma\neq 1$. Moreover, the automorphic induction is also cuspidal (Theorem 2 of \cite{HenniartInduction}).
\end{proof}

\begin{conjecture}
\label{conj;unitaryautodescent}
There exists a cohomological, cuspidal, automorphic representation $\pi$ of $U$ whose base change to $\GL_{3,E}$ is $\Pi$.
\end{conjecture}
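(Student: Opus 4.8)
The plan is to deduce Conjecture \ref{conj;unitaryautodescent} from the known descent results for automorphic representations of $\GL_n$ over CM fields to unitary groups, once we have checked that $\Pi$ lies in the range where such descent is established. The key input is that $\Pi$ is cuspidal, cohomological at infinity, and (conjugate self-dual) polarized, and that we have chosen $U$ to be quasi-split at all finite places and compact at infinity. First I would recall the descent machinery: by the work of Labesse (\cite{Labesse}) on stable base change and descent for unitary groups, combined with the endoscopic classification of Mok and Kaletha--Minguez--Shin--White, a conjugate self-dual cuspidal cohomological automorphic representation of $\GL_{3,E}$ whose local components are all either unramified or coming from discrete series (at infinity) descends to an automorphic representation of the quasi-split unitary group $U^*$ in three variables for $E/E^+$. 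The second step is to transfer from the quasi-split inner form $U^*$ to our chosen $U$, which is compact at infinity: since $U$ and $U^*$ are inner forms that agree at all finite places (we chose $U$ quasi-split everywhere at finite places) and differ only at the archimedean places where $U$ is compact, the Jacquet--Langlands type transfer for unitary groups (again via Labesse, or via the simple trace formula comparison as in \cite{CHT} and \cite{Labesse}) produces a cohomological cuspidal $\pi$ on $U$ with base change $\Pi$, because $\Pi_\infty$ is cohomological and hence the relevant archimedean packet on the compact-at-infinity form is non-empty.

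The concrete steps in order would be: (1) verify that $\Pi$ is $\iota$-cohomological and conjugate self-dual with the sign compatible with descent to $U^*$ — here we use that $\rho$ is polarized (Lemma \ref{lemma:Ind}\ref{lemma:pt3}) and that $\Psi$ has very regular weight, so $\Pi_\infty$ is a regular cohomological (essentially discrete series) representation; (2) invoke Labesse's descent (or the Mok/KMSW classification) to obtain an automorphic $\pi^*$ on the quasi-split $U^*$ with $\mathrm{BC}(\pi^*) = \Pi$, noting that cuspidality of $\Pi$ and regularity of the infinitesimal character force $\pi^*$ to be cuspidal and cohomological; (3) transfer $\pi^*$ to the form $U$ that is compact at infinity, using that the local packets at the finite places are literally the same (as $U_v \cong U^*_v$ for all finite $v$) and that at infinity the discrete series L-packet for the compact form is non-empty precisely because the infinitesimal character of $\Pi_\infty$ is regular; (4) conclude that $\pi$ is cohomological and cuspidal with $\mathrm{BC}(\pi) = \Pi$, hence its associated Galois representation is $\rho$.

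The main obstacle I expect is step (3), the transfer between inner forms, together with checking that the descended representation actually occurs on the \emph{compact-at-infinity} unitary group rather than merely on the quasi-split one: this requires either the full multiplicity formula of the endoscopic classification for all inner forms of unitary groups (which in the relevant cases is available by Kaletha--Minguez--Shin--White, modulo the stabilization of the twisted trace formula), or a more hands-on argument via the simple trace formula in the style of \cite[\S3]{CHT} exploiting that $U$ is anisotropic, which needs the test functions at $\ell$ and $p$ to pin down the representation. A secondary technical point is ensuring that the choices in \S\ref{subsect:61} (in particular the splitting conditions on ramified primes and the hypotheses $(\Psi,p)$, $(\Psi,\ell)$, $(\Psi,Ram)$) guarantee that $\Pi$ is unramified at $p$ and $\ell$ and is a principal series or unramified at all the places where $K^p$ is hyperspecial, so that the descended $\pi$ contributes to $S^{\mathrm{cl}}_\kappa(K^p)$ for a suitable tame level $K^p$. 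Since this is precisely the type of automorphic input that is standard in the Taylor--Wiles literature for unitary groups, I would present the argument as an application of \cite{Labesse} and the subsequent refinements, citing them for the descent and inner-form transfer, rather than reproving the trace formula comparison.
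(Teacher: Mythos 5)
You have overlooked that the paper states this as a \emph{Conjecture}, not a theorem: the authors do not prove it in general, but only in two special cases, namely when $E/E^+$ is everywhere unramified (quoting Labesse, Theorem 5.4) and when $E$ is imaginary quadratic (passing through $GU(3)$ via Morel, Corollary 8.5.3(ii), and the appendix of \cite{HS}, which rests on Harris--Taylor). This is not an accident of exposition. The general descent of a conjugate self-dual, cuspidal, cohomological $\Pi$ on $\GL_{3,E}$ to an inner form of $U(3)$ that is compact at infinity is exactly the content of the endoscopic classification for \emph{non-quasi-split} inner forms of unitary groups (Kaletha--Minguez--Shin--White), and that classification is precisely the input whose proof is not complete in the published literature (only the first installment of the series exists, conditional on further announced work). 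Your argument makes this the load-bearing step: step (2) (descent to $U^*$ via Mok/KMSW) and especially step (3) (transfer from $U^*$ to the compact-at-infinity form $U$, i.e.\ the multiplicity formula for the inner form) are invoked as established, whereas the authors deliberately refuse to rely on them and therefore keep the general statement conjectural. So, read as a proof of the stated conjecture, your proposal has a genuine gap: it cites results that are not unconditionally available, and no amount of checking cohomologicality, regularity of the infinitesimal character, or local isomorphisms $U_v\cong U^*_v$ at finite places removes that dependence.

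Your fallback, a ``hands-on'' simple trace formula argument in the style of \cite{CHT} and \cite{Labesse}, is closer to what the paper actually does, but it is not automatic either: those comparisons require hypotheses that let one isolate the relevant terms (e.g.\ unramifiedness conditions on $E/E^+$ as in Labesse's Theorem 5.4, or the availability of the Shimura-variety/$GU(3)$ machinery when $E$ is imaginary quadratic), and $\Pi$, being an automorphic induction of a Hecke character, need not have a supercuspidal or otherwise ``simple'' local component that one could use to simplify the trace formula unconditionally. If you restrict your claim to the two cases the paper needs for Corollary \ref{cor:exofverycriticalforms} and route the argument through the cited results of Labesse and Morel/Harris--Taylor, your outline becomes essentially the paper's argument; as written, however, it overstates what is proved.
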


\begin{prop}
If $E/E^+$ is everywhere unramified (e.g. for $E = \QQ(i,\sqrt{3})$ or $\QQ(i,\sqrt{7})$), then the previous conjecture is true.
\end{prop}

\begin{proof}
This is \cite{Labesse} Theorem 5.4.
\end{proof}

\begin{prop}
If $E$ is quadratic imaginary, then the previous conjecture is true.
\end{prop}

\begin{proof}
By \cite{Morel} Corollary 8.5.3 (ii), there exists $\pi'$ an automorphic representation for $GU(3)$ associated to $\Pi \times 1$, which is automorphic for $\GL_3 \times \GL_1$. By \cite{HS} Lemma A.7 (based on \cite{HarrisTaylor}), there exists $\pi$, an automorphic representation of $U(3)$ associated to $\pi'$.
\end{proof}

\begin{cor}
\label{cor:exofverycriticalforms}
If $E$ is quadratic imaginary or if $E/E^+$ is everywhere unramified, then there exists a classical form on $U(3)$ satisfying the hypothesis of Theorem \ref{thm:main}.
\end{cor}

\begin{proof}
Let $\pi$ be the automorphic representation of $U$ considered above, and let $f \in \pi$ be an eigenform for the Hecke operators away from a set $S$ of bad places of $\pi$.
Then $\rho_f = \rho_\pi = \rho$ is crystalline at $p$ and $\varphi$-generic. In particular it has $3! = 6$ refinements which are automorphic and split at $p$. Hence there exists an automorphic refinement $\mathcal R$ of $f$ with relative position $w_{\mathcal R} = 1$ with respect to the Hodge filtration. In particular, for this choice of a refinement, there exists a refined classical modular form $f'$ satisfying all hypothesis of Theorem \ref{thm:main}. But, by Lemma \ref{lemma:Ind}(5) we know that $f$ gives, for all $v \in S \backslash S_p$, a point of $\mathcal X_{\overline{\rho_v}}^\square$ which satisfies $\Hom_{{\rm Gal}_{E_v}}(\rho_v,\rho_v(1)) = 0$. When $v$ splits in $E/E^+$, such a $v$ is a smooth point by \cite{All1} Prop 1.2.2. 
\end{proof}

\bibliographystyle{smfalpha}
\bibliography{Biblio}

\printindex
\end{document}